\newtheorem{thm}{Theorem}[subsection]
\newtheorem{lemma}[thm]{Lemma}
\newtheorem{prop}[thm]{Proposition}
\newtheorem{cor}[thm]{Corollary}
\newtheorem{conj}{Conjecture}[subsection]
\theoremstyle{definition} 
\newtheorem{defin}[thm]{Definition}
\newtheorem{remark}[thm]{Remark}
\newtheorem{remarks}[thm]{Remarks}
\newtheorem{example}[thm]{Example}
\newtheorem{question}[thm]{Question}
\newcommand{\bb}[1]{\mathbf{#1}} 
\newcommand{\CC}{\bb{C}}
\newcommand{\FF}{\bb{F}}
\newcommand{\QQ}{\bb{Q}}
\newcommand{\RR}{\bb{R}}
\newcommand{\ZZ}{\bb{Z}}
\newcommand{\GG}{\bb{G}}
\newcommand{\cO}{\mathscr{O}}
\renewcommand{\phi}{\varphi}
\newcommand{\ra}{\longrightarrow}
\newcommand{\xto}{\xrightarrow}
\newcommand{\xra}{\xlongrightarrow}
\newcommand{\isomto}{\xrightarrow{\,\smash{\raisebox{-0.65ex}{\ensuremath{\scriptstyle\sim}}}\,}}
\newcommand{\isomlong}{\xlongrightarrow{\,\smash{\raisebox{-0.65ex}{\ensuremath{\scriptstyle\sim}}}\,}}
\newcommand{\isom}{\simeq}
\renewcommand{\bigwedge}{\mbox{\large $\wedge$}}
\DeclareMathOperator{\Hom}{Hom}
\DeclareMathOperator{\cHom}{\mathscr{H}om}
\DeclareMathOperator{\Spec}{Spec}
\DeclareMathOperator{\Spf}{Spf}
\DeclareMathOperator{\Def}{Def}
\DeclareMathOperator{\Ext}{Ext}
\DeclareMathOperator{\Pic}{Pic}
\DeclareMathOperator{\gr}{gr}
\DeclareMathOperator{\HW}{HW}
\newcommand{\et}{{\rm\acute{e}t}}
\newcommand{\wt}[1]{{\mathchoice%
  {\raisebox{1.5ex}{\resizebox{1.7ex}{!}{{}\hphantom{i}\ensuremath{{\sim}}}} \hspace{-1.7ex}{#1}}%
  {\smash{\raisebox{1.5ex}{\resizebox{1.7ex}{!}{{}\hphantom{i}\ensuremath{{\sim}}}}\hspace{-1.7ex}{#1 }}\vphantom{\tilde I}}%
  {\raisebox{1.1ex}{\resizebox{1.3ex}{!}{{}\hphantom{i}\ensuremath{{\sim}}}}\hspace{-1.3ex}{#1}}%
  {\raisebox{0.8ex}{\resizebox{1ex}{!}{{}\hphantom{i}\ensuremath{{\sim}}}}\hspace{-1ex}{#1}}%
}}
\newcommand{\swt}[1]{{\mathchoice%
  {\raisebox{0.9ex}{\resizebox{1.2ex}{!}{\ensuremath{{\sim}}}}\hspace{-1.4ex}{#1}}%
  {\smash{\raisebox{0.9ex}{\resizebox{1.2ex}{!}{\ensuremath{{\sim}}}}\hspace{-1.4ex}{#1 }}\vphantom{I}}%
  {\raisebox{0.7ex}{\resizebox{0.8ex}{!}{\ensuremath{{\sim}}}}\hspace{-0.9ex}{#1}}%
  {\raisebox{0.5ex}{\resizebox{1ex}{!}{{}\hphantom{i}\ensuremath{{\sim}}}}\hspace{-1ex}{#1}}%
}}
	\numberwithin{equation}{subsection}
 	\author[P. Achinger]{Piotr Achinger}
 	\address{Institute of Mathematics, Polish Academy of Sciences
    	\newline\indent ul. Śniadeckich 8, 00-656 Warsaw, Poland}
 	\email{pachinger@impan.pl}
 	\author[M. Zdanowicz]{Maciej Zdanowicz}
 	\address{\'Ecole Polytechnique F\'ed\'erale de Lausanne, Chair of Algebraic Geometry \newline 
    \indent MA C3 585 (Bâtiment MA), Station 8, CH-1015 Lausanne}
 	\email{maciej.zdanowicz@epfl.ch}
 	\title{Serre--Tate theory for Calabi--Yau varieties}
 	\date{\today}
  	\subjclass[2010]{Primary 14G17, Secondary 14M17, 14M25, 14J45} 
\begin{document}

\begin{abstract}
	Classical Serre–Tate theory describes deformations of ordinary abelian varieties. It implies that every such variety has a canonical lift to characteristic zero and equips its local moduli space with a Frobenius lifting and canonical multiplicative coordinates. A variant of this theory has been obtained for ordinary K3 surfaces by Nygaard and Ogus.

	In this paper, we construct canonical liftings modulo $p^2$ of varieties with trivial canonical class which are ordinary in the weak sense that the Frobenius acts bijectively on the top cohomology of the structure sheaf. Consequently, we obtain a Frobenius lifting on the moduli space of such varieties. The quite explicit construction uses Frobenius splittings and a relative version of Witt vectors of length two. If the variety has a smooth deformation space and bijective first higher Hasse--Witt operation, the Frobenius lifting gives rise to canonical coordinates. One of the key features of our liftings is that the crystalline Frobenius preserves the Hodge filtration.

	We also extend Nygaard's approach from K3 surfaces to higher dimensions, and show that no nontrivial families of such varieties exist over simply connected bases with no global one-forms.
\end{abstract}

\maketitle


\section{Introduction}
\label{s:intro}


\subsection{Deformations of varieties with trivial canonical class} 

Let $X$ be a smooth and projective algebraic variety with trivial canonical class (i.e.\ $c_1(X) = 0 \in \Pic X$) over an algebraically closed field $k$. 
A fundamental result, due to Bogomolov \cite{Bogomolov}, Tian \cite{Tian}, and Todorov \cite{Todorov} (see also \cite{Kawamata,KawamataErratum,Ran}), and called the BTT theorem, states that if $k=\CC$, then deformations of $X$ are unobstructed; in other words, its local deformation space -- or Kuranishi space -- is smooth. Moreover, the base of a~modular family of such $X$ carries a structure of an affine manifold \cite{LiuShen} (equivalently, a torsion-free flat holomorphic connection on the tangent bundle), giving rise to local canonical flat coordinates.

In contrast, if $k$ has characteristic $p>0$, then the assertion of the BTT theorem is no longer true (see e.g.\ \cite{Hirokado,SchroerExamples}), even if one considers deformations in the equi-characteristic direction \cite[Proposition~5.4]{CynkVanStraten}. Important results towards a characteristic $p$ version of the BTT theorem were given by Schr\"oer \cite{SchroerT1} and Ekedahl and Shepherd-Barron \cite{EkedahlSB}, but in general the deformation theory of varieties with trivial canonical class in positive characteristic remains a~mystery (for example, both aforementioned results require liftability to characteristic zero to prove unobstructedness). 

On the other hand, all known examples of varieties with trivial canonical class in positive characteristic which do not lift to characteristic zero are `supersingular' in the broad sense that the Hasse--Witt operation, i.e.\ the map
\begin{equation} \label{eqn:hasse-witt}
	F^* \colon H^d(X, \cO_X)\ra H^d(X, \cO_X), \quad d=\dim X
\end{equation}
induced by the absolute Frobenius, vanishes. It is a natural expectation (which we have not seen stated explicitly in the literature) that this is the case in general. More precisely, we conjecture the following.

\begin{conj} \label{conj:folk}
	Let $X$ be a smooth projective variety with trivial canonical class over an algebraically closed field of characteristic $p>0$. Suppose that the Hasse--Witt operation \eqref{eqn:hasse-witt} is bijective. Then $X$ has unobstructed deformations over the ring of Witt vectors $W(k)$ of~$k$.
\end{conj}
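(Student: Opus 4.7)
The plan is to deduce the conjecture from the Kawamata--Ran $T^1$-lifting criterion, using the mod-$p^2$ canonical lift produced later in the paper as the base case of an inductive argument, together with a characteristic-$p$ Tian--Todorov identity for the inductive step. Since $h^{d,0}(X) = 1$ and $K_X \isom \cO_X$, bijectivity of \eqref{eqn:hasse-witt} is equivalent to $X$ being Frobenius split; Mehta--Ramanathan then gives degeneration of the Hodge-to-de Rham spectral sequence at $E_1$. Contraction with a trivialising $\omega \in H^0(X,\Omega^d_X)$ produces the Tian--Todorov isomorphism $T_X \isom \Omega^{d-1}_X$, transporting the obstruction space $H^2(X,T_X)$ into $H^2(X,\Omega^{d-1}_X)$, a group whose flatness in any Witt lift is controlled by Hodge theory.

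The first step would be to lift inductively, reapplying the paper's relative Witt-vectors-of-length-two construction: starting from the canonical $X_2/W_2(k)$ with Hodge-filtration-preserving Frobenius $\Phi_2$, one would form $X_3$ by running the same construction on $X_2$ viewed over $W_2(k)/p = k$, provided the relevant weak-ordinariness condition persists, which should follow from $\Phi_n$-stability of the line $H^0(X_n,\omega_{X_n})$ together with Nakayama. The second step, which carries the real content, is to verify $T^1$-lifting: given any first-order deformation $\alpha \in H^1(X,T_X)$ and a lift $X_n/W_n(k)$, one must extend $\alpha$ to a class $\alpha_n \in H^1(X_n,T_{X_n/W_n})$. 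Under $T_X \isom \Omega^{d-1}_X$ this is a lifting problem in Hodge cohomology, and the obstruction --- morally $[\alpha_n,\alpha_n] \in H^2(X,T_X)$ --- should be annihilated by a characteristic-$p$ Tian--Todorov argument in which the crystalline Frobenius preserving the Hodge filtration plays the role of the $\partial\bar\partial$-lemma.

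The main obstacle is precisely this last point: no one has yet given a Tian--Todorov identity in positive characteristic without importing it from characteristic zero (Ekedahl--Shepherd-Barron and Schr\"oer both require liftability to $W(k)$). Producing such an identity from Frobenius splitting alone --- for instance by exploiting the $F$-equivariant Cartier-type decomposition of $F_*\Omega^\bullet_X$ afforded by the mod-$p^2$ lift (Deligne--Illusie) to furnish a contracting homotopy on a DGLA controlling deformations --- appears to be the essential step. It is also quite possible that weak ordinariness alone is insufficient and one must also assume bijectivity of higher Hasse--Witt operations, as is suggested by the paper's separate hypothesis for canonical coordinates; in that case the conjecture as stated would need strengthening, or a more global argument replacing the iterative lifting with a direct construction of the formal versal deformation ring over $W(k)$.
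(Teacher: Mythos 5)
The statement you are attempting to prove is Conjecture~\ref{conj:folk}; the paper offers no proof of it and explicitly records it as open. In the ``Future directions'' subsection the authors write that they were unable to extend the canonical lifting construction past $W_2(k)$, and they observe that if one \emph{could} produce the canonical lift modulo $p^n$ for all $n$, then the results of Ekedahl--Shepherd-Barron would settle the conjecture. So there is no argument in the paper for you to match; the most useful thing to say is whether your proposed route is sound, and where it breaks.

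Your sketch has the right shape but several steps do not go through as stated. First, a minor inaccuracy: Mehta--Ramanathan gives you that bijectivity of $\HW(0)$ on $H^d(X,\cO_X)$ implies $F$-splitting (this is Proposition~\ref{prop:1-ord}(iv)$\Rightarrow$(i)), but it does not give degeneration of the Hodge-to-de~Rham spectral sequence. Degeneration follows from Deligne--Illusie only once you have the mod-$p^2$ lift, and only under the additional hypothesis $p>\dim X$, which the conjecture does not assume; this is exactly why the paper separates out assumption (DEG). Second, and more seriously, the inductive lifting step as you describe it is incoherent: ``running the same construction on $X_2$ viewed over $W_2(k)/p = k$'' does not produce $X_3/W_3(k)$. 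The relative Witt-vector construction $W_2(\cO_X/\wt S)$ takes as input a $k$-scheme $S$, a flat lift $\wt S$ over $\ZZ/p^2\ZZ$, and a smooth $X/S$; there is no evident way to iterate it so as to move from a $W_2$-lift to a $W_3$-lift, and indeed this is precisely the obstruction the authors record as open. Third, the $T^1$-lifting step is the content of unobstructedness, not a path to it: the obstruction class you write as $[\alpha_n,\alpha_n]$ would need to be shown to vanish, and you yourself note that the tool you want for this --- a characteristic-$p$ Tian--Todorov identity not imported from characteristic zero --- is not known to exist. Neither Schr\"oer's nor Ekedahl--Shepherd-Barron's unobstructedness results apply, since both require liftability to $W(k)$ as a hypothesis, which is what you are trying to establish. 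Your final paragraph correctly diagnoses all of this; the honest conclusion is that the proposal is a research programme, not a proof, and that its two missing ingredients (a canonical lift over $W(k)$, or a positive-characteristic $\partial\bar\partial$-type lemma feeding a $T^1$-lifting argument) are the same open problems the authors themselves flag.
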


The goal of this paper is to investigate the deformation theory of varieties with trivial canonical class for which \eqref{eqn:hasse-witt} is bijective, with a special focus on deformations modulo~$p^2$.


\subsection{Classical Serre--Tate theory} 

In the important case of ordinary abelian varieties, a complete description of the deformation problem was given by Serre and Tate \cite{KatzSerreTate,DeligneIllusieKatz}. Let us summarize the main points of this description: let $A$ be an ordinary abelian variety (or, somewhat more generally, an ordinary variety with trivial tangent bundle \cite{MehtaSrinivas}) over a~perfect field $k$ of characteristic $p$, and let $\Def_A$ be its formal deformation space over $W(k)$. Then:
\begin{itemize}
	\item $\Def_A$ has the structure of a \emph{formal group}, in fact a formal torus, over $W(k)$,
	\item $\Def_A$ has a canonical \emph{lifting of Frobenius} $\wt F$  (the $p$-th power map in the group law),
	\item $\Def_{A\otimes \bar k}$ has canonical \emph{multiplicative coordinates} (dependent on the choice of basis of the $p$-adic Tate module $T_p(A(\bar k))$), i.e.\ there is a~preferred isomorphism
	\[ 
		\Def_{A\otimes \bar k} \isom \Spf W(k)[[q_{ij} - 1, \,\, 1\leq i,j  \leq \dim A]].
	\]
	They are compatible with the Frobenius lifting $\wt F$ in the sense that $\wt F{}^*(q_{ij}) = q_{ij}^p$.
	\item Setting $q_{ij}=1$, we obtain a \emph{canonical lifting} $\wt A$ over $W(k)$, which can be characterized as the unique lifting to which the Frobenius $F_A$ lifts.
	\item The restriction map $\Pic \wt A\to \Pic A$ admits a natural section. Consequently, the canonical lift $\wt A$ is projective, and hence algebraizable.
\end{itemize}
From our viewpoint, it is the Frobenius lifting $\wt F$ which is the  most fundamental; in fact, it determines the other features uniquely (cf.\ \cite[Appendix]{DeligneIllusieKatz}). It can be regarded as an analog of the affine manifold structure on the moduli of complex tori (see \cite{Mochizuki}).

As an important step in his proof of the Tate conjecture for ordinary K3 surfaces \cite{Nygaard}, Nygaard has developed an appropriate version of Serre--Tate theory. 
His results were recently extended to Calabi--Yau threefolds by Ward in his thesis \cite{WardThesis}. In fact, the same methods can be used to yield an analog for higher-dimensional varieties, under some extra assumptions. 

Before stating our first result, we have to note that beyond the cases of abelian varieties and K3 surfaces, it is important to distinguish between several notions of ordinarity. In this paper, the following conditions on a smooth projective variety $X$ of dimension $d$ shall play a role:

\begin{defin}
Let $X$ be a smooth projective variety with trivial canonical class over an algebraically closed field $k$ of characteristic $p>0$.
\begin{enumerate}[(a)]
	\item $X$ is \emph{$1$-ordinary} if the Hasse--Witt operation \eqref{eqn:hasse-witt} is bijective,
	\item $X$ is \emph{$2$-ordinary} if it is $1$-ordinary and if the first higher Hasse--Witt operation
	\[
		\HW(1)\colon H^{d-1}(X, \Omega^1_X)\ra H^{d-1}(X, \Omega^1_X)
	\]
	is bijective (see Definition~\ref{def:m-ord}).
	\item $X$ is \emph{ordinary} (in the sense of Bloch--Kato) if $H^j(X, d\Omega^i_X)=0$ for all $i,j$.
\end{enumerate}
\end{defin}

\noindent 
If $X$ has no crystalline torsion, ordinarity is equivalent to the equality of the Hodge and Newton polygons for $H^d_{\rm cris}(X/W(k))$  \cite[\S 7]{Bloch_Kato}, and implies $2$-ordinarity. More generally, $m$-ordinarity means that the first $m$ segments of the Hodge and Newton polygon coincide. Note that a choice of trivialization of the canonical bundle $\omega_X$ yields an identification of $H^{d-1}(X, \Omega^1_X)$ with the dual of the tangent space $H^1(X, T_X)$ of the deformation functor $\Def_X$.

Our first result is the following.

\begin{thm}[Corollary~\ref{cor:nygaard}] \label{thm:nygaard}
	Let $X$ be a smooth projective variety with trivial canonical class over an algebraically closed field of characteristic $p>0$. Assume that
	\begin{enumerate}[(i)] 
		\item $H^{1}(X, \cO_X) = 0 = H^0(X, T_X)$, 
		\item $X$ is $2$-ordinary,
		\item the crystalline cohomology groups $H^*_{\rm cris}(X/W(k))$ are torsion-free for $*=d,d+1$,
		\item the Hodge spectral sequence of $X/k$ degenerates,
		\item $X$ has unobstructed deformations over $W_m(k)$ for some $m\geq 1$.
	\end{enumerate}
	Then a choice of a basis of the free $\ZZ_p$-module 
	\[ 
		\Hom_{\ZZ_p}(H^d_{\rm cris}(X/W(k))^{\phi = p}, H^d_{\rm cris}(X/W(k))^{\phi = 1})
	\] 
	gives rise to an isomorphism
	\[ 
		\Def_{X/W_m(k)} \isom \Spf W_m(k)[[q_1-1, \ldots, q_r-1]], \quad r = \dim H^{d-1}(X, \Omega^1_X),
	\]
	and a natural lifting of Frobenius, defined by $\wt F{}^*(q_i)=q_i^p$.
\end{thm}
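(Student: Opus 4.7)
The plan is to adapt Nygaard's argument for ordinary K3 surfaces to the setting of $2$-ordinary varieties with trivial canonical class in higher dimensions. By hypothesis (v), the deformation functor $\Def_{X/W_m(k)}$ is prorepresentable by a smooth formal scheme $\mathscr{D}$ of dimension $r = \dim H^{d-1}(X,\Omega^1_X)$ over $W_m(k)$, and by (i) the problem has no infinitesimal automorphisms. The goal is to produce a canonical isomorphism of $\mathscr{D}$ with the formal torus $\hat{\GG}_m^r$ over $W_m(k)$, compatible with the Frobenius lift $q_i \mapsto q_i^p$.

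First I would assemble the $F$-crystal data at the closed point. By (iii)--(iv), $M := H^d_{\rm cris}(X/W(k))$ is a finite free $W(k)$-module equipped with a Frobenius-linear endomorphism $\phi$ and the Hodge filtration $\Fil^\bullet M$ identified under the de Rham comparison. By $1$-ordinarity the unit-root subcrystal $U \subset M$ has rank one and is transverse to $\Fil^1 M$, giving a splitting $M = U \oplus \Fil^1 M$ of $W(k)$-modules; taking $\phi$-invariants, $M^{\phi=1}$ is free of rank $1$ over $\ZZ_p$. By $2$-ordinarity the slope-$1$ piece of $M$ has rank $r$ and $M^{\phi=p}$ is free of rank $r$ over $\ZZ_p$. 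Consequently $\Hom_{\ZZ_p}(M^{\phi=p}, M^{\phi=1})$ is free of rank $r$, and the choice of basis identifies the formal torus $\hat{\GG}_m \otimes_{\ZZ_p} \Hom_{\ZZ_p}(M^{\phi=p}, M^{\phi=1})$ with $\Spf W_m(k)[[q_1-1,\ldots,q_r-1]]$.

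Next I would construct a Serre--Tate-style period map $\Psi \colon \mathscr{D} \to \hat{\GG}_m \otimes_{\ZZ_p} \Hom_{\ZZ_p}(M^{\phi=p}, M^{\phi=1})$. For a deformation $X_R/R$ over an Artin local $W_m(k)$-algebra $R$ with residue field $k$, the relative crystalline cohomology $H^d_{\rm cris}(X_R/W_m(k))$ is a filtered $F$-crystal on $R$ whose underlying module is canonically $M \otimes_{W_m(k)} R$ via horizontal parallel transport for the Gauss--Manin connection. The unit-root subcrystal extends canonically to $U_R \subset M \otimes R$, since unit-root crystals over $R$ correspond to $\ZZ_p$-representations of the (trivial) pro-étale fundamental group of $\Spec k$ and are therefore constant. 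Measuring the displacement of the Hodge line $\Fil^d(M \otimes R)$ from its constant reference modulo $U_R$, and projecting onto the slope-$1$ component, yields $\Psi$; the $p$-th power endomorphism of the formal torus target then pulls back through $\Psi$ to define $\wt F$ on $\mathscr{D}$.

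Finally, I would show $\Psi$ is an isomorphism of formal schemes. Since source and target are both smooth of dimension $r$ over $W_m(k)$, it suffices to verify that $\Psi$ is an isomorphism on tangent spaces at the closed point. Using the Kodaira--Spencer identification of the tangent space of $\mathscr{D}$ with $H^1(X, T_X)$, and a trivialization of $\omega_X$ identifying both tangent spaces with $H^{d-1}(X, \Omega^1_X)^\vee$, the derivative of $\Psi$ can be computed to coincide, up to a unit, with the dual of $\HW(1)$, which is bijective by $2$-ordinarity. The principal obstacle is the step going beyond the closed fiber: one must show that the pointwise construction genuinely assembles into a morphism of formal schemes over $W_m(k)$ rather than a mere transformation of set-valued functors, and verify that the unit-root trivialization and the projection onto the slope-$1$ piece behave coherently with respect to the Gauss--Manin connection modulo $p^m$.
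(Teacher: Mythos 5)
Your approach differs substantially from the paper's. The paper (\S\ref{s:nygaard}) builds the period map $\gamma\colon \Def_X \to T=\Hom(E,U)\otimes\hat\GG_m$ via the Artin--Mazur formal group and flat cohomology: for a deformation $\wt X/A$, the boundary map of the sequence
\[
0 \to 1+\mathfrak m_A\cO_{\wt X} \to \cO_{\wt X}^\times/\cO_{\wt X}^{\times p^n} \to \cO_X^\times/\cO_X^{\times p^n}\to 0
\]
gives $H^d_{\rm fl}(X,\mu_{p^n})\to H^d(\wt X, 1+\mathfrak m_A\cO_{\wt X})=\Phi_{\wt X}(A)$, and passing to the limit yields a homomorphism $E=H^d_{\rm fl}(X,\ZZ_p(1))\to U\otimes(1+\mathfrak m_A)$. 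The tangent-space check identifies this boundary with cup product against the Kodaira--Spencer class, using the $d\log$ map. You instead propose a crystalline period map in the style of the Deligne--Illusie--Katz appendix, measuring the displacement of the Hodge filtration against the constant unit-root crystal. Your overall skeleton (construct a period map, check the derivative, deduce the isomorphism from smoothness of both sides) agrees with the paper's, but the construction of the period map is where the two diverge, and this is where your proposal has a genuine gap.

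The concrete error is in the choice of Hodge step. You propose to measure the displacement of $\Fil^d(M\otimes R)$ and project onto the slope-$1$ component $E$. But by Griffiths transversality $\nabla\Fil^d\subseteq\Fil^{d-1}\otimes\Omega^1$, the first-order motion of $\Fil^d$ lands in $\Fil^{d-1}/\Fil^d \cong H^1(X,\Omega^{d-1}_X)$, which is the slope-$(d-1)$ graded piece, \emph{not} the slope-$1$ piece $H^{d-1}(X,\Omega^1_X)$. Thus the composite you define has vanishing derivative and cannot induce the required isomorphism on tangent spaces. (Your stated claim that the derivative is ``the dual of $\HW(1)$'' is therefore also not what comes out of this construction.) The correct crystalline analogue is to measure the displacement of $\Fil^1$ -- the hyperplane complementary to the unit-root line $U_R$ -- whose first-order motion restricted to $\gr^1=\Fil^1/\Fil^2\cong E\otimes k$ lands in $M/\Fil^1\cong U\otimes k$, recovering exactly the cup-product map $H^1(X,T_X)\to\Hom(H^{d-1}(X,\Omega^1_X),H^d(X,\cO_X))$ that the paper computes. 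Alternatively one could keep $\Fil^d$ but project onto the slope-$(d-1)$ piece and identify $\Hom(\Fil^d,H_{d-1})\cong\Hom(E,U)$ by Poincar\'e duality; as stated your target does not match.

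Beyond this, two secondary remarks. First, the ``principal obstacle'' you flag at the end -- that the construction should be a morphism of formal schemes rather than a transformation of set-valued functors -- is not actually the hard point: a natural transformation of functors on Artin rings into a prorepresentable functor is automatically a morphism of formal schemes by Yoneda. What genuinely needs care in the crystalline approach, and is not addressed in your sketch, is whether the slope decomposition of $M\otimes R$ beyond the unit-root part extends canonically over $R$; the paper's flat-cohomology construction avoids this entirely, since it never needs the relative slope filtration. Second, the paper's remark in \S\ref{ss:nygaardp2} that Griffiths transversality is precisely the obstacle to pushing the crystalline/filtration approach past abelian varieties and K3 surfaces is directly relevant here: you are running into that obstruction, which is one reason the paper chose the Nygaard route.
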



\subsection{\texorpdfstring{$F$-splittings and canonical liftings modulo $p^2$}{F-splittings and canonical liftings modulo p2}}

Theorem~\ref{thm:nygaard} is unsatisfying in several aspects. First, the assumption on smoothness of the deformation space is discouraging. Second, its relation to Hodge theory (Frobenius and the Hodge filtration) is a mystery (see Question~\ref{question:crysf1}). It is also important to ask whether the $2$-ordinarity assumption could be relaxed to $1$-ordinarity. Our main new insight is that using a different approach one can build a version of the Serre--Tate theory for $1$-ordinary varieties with trivial canonical class which describes deformations modulo $p^2$ (i.e.\ over Artinian $W_2(k)$-algebras).



An important feature of our approach is the use of $F$-splittings, which are $\cO_X$-linear splittings of the Frobenius map $F^*\colon \cO_X\to F_* \cO_X$. They were invented by Mehta and Ramanathan in the 1980s \cite{MehtaRamanathan} and were put to good use in geometric representation theory in positive characteristic, but so far they have not appeared too prominently in arithmetic geometry (see, however, \cite{MehtaSrinivas}). This is in contrast with the use of formal and $p$-divisible groups in the classical case, and it would be interesting to see a more direct link between $F$-splittings and the formal groups of Artin and Mazur. 

Our key starting point is the following construction. Let $X$ be a $1$-ordinary variety with trivial canonical class. Then $X$ admits a unique $F$-splitting $\sigma$. On the other hand, a construction due to the second author \cite{Zdanowicz} attaches to a pair $(X, \sigma)$ of a $k$-scheme and an $F$-splitting a canonical lifting $\wt X$ of the Frobenius twist $X'$ of $X$. The structure sheaf of $\wt X$ can be described as the following quotient of $W_2(\cO_X)$:
\[
	\cO_{\wt X} = W_2 (\cO_X) /I_\sigma, 
	\quad
	I_\sigma = \{(0, f)\,|\, \sigma(f)=0\}.
\]
It was known before \cite[8.5]{IllusieFrobenius} that an $F$-split scheme can be lifted modulo $p^2$. 
What is new here is that there is a~preferred, and quite explicit, such lifting. This was a strong indication that there should be a version of Serre--Tate theory for $1$-ordinary varieties with trivial canonical class.

Already the existence of the preferred $\wt X$ has interesting consequences. First, by \cite{DeligneIllusie}, for every $n<p$ there exists a canonical Hodge decomposition
\[ 
	H^n_{\rm dR}(X/k) \isom \bigoplus_{i+j=n} H^j(X', \Omega^i_{X'/k}).
\]
One wonders what other features of classical Hodge theory have analogues for $1$-ordinary varieties with trivial canonical class. Second, by construction the canonical lifting $\wt X$ is a closed subscheme of $W_2(X)$, and the Teichm\"uller lifting
\[ 
	\cO^\times_X\ra \cO_{\wt X}^\times, \quad f \mapsto \text{the class of }(f, 0)\text{ mod }I_\sigma
\]
gives rise to a natural splitting of the restriction map $\Pic \wt X\to \Pic X$. 

From our perspective, the most important feature of the canonical lifting $\wt X$ is seen through its de Rham cohomology. Recall that the isomorphism
\[ 
	\rho \colon H^d_{\rm cris}(X'/W_2(k)) \isomlong H^d_{\rm dR}(\wt X/W_2(k)
\]
endows the right hand side with the crystalline Frobenius $\phi$.

\begin{thm}[{Theorem~\ref{thm:fhodge1}}] \label{thm:preserve-f1} 
	Let $X$ be a $1$-ordinary smooth projective variety with trivial canonical class, and let $\wt X$ be the canonical lifting of $X'$ described above. Then the crystalline Frobenius
	\[ 
		\phi \colon H^d_{\rm dR}(\wt X/W_2(k))\ra H^d_{\rm dR}(\wt X/W_2(k))
	\]
	preserves the submodule $F^1 H^d_{\rm dR}(\wt X/W_2(k))$ (the image of $H^d(\wt X, \Omega^{\bullet \geq 1}_{\wt X}) \to H^d_{\rm dR}(\wt X/W_2(k))$).
\end{thm}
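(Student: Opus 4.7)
My approach is to compute the crystalline Frobenius $\phi$ via local Frobenius lifts on the canonical lifting $\wt X$ and verify that the resulting map preserves the Hodge filtration modulo controllable correction terms. The starting point is the Witt-vector model $\cO_{\wt X} = W_2(\cO_X)/I_\sigma$: while the Witt Frobenius $F \colon W_2(\cO_X)\to W_2(\cO_X)$, $(a_0, a_1)\mapsto (a_0^p, a_1^p)$, does not descend to $\cO_{\wt X}$ (since $F(I_\sigma) \not\subset I_\sigma$ in general), this obstruction is only global.

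On an affine cover $\{\wt U_\alpha\}$ of $\wt X$, Frobenius lifts $\wt F_\alpha \colon \wt U_\alpha \to \wt U_\alpha$ exist, because the obstruction lives in $H^1$ of an affine scheme with coherent coefficients and therefore vanishes. Each pullback $\wt F_\alpha^*$ on $\Omega^\bullet_{\wt U_\alpha/W_2}$ automatically preserves the stupid filtration $\Omega^{\geq\ast}_{\wt U_\alpha/W_2}$, being the pullback under a morphism of $W_2$-schemes. Assembling these into the \v{C}ech--de~Rham bicomplex computes $\phi$ on $H^d_{\rm dR}(\wt X/W_2)$ as the sum of the local $\wt F_\alpha^*$ together with correction cocycles $h_{\alpha\beta} = (\wt F_\alpha - \wt F_\beta)/p$ taking values in $F^* T_X$ and acting on the de~Rham complex by contraction.

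The main technical content is controlling these correction terms: contraction by $h_{\alpha\beta}$ maps $\Omega^i$ to $\Omega^{i-1}$ and a priori moves $F^1$ into $F^0$. By Mazur's divisibility $\phi(F^1)\subset p\cdot H^d_{\rm dR}(\wt X/W_2)$, the corrections come paired with an extra factor of $p$, and it remains to show that the resulting image in $H^d_{\rm dR}(\wt X/W_2)/F^1$ is zero. The plan is to use the canonical nature of the $F$-splitting to rigidify the cocycle classes of $h_{\alpha\beta}$, reducing the vanishing statement to the behavior of the Hasse--Witt operation, which is bijective under the $1$-ordinarity hypothesis. This is the hard part of the argument.

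An alternative and conceptually cleaner route, which I would explore in parallel, is to construct directly an auxiliary $W_2$-morphism lifting the relative Frobenius $F_{X/k}\colon X\to X'$ to a map $\wt Y \to \wt X$ from some $W_2$-lift $\wt Y$ of $X$. Such a morphism would induce $\phi$ on de~Rham cohomology by functoriality and preserve the stupid filtration automatically, yielding $F^1$-preservation without any \v{C}ech-cocycle analysis. The natural candidate is to exploit the embedding $\wt X \hookrightarrow W_2(X)$ and the Witt Frobenius on $W_2(X)$, modified by the $F$-splitting $\sigma$ so that the image lands in $\wt X$; if such a construction can be carried out, it bypasses the cocycle argument entirely and gives a direct proof.
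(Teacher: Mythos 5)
Your first (\v{C}ech) route identifies the correct difficulty but stops exactly where the real work begins: you say that you would ``use the canonical nature of the $F$-splitting to rigidify the cocycle classes of $h_{\alpha\beta}$, reducing the vanishing statement to the behavior of the Hasse--Witt operation,'' but no mechanism for this is supplied, and it is not clear that any direct \v{C}ech computation would close this gap. In fact $1$-ordinarity does not enter the paper's argument through a rigidification of local correction cocycles at all; it enters through a structural cohomology isomorphism that your plan does not anticipate.

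Your second route is much closer in spirit to what the paper actually does, but the naive version would fail. A map $\wt Y \to \wt X$ lifting $F_{X/k}$ for some flat $W_2$-lift $\wt Y$ of $X$ is precisely a global ``retracting'' lift of Frobenius, and the paper explicitly remarks that such lifts on $\wt X$ itself only exist \emph{locally}; moreover $W_2(X)$ is not flat, so it cannot serve as your $\wt Y$. What the paper does instead is: (i) embed $\wt X$ into an auxiliary smooth $\wt S$-scheme $\wt Y$ (e.g.\ projective space) carrying a global Frobenius lift $F_{\wt Y}$, and arrange the embedding so that the triple $(\wt Y, e, F_{\wt Y})$ is ``retracting''; (ii) use the PD-envelope $\bar Y$ and the universal property to construct a map in the derived category $t\colon \Omega^\bullet_{\wt X/\wt S}\to W_2(\cO_X/\wt S)$, whose target is the relative Witt-vector sheaf rather than $\cO_{\wt X}$; and (iii) invoke the crucial consequence of $1$-ordinarity that the restriction $H^n(X, W_2(\cO_X/\wt S)) \to H^n(\wt X, \cO_{\wt X})$ is an isomorphism (because $H^*(X, B\Omega^1_X)=0$), so that $t$ factors the projection $H^n_{\rm dR}(\wt X)\to H^n(\wt X,\cO_{\wt X})$ through a Frobenius-equivariant map, which is exactly what is needed to conclude that $\phi$ preserves $F^1$. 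In short: the correct target of the ``modified Witt Frobenius'' you were looking for is $W_2(\cO_X/\wt S)$, not $\wt X$, and the passage back to $\cO_{\wt X}$-cohomology -- not a cocycle rigidification -- is where the $1$-ordinarity hypothesis is spent. As written, both of your routes contain genuine gaps, though the second one would become the paper's proof once these two ingredients (target $W_2(\cO_X/\wt S)$ and Lemma~\ref{lemma:hw-ho}) are inserted.
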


\noindent
If $p>2$ and certain technical assumptions are satisfied, $\wt X$ is the unique lifting $\wt X$ of $X'$ with this property (Theorem~\ref{thm:canlift-uniq}). By results of Katz \cite[Appendix]{DeligneIllusieKatz}, this implies that for abelian varieties and K3 surfaces, the lifting $\wt X$ agrees with the usual one, obtained by Serre--Tate theory (Corollary~\ref{cor:comparison-st1}).
 

\subsection{Modular Frobenius liftings}

In order to describe the deformation theory of a $1$-ordinary variety $X$ with trivial canonical class, we construct the canonical lifting in families: if $X/S$ is a family of such varieties over a $k$-scheme $S$, and if $\wt S$ is a flat lifting of $S$ over $W_2(k)$, there is a canonical lifting $\wt{X}/ \wt S$ of the Frobenius pull-back family $X'=F_S^* X$. This is achieved using a construction of a sheaf of rings $W_2(\cO_{X}/\wt S)$ on $X$, a relative version of $W_2\cO_X$. As in the absolute case, the unique relative $F$-splitting $\sigma$ of $X/S$ gives rise to an ideal $I_\sigma \subseteq W_2(\cO_{X}/\wt S)$, and $\cO_{\wt{X}} = W_2(\cO_{X}/\wt S)/I_\sigma$.

Note that in the relative case (where $S$ is not perfect), it is important to have in mind that it is $X'$, and not $X$, that admits a canonical lifting. For example, if $S$ is smooth over $k$, then $X'/S$ will always have vanishing Kodaira--Spencer map. Conversely, one can show that, if the fibers have no nonzero global vector fields, a family with vanishing Kodaira--Spencer descends canonically under Frobenius (see Appendix~\ref{app:zeroks}). Consequently, a family of $1$-ordinary varieties with trivial canonical class, no global vector fields, and vanishing Kodaira--Spencer admits a canonical lifting to any lifting of the base modulo $p^2$.

Consider now the deformation functor of $X$
\[ 
	\Def_X \colon {\rm Art}_{W_2(k)}(k) \ra {\rm Sets}.
\]
Suppose that $\Def_X \isom \wt S = \Spf R$ for a \emph{flat} $W_2(k)$-algebra $R$. Then the construction of the relative canonical lifting yields \emph{by abstract nonsense} a lifting of Frobenius
\[ 
	F_{\wt S} \colon \wt S\ra \wt S.
\]
We have thus obtained the key ingredient of a Serre--Tate theory for $X$. As in the absolute case, the key feature concerns the relationship of this Frobenius lifting with de Rham cohomology.

\begin{thm}[Theorem~\ref{thm:fhodge2}] \label{thm:intro-fhodge2}
	Let $X_0$ be a $1$-ordinary smooth projective variety with trivial canonical class, with $d=\dim X_0$. Suppose that the deformation space $\wt S = \Def_{X_0/W_2(k)}$ is pro-representable and smooth.
	Then the construction outlined above endows $\wt S$ with a~Frobenius lifting $F_{\wt S}$. Let $\wt X/\wt S$ be the universal family, and let 
	\[ 
		H = H^d_{\rm cris}(\wt X/\wt S) \isom H^d_{\rm dR}(\wt X/\wt S)
	\]
	be the associated Hodge $F$-crystal. Then the induced crystalline Frobenius map
	\[ 
		\phi(F_{\wt S})F_{\wt S}^* \colon H\ra H
	\]
	preserves the submodule $F^1 H$. 
\end{thm}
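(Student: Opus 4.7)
The plan is to reinterpret $\phi(F_{\wt S})F_{\wt S}^*$ via the defining universal property of $F_{\wt S}$. By construction of the modular Frobenius lifting, $F_{\wt S}$ is the classifying map of the relative canonical lifting of $X':=F_S^*X$ over $\wt S$, so the pullback $F_{\wt S}^*\wt X$ is canonically identified with the canonical lifting $\wt{X'}$ of $X'$. Base change for de Rham cohomology of the smooth proper family then yields $F_{\wt S}^*H\isom H^d_{\rm dR}(\wt{X'}/\wt S)$, under which $F_{\wt S}^*F^1 H$ maps onto $F^1H^d_{\rm dR}(\wt{X'}/\wt S)$. Consequently, $\phi(F_{\wt S})F_{\wt S}^*$ becomes the Frobenius-functoriality map
\[
    \phi\colon H^d_{\rm dR}(\wt{X'}/\wt S)\ra H^d_{\rm dR}(\wt X/\wt S)
\]
coming from the relative Frobenius $F_{X/S}\colon X\to X'$ via the crystalline/de Rham comparison, and the theorem reduces to showing that $\phi$ preserves $F^1$.

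To prove this, I would adapt the blueprint of Theorem~\ref{thm:fhodge1} by exploiting the explicit description $\cO_{\wt{X'}}=W_2(\cO_X/\wt S)/I_\sigma$ together with the natural Witt-vector Frobenius on $W_2(\cO_X/\wt S)$ to produce a morphism $\wt F\colon \wt X\to\wt{X'}$ of $\wt S$-schemes (twisted by $F_{\wt S}$) lifting $F_{X/S}$. Once such $\wt F$ is available, its pullback carries $\Omega^{\geq 1}_{\wt{X'}/\wt S}$ into $\Omega^{\geq 1}_{\wt X/\wt S}$ for the trivial reason that pullback of a $k$-form is a $k$-form; passing to hypercohomology yields $\wt F^*(F^1H^d_{\rm dR}(\wt{X'}/\wt S))\subseteq F^1H^d_{\rm dR}(\wt X/\wt S)$. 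By the standard Berthelot--Ogus comparison, any lift of the relative Frobenius induces $\phi$ on de Rham cohomology, giving the desired preservation.

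The main obstacle is the construction of $\wt F$, or, absent a global lift, a suitable substitute patched from local lifts. For an arbitrary Frobenius lift of $\wt S$ the naive patching strategy fails: the homotopy-correction terms between two local choices of Frobenius lifts lower form degree by one, producing \v{C}ech components in $\Omega^0$ that generically push the image of $F^1$ out of $F^1$. The point of the construction is that the unique relative $F$-splitting $\sigma$ and the Witt-vector Frobenius on $W_2(\cO_X/\wt S)$ interact with the ideal $I_\sigma$ in a canonical way, yielding either a global $\wt F$ or at least a coherent system of local lifts whose mutual discrepancies stay within $F^1\Omega^\bullet_{\wt X/\wt S}$. Establishing this intertwining is the technical heart of the argument, parallels the absolute case proved in Theorem~\ref{thm:fhodge1}, and is the place where $1$-ordinarity of the fibers (which singles out $\sigma$) is crucially used.
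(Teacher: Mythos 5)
Your first paragraph correctly identifies the two key facts: that $F_{\wt S}^*\wt X$ is by construction the canonical lifting $\wt X{}'$ of $X'$, and that the Hodge filtration is compatible with base change, so that the statement to be proved becomes: the linearized crystalline Frobenius $\phi(F_{\wt S})\colon H^d_{\rm dR}(\wt X{}'/\wt S)\to H^d_{\rm dR}(\wt X/\wt S)$ sends $F^1$ into $F^1$. This matches the paper's set-up exactly.

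The gap is in how you propose to finish. Having reduced to this statement, you set out to prove it directly by building a lift $\wt F\colon \wt X\to \wt X{}'$ of $F_{X/S}$ (or a patched local substitute), and you acknowledge that establishing the compatibility of such local lifts with the ideal $I_\sigma$ is the ``technical heart'' you have not supplied. This is not the route the paper takes, and it is the hard way around: a global lift of the relative Frobenius from $\wt X$ to $\wt X{}'$ need not exist, and even the local-patching analysis you sketch would essentially reprove Theorem~\ref{thm:fhodge1} from scratch. The missing idea is \emph{faithfully flat descent along $F_{\wt S}$}. Since $\wt S$ is smooth over $W_2(k)$, the Frobenius lift $F_{\wt S}$ is faithfully flat, so the vanishing of the composite
\[
  F_{\wt S}^* F^1 \hookrightarrow F_{\wt S}^* H \xra{\phi(F_{\wt S})} H \ra H/F^1
\]
may be checked after applying $F_{\wt S}^*$ once more. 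But under the identification $F_{\wt S}^* H\isom H^d_{\rm dR}(\wt X{}'/\wt S)$, the pulled-back composite is precisely the statement that the crystalline Frobenius \emph{of} $\wt X{}'$ (a self-map of $H^d_{\rm dR}(\wt X{}'/\wt S)$) preserves $F^1$ --- and since $\wt X{}'$ is the canonical lifting of the family $X/S$, this is exactly what Theorem~\ref{thm:fhodge1} already gives for the chosen lift $F_{\wt S}$. No new Frobenius lift on the total space needs to be constructed; one only needs the descent step to convert the statement about $\phi\colon H'\to H$ into a statement about the self-Frobenius of $H'$. As written, your argument stops at identifying the hard step without closing it, so the proof is incomplete.
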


\noindent
If $p>2$ and if certain technical assumptions are satisfied, then $F_{\wt S}$ is unique with this property (Corollary~\ref{cor:uniq-froblift-moduler}). 
Again, using Katz's results, we can show that for abelian varieties and K3 surfaces, this lifting of Frobenius agrees with the one obtained by classical Serre--Tate theory (Corollary~\ref{cor:comparison-st2}). 


\subsection{\texorpdfstring{$2$-ordinarity and canonical coordinates}{2-ordinarity and canonical coordinates}}

The modular Frobenius lifting described above carries important arithmetic information related to the first higher Hasse--Witt operation and which is especially interesting if $X$ is $1$-ordinary but not $2$-ordinary. 

Recall that if $S$ is a smooth scheme over $k$, and if $\wt F\colon \wt S\to \wt S$ is a lifting of $S$ over $W_2(k)$ together with Frobenius, one has the induced operator
\begin{equation} \label{eqn:def-xi} 
	\xi\colon F^*\Omega^1_{S} \ra \Omega^1_{S}, \quad \xi(\omega) = \frac{1}{p}\wt F{}^*(\swt \omega)
\end{equation}
(here $\swt\omega\in \smash{\Omega^1_{\wt S}}$ is a lifting of the local section $\omega\in \Omega^1_{S}$). Following Mochizuki \cite{Mochizuki}, we call $\wt F$ \emph{ordinary} if $\xi$ is an isomorphism.

Applying this to $\wt S= \Def_X$ being the base of a local modular family, and specializing the operator $\xi$ at the closed point, we obtain a map
\begin{equation} \label{eqn:xi0}
	\xi(0)\colon H^1(X, T_X)^\vee\ra H^1(X, T_X)^\vee 
\end{equation}
For the statements below, it is convenient to choose a basis element of $H^d(X, \cO_X)$ which is invariant under the Hasse--Witt operation. This together with Serre duality allows us to interpret $\xi(0)$ as a map
\begin{equation} \label{eqn:xi1}
	\xi(0)\colon H^{d-1}(X, \Omega^1_X)\ra H^{d-1}(X, \Omega^1_X).
\end{equation}
\begin{thm}[{Proposition~\ref{prop:beta-hw1} + Corollary~\ref{cor:xi-hw1}}] \label{thm:alphabetahw1} 
	Suppose that $p>2$. Let $X/k$ be a $1$-ordinary variety with trivial canonical class as in Theorem~\ref{thm:intro-fhodge2}, and let $\wt X/W_2(k)$ be its canonical lifting.  Suppose that the Hodge groups of the universal deformation over $W_2(k)$ are free and that its Hodge spectral sequence degenerates. Then the following three Frobenius-linear maps are equal:
	\begin{enumerate}[(i)]
		\item The map $\xi(0)\colon H^{d-1}(X, \Omega^1_X)\to H^{d-1}(X, \Omega^1_X)$ \eqref{eqn:xi1}.
		\item The map $\beta\colon H^{d-1}(X, \Omega^1_X)\to H^{d-1}(X, \Omega^1_X)$ obtained by dividing the crystalline Frobenius on $F^1 H^d_{\rm dR}(\wt X/W_2(k))$ by $p$ (this makes sense thanks to Theorem~\ref{thm:preserve-f1}).
		\item The first higher Hasse--Witt operation $\HW(1)\colon H^{d-1}(X, \Omega^1_X)\to H^{d-1}(X, \Omega^1_X)$.
	\end{enumerate}
	Consequently, $\wt F$ is ordinary if and only if $X$ is $2$-ordinary.
\end{thm}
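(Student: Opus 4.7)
The plan is to prove the two equalities $\beta = \HW(1)$ and $\xi(0) = \beta$ separately, and then deduce the ``consequently'' clause by Nakayama. First I would establish $\beta = \HW(1)$, which is essentially tautological once Theorem~\ref{thm:preserve-f1} is available. That theorem says the crystalline Frobenius preserves $F^1 H^d_{\rm dR}(\wt X/W_2(k))$. Together with the general fact that the crystalline Frobenius acts as $p^i$ times a unit on Hodge-type $(i, d-i)$ classes, this forces its restriction to $F^1$ to be divisible by $p$, and the quotient induces a Frobenius-linear endomorphism of $F^1/F^2 \otimes k \cong H^{d-1}(X, \Omega^1_X)$ (using the Hodge decomposition modulo $p^2$ available under our assumptions). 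Unwinding Definition~\ref{def:m-ord} and using the chosen $F$-invariant trivialization of $\omega_X$ to normalize Serre duality, this endomorphism is precisely $\HW(1)$, hence equals $\beta$.

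Next comes the harder equality $\xi(0) = \beta$, the heart of the argument. The link between the Frobenius lift $\wt F$ on $\wt S$ and the action of the crystalline Frobenius on $H = H^d_{\rm dR}(\wt X/\wt S)$ is mediated by Griffiths transversality and the Kodaira--Spencer map. Since $\wt S = \Def_{X_0/W_2(k)}$ is formally smooth and pro-representable, the universal Kodaira--Spencer map
\[
	{\rm KS}\colon T_{\wt S} \ra \cHom(\gr^d H, \gr^{d-1} H)
\]
is an isomorphism, and combined with the fixed trivialization $\gr^d H \cong \cO_{\wt S}$ it gives an identification $\Omega^1_{\wt S} \isom \gr^{d-1} H$. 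By Theorem~\ref{thm:intro-fhodge2} the crystalline Frobenius preserves $F^1 H$ and acts as $p^d$ (times a unit) on $F^d H$. Differentiating the compatibility of the crystalline Frobenius with the Gauss--Manin connection, applying it to a generator of $F^d H$, and dividing by $p$, I would show that $\xi$ as defined in \eqref{eqn:def-xi}, transported across the KS identification, coincides with the divided crystalline Frobenius on $\gr^{d-1} H$. Specializing at the closed point of $\wt S$ then yields $\xi(0) = \beta$.

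The main obstacle is the bookkeeping in this last step: one must simultaneously track the Gauss--Manin connection, the Hodge filtration, the divided Frobenius, the Kodaira--Spencer map, Serre duality, and the trivialization of $\omega_X$, all modulo $p^2$. The hypotheses that the Hodge groups of the universal family are free over $W_2(k)$ and that the Hodge spectral sequence degenerates are needed precisely to guarantee that $F^\bullet H$ is a filtration by direct summands, so that ``divide by $p$'' and ``restrict to the closed fiber'' commute without ambiguity. Once $\xi(0) = \beta = \HW(1)$ is established, the final assertion is immediate: by definition $\wt F$ is ordinary iff $\xi$ is an isomorphism of locally free sheaves on $\wt S$, which by Nakayama (as $\wt S$ is local) is equivalent to $\xi(0)$ being bijective, hence to $\HW(1)$ being bijective, which combined with the $1$-ordinarity of $X$ is the definition of $2$-ordinarity.
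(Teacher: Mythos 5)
Your overall plan (prove $\beta=\HW(1)$ and $\xi(0)=\beta$ separately, conclude by Nakayama) matches the paper's splitting into Proposition~\ref{prop:beta-hw1} and Corollary~\ref{cor:xi-hw1}. But both legs have genuine gaps.

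For $\beta=\HW(1)$, the step you call ``essentially tautological'' is not: the statement that ``the crystalline Frobenius acts as $p^i$ times a unit on Hodge type $(i,d-i)$ classes'' is false in general --- the divisibility estimate gives only $\phi(F^i)\subseteq p^i H$, and the ``times a unit'' part is essentially $m$-ordinarity, which is exactly what you need to prove, not assume. The actual content of $\beta=\HW(1)$ is the Mazur--Ogus comparison \cite[8.26.3]{BerthelotOgus}: the image of $p^{-i}\phi(M_i)$ in $H^d_{\rm dR}(X/k)$ is the conjugate filtration step $F_i$, and the induced map $F^i/F^{i+1}\to F_i/F_{i-1}$ is the inverse Cartier isomorphism. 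The paper's proof of Proposition~\ref{prop:beta-hw1} is nothing but an unwinding of this: one checks that the unique lift $t\in F_1\cap(F^1\otimes k)$ appearing in the definition of $\HW(1)$ coincides with $\phi(x)/p$ precisely because $\phi$ preserves $F^1$. Without invoking Mazur--Ogus (or reproving it), there is no bridge between $\phi/p$ and the Cartier operator.

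For $\xi(0)=\beta$, the route through a generator of $F^dH$ breaks down. Working modulo $p^2$, the divisibility estimate $\phi(F_{\wt S})F_{\wt S}^*(F^2)\subseteq p^2H$ forces $\phi(F_{\wt S})F_{\wt S}^*$ to \emph{vanish} on $F^2$, hence on $F^d$ for every $d\geq 2$ --- it certainly does not act as ``$p^d$ times a unit.'' So ``apply $\phi$ to a generator of $F^d$, differentiate, divide by $p$'' yields $0=0$. The argument has to be carried out at the $F^1/F^2$ and $H/F^1$ level, where $\phi$ survives. That is what Proposition~\ref{prop:xi-hw1} does: one considers the undivided induced maps $\phi_0\colon H/F^1\to H/F^1$ and $\phi_1\colon F^1/F^2\to F^1/F^2$, the Kodaira--Spencer piece $\psi\colon T_{\wt S}\to\Hom(F^1/F^2,H/F^1)$ of the Gauss--Manin connection, and uses horizontality of $\phi(F_{\wt S})$ in the form $\nabla_v\bigl(\phi(F_{\wt S})F_{\wt S}^*x\bigr)=\phi(F_{\wt S})\bigl(F_{\wt S}^*\nabla_{dF_{\wt S}(v)}x\bigr)$, which after reducing modulo $F^1$ and invoking bijectivity of $\phi_0$ gives the commutation of $\xi$ (the differential of $\wt F$) with $\Hom(\phi_1,\phi_0^{-1})$. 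Specializing at the closed point and using $\beta=\HW(1)$, $\phi_0=\HW(0)$ then gives $\xi(0)=\HW(1)$. The top-degree picture you set up is Serre-dual to this, but the dualization does not salvage the $p$-divisibility issue; the statement about $F^dH$ must be replaced by one about $F^1/F^2$ from the start.
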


\noindent 
A simple corollary of this is that every $1$-ordinary variety as above can be (formally) deformed to a $2$-ordinary one (Corollary~\ref{cor:2-ordinary-def}).

The second big feature of the modular Frobenius lifting $\wt F$ is that if $X$ is $2$-ordinary, so that $\wt F$ is ordinary, we obtain an $\FF_p$-local system
\[ 
	\mathcal{M} = \{ x\in \Omega^1_S \,|\, \xi(x) = x\}
\]
such that $\Omega^1_S \isom \mathcal{M}\otimes_{\FF_p} \cO_S$. Consequently, $\Omega^1_S$ becomes trivialized on a canonical finite \'etale cover of $S$. This is the analog of the affine manifold structure over $\CC$. 

\begin{thm}[Proposition~\ref{prop:cancoord}]
	Suppose that $p>2$. Let $X$ be a $2$-ordinary variety with trivial canonical class satisfying the assumptions of Theorem~\ref{thm:alphabetahw1}. Then for every choice of a basis $\{\omega_i\}$ of the $\FF_p$-vector space 
	\[ 
		M = \ker\left( \HW(1) - {\rm id} \colon H^{d-1}(X, \Omega^1_X) \ra H^{d-1}(X, \Omega^1_X)\right)
	\]
	there is an isomorphism
	\[ 
		\Def_X = \Spf W_2(k)[[q_1-1, \ldots, q_r-1]], \quad r = \dim_k H^{d-1}(X, \Omega^1_X) = \dim_{\FF_p} M
	\]
	such that $\wt F(q_i) = q_i^p$ and the image of $d\log q_i$ in $T^* \Def_X = H^{d-1}(X, \Omega^1_X)$ equals $\omega_i$. 
\end{thm}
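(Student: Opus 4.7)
\emph{Proof proposal.} By Theorem~\ref{thm:alphabetahw1}(iii), the fibre of the $\cO_S$-linear map $\xi\colon F^*\Omega^1_{S/k}\to\Omega^1_{S/k}$ at the closed point of $S=\Def_X\otimes_{W_2(k)}k$ equals $\HW(1)$, which is bijective by $2$-ordinarity. Since $\xi$ is $\cO_S$-linear between finite locally free modules of the same rank on the local formal disc $S$, Nakayama upgrades this to a global isomorphism, and $\wt F$ is an ordinary Frobenius lifting in the sense of Mochizuki. Viewing $\xi$ as a Frobenius-semilinear endomorphism of $\Omega^1_{S/k}$, the subsheaf $\mathcal{M}=\ker(\xi-\mathrm{id})$ is a unit-root datum; standard descent gives an $\FF_p$-local system of rank $r$ with $\mathcal{M}\otimes_{\FF_p}\cO_S \isom \Omega^1_{S/k}$. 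Since $S$ is simply connected, $\mathcal{M}$ is constant with fibre $M$, and the given basis $\{\omega_i\}$ extends uniquely to a global basis of $\mathcal{M}$, hence to an $\cO_S$-basis of $\Omega^1_{S/k}$.

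Next I would lift each $\omega_i$ to a closed form $\wt\omega_i\in\Omega^1_{\wt S/W_2(k)}$ satisfying $\wt F^*\wt\omega_i = p\wt\omega_i$. Two observations reduce this to a Poincar\'e-lemma exercise. First, for any lift $\wt\omega$ of $\omega_i$, the identity $\wt F^*\wt\omega = p\wt\omega$ is automatic: writing $\wt F^*\wt\omega = p\beta$ (possible since $\wt F^*(\Omega^1_{\wt S})\subseteq p\Omega^1_{\wt S}$), the class $\bar\beta = \xi(\omega_i) = \omega_i = \bar{\wt\omega}$ gives $\beta = \wt\omega+p\gamma$, hence $p\beta = p\wt\omega$ as $p^2 = 0$. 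Second, applying $d$ and using $\wt F^*(\Omega^{\geq 2}_{\wt S})\subseteq p^2\Omega^{\geq 2}_{\wt S} = 0$ yields $p\cdot d\wt\omega = 0$, so $\omega_i$ is closed on $S$. A routine adjustment $\wt\omega_i \mapsto \wt\omega_i - p\mu_i$, where $d\bar\mu_i$ cancels the obstruction $\tfrac{1}{p}d\wt\omega_i\in\Omega^2_{S/k}$ via the Poincar\'e lemma for the formal disc, then produces the required closed lift, still satisfying the Frobenius relation.

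Integration yields the multiplicative coordinates. Writing $q_i=1+u_i$ with $u_i\in\mathfrak{m}_R$, the equation $d\log q_i = \wt\omega_i$ becomes $du_i = (1+u_i)\wt\omega_i$; closedness of $\wt\omega_i$ is exactly its integrability condition, so iteration in the $\mathfrak{m}$-adic topology of $R$ produces a solution with $u_i(0) = 0$. The ratio $c_i = \wt F(q_i)/q_i^p \in R^\times$ has $d\log c_i = \wt F^*\wt\omega_i - p\wt\omega_i = 0$, hence lies in $(1+\mathfrak{m})\cap W_2(k)^\times = 1+pk$. Writing $c_i = 1+pa_i$, the perfection of $k$ provides a unique $b_i\in k$ with $b_i^p = -a_i$, and the correction $q_i \mapsto q_i(1+pb_i)$ enforces $\wt F(q_i) = q_i^p$ exactly while preserving $d\log q_i$, since $d\log(1+pb_i) = 0$.

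Finally, since $d(q_i-1)|_0 = \omega_i|_0$ and $\{\omega_i|_0\}$ is a $k$-basis of $T^*_0 S$ via $M\otimes_{\FF_p}k\isom T^*_0 S$, the induced $W_2(k)$-algebra map $W_2(k)[[T_1,\ldots,T_r]]\to \cO_{\wt S}$ given by $T_i \mapsto q_i-1$ is surjective. Both sides are formally smooth complete local $W_2(k)$-algebras of the same dimension; $W_2(k)$-flatness together with the isomorphism on reductions mod~$p$ forces surjectivity to upgrade to an isomorphism $\wt S \isom \Spf W_2(k)[[q_1-1,\ldots,q_r-1]]$, with $\wt F^*(q_i) = q_i^p$ and $d\log q_i|_0 = \omega_i$ by construction. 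The main subtleties lie in the closed-lift construction (handled by the formal-disc Poincar\'e lemma) and the Frobenius-fixing correction (handled by perfection of $k$), both routine in this smooth local setup.
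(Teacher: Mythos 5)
There is a genuine gap in the ``integration'' step. You assert that ``closedness of $\wt\omega_i$ is exactly its integrability condition, so iteration in the $\mathfrak{m}$-adic topology of $R$ produces a solution'' of $du_i = (1+u_i)\wt\omega_i$. This is false over $k$ or $W_2(k)$: the Poincar\'e lemma fails in characteristic $p$, and closed $1$-forms on the formal disc are not exact. For instance, $t^{p-1}\,dt$ on $\Spf k[[t]]$ is closed but not $d$ of anything, and the iteration you propose already stalls in degree $p$, where one would need to divide by $p$. The same problem undermines the earlier appeal to ``the Poincar\'e lemma for the formal disc'' to kill the obstruction $\tfrac1p d\wt\omega_i \in \Omega^2_S$ -- that obstruction is a closed $2$-form, but closed $\neq$ exact in characteristic $p$.

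What your argument is missing is exactly the extra hypothesis you computed in the first paragraph but then never used: $\omega_i$ is not merely closed, it is fixed by $\xi$, and since $C\circ\xi = \mathrm{id}$ (see Remark~\ref{rmk:ord-froblift}), it is fixed by the Cartier operator $C$. The paper's Lemma~\ref{lemma:cancoord} then invokes the exact sequence
\[
	1\ra \cO_S^\times/\cO_S^{\times p} \ra Z\Omega^1_S \xra{C-1} \Omega^1_S \ra 0
\]
from Illusie, which shows precisely that a closed $1$-form in the kernel of $C-1$ is $d\log q$ for some $q\in\cO_S^\times$, unique up to $p$-th powers. This is the substitute for ``integration'' in characteristic $p$; no $\mathfrak m$-adic iteration is available. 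The paper's proof then lifts $q_i$ to $\tilde q_i\in\cO_{\wt S}^\times$ with $\wt F^*\tilde q_i=\tilde q_i^p$ using the snake-lemma identification of $Q(\wt F)$ with $\{f\in\cO_S^\times : \xi(d\log f)=d\log f\}$ in \S\ref{s:cancoord}. Your final Frobenius-fixing adjustment and the Nakayama/flatness argument for the isomorphism $\wt S\isom\Spf W_2(k)[[q_1-1,\ldots,q_r-1]]$ are fine once the $q_i$ exist, but the existence step needs the Cartier-operator mechanism, not a Poincar\'e lemma.
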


\noindent
These coordinates are only unique up to jet order depending on $p$: if $q'_i$ are another such coordinates, then 
\[ 
	q'_i - q_i \in (q_1^p - 1, \ldots, q_r^p - 1),
\]
where the ideal on the right is independent of the choices.


\subsection{Application to isotriviality questions}
To illustrate the utility of the modular Frobenius lifting, we give an application to isotriviality of families of $2$-ordinary varieties. Recall that by a result of Raynaud \cite[Chapter~XI, Theorem~5.1]{MoretBailly}, every family of ordinary abelian varieties over a complete curve is isotrivial. The proof of this result makes use of the global geometry of the moduli space, notably the fact that the locus of non-ordinary abelian varieties is an ample divisor. For general varieties with trivial canonical class, we do not know of such results. Nevertheless, the Frobenius lifting can be used to show that no nontrivial families of $2$-ordinary varieties exist over simply connected varieties with no nonzero global $1$-forms.

\begin{thm}[see Proposition~\ref{prop:isotriv1} and Proposition~\ref{prop:isotriv2} for precise statements]
	Let $S/k$ be a smooth simply connected variety with $H^0(S, \Omega^1_S)=0$, and let $X/S$ be a~smooth projective morphism whose fibers are $2$-ordinary varieties with trivial canonical class satifying certain assumptions. Then $f$ is a constant family.
\end{thm}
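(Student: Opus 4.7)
The plan is to reduce the claim to the vanishing of the Kodaira--Spencer map $\kappa\colon T_S \to R^1 f_* T_{X/S}$, and then iterate the Frobenius-descent result of Appendix~\ref{app:zeroks} to force constancy of the classifying map.

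By the relative $2$-ordinarity included in the ``certain assumptions'', the first higher Hasse--Witt operator
\[
	\HW(1)\colon R^{d-1} f_* \Omega^1_{X/S} \ra R^{d-1} f_* \Omega^1_{X/S}
\]
is a Frobenius-linear isomorphism of locally free $\cO_S$-modules of rank $r := \dim_k H^{d-1}(X_s, \Omega^1_{X_s})$. A standard lemma of Katz type then produces an étale $\FF_p$-local system
\[
	\mathcal{M} := \ker(\HW(1) - 1) \subset R^{d-1} f_* \Omega^1_{X/S},
\]
also of rank $r$, with $\mathcal{M}\otimes_{\FF_p} \cO_S \isomto R^{d-1} f_* \Omega^1_{X/S}$. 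Fiberwise, this recovers the $\FF_p$-structure from Theorem~\ref{thm:alphabetahw1} on the cotangent space of $\Def_{X_s}$. The trivial canonical class assumption on fibers gives $\omega_{X/S} \cong f^* \mathcal{L}$ with $\mathcal{L} := f_*\omega_{X/S}$ a line bundle on $S$, and relative Serre duality combined with the projection formula identifies the dual Kodaira--Spencer as an $\cO_S$-linear map
\[
	\kappa^\vee\colon R^{d-1} f_* \Omega^1_{X/S} \ra \Omega^1_{S/k} \otimes \mathcal{L}^{-1}.
\]
Since $S$ is simply connected, $\mathcal{M}$ is a constant $\FF_p$-local system, so it has $r$ global sections spanning $R^{d-1}f_* \Omega^1_{X/S}$ as an $\cO_S$-module. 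Their images under $\kappa^\vee$ are global sections of $\Omega^1_{S/k} \otimes \mathcal{L}^{-1}$; once $\mathcal{L}$ is trivialized (see the obstacles paragraph), these are global $1$-forms on $S$ and hence vanish by $H^0(S, \Omega^1_S) = 0$. It follows that $\kappa^\vee = 0$, whence $\kappa = 0$.

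With $\kappa = 0$ and the absence of nonzero global vector fields on fibers (consistent with $1$-ordinarity and the trivial canonical class, and presumably encoded in the ``certain assumptions''), Appendix~\ref{app:zeroks} produces a canonical Frobenius descent $X \cong F_S^* X_1$ for some family $X_1/S$. Because Frobenius descent over perfect residue fields does not alter the isomorphism type of fibers, $X_1/S$ is again a family of $2$-ordinary varieties with trivial canonical class satisfying the same assumptions. Rerunning the local-system argument gives $\kappa(X_1) = 0$, hence $X_1 \cong F_S^* X_2$, and iterating we obtain $X \cong F_S^{n*} X_n$ for every $n \geq 1$. At each point $s\in S$, the formal classifying map of $X$ therefore factors through $F_S^n$ on $\cO_{S,s}^{\wedge}$ for all $n$; since $\bigcap_n (\cO_{S,s}^{\wedge})^{p^n} = k$ for $S$ smooth over the perfect field $k$, this classifying map is constant, so $f$ is a constant family.

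The main obstacle is the line bundle $\mathcal{L} = f_* \omega_{X/S}$: the vanishing argument genuinely needs $H^0(S, \Omega^1_S \otimes \mathcal{L}^{-1}) = 0$, which does not follow formally from $H^0(S, \Omega^1_S) = 0$. The natural remedy is to pass to an étale cover that trivializes $\mathcal{L}$ (which preserves both simple-connectedness and vanishing of $H^0(\Omega^1)$, since finite étale covers of simply connected $S$ are disjoint unions of copies of $S$), or to argue directly that $\mathcal{L}$ is trivial under the ``certain assumptions''; either way this twist must be handled explicitly. A secondary concern is bookkeeping the full list of assumptions along the iterative descent to confirm they survive each application of Frobenius pullback and descent.
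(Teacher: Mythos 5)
Your proposal is essentially the paper's second proof (Proposition~\ref{prop:isotriv2}): trivialize $R^{d-1}f_*\Omega^1_{X/S}$ using the Frobenius-linear bijection $\HW(1)_{X/S}$, dualize via the Kodaira--Spencer pairing, use $H^0(S,\Omega^1_S)=0$ to kill $\kappa$, and then iterate the Frobenius descent of Appendix~\ref{app:zeroks}. The $\FF_p$-local system $\mathcal{M}=\ker(\HW(1)-1)$ that you construct is exactly the content of the Lange--Stuhler theorem that the paper invokes, so this is the same mechanism in slightly different clothing.

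The ``main obstacle'' you flag is real, but you stopped one step short of resolving it the way the paper does: the line bundle $\mathcal{L}=f_*\omega_{X/S}\cong (R^df_*\cO_X)^\vee$ (Corollary~\ref{cor:rfomega}) is itself étale trivializable, because the relative Hasse--Witt operator $\HW(0)_{X/S}$ gives a Frobenius-linear bijection on $R^df_*\cO_X$, so the same Lange--Stuhler / $\FF_p$-local-system argument you used for $\HW(1)$ applies. On a simply connected $S$ this makes $\mathcal{L}$ genuinely trivial, not just trivial after an étale cover, so no twist survives and $\kappa^\vee$ lands in honest global $1$-forms. Your suggested étale cover would also work, but it is unnecessary. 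Two smaller points: the termination of the iteration is what the paper isolates as Lemma~\ref{lem:frobenius_degree} (Krull intersection applied to the factorization through $F^n$), and the descended families live over Frobenius twists $S^{(n)}$ rather than over $S$ itself; neither affects the conclusion. Be aware the paper also gives a second, more conceptual argument (Proposition~\ref{prop:isotriv1}) by mapping $S$ to the Deligne--Mumford stack $\mathscr{U}^{\rm sm}$ equipped with its ordinary Frobenius lifting and using Lemma~\ref{lem:morphism_dm_stack_no_forms}; your proof corresponds to the direct version.
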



\subsection{Future directions}

\emph{1.} We were unable to construct in an analogous way a canonical lifting modulo $p^n$ for $n>2$. On the other hand, we do not know of an example of an $F$-split variety which does not admit a lifting over $W(k)$. By the results of \cite{EkedahlSB}, such an extension would give an affirmative answer to Conjecture~\ref{conj:folk}. A good first step in this direction would be to check whether in the simplest case of an ordinary elliptic curve $E$, the Serre--Tate lifting $E_n$ admits a closed immersion into $W_n(E)$ for $n>2$.

\emph{2.} The sheaves of relative Witt vectors $W_2(\cO_X/\wt S)$ seem to be new and potentially of independent interest.
It would be desirable to construct relative Witt vectors of length $n>2$, and for singular schemes. Being unable to develop such a general theory, we decided to stick to the simplest case in this paper. 

\emph{3.} The construction of a canonical lifting modulo $p^2$ for a $1$-ordinary variety with trivial canonical class can be extended to varieties with finite height. We discuss this construction, based on the ideas of Yobuko \cite{Yobuko}, in Appendix~\ref{app:yobuko}. We do not know how much of the theory developed in this paper can be extended to the case of finite height. For K3 surfaces, such a theory was developed by Nygaard and Ogus \cite{NygaardOgus}.

\emph{4.} Our construction of the canonical lifting is reasonably explicit, and it seems that in practice one could write it down with equations in the case of hypersurfaces or complete intersections. This could be interesting already for elliptic curves, where the results should interact with those of Finotti \cite{Finotti}.

\emph{5.} As we have already mentioned, for $p>d$ the canonical lifting gives by \cite{DeligneIllusie} a~canonical Hodge decomposition of $H^d_{\rm dR}(X/k)$. How can we characterize this decomposition?

\emph{6.} We do not know whether one should expect that exists a formal lifting of a (suitably) ordinary variety with trivial canonical class over $W(k)$ for which the crystalline Frobenius preserves the first step of the Hodge filtration as in Theorem~\ref{thm:preserve-f1} (we know that there is at most one), or, when such a lift does exist, whether the crystalline Frobenius should preserve the entire Hodge filtration. The latter feature could be used to produce CM Hodge structures as in the case of abelian varieties and K3 surfaces. See \S\ref{ss:fhodge1-uniq} and \S\ref{ss:nygaardp2} for more discussion.

\emph{7.} Lastly, we do not know if there is a logarithmic variant of our construction, which would allow one to extend the Frobenius lifting to some part of the boundary of the moduli space, and to study some arithmetic aspects of the toric degenerations studied by Gross and Siebert \cite{GrossSiebert}.




\label{ss:outline}

\subsection{Notation and conventions}
\label{ss:notation-conventions}

We fix a perfect field $k$ of characteristic $p>0$. We tend to denote schemes over $W_2(k)$ by $\wt X, \wt S, \ldots$ and by $X, S, \ldots$ their reductions modulo $p$. If $X/S$ is a morphism of $k$-schemes, we denote by $X'$ the pullback of $X$ along the absolute Frobenius $F_S$ of $S$, and by $F_{X/S}\colon X\to X'$ the relative Frobenius.

\subsection*{Acknowledgements}

We would like to thank Bhargav Bhatt, J\k{e}drzej Garnek, Adrian Langer, Daniel Litt, Arthur Ogus, and Lenny Taelman for helpful discussions.

Part of this work was conducted during the first author's stay at the MPIM and the Hausdorff Center for Mathematics in Bonn. 

The first author was supported by NCN SONATA grant number $2017/26/D/ST1/00913$. The second author's work was supported by Zsolt Patakfalvi's Swiss National Science Foundation Grant No. $200021 / 169639$.  This work was partially supported by the grant 346300 for IMPAN from the Simons Foundation and the matching 2015-2019 Polish MNiSW fund.

\setcounter{tocdepth}{1}
\tableofcontents


\section{Ordinary varieties with trivial canonical class}
\label{s:ord-cy}

\noindent
Let $k$ be a perfect field of characteristic $p>0$. We call a scheme $X/k$ a \emph{variety with trivial canonical class} if $X$ is smooth, projective, geometrically connected, and if the canonical bundle $\omega_X = \det \Omega^1_X$ is trivial. We do not implicitly fix however a particular trivialization of $\omega_X$, unless stated otherwise. 

\subsection{Crystalline and de Rham cohomology}

The most interesting cohomological invariant of a variety with trivial canonical class $X$ is its middle crystalline cohomology group 
\[ 
	H^d_{\rm cris}(X/W(k)), \quad d=\dim X
\]
as well as the corresponding de Rham cohomology group $H^d_{dR}(X/k)$. These are related by the short exact sequence
\[ 
	0\to H^d_{\rm cris}(X/W(k))\otimes_{W(k)} k\to H^d_{dR}(X/k) \to {\rm Tor}_1(H^{d+1}_{\rm cris}(X/W(k)), k)\to 0.
\]

\medskip

\noindent {\bf Assumption (NCT).} \emph{
	We suppose that the crystalline cohomology groups $H^*_{\rm cris}(X/W(k))$ are free $W(k)$-modules for $*=d,d+1$.}

\medskip


The de Rham cohomology groups $H^*_{\rm dR}(X/k)$ are by definition the hypercohomology groups of the de Rham complex $\Omega^\bullet_{X/k}$, and as such they are abutments of the two spectral sequences: the Hodge spectral sequence
\begin{equation} \label{eqn:hodgess} 
	E^{ij}_1 = H^j(X, \Omega^i_{X/k}) \quad \Rightarrow \quad H^{i+j}_{\rm dR}(X/k)
\end{equation}
and the conjugate spectral sequence \cite[\S 2]{KatzAlgSoln}
\begin{equation} \label{eqn:conjss}
	E^{ij}_2 = H^i(X, \mathscr{H}^j(\Omega^\bullet_{X/k})) \quad \Rightarrow \quad H^{i+j}_{\rm dR}(X/k).
\end{equation}
Further, the Cartier isomorphism $C\colon \mathscr{H}^j(F_{X/k,*}\Omega^\bullet_{X/k}) \isom \Omega^j_{X'/k}$ allows one to identify the $E^{ij}_2$ term of the conjugate spectral sequence with $H^i(X', \Omega^j_{X'/k})$. The abutment filtration on $H^*_{\rm dR}(X/k)$ induced by the Hodge resp.\ conjugate spectral sequence is called the Hodge resp.\ conjugate filtration and denoted $F^i$ resp.\ $F_i$. Explicitly,
\[ 
	F^i = {\rm Im}\left(H^*(X, \Omega^{\bullet\geq i}_X)\ra H^*_{\rm dR}(X/k)\right),
	\quad
	F_i = {\rm Im}\left(H^*(X, \tau_{\leq i} \Omega^\bullet_X)\ra H^*_{\rm dR}(X/k)\right).
\]
The first spectral sequence degenerates if and only if the second one does, and in this case we have
\begin{equation} \label{eqn:hodge-conj-gr}
	F^i/F^{i+1} \isomlong H^{*-i}(X, \Omega^i_X) \stackrel{C^{-1}}\isomlong F_i/F_{i-1}.
\end{equation}

\noindent {\bf Assumption (DEG).} \emph{
	We assume that the spectral sequences \eqref{eqn:hodgess} and \eqref{eqn:conjss} degenerate.}

\medskip
	
\noindent
By the results of \cite{DeligneIllusie}, this is the case if $p>\dim X$ and $X$ can be lifted to $W_2(k)$, in particular if $p>\dim X$ and $X$ is $1$-ordinary (see Corollary~\ref{cor:can-lift-cy}).

\subsection{Hodge and Newton polygons}\label{ss:hodge_newton_polygons}

The crystalline cohomology groups are endowed with the crystalline Frobenius
\[ 
	\phi\colon H^d_{\rm cris}(X/W(k)) \ra H^d_{\rm cris}(X/W(k)),
\]
which is semi-linear with respect to the Frobenius on $W(k)$ and which is injective modulo torsion. Its main invariants are the \emph{Hodge numbers} $h^0, \ldots, h^d$, determined by
\[ 
	H = \bigoplus H_i, \quad \phi(H) = \bigoplus p^i H_i, \quad {\rm rank}\, H_i = h^i,
\]
where $H = H^d_{\rm cris}(X/W(k))/\mathrm{torsion}$, and the \emph{Newton slopes} $\lambda_1\leq \ldots\leq \lambda_m$, $m = {\rm rank}\, H$, coming from the Dieudonn\'e--Manin decomposition of the associated $F$-isocrystal over the algebraic closure of $k$. They are conveniently encoded by means of the \emph{Hodge polygon}, which is the graph of the piecewise linear function $h\colon [0,m]\to \RR$, $h(0) = 0$, with slope $i$ on $[h^0+\ldots+ h^{i-1}, h^0+\ldots + h^i]$, and the \emph{Newton polygon}, which is the graph of the piecewise linear function $\lambda \colon [0,m]\to \RR$, $\lambda(0) = 0$, with slope $\lambda_i$ on $[i-1,i]$.

The Mazur--Ogus theorem \cite[8.39]{BerthelotOgus} asserts that under our assumptions $h^i$ equals $\dim_k H^{d-i}(X, \Omega^i_X)$, that the Newton polygon lies above the Hodge polygon, and that the endpoints of the two polygons coincide. Moreover, the image of $M_i = \phi^{-1}(p^i H)$ in $H\otimes k = H^*_{\rm dR}(X/k)$ equals $F^i$, the image of $(p^{-i} \phi)(M_i)$ in $H^*_{\rm dR}(X/k)$ equals $F_i$, and the isomorphism \eqref{eqn:hodge-conj-gr} is the one induced by $p^{-i}\phi$ [\emph{op.cit.}, 8.26].

\medskip

\noindent \emph{In the rest of this section, with the exception of \S\ref{ss:ord-family}, we assume that $X$ is a variety with trivial canonical class of dimension $d$ satisfying (NCT) and (DEG).}

\subsection{The Hasse--Witt operation}\label{ss:hasse_witt} The \emph{Hasse--Witt operations} are the maps 
\[
	\HW(0) = F_X^* \colon H^*(X, \cO_X) \ra H^*(X, \cO_X).
\]

\begin{prop} \label{prop:1-ord}
	Let $X$ be a normal proper scheme over $k$. Consider the following conditions:
	\begin{enumerate}[(i)]
		\item $X$ is $F$-split, i.e.\ the exact sequence
			\begin{equation} \label{eqn:fsplitseq} 
				0\ra \cO_{X'} \ra F_{X/k,*} \cO_X \ra B\Omega^1_X \ra 0
			\end{equation}
			is split.
		\item $H^i(X, B\Omega^1_X) = 0$ for all $i\geq 0$.
		\item The Hasse--Witt operations $\HW(0)$ on $H^i(X, \cO_X)$ are bijective for all $i$.
		\item The Hasse--Witt operation $\HW(0)$ on $H^d(X, \cO_X)$ is bijective, where $d=\dim X$.
	\end{enumerate}
	Then (i)$\Rightarrow$(ii)$\Rightarrow$(iii)$\Rightarrow$(iv). If $X$ is a variety with trivial canonical class, then all four conditions are equivalent, and a splitting of \eqref{eqn:fsplitseq} is unique if it exists.
\end{prop}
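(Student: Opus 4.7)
The plan is to establish (i)$\Rightarrow$(ii)$\Rightarrow$(iii)$\Rightarrow$(iv) for any normal proper $X$, and then to prove (iv)$\Rightarrow$(i) together with uniqueness under the trivial canonical class hypothesis by means of Grothendieck--Serre duality. For (i)$\Rightarrow$(ii), I will use a splitting to decompose $F_{X/k,*}\cO_X \simeq \cO_{X'} \oplus B\Omega^1_X$ as $\cO_{X'}$-modules. Since $F_{X/k}$ is affine, this gives $H^i(X, \cO_X) \simeq H^i(X', \cO_{X'}) \oplus H^i(X', B\Omega^1_X)$. The first two groups have the same $k$-dimension (since $k$ is perfect, base change along $F \colon k \to k$ preserves dimensions), forcing $H^i(X, B\Omega^1_X) = 0$.

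For (ii)$\Rightarrow$(iii), the long exact sequence of cohomology associated to \eqref{eqn:fsplitseq} combined with the vanishing of $H^i(X, B\Omega^1_X)$ shows that the relative Frobenius $F_{X/k}^* \colon H^i(X', \cO_{X'}) \to H^i(X, \cO_X)$ is bijective in every degree; composing with the $F$-semilinear base-change isomorphism $\pi^* \colon H^i(X, \cO_X) \to H^i(X', \cO_{X'})$ recovers the absolute Frobenius, so the Hasse--Witt operations are all bijective. The implication (iii)$\Rightarrow$(iv) is tautological.

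The essential step is (iv)$\Rightarrow$(i). Applying $\Hom_{\cO_{X'}}(-, \cO_{X'})$ to \eqref{eqn:fsplitseq} I extract the evaluation map
\[
	\mathrm{ev}\colon \Hom_{\cO_{X'}}(F_{X/k,*}\cO_X, \cO_{X'}) \to \Hom_{\cO_{X'}}(\cO_{X'}, \cO_{X'}) = k.
\]
The extension class of \eqref{eqn:fsplitseq} is the connecting-map image of $\mathrm{id}$, so the sequence splits iff $\mathrm{ev}$ is surjective. Grothendieck duality for the finite flat morphism $F_{X/k}$ yields $F_{X/k}^! \cO_{X'} = \omega_{X/X'} = \omega_X^{\otimes(1-p)}$; the trivialization of $\omega_X$ then identifies the source of $\mathrm{ev}$ with $H^0(X, \cO_X) = k$. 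Serre duality on $X'$ (again using $\omega_{X'} \simeq \cO_{X'}$) identifies $\mathrm{ev}$ with the $k$-linear dual of the relative Frobenius $F_{X/k}^* \colon H^d(X', \cO_{X'}) \to H^d(X, \cO_X)$; hypothesis (iv) says this latter map is bijective, so $\mathrm{ev}$ is bijective as well, giving (i). Uniqueness follows immediately: the set of splittings is a torsor under $\Hom_{\cO_{X'}}(B\Omega^1_X, \cO_{X'}) = \ker(\mathrm{ev}) = 0$.

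The main technical point will be the Serre duality identification of $\mathrm{ev}$ with $F_{X/k}^*$, which depends on functoriality of Serre duality applied to the sheaf morphism $\cO_{X'} \to F_{X/k,*}\cO_X$ and on the matching of the two one-dimensional $\Hom$-spaces via the trivialization of $\omega_X$; this is the only step in which the trivial canonical class assumption is genuinely used.
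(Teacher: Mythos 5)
Your proof is correct and amounts to the same underlying duality computation as the paper's, but is more self-contained. For (iv)$\Rightarrow$(i) the paper simply cites Mehta--Ramanathan; your argument via Grothendieck duality for the finite flat $F_{X/k}$ (giving $\cHom_{\cO_{X'}}(F_{X/k,*}\cO_X,\cO_{X'}) \cong F_{X/k,*}\omega_X^{1-p}$) followed by Serre duality on $X'$ to identify $\mathrm{ev}$ with the dual of $F_{X/k}^*\colon H^d(X',\cO_{X'})\to H^d(X,\cO_X)$ is precisely the content of that citation, and it is worth noting that this is also the one place where smoothness (not just normality) is used, so that $F_{X/k}$ is flat. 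Your uniqueness argument is also slightly more unified: you read off $\Hom(B\Omega^1_X,\cO_{X'}) = \ker(\mathrm{ev}) = 0$ directly from the $\Hom(-,\cO_{X'})$ long exact sequence, whereas the paper identifies $\Hom(B\Omega^1_X,\cO_{X'})$ with $H^0(X',B\Omega^d_X)$ and shows this vanishes by noting the Cartier operator $C\colon H^0(X',F_*\Omega^d_X)\to H^0(X',\Omega^d_{X'})$ is Serre-dual to the bijective Frobenius on $H^d(\cO)$ --- the same duality applied one step later. The implications (i)$\Rightarrow$(ii)$\Rightarrow$(iii)$\Rightarrow$(iv) are handled identically (long exact sequence plus the observation, using perfectness of $k$, that $\dim_k H^i(X',\cO_{X'}) = \dim_k H^i(X,\cO_X)$).
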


\begin{proof}
The implications (i)$\Rightarrow$(ii)$\Rightarrow$(iii) follow by taking the cohomology exact sequence of \eqref{eqn:fsplitseq}. For (iv)$\Rightarrow$(i), see \cite[Proposition~9]{MehtaRamanathan}.  
For the uniqueness, note first that splittings of \eqref{eqn:fsplitseq} are a torsor under $\Hom(B\Omega^1_X, \cO_{X'}) \isom H^0(X', B\Omega^d_X)$ \cite[\S 3]{GeerKatsura}. We have an exact sequence
\[ 
	0\ra H^0(X', B\Omega^d_X) \ra H^0(X', F_* \Omega^d_X) \xra{C} H^0(X', \Omega^d_{X'})
\]
where the map $C$ is dual to $F\colon H^d(X', \cO_{X'})\to H^d(X, \cO_X)$ and hence bijective.
\end{proof}

\begin{defin}
	We call a variety with trivial canonical class $X$ \emph{$1$-ordinary} if the equivalent conditions of Proposition~\ref{prop:1-ord} hold.
\end{defin}

\subsection{Higher Hasse--Witt operations} 

These are Frobenius-linear morphisms 
\begin{equation} \label{eqn:hwi} 
	\HW(i) \colon H^{d-i}(X, \Omega^i_X) \ra H^{d-i}(X, \Omega^i_X), \quad i = 0, \ldots, d
\end{equation}
defined by Katz \cite[2.3.4.2]{KatzAlgSoln}. The morphism $\HW(0)$ is the Hasse--Witt operation, and for $i>0$, $\HW(i)$ is only defined if the following composition is bijective
\[ 
	\xymatrix{
		h(i)\colon F_{i-1} \ar@{^{(}->}[r] & H^d_{\rm dR}(X/k) \ar@{->>}[r] & H^d_{\rm dR}(X/k)/F^i.
	}	
\]
In this case, $F_i\cap F^{i}$ projects isomorphically onto $F_i/F_{i-1}$. The mapping $\HW(i)$ is then the composition
\[ 
	H^{d-i}(X, \Omega^i_X) = F_i/F_{i-1} \isomto F_i\cap F^i \ra F^{i}/F^{i+1} = H^{d-i}(X, \Omega^i_X).
\]
Moreover, $h(i+1)$ is bijective if and only if $HW(i)$ is bijective.


\begin{prop} \label{prop:m-ord}
	Let $1\leq m\leq d+1$. The following are equivalent:
	\begin{enumerate}[(i)]
		\item The higher Hasse--Witt operations $\HW(i)$ are defined and bijective for $i<m$.
		\item $F_i \oplus F^{i+1}\isomto H^d_{\rm dR}(X/k)$ for $i<m$.
		\item The first $m$ segments of the Newton and Hodge polygons of $H^d_{\rm cris}(X/W(k))$ coincide.
		\item There exists a $\phi$-stable decomposition 
		\[ 
			H^d_{\rm cris}(X/W(k)) \isom \left(\bigoplus_{i<m} H_i\right) \oplus H_{\geq m}
		\]
		where the $H_i$ are free $W(k)$-modules of ranks $h^i = \dim_k H^{d-i}(X, \Omega^i_X)$, where $\phi|_{H_i}$ has matrix $p^i\cdot {\rm Id}$ in some basis of $H_i$, and where $\phi(H_{\geq m}) \subseteq p^m H_{\geq m}$.
	\end{enumerate}
	Moreover, these conditions hold for all $m$ if $X$ is ordinary in degree $d$ in the sense of \cite[Definition~4.12, p.\ 208]{IllusieRaynaud}.
\end{prop}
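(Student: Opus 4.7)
The plan is to establish the equivalences in the order (i)$\iff$(ii), (iv)$\Rightarrow$(iii), (iv)$\Rightarrow$(ii), and finally (iii)$\Rightarrow$(iv), which is the main content. The assumptions (NCT) and (DEG) remain in force throughout.

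The equivalence (i)$\iff$(ii) is a definition chase. By (DEG), dimension counting on the graded pieces of the two filtrations yields $\dim_k F_{i-1} + \dim_k F^i = \dim_k H^d_{\rm dR}(X/k)$, so the map $h(i)$ is bijective if and only if $F_{i-1}\oplus F^i = H^d_{\rm dR}(X/k)$. Combined with the observation (already recorded in the excerpt) that $\HW(i)$ is bijective iff $h(i+1)$ is bijective, together with the fact that $\HW(0)$ is automatically defined since $F_{-1}=0$, iterating over $0\leq i<m$ yields (ii).

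For (iv)$\Rightarrow$(iii) one simply reads off the Newton slopes $0^{h^0},1^{h^1},\ldots,(m-1)^{h^{m-1}}$ from the decomposition, matching the first $m$ Hodge segments. For (iv)$\Rightarrow$(ii) one invokes the Mazur--Ogus descriptions $F^i = M_i \bmod p$ and $F_i = (p^{-i}\phi(M_i)) \bmod p$, where $M_i=\phi^{-1}(p^iH)$. Using $\phi|_{H_j}=p^j\cdot(\text{semilinear iso})$ for $j<m$ and $\phi(H_{\geq m})\subseteq p^mH_{\geq m}$, a direct computation gives, for $i<m$,
\[
    M_i = \bigoplus_{j<i} p^{i-j}H_j \oplus \bigoplus_{i\leq j<m} H_j \oplus H_{\geq m},
\]
which reduces modulo $p$ to $F^i = \bigoplus_{i\leq j<m}\bar H_j\oplus \bar H_{\geq m}$. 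A parallel computation of $p^{-i}\phi(M_i)$ gives $F_i = \bigoplus_{j\leq i}\bar H_j$, and the decompositions are visibly complementary, so $F_i\oplus F^{i+1} = H^d_{\rm dR}(X/k)$.

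The core step is (iii)$\Rightarrow$(iv). After base change to $W(\bar k)$, the Dieudonné--Manin classification decomposes the isocrystal $H\otimes\QQ_p=\bigoplus_\lambda V_\lambda$ into isoclinic pieces, and (iii) forces each $V_i$ for $i<m$ to be isoclinic of integer slope $i$ and rank $h^i$, hence a Tate twist $U_i\otimes\QQ_p(-i)$ of a unit-root isocrystal $U_i$. A unit-root $F$-isocrystal carries a unique $\phi$-stable integral lattice (via the equivalence with $\ZZ_p$-local systems on $\Spec k$), so the rational decomposition refines to a $\phi$-stable integral decomposition $H = \bigoplus_{i<m} H_i \oplus H_{\geq m}$ with the required ranks; further base change to $W(\bar k)$ trivializes each $U_i$ and produces a basis in which $\phi|_{H_i}$ is literally $p^i\cdot\mathrm{Id}$. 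The final sentence follows because Illusie--Raynaud ordinarity in degree $d$ forces Newton polygon = Hodge polygon on $H^d_{\rm cris}(X/W(k))$ via the slope spectral sequence of the de Rham--Witt complex, giving (iii) for all $m$. The principal obstacle is the integrality refinement just described: pushing Dieudonné--Manin from the isocrystal to the $F$-crystal, where uniqueness of lattices in unit-root isocrystals is essential and (NCT) ensures that torsion does not obstruct the construction.
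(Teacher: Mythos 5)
Your proof plan leaves the cycle of implications open: you establish (i)$\iff$(ii), (iii)$\Rightarrow$(iv), (iv)$\Rightarrow$(iii), and (iv)$\Rightarrow$(ii), which gives (iii)$\iff$(iv) and \{(iii),(iv)\}$\Rightarrow$\{(i),(ii)\}, but nowhere do you go from (i) or (ii) back to (iii) or (iv). The direction (ii)$\Rightarrow$(iii) (or (ii)$\Rightarrow$(iv)) is exactly the content the paper outsources to \cite[Prop.~1.3.2]{DeligneIllusieKatz}, and it is not a formal consequence of what you have written. (Your (i)$\iff$(ii) and (iv)$\Rightarrow$(ii) computations via the Mazur--Ogus descriptions of $F^i$ and $F_i$ are fine, and (iv)$\Rightarrow$(iii) is immediate.)

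The core step (iii)$\Rightarrow$(iv) also has a genuine gap. The assertion ``a unit-root $F$-isocrystal carries a unique $\phi$-stable integral lattice'' is false: already for $(\QQ_p,\mathrm{id})$, the lattices $p^n\ZZ_p$ are all $\phi$-stable and pairwise distinct. More to the point, the problem is not to \emph{find} lattices in each isoclinic piece $V_i$, but to show that the fixed lattice $H=H^d_{\rm cris}(X/W(k))$ actually decomposes as $\bigoplus_i (H\cap V_i)\oplus(H\cap V_{\geq m})$ inside $H\otimes\QQ_p$. This is not automatic from the Dieudonn\'e--Manin decomposition of the isocrystal; in your argument the hypothesis (iii) is used only to read off the shape of the Newton polygon, and the Hodge-polygon half of (iii) --- the divisibility data measuring $\phi(H)$ against $H$ --- is never invoked. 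It is precisely this Newton--Hodge agreement that forces the integral splitting; that is the content of Katz's Newton--Hodge decomposition theorem, which the paper cites for exactly this step. As written, your argument would ``prove'' the same decomposition for any $F$-crystal whose Newton polygon happens to have integer break points in the right places, which is false. The paper's proof is deliberately a string of citations ((i)$\iff$(ii) by unwinding definitions, (ii)$\iff$(iii) by Deligne--Illusie--Katz, (iii)$\iff$(iv) by Katz's slope filtration theorem); if you want a self-contained argument you need to actually carry out the Newton--Hodge descent on the lattice, not just decompose the isocrystal.
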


\begin{proof}
The equivalence of (i) and (ii) is clear by the definition of $\HW(i)$ and $h(i)$.  In light of \S\ref{ss:hodge_newton_polygons}, the equivalence of (ii) and (iii) is a statement purely about $F$-crystals proved e.g.\ in \cite[Prop.~1.3.2]{DeligneIllusieKatz}.  The equivalence of (iii) and (iv) follows from \cite[Theorem~1.6.1]{KatzSlopeFil}. Finally, the last assertion is \cite[4.12.1 p.\ 208]{IllusieRaynaud} (see also \cite[Proposition~7.3]{Bloch_Kato}).
\end{proof}

\begin{defin} \label{def:m-ord}
	Let $1\leq m\leq d$. We call $X$ \emph{$m$-ordinary} if the equivalent conditions of Proposition~\ref{prop:m-ord} hold.
\end{defin}

The following result will be used in \S\ref{s:nygaard}.

\begin{lemma} \label{lemma:2-ord}
	If $X$ is $2$-ordinary and $H^d(X,\Omega^1_X) = 0$, then we have $H^i(X, B\Omega^2_X) = 0$ for $i=d-1, d$. 
\end{lemma}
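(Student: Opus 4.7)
The strategy is to leverage $1$-ordinarity to control $B\Omega^1_X$, and then to use $2$-ordinarity in the form of the bijectivity of $HW(1)$ to kill the relevant pieces of $B\Omega^2_X$. To begin, Proposition~\ref{prop:1-ord} tells us that the $1$-ordinarity of $X$ forces $H^i(X', B\Omega^1_X) = 0$ for all $i$. Feeding this into the Cartier short exact sequence
\[
    0 \to B\Omega^1_X \to Z\Omega^1_X \to \Omega^1_{X'} \to 0
\]
yields isomorphisms $H^i(X', Z\Omega^1_X) \isom H^i(X', \Omega^1_{X'})$, and the latter groups have the same $k$-dimension as $H^i(X, \Omega^1_X)$.

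I would then consider the short exact sequence $0 \to Z\Omega^1_X \to F_*\Omega^1_X \xrightarrow{d} B\Omega^2_X \to 0$. Its long exact sequence in cohomology contains
\[
    \cdots \to H^{d-1}(Z\Omega^1_X) \xrightarrow{\alpha} H^{d-1}(X, \Omega^1_X) \to H^{d-1}(B\Omega^2_X) \to H^d(Z\Omega^1_X) \to H^d(X, \Omega^1_X) \to H^d(B\Omega^2_X) \to 0.
\]
By the previous step combined with the hypothesis $H^d(X, \Omega^1_X) = 0$, we have $H^d(Z\Omega^1_X) = 0$, which immediately yields $H^d(B\Omega^2_X) = 0$. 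For the vanishing of $H^{d-1}(B\Omega^2_X)$ it thus suffices to establish that $\alpha$ is surjective.

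The heart of the argument is to identify $\alpha$, under the identification from the first step, with the higher Hasse--Witt operation $HW(1)$. Comparing the canonical truncation $\tau_{\leq 1}$ with the stupid truncation $\sigma_{\geq 1}$ of $F_*\Omega^\bullet_X$ shows that, under (DEG), the subquotient $F_1 \cap F^1 \subseteq H^d_{\mathrm{dR}}(X/k)$ is canonically identified with the image of $H^{d-1}(Z\Omega^1_X) \to H^d_{\mathrm{dR}}(X/k)$, and moreover the inclusion $Z\Omega^1_X \hookrightarrow F_*\Omega^1_X$ induces exactly the projection $F_1 \cap F^1 \to F^1/F^2 = H^{d-1}(X, \Omega^1_X)$ appearing in Katz's definition of $HW(1)$. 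Since $X$ is $2$-ordinary, $HW(1)$ is bijective by Proposition~\ref{prop:m-ord}, so $\alpha$ is surjective and $H^{d-1}(B\Omega^2_X) = 0$, completing the proof.

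The main technical obstacle is the last identification: one must carefully trace through the definitions of the conjugate and Hodge filtrations on $H^d_{\mathrm{dR}}(X/k)$ to verify that no unexpected Frobenius twist intervenes when comparing $\alpha$ with $HW(1)$. This reduces to diagram-chasing with truncations of the de Rham complex and the Cartier isomorphism, but is worth checking explicitly.
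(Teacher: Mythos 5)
Your proof is correct, but it takes a genuinely different route from the paper's. For $H^{d-1}(B\Omega^2_X)=0$, the paper considers the short exact sequence of complexes
\[
0 \ra \tau_{\leq 1}\Omega^\bullet_X \oplus \Omega^{\bullet\geq 2}_X \ra \Omega^\bullet_X \ra B\Omega^2_X[-1]\ra 0
\]
and uses $2$-ordinarity in the guise of Proposition~\ref{prop:m-ord}(ii), namely that $F_1\oplus F^2\isomto H^d_{\rm dR}(X/k)$; the relevant piece of the long exact sequence then kills $H^{d-1}(B\Omega^2_X)$, after one verifies that $F_1 H^{d+1}_{\rm dR}(X/k)=0$ (which in turn uses $H^d(Z\Omega^1_X)=0$). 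You instead stay on the level of sheaves: you take the long exact sequence of $0\to Z\Omega^1_X\to F_*\Omega^1_X\to B\Omega^2_X\to 0$ and reduce surjectivity of $\alpha$ to the claim $\alpha\circ C^{-1}=\HW(1)$, invoking $2$-ordinarity via Proposition~\ref{prop:m-ord}(i). The identification you flag as the ``main technical obstacle'' is indeed correct: using the short exact sequence $0\to Z\Omega^1_X[-1]\to \tau_{\leq 1}\Omega^\bullet_X\oplus\Omega^{\bullet\geq 1}_X\to\Omega^\bullet_X\to 0$ and degeneration, one checks that the natural map $H^{d-1}(Z\Omega^1_X)\to H^d_{\rm dR}$ surjects onto $F_1\cap F^1$, that its composite with $F_1\cap F^1\twoheadrightarrow F_1/F_0$ is the Cartier operator $C$, and that its composite with $F_1\cap F^1\to F^1/F^2$ is your $\alpha$; unwinding the definition of $\HW(1)$ then gives $\alpha = \HW(1)\circ C$, as you assert. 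The trade-off: the paper's triangle makes the $2$-ordinarity input appear as a clean direct-sum splitting and avoids tracking which map is $\HW(1)$, while your argument makes the role of the first higher Hasse--Witt operation completely explicit (at the cost of the diagram chase you note); both establish the same vanishing under the same hypotheses.
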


\begin{proof}
Since $X$ is $1$-ordinary, we know that $H^d(X,B\Omega^1_X) = 0$, and hence the short exact sequence
\[ 
	0\ra B\Omega^1_X\ra Z\Omega^1_X\ra \Omega^1_X \ra 0
\]
and the assumption $H^d(X, \Omega^1_X) = 0$ show together that $H^d(X, Z\Omega^1_X)=0$.  Then the short exact sequence
\[ 
	0\ra Z\Omega^1_X\ra \Omega^1_X\ra B\Omega^2_X \ra 0
\]
shows that $H^d(X, B\Omega^2_X)=0$. Finally, the exact triangle
\[ 
	\tau_{\leq 1}\Omega^\bullet_X \oplus \Omega^{\bullet\geq 2}_X \ra \Omega^\bullet_X\ra B\Omega^2_X[-1]\xra{+1} 
\]
gives an exact sequence
\[ 
	(F_1\oplus F^2)H^d_{\rm dR}(X/k) \isomto H^{d}_{\rm dR}(X/k)  \to H^{d-1}(X, B\Omega^2_X)
	\to (F_1\oplus F^2)H^{d+1}_{\rm dR}(X/k) \to H^{d+1}_{\rm dR}(X/k)
\]
where the first map is an isomorphism by $2$-ordinarity. It remains to show that the last map above is injective, which follows from $F_1 H^{d+1}_{\rm dR}(X/k) = H^{d+1}(X, \tau_{\leq 1} \Omega^\bullet_X) = 0$. Indeed, this group sits inside an exact sequence
\[ 
	0= H^d(X, Z\Omega^1_X) \ra H^{d+1}(X, \tau_{\leq 1} \Omega^\bullet_X) \ra H^{d+1}(X, \cO_X) = 0. \qedhere
\]
\end{proof}

\subsection{Artin--Mazur formal groups}
\label{ss:amfg}

Suppose that $H^{d-1}(X, \cO_X)=0$. In \cite{Artin_Mazur}, Artin and Mazur consider the functor 
\[ 
	\Phi_X \colon {\rm Art}_k \to {\rm Set}, 
	\quad
	\Phi_X(A) = \ker\left( H^d_\et(X_A, \GG_m)\to H^d_\et(X, \GG_m) \right) = H^d(X_A, 1 + \mathfrak{m}_A \cO_{X_A}). 
\]
and show that it is a formal Lie group. The group $\Phi_X$ is called the \emph{Artin--Mazur formal group} of $X$. Its tangent space is $H^d(X, \cO_X)$, and its (covariant) Dieudonn\'e module is canonically identified with $H^d(X, W\cO_X)$ endowed with its natural Frobenius and Verschiebung.

\begin{prop}
	The following are equivalent:
	\begin{enumerate}[(i)]
		\item $X$ is $1$-ordinary.
		\item $F\colon H^d(X, W\cO_X) \to H^d(X, W\cO_X)$ is bijective.
		\item The height of the formal group $\Phi_X$ equals one.
	\end{enumerate}
	Moreover, in this case there exists a canonical isomorphism 
	\[
		\Phi_X \isom H^d(X, W\cO_X)^{F=1} \otimes \hat \GG_m
	\]
	inducing the identity $H^d(X, \cO_X) = H^d(X, \cO_X)$ on tangent spaces.
\end{prop}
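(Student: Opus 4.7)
The starting observation is that since $\omega_X \cong \cO_X$, Serre duality gives $H^d(X, \cO_X) \cong H^0(X, \cO_X)^\vee \cong k$, so $\Phi_X$ is one-dimensional and ``height one'' is unambiguous. Setting $M := H^d(X, W\cO_X)$, I would first analyse the structure of $M$ via the exact sequences
\[
	0\to W_n\cO_X \xrightarrow{V} W_{n+1}\cO_X \to \cO_X \to 0.
\]
The hypothesis $H^{d-1}(X, \cO_X) = 0$, together with $H^{d+1}(X, W_n\cO_X) = 0$ by dimension reasons, gives at each finite level a short exact sequence
\[
	0\to H^d(X, W_n\cO_X) \xrightarrow{V} H^d(X, W_{n+1}\cO_X) \to H^d(X, \cO_X) \to 0,
\]
and passing to the inverse limit (transition maps are surjective, so Mittag--Leffler is automatic) yields $0 \to M \xrightarrow{V} M \to H^d(X, \cO_X)\to 0$. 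Hence $V$ is injective, $M$ is $V$-adically separated and complete, and the reduction of $F$ modulo $V$ agrees with the Hasse--Witt operation $\HW(0)$.

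The equivalences are then read off from the Dieudonn\'e classification of one-dimensional formal Lie groups. If $\Phi_X$ has finite height $h$, then $M$ is $W(k)$-free of rank $h$ with $V$ injective and $V^h = p\cdot u$ for a unit $u\in W(k)$, and consequently $F = pV^{-1} = V^{h-1}\cdot u^{-1}$. When $h=1$, $V=pu$ and $F=u^{-1}$ is a unit, so $F$ is bijective on $M$ and $\HW(0) = F\bmod V$ is a $\sigma$-linear automorphism of $k$; when $h\geq 2$, $F\in VM$, so $\HW(0) = 0$ and $F$ fails to be bijective on $M$. In the infinite-height case $\Phi_X\cong \hat\GG_a$, $M$ is not finitely generated and $F=0$ identically, so again $\HW(0)=0$. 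This simultaneously establishes (i)$\Leftrightarrow$(ii)$\Leftrightarrow$(iii).

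For the canonical isomorphism, in the height-one case $M\cong W(k)$ is a unit-root $F$-crystal of rank one, so the classical equivalence between unit-root $F$-crystals and \'etale $\ZZ_p$-local systems (via $F$-invariants) gives $M = M^{F=1}\otimes_{\ZZ_p} W(k)$ with $F = \mathrm{id}\otimes\sigma$. The right-hand side is precisely the Dieudonn\'e module of the formal group $M^{F=1}\otimes_{\ZZ_p}\hat\GG_m$, so full faithfulness of the Dieudonn\'e functor yields a canonical isomorphism $\Phi_X \cong H^d(X, W\cO_X)^{F=1}\otimes_{\ZZ_p}\hat\GG_m$; the induced map on tangent spaces is the identity on $H^d(X, \cO_X) = M/VM$ by construction. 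The main technical input the argument leans on is the Dieudonn\'e classification of one-dimensional formal Lie groups --- in particular the computation $V^h = p\cdot(\text{unit})$, which forces the reduction of $F$ modulo $V$ to vanish as soon as $h\geq 2$ --- and this is what converts the apparently weak hypothesis ``$\HW(0)$ bijective'' into the strong conclusion that $\Phi_X$ is a formal torus.
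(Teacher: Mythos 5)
Your proof is correct and is essentially a more self-contained version of the paper's argument. The paper disposes of the three-way equivalence in one line by citing Geer--Katsura, Theorem~2.1, and then reverses the Dieudonn\'e equivalence for the final isomorphism; you instead re-derive the equivalences directly from the Dieudonn\'e classification of one-dimensional formal Lie groups over a perfect field, supplying along the way two identifications the paper leaves implicit: that $H^d(X, W\cO_X)$, with its $F$ and $V$, is the covariant Dieudonn\'e module of $\Phi_X$ (Artin--Mazur), and that $F \bmod V$ on $M/VM \cong H^d(X,\cO_X)$ is $\HW(0)$. Your reduction $0\to M\xrightarrow{V} M\to H^d(X,\cO_X)\to 0$ via cohomological dimension and $H^{d-1}(X,\cO_X)=0$ is a valid way to see that $M$ is $V$-adically separated. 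One piece of loose phrasing: since $V$ is $\sigma^{-1}$-linear, ``$V^h = p\cdot u$ for a unit $u\in W(k)$'' should be read as a normalization in the (noncommutative) Dieudonn\'e ring, e.g.\ $F = V^{h-1}$ in a suitable basis; a cleaner way to extract the dichotomy you want is to note that $\dim_k M/FM = h-1$, so $F$ is surjective on $M$ (hence bijective, as $M$ is $W(k)$-torsion-free at finite height) precisely when $h=1$, while $F=0$ at infinite height. Your appeal to the unit-root $F$-crystal/\'etale local system equivalence for the last isomorphism implicitly requires the $F$-invariants to form a full $\ZZ_p$-lattice, which in general needs $k=\bar k$ or a chosen trivialization; the paper's proof makes the same silent assumption, and the statement is only used later over an algebraically closed field, so this is not a gap relative to the paper.
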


\begin{proof}
The equivalence of the three conditions follows from \cite[Theorem~2.1]{GeerKatsura}. The isomorphism
\[
  H^d(X, W\cO_X)^{F=1} \otimes W(k) \to H^d(X, W\cO_X)
 \]
is an isomorphism of Dieudonn\'e modules where on the left $F$ and $V$ act on $W(k)$ in the usual way and trivially on $H^d(X, W\cO_X)^{F=1}$.  Reversing the equivalence between suitable Dieudonne modules and formal groups, we obtain the desired isomorphism.
\end{proof}


\subsection{Ordinarity in families}
\label{ss:ord-family}

In characteristic zero, deformations of a variety with trivial canonical class have trivial canonical class. This is no longer the case in positive characteristic, for example a supersingular Enriques surface in characteristic two can be deformed to a classical one. Fortunately for us, this is true for families of $1$-ordinary varieties. In fact, such families admit a unique relative $F$-splitting, and a trivialization of the canonical bundle which is canonical up to a discrete choice (Corollary~\ref{cor:uniq-rel-fsplit} below). This will be crucial later on for the construction of the canonical lifting in families (Corollary~\ref{cor:can-lift-cy}).

\begin{prop} \label{prop:fsplit-family}
	Let $S$ be the spectrum of a noetherian local $\FF_p$-algebra with closed point $s$, and let $f:X\to S$ be a proper and flat morphism. Suppose that the maps
	\begin{equation} \label{eqn:hw-bc}
		F_{X_s}^* \colon H^i(X_s, \cO_{X_s}) \ra H^i(X_s, \cO_{X_s})
	\end{equation}
	are bijective for $i\geq 0$. Then the higher direct images $R^i f_* \cO_X$ are locally free, with formation commuting with base change, for all $i\geq 0$.
\end{prop}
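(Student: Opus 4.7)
The plan is to represent the derived pushforward $Rf_*\cO_X$ by a single perfect complex on $S$ whose base changes compute cohomology on the nose, and then use the Frobenius action on this complex together with the bijectivity hypothesis to force the complex to split in the derived category as the direct sum of its cohomology modules (each of which will be free). The statement is local on $S$, so I assume $S=\Spec R$ with $R$ noetherian local and residue field $k=k(s)$. By the standard Mumford-type construction, there exists a bounded complex $K^\bullet$ of finitely generated free $R$-modules such that $K^\bullet \otimes_R A$ computes $R\Gamma(X_A, \cO_{X_A})$ functorially in the $R$-algebra $A$; in particular $R^if_*\cO_X = H^i(K^\bullet)$ and $K^\bullet \otimes_R k$ computes $H^*(X_s, \cO_{X_s})$.

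The absolute Frobenius of $X$ satisfies $f\circ F_X = F_S\circ f$, which yields an $R$-linear morphism of complexes $\Phi\colon F_S^* K^\bullet \ra K^\bullet$. Its reduction mod $\mathfrak{m}_R$ induces on $H^i$ the Hasse--Witt operation $F_{X_s}^*$, after one identifies $(F_S^* K^\bullet)\otimes_R k$ with the Frobenius twist of $K^\bullet \otimes_R k$ as a $k$-complex. By hypothesis these operations are bijective in every degree, hence $\Phi \otimes_R k$ is a quasi-iso of bounded complexes of finite-dimensional $k$-vector spaces. Now I pass to a minimal representative: let $K^\bullet_{\min}$ be the minimal representative of $K^\bullet$ (a bounded complex of finite free $R$-modules whose differentials have entries in $\mathfrak{m}_R$). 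Since the Frobenius of a matrix with entries in $\mathfrak{m}_R$ has entries in $\mathfrak{m}_R^p \subseteq \mathfrak{m}_R$, the complex $F_S^* K^\bullet_{\min}$ is again minimal, hence is a minimal representative of $F_S^* K^\bullet$, and $\Phi$ induces a quasi-iso $\Phi_{\min}\colon F_S^* K^\bullet_{\min} \ra K^\bullet_{\min}$. A quasi-iso between minimal bounded complexes of finite free $R$-modules of equal termwise ranks is a chain isomorphism: reducing mod $\mathfrak{m}_R$ kills all differentials, so the induced map is a termwise iso of $k$-vector spaces, and one invokes Nakayama together with the fact that a surjective endomorphism of a finite free $R$-module is an isomorphism.

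Writing $\Phi_{\min}^i = A^i \in \mathrm{GL}(K^i_{\min})$, the chain-map relation gives $d^i \cdot A^i = A^{i+1}\cdot (d^i)^{(p)}$, where $(d^i)^{(p)}$ is the matrix of $d^i$ after raising every entry to the $p$-th power. Rearranging, $d^i = A^{i+1}\cdot (d^i)^{(p)}\cdot (A^i)^{-1}$, and since $d^i$ already has entries in $\mathfrak{m}_R$, the right-hand side has entries in $\mathfrak{m}_R^p$; iterating, the entries of $d^i$ lie in $\bigcap_n \mathfrak{m}_R^{p^n} = 0$ by the Krull intersection theorem. Hence $K^\bullet_{\min}$ splits as $\bigoplus_i K^i_{\min}[-i]$, so $R^if_*\cO_X = K^i_{\min}$ is free and commutes with arbitrary base change by the universal property of $K^\bullet$. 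The main point requiring care -- rather than a genuine obstacle -- is the identification of $\Phi\otimes_R k$ with the Hasse--Witt operation on fibers: when $k$ is imperfect one must track the Frobenius twist of $k$-vector spaces coming from the nontrivial action of $F_S$ on the residue field, but this twist preserves $k$-dimensions and the $\FF_p$-structure, so bijectivity of the Hasse--Witt translates cleanly into the needed quasi-isomorphism.
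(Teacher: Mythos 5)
Your proof is correct, and it takes a genuinely different organizational route from the paper while sharing the same algebraic kernel.

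The paper proves the statement by descending induction on $i$: it invokes cohomology and base change (Hartshorne III.12.11) to propagate the base-change property down one degree at a time, and then at each degree it isolates a purely module-theoretic Lemma (finitely presented $M$ with $F_R^*M\cong M$ is free), which it proves by comparing minimal presentations and observing that the ideal $I$ of relations satisfies $I\subseteq F_R(I)\subseteq I^2$, hence $I=0$ by Nakayama. You instead work with the whole Grothendieck complex $K^\bullet$ representing $Rf_*\cO_X$, pass to its minimal model, and show the Frobenius relation forces \emph{all} differentials to have entries in $\bigcap_n\mathfrak m^{p^n}=0$ by Krull intersection, so that $K^\bullet_{\min}$ splits. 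This avoids the descending induction and the repeated appeal to cohomology-and-base-change, handling every degree in one stroke and giving the splitting of the entire derived pushforward (which is slightly stronger than the statement). The core identity $d^i = A^{i+1}(d^i)^{(p)}(A^i)^{-1}$ is exactly the complex-level version of the paper's relation $A = B\,F_R(A)\,C$, and Krull intersection versus Nakayama on $I=I^2$ are interchangeable endgames, so the two proofs are really the same idea packaged differently; yours is arguably the cleaner packaging.

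One small logical wrinkle in the exposition: you declare that $\Phi$ \emph{induces a quasi-isomorphism} $\Phi_{\min}$ before you have any right to that conclusion -- a priori $\Phi$ (and hence $\Phi_{\min}$) need not be a quasi-isomorphism over $R$, only $\Phi\otimes_R k$ is, and that is precisely the thing you end up proving. The argument you then run is the correct one: $\Phi_{\min}\otimes_R k$ is a quasi-isomorphism, minimality makes it a termwise isomorphism of $k$-vector spaces, and Nakayama upgrades $\Phi_{\min}^i$ to a termwise isomorphism of free $R$-modules. So there is no gap, just a sentence that states the conclusion where the hypothesis should be; rephrase it as ``$\Phi$ induces a chain map $\Phi_{\min}$ whose reduction mod $\mathfrak m_R$ is a quasi-isomorphism'' and the logic reads straight.
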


\begin{proof}
The proof proceeds by descending induction on $i$. For $i>\dim X_s$, the assertion is trivial. For the induction step, suppose that $R^{i+1}f_* \cO_X$ has the required property. By cohomology and base change \cite[Theorem~12.11]{Hartshorne}, it follows that the formation of $R^i f_* \cO_X$ commutes with base change. Equivalently, the restriction map
\[ 
	(R^i f_*\cO_X)\otimes \kappa(s) \ra H^i(X_s, \cO_{X_s})
\]
is an isomorphism. Consider the commutative square
\[ 
	\xymatrix{
		R^i f_*\cO_X\ar[r] \ar[d]_F & H^i(X_s, \cO_{X_s}) \ar[d]^F \\
		R^i f_*\cO_X \ar[r]  & H^i(X_s, \cO_{X_s}). \\
	}
\]
By assumption, the right vertical arrow is an isomorphism. By the base change property of $R^i f_* \cO_X$ and Nakayama's lemma, it follows that the morphism
\[ 
	F_{X/S}^* \colon R^i f'_* \cO_{X'} \isom F_S^* R^i f_*\cO_X \ra R^i f_*\cO_X
\]
is an isomorphism. By Lemma~\ref{lemma:locfree} below, $R^i f_* \cO_X$ is then locally free, as desired.
\end{proof}

\begin{lemma} \label{lemma:locfree}
	Let $R$ be a local $\FF_p$-algebra 
	and let $M$ be a finitely presented $R$-module such that $F_R^* M \isom M$. Then $M$ is free.
\end{lemma}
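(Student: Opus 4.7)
The plan is to combine a minimal free cover of $M$ with iterated Frobenius pullback, then conclude via Krull's intersection theorem (which is why the noetherian hypothesis on $R$, implicit in the application, is needed).

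Set $k = R/\mathfrak{m}$ and $n = \dim_k(M/\mathfrak{m} M)$. By Nakayama's lemma, choose a minimal surjection $\phi \colon R^n \twoheadrightarrow M$; its kernel $K$ lies in $\mathfrak{m} R^n$ and is finitely generated because $M$ is finitely presented. Applying the right-exact functor $F_R^*$ to $0 \to K \to R^n \to M \to 0$, and using the canonical identification $F_R^* R^n \cong R^n$ sending $r \otimes (a_1,\ldots,a_n)$ to $(r a_1^p, \ldots, r a_n^p)$, we see $F_R^* M \cong R^n / K^{[p]}$, where $K^{[p]} \subseteq R^n$ is the $R$-submodule generated by $\{(a_1^p,\ldots,a_n^p) : (a_1,\ldots,a_n) \in K\}$. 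In particular $K^{[p]} \subseteq \mathfrak{m}^p R^n \subseteq \mathfrak{m} R^n$, so the surjection $R^n \twoheadrightarrow F_R^* M$ is also minimal.

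Fix an isomorphism $\alpha \colon M \isomto F_R^* M$ and lift $\alpha$ and $\alpha^{-1}$ to $R$-linear maps $\tilde\alpha, \tilde\beta \colon R^n \to R^n$ satisfying $\tilde\alpha(K) \subseteq K^{[p]}$ and $\tilde\beta(K^{[p]}) \subseteq K$. Both compositions $\tilde\beta\tilde\alpha$ and $\tilde\alpha\tilde\beta$ reduce to the identity modulo $\mathfrak{m} R^n$, so by Nakayama they are automorphisms of $R^n$; hence so is $\tilde\alpha$. From $\tilde\alpha(K) \subseteq K^{[p]} \subseteq \mathfrak{m}^p R^n$ and the $R$-linearity of $\tilde\alpha^{-1}$ we obtain $K \subseteq \mathfrak{m}^p R^n$. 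Since $F_R^{*i} M \cong M$ for all $i \geq 1$ and $F_R^{*i} M \cong R^n / K^{[p^i]}$ by iterating the computation above, the same argument yields $K \subseteq \mathfrak{m}^{p^i} R^n$ for every $i$. Krull's intersection theorem then gives $K \subseteq \bigcap_i \mathfrak{m}^i R^n = 0$, so $M \cong R^n$ is free.

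The main technical point to get right is the existence of the lifts $\tilde\alpha, \tilde\beta$ and the verification that they are automorphisms; this rests on the fact that the two surjections $R^n \twoheadrightarrow M$ and $R^n \twoheadrightarrow F_R^* M$ are both minimal in the sense of Nakayama. Everything else is formal once this is in place.
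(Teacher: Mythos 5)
Your proof is correct, but it diverges from the paper's argument at the final step and, as you note, imports a noetherian hypothesis that the lemma as stated does not have. The paper also starts from a minimal free presentation and the fact that any two minimal presentations of $M$ are isomorphic, but it applies the Frobenius isomorphism only once rather than iterating: writing the presentation matrix as $A = [a_{ij}]$ and using $A = B\,F_R(A)\,C$ with $B \in GL_n(R)$, $C \in GL_m(R)$, it observes that the ideal $I = (a_{ij})$ satisfies $I \subseteq F_R(I) \subseteq I^p \subseteq I^2$, so $I = I^2$, and then Nakayama applied to the finitely generated ideal $I$ gives $I = 0$ outright. This sidesteps Krull's intersection theorem entirely and proves the lemma for an arbitrary local $\FF_p$-algebra, matching the statement as given. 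Your route -- iterating Frobenius pullback to get $K \subseteq \mathfrak{m}^{p^i} R^n$ for all $i$, then Krull -- is sound and certainly suffices for the paper's application (where $R$ is a local ring on a noetherian scheme), but to obtain the stated generality you can extract the paper's shortcut from your own setup: the automorphism $\tilde\alpha$ gives $K \subseteq \tilde\alpha^{-1}(K^{[p]})$, and if $I$ denotes the content ideal of $K$ (generated by the coordinates of a finite generating set), then $R$-linearity of $\tilde\alpha^{-1}$ forces $I \subseteq I^p \subseteq I^2$, at which point Nakayama on the finitely generated ideal $I$ finishes without any appeal to Krull.
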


\begin{proof}
Fix a presentation
\[ 
	R^m \xra{A} R^n \ra M \ra 0, \quad A = [a_{ij}] \in M_{n\times m}(R)
\]
with $n$ and $m$ minimal; in particular, $a_{ij}\in\mathfrak{m}_R$. As $F_R^* M \isom M$, the matrix $F_R(A)$ gives another minimal presentation for $M$. But every two such presentations are isomorphic, so there exist $B\in GL_n(R)$, $C\in GL_m(R)$ such that $A = B F_R(A)C$. Therefore, if  $I\subseteq \mathfrak{m}_R$ is the ideal generated by the $a_{ij}$, then $I \subseteq F_R(I)\subseteq I^2$. So $I=I^2$, and hence $I=0$.
\end{proof}



\begin{remarks}
\begin{enumerate}[1.]
	\item The assertion of Proposition~\ref{prop:fsplit-family} fails if $S$ is of mixed characteristic, already for $S=\Spec W_2(k)$. Indeed, if $p=2$ and $X/k$ is a singular Enriques surface \cite[\S II 7.3]{Illusie_deRhamWitt}, then the Frobenius acts bijectively on $H^*(X, \cO_X)$, which are one-dimensional for $*\leq 2$, and one has $H^2_{\rm cris}(X/W(k))_{\rm tors} \isom H^2(X, W\cO_X) \isom k$. Thus if $\wt X/W_2(k)$ is any lifting of $X$, then $H^2(\wt X, \cO_{\wt X}) \isom k$. Note that $X$ is a $1$-ordinary variety with trivial canonical class which satisfies (DEG) but not (NCT). 
	\item If $X_s$ is $F$-split, then the maps \eqref{eqn:hw-bc} are bijective for all $i\geq 0$ by Proposition~\ref{prop:1-ord}. 
	\item The proof of Proposition~\ref{prop:fsplit-family} shows that if the maps \eqref{eqn:hw-bc} are bijective for $i\geq r$ for some integer $r$, then the sheaves $R^i f_*\cO_X$ are locally free and commute with base change for all $i\geq r$.
\end{enumerate}
\end{remarks}

\begin{cor} \label{cor:rfomega}
	In the situation of Proposition~\ref{prop:fsplit-family}, suppose that $f$ is smooth. Then the sheaves $R^i f_* \omega_{X/S}$ are locally free, with formation commuting with base change, for all $i\geq 0$.
\end{cor}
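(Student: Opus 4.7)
The plan is to deduce this from Proposition~\ref{prop:fsplit-family} by relative Grothendieck--Serre duality. Since $f$ is smooth and proper of relative dimension $d$, duality supplies a canonical quasi-isomorphism
\[
  Rf_*\omega_{X/S}[d] \;\simeq\; R\cHom_{\cO_S}\!\bigl(Rf_*\cO_X,\,\cO_S\bigr)
\]
in $D^b_{\rm coh}(\cO_S)$, which is functorial under base change. The object to be dualised is already under control: by Proposition~\ref{prop:fsplit-family} every $R^i f_*\cO_X$ is finite locally free and commutes with base change.

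Next I would use that $S$ is local and that $Rf_*\cO_X$ is a perfect complex, because over a noetherian local ring a perfect complex admits a minimal free representative in which every differential has entries in the maximal ideal, and freeness of all cohomology forces those differentials to vanish. Hence $Rf_*\cO_X \simeq \bigoplus_i R^i f_*\cO_X[-i]$ in $D^b(\cO_S)$. Applying $R\cHom(-,\cO_S)$ to this decomposition and extracting cohomology in degree $i-d$ from the duality isomorphism yields
\[
  R^i f_*\omega_{X/S} \;\cong\; \bigl(R^{d-i}f_*\cO_X\bigr)^{\!\vee},
\]
which is finite locally free. Base change compatibility then transfers from the right-hand side, using that $\omega_{X/S}$ itself commutes with base change for smooth $f$ because $\Omega^1_{X/S}$ does.

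The only delicate step is the splitting of $Rf_*\cO_X$ into a sum of shifted locally free sheaves, combined with the functoriality of duality under base change. If one prefers to avoid invoking derived Grothendieck duality, an alternative route is a fibrewise argument mimicking the proof of Proposition~\ref{prop:fsplit-family}: Serre duality on $X_s$ gives $\dim_{\kappa(s)} H^i(X_s,\omega_{X_s}) = \dim_{\kappa(s)} H^{d-i}(X_s,\cO_{X_s})$, which is locally constant in $s$ by Proposition~\ref{prop:fsplit-family}, and then a descending induction combined with \cite[Theorem~12.11]{Hartshorne} applied to $\omega_{X/S}$ yields local freeness together with base change compatibility for each $R^i f_*\omega_{X/S}$.
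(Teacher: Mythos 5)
Your main argument via Grothendieck--Serre duality is correct and is essentially the paper's proof: the paper writes down the duality quasi-isomorphism $Rf_*\omega_{X/S}\isom R\cHom_S(Rf_*\cO_X,\cO_S)[-d]$ and then reads off $R^if_*\omega_{X/S}\isom\cHom_S(R^{d-i}f_*\cO_X,\cO_S)$ directly from the local freeness of the sheaves $R^jf_*\cO_X$; your minimal-complex splitting of $Rf_*\cO_X$ is one valid way to justify that last step, which the paper leaves implicit. The alternative fiberwise route you sketch at the end, however, has a gap: the base $S$ in Proposition~\ref{prop:fsplit-family} is the spectrum of a noetherian local $\FF_p$-algebra that need not be reduced, so constancy of $\dim_{\kappa(s)}H^i(X_s,\omega_{X_s})$ does not by itself yield local freeness of $R^if_*\omega_{X/S}$ (Grauert-type arguments require a reduced base), and a descending induction through \cite[Theorem~12.11]{Hartshorne} only propagates base-change compatibility, not local freeness. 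In the proof of Proposition~\ref{prop:fsplit-family} itself, that missing step is supplied by the Frobenius trick of Lemma~\ref{lemma:locfree}, for which there is no direct analogue with $\omega_{X/S}$ in place of $\cO_X$; the duality argument is precisely what circumvents this, so you should keep it as the actual proof rather than a removable convenience.
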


\begin{proof}
Let $d=\dim X_s$. Grothendieck duality produces a quasi-isomorphism
\begin{align*}
	Rf_* \omega_{X/S} &= Rf_* R\cHom_X(\cO_X, \omega_{X/S})\\
	 &= Rf_* R\cHom_X(\cO_X, f^! \cO_S[-d]) \isom R\cHom_S(Rf_* \cO_X, \cO_S)[-d].
\end{align*}
Since the cohomology sheaves $R^i f_*\cO_X$ of $Rf_*\cO_X$ are locally free by Proposition~\ref{prop:fsplit-family}, we conclude that
\[ 
	R^i f_* \omega_{X/S} \isom \cHom_S(R^{d-i} f_*\cO_X, \cO_S). \qedhere
\]
\end{proof}

\begin{cor} \label{cor:uniq-rel-fsplit}
	Let $S$ be a noetherian $\FF_p$-scheme, and let $f\colon X\to S$ be a smooth and proper morphism. Suppose that for every closed $s\in S$, the geometric fiber $X_{\bar s}$ is a~$1$-ordinary variety with trivial canonical class. Then the following assertions hold.
	\begin{enumerate}[(a)]
		\item There exists a unique relative $F$-splitting $\sigma\colon F_{X/S,*}\cO_X \to \cO_{X'}$.
		\item There exists a canonical $\mu_{p-1}$-torsor $T\to S$ and a canonical trivialization $\omega_{X_T/T} \isom \cO_{X_T}$.
	\end{enumerate}
\end{cor}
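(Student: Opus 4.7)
The plan is to use Corollary~\ref{cor:rfomega} to put everything on $S$ and invoke Grothendieck duality for the finite flat relative Frobenius $F_{X/S}\colon X \to X'$. From Corollary~\ref{cor:rfomega}, $L := f_*\omega_{X/S}$ is a line bundle on $S$ whose adjunction morphism $f^*L \to \omega_{X/S}$ is fiberwise an isomorphism of line bundles, and hence an isomorphism globally. Setting $L' := F_S^*L$, the same reasoning (together with flat base change along $F_S$) gives $\omega_{X'/S} \cong f'^*L'$. Next, I would push the Cartier sequence
\[
0 \to B\Omega^d_{X/S} \to F_{X/S,*}\Omega^d_{X/S} \xrightarrow{C} \Omega^d_{X'/S} \to 0
\]
forward via $f'_*$ to obtain a morphism $\bar C \colon L \to L'$ of line bundles on $S$. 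At a geometric point $\bar s$, the fiber of $\bar C$ is the Cartier operator $C \colon H^0(X_{\bar s}, \omega_{X_{\bar s}}) \to H^0(X'_{\bar s}, \omega_{X'_{\bar s}})$, which by Serre duality on $X_{\bar s}$ (using the triviality of $\omega_{X_{\bar s}}$) is dual to the Hasse--Witt map $H^d(X'_{\bar s}, \cO_{X'_{\bar s}}) \to H^d(X_{\bar s}, \cO_{X_{\bar s}})$; this is bijective by $1$-ordinarity. Hence $\bar C$ is a global isomorphism and $f'_* B\Omega^d_{X/S} = 0$.

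For part (a), Grothendieck duality for the finite flat morphism $F_{X/S}$ yields a canonical isomorphism
\[
\cHom_{\cO_{X'}}(F_{X/S,*}\cO_X, \cO_{X'}) \,\cong\, F_{X/S,*}\omega_{X/S} \otimes \omega_{X'/S}^{-1}
\]
under which the ``restriction to $\cO_{X'}$'' map $\cHom(F_{X/S,*}\cO_X, \cO_{X'}) \to \cHom(\cO_{X'}, \cO_{X'}) = \cO_{X'}$ corresponds to the Cartier operator $C$ (twisted by $\omega_{X'/S}^{-1}$). Applying $f'_*$ and the projection formula, the restriction map becomes $\alpha := \bar C \otimes L'^{-1} \colon L'^{-1}\otimes L \to \cO_S$, which is an isomorphism by the previous step. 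The unique relative $F$-splitting is therefore $\sigma := \alpha^{-1}(1) \in \Hom_{\cO_{X'}}(F_{X/S,*}\cO_X, \cO_{X'})$.

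For part (b), the isomorphism $\alpha$ combined with the canonical identification $F_S^*L \cong L^{\otimes p}$ (valid for any line bundle on an $\FF_p$-scheme) produces a canonical trivialization $L^{\otimes(p-1)} \cong \cO_S$. Setting $T := \underline{\Spec}_S\bigl(\bigoplus_{i=0}^{p-2} L^{\otimes i}\bigr)$ with multiplication given by this trivialization yields a $\mu_{p-1}$-torsor on $S$ over which $L$ is tautologically trivialized, and consequently $\omega_{X_T/T} \cong f_T^*(L|_T) \cong \cO_{X_T}$. The main technical item to check carefully will be the identification under Grothendieck duality of the ``restrict to $\cO_{X'}$'' map with the Cartier operator, which is the relative-to-$S$ incarnation of the classical description of the Cartier operator as the trace of Frobenius; everything else then reduces to formal manipulations with line bundles on $S$.
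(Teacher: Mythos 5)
Your proposal is correct and follows essentially the same route as the paper: both establish via Corollary~\ref{cor:rfomega} that $f_*\omega_{X/S}$ is a line bundle with $f^*f_*\omega_{X/S}\isomto\omega_{X/S}$, and both apply Grothendieck duality for the finite flat $F_{X/S}$ to identify $f'_*\cHom(F_{X/S,*}\cO_X,\cO_{X'})$ with a line bundle on $S$ that maps to $\cO_S$ via evaluation, then check this map is an isomorphism fiberwise, and derive the $\mu_{p-1}$-torsor from the resulting trivialization of $\omega_{X/S}^{1-p}$ (equivalently $L^{p-1}$). The only cosmetic difference is that you spell out the identification of the evaluation map with the Cartier operator $\bar C$ and its duality with the Hasse--Witt map, whereas the paper simply observes that the evaluation morphism of line bundles is an isomorphism at closed points by the uniqueness part of Proposition~\ref{prop:1-ord} — which is the same computation, just packaged differently.
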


\begin{proof}
We shall first prove that locally on $S$, there exists an trivialization $\omega_{X/S}\isom \cO_X$. By Corollary~\ref{cor:rfomega}, $f_* \omega_{X/S}$ is locally free, with formation commuting with base change. It is therefore a line bundle. Consider the adjunction map
\[ 
	\alpha\colon f^* f_* \omega_{X/S} \ra \omega_{X/S}.
\]
Its restriction to every fiber $X_s$ is an isomorphism by assumption. Since $\alpha$ is a morphism between line bundles, it is an isomorphism. 

For (a), consider the relative evaluation map
\[
	f'_*\cHom(F_{X/S,*}\cO_X,\cO_{X'}) \ra f'_*\cO_{X'} \ra \cO_S.
\]
It is enough to show that this map is an isomorphism. Grothendieck duality for the finite flat morphism $F_{X/S}$ yields an identification
\[ 
	f'_*\cHom(F_{X/S,*}\cO_X,\cO_{X'}) \isom f'_* F_{X/S,*}\omega_{X/S}^{1-p} = f_* \omega_{X/S}^{1-p}.
\]
By the first paragraph and Corollary~\ref{cor:rfomega}, $f_* \omega_{X/S}^{1-p}$ is a line bundle whose formation commutes with base change. The composite morphism
\[ 
	f_* \omega_{X/S}^{1-p} \isom f'_*\cHom(F_{X/S,*}\cO_X,\cO_{X'}) \ra \cO_S
\] 
is a morphism of line bundles which is an isomorphism at closed points, and hence an isomorphism.

For assertion (b), we note that the proof of (a) furnishes a canonical trivialization $\tau$ of $\omega_{X/S}^{1-p}$. Trivializations of $\omega_{X/S}$ whose $(1-p)$-th power equals $\tau$ form the desired $\mu_{p-1}$-torsor on $S$.
\end{proof}

\subsection{Hodge $F$-crystals}
\label{ss:hodgefcrystal}

Let $\wt S = \Spf W_2(k)[[t_1, \ldots, t_r]]$. The following is a version of the definition in  \cite[1.1.1, 1.1.3, 1.3.5]{DeligneIllusieKatz} adapted to the case `mod $p^2$.' Since $p^2 = 0$, our definition works well only if one is interested in the first two levels of the Hodge filtration. We assume $p>2$, so that the change of Frobenius formula \eqref{eqn:changefrob} takes a particularly simple form.

\begin{defin} \label{def:hodgefcrys}
	A \emph{Hodge $F$-crystal} $H=(H, \nabla, F^\bullet, \phi)$ on $\wt S$ consists of:
	\begin{enumerate}[(a)]
		\item a finitely generated free $\cO_{\wt S}$-module $H$,
		\item a nilpotent integrable connection $\nabla \colon H\to H\otimes \Omega^1_{\wt S}$,
		\item a decreasing filtration $F^i$ of $H$ by direct summands,
		\item for every lifting of Frobenius $F_{\wt S}$ on $S$, an $\cO_{\wt S}$-linear map
		\[
			\phi(F_{\wt S})\colon F_{\wt S}^* H \ra H
		\]
	\end{enumerate}
	satisfying the following conditions:
	\begin{enumerate}[(i)]
		\item $\phi(F_{\wt S})$ is horizontal,
		\item $F^i$ satisfies \emph{Griffiths transversality}: $\nabla F^i \subseteq F^{i-1}\otimes \Omega^1_{\wt S}$,
		\item $F^i \otimes k$ coincides with the image of $(\phi(F_{\wt S})F_{\wt S}^*)^{-1} (p^i H)$ for $i=0,1,2$,
		\item the maps $\phi(F_{\wt S})$ for different $F_{\wt S}$ satisfy the \emph{change of Frobenius formula} \cite[1.1.3.4]{DeligneIllusieKatz}:
		\begin{equation} \label{eqn:changefrob} 
			\phi(F_{\wt S})F_{\wt S}^* x 
			= 
			\phi(F'_{\wt S})(F'_{\wt S})^* x 
			+
			p\sum_{i=1}^r (F_{\wt S}^*(t_i) - (F'_{\wt S})^*(t_i)) \phi(F'_{\wt S})(F'_{\wt S})^*(\nabla_{\frac{\partial}{\partial t_i}} x).
		\end{equation}
	\end{enumerate}
\end{defin}

If $\wt X/\wt S$ is smooth and projective satisfying (NCT) and (DEG) and such that the Hodge groups are free, then $H= H^n_{\rm dR}(\wt X/\wt S)$ endowed with the Gauss--Manin connection, the Hodge filtration, and the crystalline Frobenius is a Hodge $F$-crystal over $\wt S$. (All statements except (iii) are standard.  For (iii), we observe that the statement only concerns the Frobenius on the crystalline cohomology of the reduction mod $p$.  Therefore we may apply the results of Mazur--Ogus \cite[8.26]{BerthelotOgus} mentioned in \S\ref{ss:hodge_newton_polygons}, valid over a torsion-free base, and then reduce them mod $p^2$.)  Note that for $H$ we also have the basic divisibility estimates 
\[ 
	\phi(F_{\wt S})F_{\wt S}^* (F^1) \subseteq p H, \quad \phi(F_{\wt S})F_{\wt S}^* (F^2) \subseteq p^2 H \text{ if }p>2.
\]

\section{Relative Witt vectors}
\label{s:relwitt}


\noindent
Let $S$ be a scheme over ${\bf F}_p$, and let $\wt S$ be a lifting of $S$ over $\ZZ/p^2\ZZ$. Consider a smooth scheme $X$ over $S$. 
We let $F_{X/S}\colon X\to X'$ denote the relative Frobenius of $X/S$. If $S$ is perfect, then using the isomorphism $F_S^*\colon \cO_X\isomto \cO_{X'}$ one can consider the length two Witt vectors $W_2\cO_X$ as a square-zero extension
\[ 
	0\ra F_{X/S, *} \cO_X \xra{V} W_2 \cO_X \ra \cO_{X'} \ra 0
\]
of $\cO_{X'}$ by $F_{X/S,*} \cO_X$. The first goal of this section is to construct a natural extension of the above type 
\begin{equation} \label{eqn:relw2type}
	0\ra F_{X/S, *} \cO_X \xra{V} W_2 (\cO_X/\wt S) \ra \cO_{X'} \ra 0
\end{equation}
for a general $\wt S$, lying above the given lifting
\[
	0\ra \cO_S \ra \cO_{\wt S} \ra \cO_S \ra 0.
\]



The second goal concerns comparison with zeroth crystalline cohomology. Let $\wt X$ be a~lifting of $X$ over $\wt S$. We have a canonical isomorphism \cite[\S 7]{BerthelotOgus}
\[ 
	\mathscr{H}^q_{dR}(\wt X/\wt S) \isom   R^q u_* \cO_{X/\wt S}
\]
where $u\colon (X/\wt S)_{\rm cris} \to X_{\rm Zar}$ is the canonical projection, and $\mathscr{H}^q_{dR}(\wt X/\wt S)$ is the $q$-th cohomology sheaf of the de Rham complex of $\wt X/\wt S$. If, in addition, $S$ is perfect, one has Katz's isomorphism \cite[III 1.4]{IllusieRaynaud}
\[
  R^q u_* \cO_{X/\wt S} \isom W_2\Omega^q_X
\]
where $W_2\Omega^\bullet_X$ is the de Rham--Witt complex of $X$. Setting $q=0$, we obtain an identification
\[ 
	W_2\cO_X = \cO_{\wt X}^{d=0} = \{ f\in \cO_{\wt X} \,|\, df = 0 \in \Omega^1_{\wt X/\wt S}\}.
\]
The right hand side makes sense if $S$ is not perfect, so one could try and define
\[ 
	W_2 (\cO_X/\wt S) =  \cO_{\wt X}^{d=0}  = u_* \cO_{X/\wt S}
\]
which by the last equality is manifestly independent of the choice of the lifting $\wt X$. One quickly realizes however that $\cO_{\wt X}^{d=0}$ is not quite of the required type, and that one in fact gets an extension
\[ 
	0\ra \cO_{X'} \ra \cO_{\wt X}^{d=0} \ra \cO_{X''} \ra 0
\]
with a double Frobenius twist on the right hand side (compare \cite[p.\ 103]{Olsson}). For example, if $\wt X = \mathbf{A}^1_{\wt S}$ with coordinate $t$, then one obtains
\[ 
	0\ra \cO_S[x^{p}]\xra{p} \cO_{\wt S}[x^{p^2}, px^{pi}] \ra \cO_S[x^{p^2}]\ra 0.
\]
We shall prove in \S\ref{ss:h0cris} that $W_2(\cO_{X}/\wt S)$ is a canonical Frobenius untwist of $u_* \cO_{X/\wt S}$, i.e.\ that $u_* \cO_{X/\wt S} \isom W_2(\cO_{X'}/\wt S)$.


\begin{remark}
By the same method as given below, one could define longer Witt vectors relative to a lifting of $S$ modulo $p^n$. For the sake of brevity, we decided to include only the case relevant to our applications.
\end{remark}

\subsection{Construction}

Let $X$ be a smooth scheme over $S$ as before, and let $\wt X$ be a lifting over $\wt S$. We set
\begin{equation} \label{eqn:defwtilde}
	W_{\wt X} = \{ f \in \cO_{\wt X} \,|\, f\, {\rm mod}\, p \in \cO_{X'}  \} \subseteq \cO_{\wt X}. 
\end{equation}
In other words, $W_{\wt X}$ is the pullback
\[ 
	\xymatrix{
		W_{\wt X} \ar[r] \ar[d] \ar@{}[dr]|-{\square} &  \cO_{X'} \ar[d]^{F_{X/S}^*} \\
		 \cO_{\wt X}  \ar[r] & \cO_X,
	}
\]
so that we have a diagram with exact rows (morphism of square-zero extensions)
\[
	\xymatrix{
		0 \ar[r] & \cO_X \ar[r]\ar@{=}[d] & W_{\wt X}\ar[r] \ar[d] & \cO_{X'} \ar[r]\ar[d]^{F_{X/S}^*} & 0 \\
		0 \ar[r] & \cO_X \ar[r] & \cO_{\wt X}\ar[r] & \cO_X \ar[r] & 0.
	}
\]
In particular, $W_{\wt X}$ is an extension of the desired type \eqref{eqn:relw2type}. Our goal is to get rid of its apparent dependence on the lifting $\wt X$. 

We note that $f\in \smash{\cO_{\wt X}}$ lies in $W_{\wt X}$ if and only if $df\in \smash{\Omega^1_{\wt X/\wt S}}$ is divisible by $p$, so that we also have a pullback square
\[ 
	\xymatrix{
		W_{\wt X} \ar[r] \ar[d] \ar@{}[dr]|-{\square} & \Omega^1_{X/S}  \ar[d]^p \\
		\cO_{\wt X} \ar[r]_-d & \Omega^1_{\wt X/\wt S}.
	}
\]

\begin{lemma}\label{lem:fundamental}
	Let $f\colon X\to Y$ be a map between smooth schemes over $S$, let $\wt X$ and $\wt Y$ be liftings of $\wt X$ and $\wt Y$ over $\wt S$, and let $\tilde f_1, \tilde f_2 \colon \wt X \to \wt Y$ be two liftings of $f$.  Then the induced maps $\tilde f_1, \tilde f_2 \colon W_{\wt X} \to W_{\wt Y}$ are equal.  
	
	In particular, if $\alpha\colon \wt X\to \wt X$ is an automorphism of the lifting $\wt X$ (i.e. an isomorphism of schemes over $\wt S$ reducing to the identity on $X$), then the induced map $\alpha\colon W_{\wt X} \to W_{\wt X}$ is the identity.
\end{lemma}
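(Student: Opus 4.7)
The plan rests on the classical deformation-theoretic principle that two liftings of a morphism between smooth schemes over a square-zero thickening differ by a derivation. Concretely, since $\tilde f_1$ and $\tilde f_2$ both reduce to $f$ modulo $p$, the difference $\tilde f_1^\ast - \tilde f_2^\ast \colon \cO_{\wt Y} \to \cO_{\wt X}$ takes values in $p\cO_{\wt X}$, which via the isomorphism $p\cO_{\wt X} \isom \cO_X$ supplied by flatness identifies this difference with an $\wt S$-linear map $D \colon \cO_{\wt Y} \to \cO_X$. Because $p\cdot \cO_X = 0$, the map $D$ factors through $\cO_Y = \cO_{\wt Y}/p$, and a direct check using the Leibniz rule (expanding $\tilde f_i^\ast(gh)$ and observing that the cross terms reduce modulo $p$ to products of $f^\ast$-pullbacks) shows that the induced map $\bar D \colon \cO_Y \to f_\ast \cO_X$ is an $\cO_S$-derivation. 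Equivalently, $\bar D$ corresponds to an $\cO_X$-linear map $\bar D \colon f^\ast \Omega^1_{Y/S} \to \cO_X$, and for every $g \in \cO_{\wt Y}$ with $\bar g = g \bmod p$ one has the formula
\[
    \tilde f_1^\ast(g) - \tilde f_2^\ast(g) \;=\; p\cdot \bar D\bigl(f^\ast d\bar g\bigr) \qquad \text{in } p\cO_{\wt X}.
\]

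The second step invokes the alternative description of $W_{\wt Y}$ from the pullback square just before the lemma: a section $g \in \cO_{\wt Y}$ lies in $W_{\wt Y}$ if and only if its reduction $\bar g$ satisfies $d\bar g = 0 \in \Omega^1_{Y/S}$. For such $g$, the right-hand side of the displayed formula vanishes, so $\tilde f_1^\ast(g) = \tilde f_2^\ast(g)$ in $\cO_{\wt X}$. A one-line compatibility check (using that each $\tilde f_i^\ast$ commutes with $d$) shows that these values actually lie in $W_{\wt X}$, so $\tilde f_1, \tilde f_2 \colon W_{\wt Y} \to W_{\wt X}$ agree.

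The ``in particular'' part is the special case $f = \mathrm{id}_X$, $\tilde f_1 = \alpha$, $\tilde f_2 = \mathrm{id}_{\wt X}$: since $\mathrm{id}_{\wt X}$ induces the identity on $W_{\wt X}$, the same holds for $\alpha$. I do not anticipate any real obstacle; the only point that requires care is verifying that $D$ is indeed an $\cO_S$-derivation after reduction modulo $p$, which is a routine bookkeeping with the Leibniz identity modulo $p^2$.
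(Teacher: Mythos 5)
Your proof is correct and takes essentially the same route as the paper's: both recognize that the difference $\tilde f_1^* - \tilde f_2^*$ is $p$ times a derivation (equivalently a map factoring through $f^*\Omega^1_{Y/S}$), and hence kills sections of $W_{\wt Y}$. The only cosmetic difference is that you invoke the pullback-square description $W_{\wt Y}=\{g : d\bar g = 0\}$ directly, while the paper phrases the identical vanishing by saying the derivation annihilates the subring $\cO_{Y'}\subseteq \cO_Y$ because $F_{Y/S}^*\colon\Omega^1_{Y'/S}\to\Omega^1_{Y/S}$ is zero.
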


\begin{proof}
We can write $\tilde f_2(x) = \tilde f_1(x) + p\cdot \psi(x\,{\rm mod}\,p)$ for a derivation $\psi\colon \cO_Y \to f_*\cO_X$. Since $F_{Y/S}$ induces the zero map $F_{Y/S}^* \Omega^1_{Y'/S}\to \Omega^1_{Y/S}$, the derivation $\psi$ annihilates the image of $\cO_{Y'}$. Thus if $x \in W_{\wt Y}$, i.e. $x \,{\rm mod}\, p \in \cO_{Y'}$, then
\[
	\tilde f_2(x) = \tilde f_1(x) + p\cdot \psi(x\,{\rm mod}\, p) = \tilde f_1(x). \qedhere
\]
\end{proof}

The above lemma allows us to assemble the $W_{\wt X}$ into a global thickening $W_2(\cO_X/\wt S)$ in the following way. Choose an open cover $X=\bigcup U_i$ and liftings $\wt U_i$ over $\wt S$. Let $\{U_{ijk}\}_k$ be an affine open cover of $U_i\cap U_j$. Since $U_{ijk}$ are affine, there exists an isomorphism of liftings 
\[ 
	\alpha_{ijk} \colon \wt U_i|_{U_{ijk}} \ra \wt U_j|_{U_{ijk}}.
\]
Consider the schemes $W_{\wt U_i}$; over $U_{ijk}$, the above maps induce isomorphisms
\[ 
	\alpha_{ijk} \colon W_{\wt U_i}|_{U_{ijk}} \ra W_{\wt U_j}|_{U_{ijk}}.
\]
which are independent of $\alpha_{ijk}$ by Lemma~\ref{lem:fundamental}. Another application of Lemma~\ref{lem:fundamental} implies that these isomorphisms necessarily satisfy the cocycle condition, and therefore they can be glued to obtain a global thickening $W_2(\cO_X/\wt S)$.

It is easy to see that this construction does not depend on the choice of the open covering and the liftings $\wt U_i$. We have thus proved:

\begin{prop}
	Let $X$ be a smooth scheme over $S$. The construction outlined above yields a thickening
	\[
		0\ra F_{X/S,*} \cO_X \ra W_2(\cO_X/\wt S) \ra \cO_{X'} \ra 0	
	\]
	over
	\[
		0\ra \cO_S \ra \cO_{\wt S}\ra \cO_S \ra 0.	
	\]
	If $\wt X$ is a lifting of $X$ over $\wt S$, there is a canonical isomorphism 
	\[
		W_2(\cO_X/\wt S) \isom W_{\wt X} = \{ f \in \cO_{\wt X} \,|\, f\, {\rm mod}\, p \in \cO_{X'}  \} \subseteq \cO_{\wt X}.
	\]
	Moreover, the construction of $W_2(\cO_X/\wt S)$ is functorial in the sense that if $f\colon X\to Y$ is a morphism of smooth schemes over $S$, then we get a canonical map $f^*\colon W_2(\cO_Y/\wt S)\to W_2(\cO_X/\wt S)$ fitting in the diagram of sheaves on the space $Y=Y'$
	\[
		\xymatrix{
			0\ar[r] & F_{Y/S, *} \cO_Y \ar[d]\ar[r] & W_2(\cO_Y/\wt S) \ar[r]\ar[d] & \cO_{Y'} \ar[r]\ar[d] & 0 \\
			0\ar[r] & f_* F_{X/S, *} \cO_X \ar[r] & f_* W_2(\cO_X/\wt S) \ar[r] & f_* \cO_{X'} \ar[r] & 0.
		}	
	\]
\end{prop}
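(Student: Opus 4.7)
The construction of $W_2(\cO_X/\wt S)$ has already been given in the paragraphs preceding the statement; what remains is to verify that the gluing data satisfy the cocycle condition, that the resulting sheaf is independent of all choices, that the claimed extension structure is present, and that the construction is functorial. My plan is to reduce each of these assertions to Lemma~\ref{lem:fundamental} applied to a~suitable configuration of local lifts.

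For the cocycle condition, on a~sufficiently fine refinement of any triple overlap $U_i\cap U_j\cap U_k$, both $\alpha_{jk}\circ\alpha_{ij}$ and $\alpha_{ik}$ are morphisms $\wt U_i\to\wt U_k$ lifting the identity of the reduction, so Lemma~\ref{lem:fundamental} forces their induced maps $W_{\wt U_i}\to W_{\wt U_k}$ to coincide. Independence from the initial choice of open cover and of the local liftings $\wt U_i$ follows by the same argument applied on a~common refinement with chosen comparison isomorphisms between the two families of lifts.

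The extension sequence $0 \to F_{X/S,*}\cO_X \to W_2(\cO_X/\wt S) \to \cO_{X'} \to 0$ and its compatibility with $0\to\cO_S\to\cO_{\wt S}\to\cO_S\to 0$ are local statements; they follow from the defining pullback square for $W_{\wt X}$ together with the fact that $\cO_{\wt X}$ is a~square-zero extension of $\cO_X$ by $\cO_X$. The identification $W_2(\cO_X/\wt S)\simeq W_{\wt X}$ in the presence of a~global lift is obtained by running the construction with the trivial cover $\{X\}$ and the single lift $\wt X$, so that no gluing is needed.

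Finally, for functoriality, given $f\colon X\to Y$ I would choose an open cover $\{V_j\}$ of $Y$ with lifts $\wt V_j$, refine to an open cover $\{U_i\}$ of $X$ with $f(U_i)\subseteq V_{j(i)}$ equipped with lifts $\wt U_i$, and, after a~further affine refinement, lift $f|_{U_i}$ to some morphism $\tilde f_i\colon \wt U_i\to\wt V_{j(i)}$. By the first part of Lemma~\ref{lem:fundamental}, the induced map $W_{\wt V_{j(i)}}\to (f|_{U_i})_* W_{\wt U_i}$ is independent of the choice of $\tilde f_i$, and the same lemma applied to two rival lifts of $f$ over overlaps shows compatibility with the transition isomorphisms on $X$ and $Y$. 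The local maps therefore glue to the desired morphism $f^*\colon W_2(\cO_Y/\wt S)\to f_* W_2(\cO_X/\wt S)$, and its compatibility with the extension sequences is checked in the local models. The main obstacle is simply the bookkeeping of three layers of refinement (for $X$, for $Y$, and for the lifts of~$f$); every independence assertion is, however, a~direct consequence of one application of Lemma~\ref{lem:fundamental}.
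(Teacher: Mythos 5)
Your proposal matches the paper's own argument: the paper's "proof" of this Proposition is precisely the construction given in the preceding paragraphs, which, just as you do, reduces every verification (well-definedness of the transition maps, the cocycle condition, independence of the cover and lifts, and functoriality) to Lemma~\ref{lem:fundamental}, with the extension structure and compatibility with $0\to\cO_S\to\cO_{\wt S}\to\cO_S\to 0$ checked in the local model $W_{\wt X}$. You spell out the independence-of-cover and functoriality bookkeeping a bit more explicitly than the paper (which dismisses the former as "easy to see"), but the key idea and the key lemma are the same.
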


\subsection{Comparison with zeroth crystalline cohomology}
\label{ss:h0cris}

Let $X$ be a smooth scheme over $S$. We denote by
\[ 
	u \colon (X/\wt S)_{\rm cris}\ra X_{\rm Zar}
\]
the natural projection from the crystalline site of $X$ relative to $\wt S$ to the Zariski site of $X$. This functor takes an open $U\subseteq X$ to the trivial PD-thickening $(U, U)$. If $\wt X$ is a lifting of $X$ over $\wt S$, there is a canonical isomorphism \cite[\S 7]{BerthelotOgus}
\[ 
	Ru_* \cO_{X/\wt S} \isom \Omega^\bullet_{\wt X/\wt S}.
\]
We denote by $\mathscr{H}^0_{dR}(\wt X/\wt S)$ the kernel of the map $d\colon \cO_{\wt X}\to \smash{\Omega^1_{\wt X/\wt S}}$. The restriction map $\mathscr{H}^0_{dR}(\wt X/\wt S)\to \mathscr{H}^0_{dR}(X/S) = \cO_{X'}$ has image $\cO_{X''}$ and the following sequence is exact
\[ 
	0\ra F_{X'/S,*}\cO_{X'} \xra{\text{``$p$''}} \mathscr{H}^0_{\rm dR}(\wt X/\wt S) \ra \cO_{X''} \ra 0.
\]

\begin{prop}
	There exists a unique morphism of sheaves of rings 
	\[
	\lambda \colon W_2(\cO_{X'}/\wt S) \ra u_*\cO_{X/\wt S}
	\]
	compatible with restriction to open subsets, and such that whenever $\wt F \colon \wt X \to \wt X{}'$ is a lifting of $F_{X/S} \colon X \to X'$, then the diagram
	\[
		\xymatrix{
		\cO_{\wt X{}'} \ar[r]^{\wt F{}^*} & \cO_{\wt X} \\
		W_{\wt X{}'} \ar[r]^{\wt F{}^*}\ar@{^{(}->}[u] & \mathscr{H}^0_{\rm dR}(\wt X/\wt S) \ar@{^{(}->}[u] \\
		W_2(\cO_{X'}/\wt S) \ar[u]^{\rotatebox{90}{$\sim$}} \ar[r]_{\lambda}^{\sim} & u_*\cO_{X/\wt S} \ar[u]_{\rotatebox{90}{$\sim$}}
		}
	\]
	is commutative.  This map is an isomorphism.
\end{prop}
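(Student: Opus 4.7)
The plan is to construct $\lambda$ via an arbitrary local lift of Frobenius, show that the resulting map is independent of all auxiliary choices, glue these local maps to a global morphism of sheaves of rings, and finally deduce bijectivity by applying the five-lemma to the two short exact sequences already recorded above.

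Working Zariski-locally on $X$, which I may assume affine, choose a smooth lifting $\wt X/\wt S$ and a lift $\wt F\colon \wt X\to \wt X{}'$ of the relative Frobenius. For $f\in W_{\wt X{}'}$ the pullback description of $W_{\wt X{}'}$ gives $df=p\eta$ for some $\eta\in \Omega^1_{X'/S}$, so
\[
	d(\wt F{}^*f) \;=\; \wt F{}^*(df) \;=\; p\cdot F_{X/S}^*(\eta) \;=\; 0,
\]
because $F_{X/S}^*\colon \Omega^1_{X'/S}\to \Omega^1_{X/S}$ vanishes. Hence $\wt F{}^*$ restricts to a ring map $\lambda_{\wt F}\colon W_{\wt X{}'}\to \mathscr{H}^0_{\rm dR}(\wt X/\wt S)$. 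For two such lifts $\wt F_1,\wt F_2$ the difference has the standard form $\wt F_2^*-\wt F_1^* = p\cdot \xi\circ d$ for some $\cO_X$-linear $\xi\colon F_{X/S}^*\Omega^1_{X'/S}\to \cO_X$; since $df\bmod p=0$ for $f\in W_{\wt X{}'}$, this forces $\lambda_{\wt F_1}=\lambda_{\wt F_2}$.

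To glue these local maps, given two local liftings $\wt X_1,\wt X_2$ and an isomorphism $\alpha$ between them reducing to the identity on $X$, Lemma~\ref{lem:fundamental} ensures $\alpha$ acts as the identity on the canonically identified sources $W_2(\cO_{X'}/\wt S)\isom W_{\wt X_i{}'}$; the analogous crystalline-functoriality statement (two liftings define the same crystal on $X$) ensures $\alpha$ acts as the identity on the canonically identified targets $u_*\cO_{X/\wt S}\isom \mathscr{H}^0_{\rm dR}(\wt X_i/\wt S)$. Combined with the independence of $\wt F$ above, the local maps $\lambda_{\wt F}$ glue to a well-defined global morphism $\lambda\colon W_2(\cO_{X'}/\wt S)\to u_*\cO_{X/\wt S}$ of sheaves of rings satisfying the prescribed commutativity. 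Uniqueness is then automatic, since Frobenius lifts exist on an affine cover and the commutative diagram forces the values of $\lambda$ on sections there.

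Finally, $\lambda$ gives a morphism between the two short exact sequences
\[
	0\to F_{X'/S,*}\cO_{X'}\to W_2(\cO_{X'}/\wt S)\to \cO_{X''}\to 0
	\quad\text{and}\quad
	0\to F_{X'/S,*}\cO_{X'}\to u_*\cO_{X/\wt S}\to \cO_{X''}\to 0.
\]
On the quotient $\cO_{X''}$, the map induced by $\lambda$ is the reduction $\wt F{}^*\bmod p$ restricted to $\cO_{X''}\subseteq \cO_{X'}$, which, by construction of both extensions and injectivity of the relative Frobenius, is the identity. On the kernel, $\lambda$ sends $pf$ to $p\wt F{}^*(f)$, and under the multiplication-by-$p$ identifications of both kernels with $F_{X'/S,*}\cO_{X'}$ this is again the identity. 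The five-lemma then yields that $\lambda$ is an isomorphism. The delicate point in the argument is the gluing step: one needs the analogue of Lemma~\ref{lem:fundamental} on the target side so that both $W_2(\cO_{X'}/\wt S)$ and $u_*\cO_{X/\wt S}$ acquire canonical local identifications with the concrete models $W_{\wt X{}'}$ and $\mathscr{H}^0_{\rm dR}(\wt X/\wt S)$; once these independences are in place, the remaining verifications are formal bookkeeping.
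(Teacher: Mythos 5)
Your proposal follows essentially the same route as the paper: define $\lambda$ locally via $\wt F{}^*$, check that $d(\wt F{}^*f)=0$ so the image lands in $\mathscr{H}^0_{\rm dR}(\wt X/\wt S)$, show independence of the Frobenius lift via the derivation-difference argument (this is precisely Lemma~\ref{lem:fundamental}, which the paper simply invokes), and conclude the isomorphism by comparing the two short exact sequences over $F_{X'/S,*}\cO_{X'}$ and $\cO_{X''}$. Your treatment is somewhat more explicit about the gluing step and the identifications of the kernels, which the paper leaves implicit, but the underlying argument is the same.
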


\begin{proof}
We define the morphism $\lambda$ locally by the diagram above, and then prove this is independent of the choices.  To this end, we first take two liftings $\wt F_1, \wt F_2 \colon \wt X \to \wt X{}'$ of the Frobenius morphism $F_{X/S}$.  By Lemma~\ref{lem:fundamental} the morphisms $\wt F{}_1^*, \wt F{}_2^* \colon W_{\wt X{}'} \to W_{\wt X}$ coincide.  We check that they map $W_{\wt X{}'}$ onto $\mathscr{H}^0_{\rm dR}(\wt X/\wt S) \subset W_{\wt X}$: if $f\in W_{\wt X{}'}$, then $df = p\cdot \omega$ for some $\omega\in \Omega^1_{X'/\wt S}$, therefore
\[ 
	d(\wt F_i{}^*(f)) = \wt F_i{}^*(df) = p\cdot F_{X'/S}^* (\omega) = 0.
\]  
Finally, since the following diagram is obviously commutative
\[
\xymatrix{
	0 \ar[r] & F_{X'/S,*}\cO_{X'} \ar@{=}[d]\ar[r] & W_{\wt X{}'} \ar[r]\ar[d]^{\wt F_i{}^*} & \cO_{X''} \ar[r]\ar@{=}[d] & 0 \\
	0 \ar[r] & F_{X'/S,*}\cO_{X'} \ar[r] & \mathscr{H}^0_{\rm dR}(\wt X/\wt S) \ar[r] & \cO_{X''} \ar[r] & 0, 
}
\] 
we conclude that $W_{\wt X{}'}\to \mathscr{H}^0_{\rm dR}(\wt X/\wt S)$ is an isomorphism.
\end{proof}

\begin{cor}
	If $S$ is perfect, then $W_2(\cO_X/\wt S) \isom W_2\cO_X$.
\end{cor}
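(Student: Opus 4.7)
My plan is to deduce the corollary from the preceding proposition by combining it with the functoriality of the construction of $W_2(\cO_\bullet/\wt S)$ and with the Katz isomorphism $R^q u_*\cO_{X/\wt S}\isom W_2\Omega^q_X$ (valid when $S$ is perfect) that was recalled at the beginning of this section.

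Since $S$ is perfect, the absolute Frobenius $F_S\colon S\to S$ is an isomorphism, and hence so is its base change $W\colon X'\to X$. By the functoriality of $W_2(\cO_\bullet/\wt S)$ in its first argument, established in the construction proposition above, $W$ induces a canonical isomorphism of extensions
\[
  W^*\colon W_2(\cO_X/\wt S)\isomlong W_2(\cO_{X'}/\wt S).
\]
The preceding proposition then identifies the target with $u_*\cO_{X/\wt S}$, and Katz's isomorphism in degree zero identifies $u_*\cO_{X/\wt S}\isom W_2\Omega^0_X = W_2\cO_X$. Composing these three canonical isomorphisms produces the desired iso $W_2(\cO_X/\wt S)\isom W_2\cO_X$.

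The only thing requiring some attention is the compatibility of the composite with the extension structure $0\to F_{X/S,*}\cO_X\xrightarrow{V}\bullet\to\cO_{X'}\to 0$ in which both sides sit. Each factor is easily seen to respect this structure: the functoriality isomorphism by the last diagram in the construction proposition above, the isomorphism $\lambda$ by the commutative diagram in its statement (which identifies $V$ with the inclusion $p\cO_{\wt X{}'}\hookrightarrow W_{\wt X{}'}$), and Katz's identification by the classical compatibility of the de Rham--Witt Verschiebung and restriction with crystalline cohomology in length two. I expect no genuine obstacle beyond the routine bookkeeping of these identifications, together with the canonical identifications $W^*\colon \cO_X\isomto\cO_{X'}$ and $p\cdot\colon F_{X/S,*}\cO_X\isomto p\cO_{\wt X}$ induced by multiplication by $p$. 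A direct verification via the explicit local formula $(a,b)\mapsto \tilde a^p + p\tilde b\in W_{\wt X}$ (using any lifts $\tilde a,\tilde b\in\cO_{\wt X}$) could be substituted should the indirect argument above feel unsatisfying, since this formula manifestly defines a ring homomorphism compatible with the extension structure and is an isomorphism by the five lemma.
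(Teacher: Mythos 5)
Your route is the natural one and it works, but there is one small point you should flag explicitly. The functoriality of $W_2(\cO_\bullet/\wt S)$ established in the construction proposition is stated only for $S$-morphisms $f\colon X\to Y$; the projection $W\colon X'\to X$ is not an $S$-morphism — it sits over $F_S$, not over $\mathrm{id}_S$. Applying $W^*$ therefore requires a mild extension of the stated functoriality to morphisms lying over $F_S$ once one fixes the Witt Frobenius $F_{\wt S}$ on $\wt S=W_2(S)$; the proof is verbatim the same (Lemma~\ref{lem:fundamental} still applies because $F_{X'/S}$ kills $\Omega^1_{X''/S}$), but as written your argument quotes a statement that doesn't literally cover the case used. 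A way to sidestep this entirely is to apply the proposition not to $X$ but to the Frobenius untwist $X_{(-1)}=X\times_{S,F_S^{-1}}S$ (a smooth $S$-scheme since $S$ is perfect), which yields
\[
W_2(\cO_X/\wt S)=W_2\bigl(\cO_{(X_{(-1)})'}/\wt S\bigr)\isomlong u_*\cO_{X_{(-1)}/\wt S}\isomlong W_2\cO_{X_{(-1)}},
\]
and then observe that $X_{(-1)}$ and $X$ share the same underlying ringed space, so $W_2\cO_{X_{(-1)}}=W_2\cO_X$. Your closing remark about the explicit local formula $(a,b)\mapsto\tilde a^p+p\tilde b\in W_{\wt X}$ is also a perfectly good direct substitute; note for cleanliness that $a^p$ automatically lands in $\cO_{X'}\subseteq\cO_X$ because $S$ is perfect, and additivity is the ghost-component computation behind the map $\theta$ used later in the paper. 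The discussion of compatibility with the $V$-extension, while correct in spirit, is not required by the statement of the corollary (which only asserts an isomorphism of sheaves of rings), so it can safely be trimmed.
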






\section{\texorpdfstring{$F$-splittings and canonical liftings mod $p^2$}{F-splittings and canonical liftings mod p2}}
\label{s:canlift}

\noindent
We come to the key construction in this paper.

\subsection{Construction}
\label{ss:cons-canlift}

Let $S$ be an $\FF_p$-scheme, and let $\wt S$ be a flat lifting of $S$ over $\ZZ/p^2\ZZ$. Let $(X,\sigma)$ be a pair consisting of an $S$-scheme $X$ and a relative $F$-splitting
\[ 
	\sigma\colon F_{X/S,*}\cO_X \ra \cO_{X'}.
\]
Such pairs form a category $\mathbf{FSplit}_S$, cf.\ \cite[\S 2.5]{AchingerWitaszekZdanowicz}. Consider the ring of relative Witt vectors $W_2(\cO_X/\wt S)$, fitting in an exact sequence
\[ 
	0\ra F_{X/S,*}\cO_X \xra{V} W_2(\cO_X/\wt S) \ra \cO_{X'}\ra 0.
\] 
The subsheaf
\[ 
	I_\sigma = V(\ker(\sigma)) \subseteq W_2(\cO_X/\wt S).
\]
is an ideal in $W_2(\cO_X/\wt S)$ --- this is because $V(F_{X/S,*}\cO_X)$ is an ideal of square zero inside $W_2(\cO_X/\wt S)$, and $\ker(\sigma)\subseteq F_{X/S,*}\cO_X$ is an $\cO_{X'}$-submodule. We define the sheaf of rings $\cO_{\wt X(\sigma)}$ as the quotient
\[ 
	\cO_{\wt X(\sigma)} = W_2(\cO_X/\wt S) / I_\sigma.
\]
By construction, it fits inside a commutative diagram with exact rows
\[ 
	\xymatrix{
		0\ar[r] & F_{X/S,*}\cO_X \ar[r]^V \ar[d]_\sigma & W_2(\cO_X/\wt S) \ar[r] \ar[d] & \cO_{X'} \ar@{=}[d] \ar[r] & 0 \\
		0\ar[r] & \cO_{X'} \ar[r] & \cO_{\wt X(\sigma)} \ar[r] & \cO_{X'} \ar[r] & 0
	}
\]
lying over
\[ 
	\xymatrix{
		0\ar[r] & \cO_S \ar[r] & \cO_{\wt S} \ar[r] & \cO_S\ar[r] & 0.
	}
\]
We deduce the following.

\begin{prop} \label{prop:can-lift}
	The ringed space $\wt X(\sigma) = (X, \cO_{\wt X(\sigma)})$ is a scheme, and the surjection $\cO_{\wt X(\sigma)}\to \cO_{X'}$ exhibits $\wt X(\sigma)$ as a flat lifting of $X'$ over $\wt S$. The construction defines a~functor
	\[ 
		(X, \sigma)\mapsto \wt X(\sigma) \colon \mathbf{FSplit}_S \ra \mathbf{Sch}_{\wt S}
	\] 
	together with a natural isomorphism between the two compositions in the square below
	\[ 
		\xymatrix{
			\mathbf{FSplit}_S \ar[r] \ar[d]_{\mathrm{forget}} & \mathbf{Sch}_{\wt S} \ar[d]^{-\times_{\wt S} S} \\
			\mathbf{Sch}_S \ar[r]_{F_S^*} & \mathbf{Sch}_S,
		}
	\]
	i.e.\ $\wt X(\sigma)\times_{\wt S} S \isom X'$.
\end{prop}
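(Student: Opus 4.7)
All the assertions are local on $X$ (equivalently on $X'$, since these share the same topological space), so I would restrict to an affine open $U \subseteq X$ admitting a lifting $\wt U$ over $\wt S$, which exists by smoothness of $X/S$. On such a $U$ the relative Witt sheaf is described concretely as $W_2(\cO_X/\wt S)|_U \cong W_{\wt U}$, the explicit subsheaf of $\cO_{\wt U}$ given in~\eqref{eqn:defwtilde}, and $I_\sigma|_U = V(\ker \sigma|_U)$. The $F$-splitting decomposition
\[
F_{X/S,*}\cO_X = F_{X/S}^*(\cO_{X'}) \oplus \ker \sigma
\]
as $\cO_{X'}$-modules, combined with exactness of the defining short exact sequence for $W_2(\cO_X/\wt S)$, gives the stated exact sequence for $\cO_{\wt X(\sigma)}$ upon quotienting by $V(\ker \sigma)$, with the first copy of $\cO_{X'}$ realized as $V(F_{X/S}^*\cO_{X'})\subseteq \cO_{\wt X(\sigma)}$ and identified with $\cO_{X'}$ via $\sigma$.

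I would then verify in turn that $\wt X(\sigma)$ is a scheme, flat over $\wt S$, with reduction $X'$. Schemeness is formal: any square-zero extension of a scheme by a quasi-coherent sheaf of ideals is again a scheme, since on an affine open of $X'$ the ring of sections provides the required affine chart. For flatness, by the local criterion for a square-zero thickening of $\wt S$ it suffices to check that multiplication by $p\in \cO_{\wt S}$ identifies the square-zero ideal of $\cO_{\wt X(\sigma)}$ with $\cO_{X'}$ via the isomorphism just constructed. Concretely, for $f \in W_{\wt U}$ the reduction $f \bmod p$ lies in $F_{X/S}^*(\cO_{X'})$, so $f \bmod p = F_{X/S}^*(\sigma(f \bmod p))$ by $\sigma \circ F_{X/S}^* = \mathrm{id}$, and $pf = V(f\bmod p)$ descends modulo $I_\sigma$ to the element of $\cO_{X'}$ identified above. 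Reduction modulo $p$ then recovers the bottom row of the defining diagram, giving $\wt X(\sigma)\times_{\wt S} S \isom X'$.

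For functoriality: a morphism $f\colon (X, \sigma_X) \to (Y, \sigma_Y)$ in $\mathbf{FSplit}_S$ satisfies by definition $\sigma_X \circ f^* = f^* \circ \sigma_Y$. The naturality of the relative Witt vector construction provides a map $f^* \colon W_2(\cO_Y/\wt S) \to f_* W_2(\cO_X/\wt S)$ commuting with $V$, and the splitting compatibility forces $f^*(\ker \sigma_Y) \subseteq \ker \sigma_X$, whence $f^*(I_{\sigma_Y}) \subseteq I_{\sigma_X}$. The map descends to the quotients to give the desired morphism $\wt X(\sigma_X) \to \wt Y(\sigma_Y)$ over $\wt S$, and naturality of the whole construction yields the square of categories with the indicated base change. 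The entire proof is a matter of careful bookkeeping once the framework of $W_2(\cO_X/\wt S)$ is in place; the only step requiring genuine attention is the identification of the square-zero ideal of $\cO_{\wt X(\sigma)}$ with $\cO_{X'}$ via $\sigma$, which simultaneously underpins the extension structure, the flatness check, and the reduction modulo $p$.
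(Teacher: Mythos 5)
Your proposal is correct and follows essentially the same route as the paper: the paper treats the construction in \S\ref{ss:cons-canlift} (the observation that $I_\sigma$ is an ideal, the pushout diagram, and the compatibility with the extension of $\cO_S$ by $\cO_S$) as the proof, and you simply spell out the verifications — schemeness of a square-zero extension, the flatness check via the local criterion using $pf = V(f \bmod p)$ and $\sigma \circ F_{X/S}^* = \mathrm{id}$, and functoriality through $f^*(\ker\sigma_Y) \subseteq f_*\ker\sigma_X$ — that the paper leaves implicit.
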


\begin{remark}
One can check that $\Spec W_2(\cO_X/\wt S)$ represents the following functor on the category of square-zero thickenings of $X'$ over $\wt S$ and closed immersions:
\[ 
	0\to M\to \cO_{Y} \to \cO_{X'}\to 0 \quad \mapsto \quad \{\text{surjections }\alpha\colon F_{X/S,*}\cO_X\to M\,|\, \alpha(1) = p \}.
\]
In particular $F$-splittings of $X/S$ are in natural bijection with $\wt S$-liftings of $X'$ endowed with a closed immersion to $\Spec W_2(\cO_X/\wt S)$ over $\wt S$. This should be contrasted with the fact that (for $S$ perfect) maps \emph{from} $\Spec W_2(\cO_X/\wt S)$ control Frobenius liftings.
\end{remark}

Combining this with Corollary~\ref{cor:uniq-rel-fsplit}(a), we obtain:

\begin{cor} \label{cor:can-lift-cy}
	The canonical lifting functor from Proposition~\ref{prop:can-lift} produces a functor $X\mapsto \wt X$ from smooth $X/S$ whose geometric fibers are $1$-ordinary varieties with trivial canonical class to smooth $\wt X/\wt S$ with the same property, together with a functorial isomorphism $\wt X\times_{\wt S} S \isom X'$.
\end{cor}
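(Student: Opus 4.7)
The plan is to concatenate two already-established functors. First, by Corollary~\ref{cor:uniq-rel-fsplit}(a), the hypothesis that the geometric fibers of $X/S$ are $1$-ordinary varieties with trivial canonical class guarantees a unique relative $F$-splitting $\sigma \colon F_{X/S,*}\cO_X \to \cO_{X'}$. Uniqueness upgrades the assignment $X \mapsto (X,\sigma)$ to a functor from the category of such $X/S$ to $\mathbf{FSplit}_S$: for a morphism $f \colon X_1 \to X_2$ over $S$, the pullback $f^*\sigma_{X_2}$ is a relative $F$-splitting of a relevant subfamily, and by uniqueness it must agree with $\sigma_{X_1}$ where both are defined, giving a morphism in $\mathbf{FSplit}_S$. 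Composing with the functor $(X,\sigma) \mapsto \wt X(\sigma)$ from Proposition~\ref{prop:can-lift} yields the desired functor $X \mapsto \wt X$, together with the natural isomorphism $\wt X \times_{\wt S} S \isom X'$ provided by that proposition.

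Next I would verify smoothness of $\wt X$ over $\wt S$. Proposition~\ref{prop:can-lift} tells us that $\wt X \to \wt S$ is flat, and that its reduction modulo $p$ is $X' = F_S^* X$, which is smooth over $S$ (being a base change of the smooth morphism $X \to S$ along the absolute Frobenius). Since $\wt S$ is a flat lifting of $S$ modulo $p^2$, the kernel of $\cO_{\wt S} \twoheadrightarrow \cO_S$ is a square-zero ideal, and a flat $\wt S$-scheme whose reduction is smooth over $S$ is automatically smooth over $\wt S$. Applying this criterion to $\wt X$ gives smoothness.

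Finally, the assertion that the geometric fibers of $\wt X/\wt S$ retain the stated properties is essentially tautological: every geometric point of $\wt S$ factors through its reduction modulo $p$, so the geometric fibers of $\wt X \to \wt S$ coincide with the geometric fibers of $X' \to S$. The latter are Frobenius twists of the geometric fibers of $X \to S$, and both $1$-ordinarity (which is the bijectivity of a Frobenius-semilinear map on a cohomology group, preserved under Frobenius twist) and triviality of the canonical class (preserved under any base change) survive.

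The only nonroutine point I anticipate is checking functoriality of $X \mapsto (X,\sigma)$ carefully: one needs to argue that morphisms $f \colon X_1 \to X_2$ in the source category are compatible with the canonical splittings. This is handled by the uniqueness clause of Corollary~\ref{cor:uniq-rel-fsplit}(a) together with the fact that $\sigma$ was constructed from an adjunction map that commutes with arbitrary base change (so restricts well to open subsets and pulls back correctly along $f$); every other step above is formal.
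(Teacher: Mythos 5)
Your proposal matches the paper's own proof, which is simply the one-line remark ``Combining this with Corollary~\ref{cor:uniq-rel-fsplit}(a), we obtain'' placed immediately before the statement: one takes the unique relative $F$-splitting from Corollary~\ref{cor:uniq-rel-fsplit}(a) and feeds it into the functor of Proposition~\ref{prop:can-lift}. You have spelled out the routine verifications (functoriality via uniqueness of $\sigma$, smoothness from flatness over a square-zero thickening plus smoothness of the reduction, and the fact that all geometric fibers of $\wt X/\wt S$ live over $S$), which the paper leaves implicit.
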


\subsection{First properties}

As a consequence of functoriality of $\wt X(\sigma)$, we obtain the following.

\begin{lemma}
	Let $S$ and $\wt S$ be as in \S\ref{ss:cons-canlift}, and let $(X, \sigma)$ be an $F$-split scheme over $S$. Let $Y\subseteq X$ be a closed subscheme which is compatible with $\sigma$ in the sense that $\sigma(F_{X/S,*} I_Y)\subseteq I_Y'$. Then $\sigma$ induces an $F$-splitting $\sigma_Y$ on $Y$ relative to $S$, and $\wt Y(\sigma_Y)$ is a closed subscheme of $\wt X(\sigma)$ lifting $Y'\subseteq X'$.
\end{lemma}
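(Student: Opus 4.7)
The proof is essentially a routine unwinding of the construction of $\wt X(\sigma)$ and the functoriality of $W_2(\cO_-/\wt S)$. I would organize it in three steps: (1) construct $\sigma_Y$ and check it is an $F$-splitting, (2) use functoriality to obtain a surjection $W_2(\cO_X/\wt S)\twoheadrightarrow W_2(\cO_Y/\wt S)$ carrying $I_\sigma$ into $I_{\sigma_Y}$, and (3) identify the reduction modulo $p$ of the resulting closed immersion with $Y' \hookrightarrow X'$.

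\smallskip

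\noindent\textbf{Step 1.} Since $F_{X/S}$ is affine, $F_{X/S,*}$ is exact, so $F_{Y/S,*}\cO_Y = F_{X/S,*}\cO_X / F_{X/S,*} I_Y$. The hypothesis $\sigma(F_{X/S,*} I_Y) \subseteq I_{Y'}$ therefore lets us pass to the quotient and obtain a map
\[
	\sigma_Y \colon F_{Y/S,*}\cO_Y \ra \cO_{Y'}.
\]
Comparing the defining square for $\sigma_Y$ with that of $\sigma$ shows that the diagram with rows $F_{Y/S}^*$ followed by $\sigma_Y$ and $F_{X/S}^*$ followed by $\sigma$ is commutative; since the top composite is $\mathrm{id}_{\cO_{X'}}$, the bottom is $\mathrm{id}_{\cO_{Y'}}$, i.e.\ $\sigma_Y$ is an $F$-splitting of $Y/S$.

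\smallskip

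\noindent\textbf{Step 2.} By the functoriality of $W_2(\cO_-/\wt S)$ established at the end of Section~\ref{s:relwitt}, the closed immersion $i\colon Y\hookrightarrow X$ induces a morphism of extensions
\[
	\xymatrix{
		0\ar[r] & F_{X/S,*}\cO_X \ar[r]^-V \ar@{->>}[d] & W_2(\cO_X/\wt S)\ar[r]\ar[d] & \cO_{X'}\ar@{->>}[d]\ar[r] & 0 \\
		0\ar[r] & i_* F_{Y/S,*}\cO_Y \ar[r]^-V & i_* W_2(\cO_Y/\wt S)\ar[r] & i_*\cO_{Y'}\ar[r] & 0
	}
\]
on $X$. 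The outer vertical maps are surjections, so the snake lemma forces the middle vertical map to be a surjection of sheaves of rings as well. Now if $f\in \ker\sigma$, then $\sigma(f) = 0$ lies in $I_{Y'}$, so the image of $f$ in $F_{Y/S,*}\cO_Y$ lies in $\ker \sigma_Y$. By naturality of $V$ the induced map therefore carries
\[
	I_\sigma = V(\ker\sigma) \ \longrightarrow \ V(\ker\sigma_Y) = I_{\sigma_Y}.
\]
Passing to the quotient produces a surjection of sheaves of $\cO_{\wt S}$-algebras
\[
	\cO_{\wt X(\sigma)} = W_2(\cO_X/\wt S)/I_\sigma \ \twoheadrightarrow \ W_2(\cO_Y/\wt S)/I_{\sigma_Y} = \cO_{\wt Y(\sigma_Y)},
\]
i.e.\ a closed immersion $\wt Y(\sigma_Y)\hookrightarrow \wt X(\sigma)$ of schemes over $\wt S$.

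\smallskip

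\noindent\textbf{Step 3.} By the explicit description of the reduction map $\cO_{\wt X(\sigma)}\to \cO_{X'}$ (the bottom-right arrow of the construction diagram for $\wt X(\sigma)$) and the compatibility square above, the reduction modulo the natural ideal identifies the resulting closed immersion with $\cO_{X'}\twoheadrightarrow \cO_{Y'}$. Hence $\wt Y(\sigma_Y)\hookrightarrow \wt X(\sigma)$ is a lifting of $Y'\hookrightarrow X'$ over $\wt S$, as required. There is no genuine obstacle in the argument; the only point requiring care is the exactness of $F_{X/S,*}$ in Step~1 and the snake-lemma surjectivity in Step~2, both of which are immediate.
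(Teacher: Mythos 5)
The paper states this lemma without a detailed proof, remarking only that it ``is a consequence of functoriality of $\wt X(\sigma)$''. Your argument is a correct and careful unpacking of exactly that functoriality (inducing $\sigma_Y$ by passage to the quotient, invoking the functoriality of $W_2(\cO_{-}/\wt S)$ from Section~\ref{s:relwitt} to transport $I_\sigma$ into $I_{\sigma_Y}$, and reading off the reduction mod $p$ from the construction diagram), so it matches the paper's intended approach.
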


We shall not use the above result, as the unique $F$-splitting on a $1$-ordinary variety with trivial canonical class cannot be compatible with a non-empty proper subvariety. In contrast, the following will be quite useful.

\begin{lemma} \label{lemma:teichmueller}
	Let $S$ be a perfect scheme over $\FF_p$ and let $\wt S = W_2(S)$. Let $(X, \sigma)$ be an $F$-split scheme over $S$. Then the composition of the Teichm\"uller lift 
	\[ 
		[-]\colon \cO_{X'} \ra W_2(\cO_X) = W_2(\cO_X/\wt S)
	\]
	with the restriction map $W_2(\cO_X)\to \cO_{\wt X(\sigma)}$ yields multiplicative sections of the restriction maps 
	\[ 
		\cO_{\wt X(\sigma)}\ra \cO_{X'}
		\quad\text{and}\quad
		\cO_{\wt X(\sigma)}^\times\ra \cO_{X'}^\times.
	\]
	Consequently, for every $r\geq 0$, the restriction map
	\[ 
		H^r_\et(\wt X(\sigma), \GG_m) \ra H^r_\et(X', \GG_m)
	\]
	admits a natural section. In particular, line bundles and log structures on $X'$ have canonical liftings to $\wt X(\sigma)$.
\end{lemma}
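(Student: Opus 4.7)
The plan is to verify first that the composition $\cO_{X'} \xra{[-]} W_2(\cO_X) \to \cO_{\wt X(\sigma)}$ is a multiplicative section of the restriction, and then to pass to units, to \'etale cohomology of $\GG_m$, and finally to log structures. The starting point is a direct calculation in Witt vectors: in $W_2(\cO_X)$, the Teichm\"uller lift is $[f] = (f,0)$, and multiplicativity $(f,0)(g,0) = (fg,0)$ is visible from the usual Witt addition and multiplication formulas. The restriction map $W_2(\cO_X) \to \cO_{X'}$ is the first projection $(a,b)\mapsto a$ (using the identification $\cO_{X'}=\cO_X$ coming from perfectness of $S$), so the composition with $[-]$ is the identity, and it is preserved after passing to the quotient $\cO_{\wt X(\sigma)} = W_2(\cO_X)/I_\sigma$.

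Next I would check that the resulting section $\cO_{X'} \to \cO_{\wt X(\sigma)}$ restricts to units: since it is multiplicative and sends $1$ to $1$, if $f\in \cO_{X'}^\times$ with inverse $f^{-1}$, then the images of $[f]$ and $[f^{-1}]$ multiply to $1$, so $[f]$ is a unit in $\cO_{\wt X(\sigma)}$. This yields the desired multiplicative section $\cO_{X'}^\times \to \cO_{\wt X(\sigma)}^\times$ of the restriction of unit groups. It is important here that $\sigma$ plays no role in this verification (beyond existing); the entire construction happens at the level of $W_2(\cO_X)$ and passes to the quotient by $I_\sigma$ automatically. The main subtle point is nothing more than remembering that the perfectness of $S$ is exactly what makes the Teichm\"uller lift available on $\cO_{X'}$ with values in $W_2(\cO_X/\wt S) = W_2\cO_X$.

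For the cohomological statement, I would note that the closed immersion $X' \hookrightarrow \wt X(\sigma)$ is a homeomorphism of underlying topological spaces and induces an equivalence of small \'etale sites, under which $\GG_m$ on $\wt X(\sigma)$ corresponds to $\cO_{\wt X(\sigma)}^\times$ viewed as a sheaf of abelian groups on $X'_\et$. The Teichm\"uller section thus gives a morphism of \'etale sheaves of abelian groups splitting the restriction $\cO_{\wt X(\sigma)}^\times \to \cO_{X'}^\times$, and applying $H^r_\et(X'_\et, -)$ yields the desired section of $H^r_\et(\wt X(\sigma), \GG_m) \to H^r_\et(X', \GG_m)$ for every $r\geq 0$. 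Specializing to $r=1$ recovers a canonical lifting of line bundles from $X'$ to $\wt X(\sigma)$.

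Finally, the claim for log structures I would deduce from the usual pushout construction: given a log structure $\alpha\colon M \to \cO_{X'}$, define $\wt M$ to be the pushout of monoids $M \oplus_{\cO_{X'}^\times} \cO_{\wt X(\sigma)}^\times$, where $\cO_{X'}^\times \to M$ is the canonical inclusion and $\cO_{X'}^\times \to \cO_{\wt X(\sigma)}^\times$ is the Teichm\"uller section. The structure map $\wt M \to \cO_{\wt X(\sigma)}$ is given on the two factors by $[-]\circ \alpha$ and by the inclusion of units, and a direct check shows that it makes $\wt M$ into a log structure on $\wt X(\sigma)$ whose pullback to $X'$ is $M$. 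I expect no serious obstacle here; the only point requiring care is the verification that the map from the pushout to $\cO_{\wt X(\sigma)}$ induces an isomorphism on the preimage of units, which follows from the corresponding property of $\alpha$ together with the multiplicativity of $[-]$.
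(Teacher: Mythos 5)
Your proof is correct, and since the paper states this lemma without proof (treating it as routine), your argument simply fills in the expected details: multiplicativity of the Teichm\"uller lift, preservation of unit elements, passage through the topological invariance of the small \'etale site for the nilpotent thickening $X'\hookrightarrow \wt X(\sigma)$, and the standard pushout construction for lifting log structures along a multiplicative section of units. One small point worth being explicit about: your claim that the pushout $\wt M = M\oplus_{\cO_{X'}^\times}\cO_{\wt X(\sigma)}^\times$ is already a log structure (not merely a pre-log structure needing sheafification/associated log structure) does hold here, but it relies on the fact that $\cO_{\wt X(\sigma)}\to\cO_{X'}$ is a nilpotent (square-zero) surjection, so $f\in\cO_{\wt X(\sigma)}$ is a unit if and only if its image in $\cO_{X'}$ is; this is exactly what lets you reduce the fiber-product computation $\wt M\times_{\cO_{\wt X(\sigma)}}\cO_{\wt X(\sigma)}^\times\isom\cO_{\wt X(\sigma)}^\times$ to the corresponding property of $\alpha$ on $X'$.
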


\begin{lemma} \label{lemma:hw-ho}
	Let $S$ be a noetherian $\FF_p$-scheme and let $\wt S$ be a flat lifting of $S$ over $\ZZ/p^2\ZZ$. Let $(X, \sigma)$ be an $F$-split scheme over $S$ such that $X$ is proper over $S$. Then the restriction maps
	\[ 
		i^* \colon H^*(X, W_2(\cO_X/\wt S)) \ra H^*(\wt X, \cO_{\wt X})
	\]
	are isomorphisms.
\end{lemma}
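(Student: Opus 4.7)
The plan is to identify $i^*$ with the map induced in cohomology by the natural surjection of sheaves $W_2(\cO_X/\wt S)\twoheadrightarrow \cO_{\wt X(\sigma)}$, whose kernel is the square-zero ideal $I_\sigma = V(\ker\sigma)$, and to show that $H^*(X, I_\sigma) = 0$. Since $V$ is injective, $I_\sigma\cong \ker\sigma$ as sheaves of abelian groups. The splitting $\sigma$ decomposes $F_{X/S,*}\cO_X\cong \cO_{X'}\oplus \ker\sigma$ as $\cO_{X'}$-modules, and the projection $F_{X/S,*}\cO_X\twoheadrightarrow B\Omega^1_{X/S}$ restricts to an isomorphism $\ker\sigma\isomto B\Omega^1_{X/S}$; thus it suffices to show $H^*(X, B\Omega^1_{X/S}) = 0$.

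I would attack this via the direct images of $f\colon X\to S$ and of $f'\colon X'\to S$. The restriction of $\sigma$ to any geometric fiber $X_{\bar s}$ gives an absolute $F$-splitting, so by Proposition~\ref{prop:1-ord} the Hasse--Witt operation is bijective on every $H^i(X_{\bar s}, \cO_{X_{\bar s}})$. Localizing at each point of $S$ and applying Proposition~\ref{prop:fsplit-family}, one gets that $R^if_*\cO_X$ is locally free on $S$ with formation compatible with arbitrary base change; the same argument, applied to $f'$ after pulling $\sigma$ back along $F_S$, shows that $R^if'_*\cO_{X'}$ is locally free on $S$ of the same fiberwise rank as $R^if_*\cO_X$, since the cohomology of a smooth proper variety and that of its Frobenius twist have equal dimension over the residue field. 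Now pushing the splitting $F_{X/S,*}\cO_X\cong \cO_{X'}\oplus \ker\sigma$ forward along $f'$, and using that $F_{X/S}$ is finite so that $f = f'\circ F_{X/S}$ and $R^jF_{X/S,*}=0$ for $j>0$, yields a direct sum decomposition
\[
R^if_*\cO_X \;\cong\; R^if'_*\cO_{X'}\,\oplus\,R^if'_*\ker\sigma.
\]
Because the first two terms are locally free of equal rank, the complementary summand $R^if'_*\ker\sigma$ is locally free of rank zero, hence vanishes. The Leray spectral sequence for $f'$ then yields $H^*(X, \ker\sigma) = 0$, and the long exact sequence associated with $0\to I_\sigma\to W_2(\cO_X/\wt S)\to \cO_{\wt X(\sigma)}\to 0$ gives the desired isomorphism.

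The main obstacle is establishing local freeness of $R^if_*\cO_X$: the absolute argument for $H^*(X, B\Omega^1_X) = 0$ over a field relies on a dimension count between $H^*(X,\cO_X)$ and $H^*(X',\cO_{X'})$ that has no direct analog in the relative setting, and one must instead leverage fiberwise $F$-splitting together with Proposition~\ref{prop:fsplit-family} to promote cohomological bijectivity on the fibers to local freeness on $S$, after which the rank-count argument becomes available.
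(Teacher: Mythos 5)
Your proof is correct, and the identification $\ker(i^*)\cong\ker\sigma\cong B\Omega^1_{X/S}$ at the outset matches the paper. But you then take a longer route than necessary. The paper's proof does not need local freeness of $R^if_*\cO_X$ or any rank count: it invokes Proposition~\ref{prop:1-ord} in the form (i)$\Rightarrow$(ii), i.e.\ $F$-split implies $H^*(X_s, B\Omega^1_{X_s}) = 0$ on every fiber $X_s$, and then the single sheaf $B\Omega^1_{X/S}$ (flat over $S$, being a direct summand of $F_{X/S,*}\cO_X$) has all higher direct images vanishing by cohomology and base change \cite[Theorem~12.11]{Hartshorne}, whence $H^*(X, B\Omega^1_{X/S})=0$ via Leray. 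Your closing remark---that ``the absolute argument\ldots has no direct analog in the relative setting''---is where you diverged: the fiberwise \emph{vanishing} from Proposition~\ref{prop:1-ord}(ii) does propagate directly (a coherent sheaf whose fiber cohomology vanishes everywhere has vanishing direct images), so there is no need to route through Proposition~\ref{prop:fsplit-family} and compare ranks of $R^if_*\cO_X$ with $R^if'_*\cO_{X'}$. That said, your argument is sound: Proposition~\ref{prop:fsplit-family} applies to both $f$ and $f'$ (the latter after pulling $\sigma$ back along $F_S$), the $F$-splitting yields $R^if_*\cO_X\cong R^if'_*\cO_{X'}\oplus R^if'_*\ker\sigma$, and the equality of fiberwise ranks forces the locally free summand $R^if'_*\ker\sigma$ to vanish. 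It is just a heavier machine for the same conclusion---effectively re-deriving the base-change-to-zero step via the local-freeness machinery rather than citing it directly.
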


\begin{proof}
The restriction map $i^*\colon W_2(\cO_X/\wt S)\to \cO_{\wt X}$ fits into a short exact sequence
\[ 
	0\ra B\Omega^1_{X/S} \ra W_2(\cO_X/\wt S) \ra \cO_{\wt X} \ra 0.
\]
By Proposition~\ref{prop:1-ord}, we have $H^*(X_s, B\Omega^1_{X_s}) = 0$ for all $s\in S$, and hence $H^*(X, B\Omega^1_{X/S}) = 0$ by \cite[Theorem~12.11]{Hartshorne} and the Leray spectral sequence for $X/S$.
\end{proof}

\begin{lemma}
	Let $X/k$ be a $1$-ordinary variety with trivial canonical class, and let $\wt X / W_2(k)$ be its canonical lifting. Suppose that $H^d(X, W\cO_X)$ is torsion-free. Then $\omega_{\wt X/W_2(k)} \isom \cO_{\wt X}$.
\end{lemma}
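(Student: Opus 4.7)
The plan is to show that $\omega_{\wt X/W_2(k)}$ is trivial by producing a global trivializing section via a Serre-duality computation. The key input will be the assumption that $H^d(X, W\cO_X)$ is torsion-free, which together with $1$-ordinarity will force $H^d(\wt X, \cO_{\wt X})$ to be free of rank one over $W_2(k)$.

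First, I would identify $H^d(\wt X, \cO_{\wt X})$ with $H^d(X, W_2\cO_X)$ by invoking Lemma~\ref{lemma:hw-ho} (since $k$ is perfect, $W_2(\cO_X/\wt S) = W_2\cO_X$). To compute the latter I would use the short exact sequence
\[
0 \ra W\cO_X \xra{V^2} W\cO_X \ra W_2\cO_X \ra 0
\]
together with the standard vanishing $H^{d+1}(X, W\cO_X) = 0$, proved by induction from $H^{d+1}(X, \cO_X) = 0$ via the filtration by the subsheaves $V^n(\cO_X)\subseteq W_{n+1}\cO_X$. Torsion-freeness of $H^d(X, W\cO_X)$ then yields $H^d(X, W_2\cO_X) \isom H^d(X, W\cO_X) / V^2 H^d(X, W\cO_X)$. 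The $1$-ordinarity assumption implies that $F$ is bijective on $H^d(X, W\cO_X)$ (lifting bijectivity on $H^d(X, \cO_X)$ up the Witt tower by $V$-adic Nakayama), so $V = pF^{-1}$ and $V^2 H^d(X, W\cO_X) = p^2 H^d(X, W\cO_X)$. Since $H^d(X, W\cO_X)$ is $V$-adically complete with $H^d(X, W\cO_X)/VH^d(X, W\cO_X) \isom H^d(X, \cO_X) \isom k$ (one-dimensional as $\omega_X$ is trivial), a graded Nakayama argument combined with torsion-freeness forces $H^d(X, W\cO_X) \isom W(k)$, whence $H^d(\wt X, \cO_{\wt X}) \isom W_2(k)$.

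Next, I would apply Grothendieck duality for the smooth proper morphism $\wt X \to \Spec W_2(k)$ of relative dimension $d$, which gives
\[
R\Gamma(\wt X, \omega_{\wt X}[d]) \isom R\Hom_{W_2(k)}(R\Gamma(\wt X, \cO_{\wt X}), W_2(k)).
\]
Taking the degree $-d$ cohomology, the only surviving term is $\Hom_{W_2(k)}(H^d(\wt X, \cO_{\wt X}), W_2(k))$: the potentially contributing $\Ext^1_{W_2(k)}(H^{d+1}(\wt X, \cO_{\wt X}), W_2(k))$ vanishes because $H^{d+1}(\wt X, \cO_{\wt X}) = 0$ on cohomological-dimension grounds. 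Therefore $H^0(\wt X, \omega_{\wt X}) \isom W_2(k)$, a free $W_2(k)$-module of rank one whose reduction map to $H^0(X', \omega_{X'}) \isom k$ is the surjection $W_2(k) \twoheadrightarrow k$.

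To conclude, I would lift a trivializing section of $\omega_{X'}$ to a section $s \in H^0(\wt X, \omega_{\wt X})$ and consider the associated morphism of line bundles $\cO_{\wt X} \to \omega_{\wt X}$, $1 \mapsto s$; it is an isomorphism modulo $p$, hence an isomorphism by Nakayama's lemma applied stalk-wise. The main technical point is the identification $H^d(X, W\cO_X) \isom W(k)$: morally this is the statement that for a $1$-ordinary Calabi--Yau the top pro-Witt cohomology is of slope zero, but carrying it out rigorously demands some care with inverse limits and the $V$-adic filtration.
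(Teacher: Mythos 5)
Your argument is correct and follows essentially the same route as the paper: identify $H^d(\wt X,\cO_{\wt X})$ with $H^d(X,W_2\cO_X)$ via Lemma~\ref{lemma:hw-ho}, use torsion-freeness (plus $1$-ordinarity, so $V=pF^{-1}$ with $F$ bijective) to get a free rank-one $W_2(k)$-module, apply Grothendieck duality to deduce that $f_*\omega_{\wt X}$ is free of rank one with formation commuting with base change, and conclude via the adjunction-map-plus-Nakayama argument from Corollary~\ref{cor:rfomega}/Corollary~\ref{cor:uniq-rel-fsplit}. The only cosmetic difference is that you pin down $H^d(X,W\cO_X)\isom W(k)$ explicitly, while the paper only records that torsion-freeness forces $H^d(X,W_2\cO_X)$ to be free over $W_2(k)$; both reach the same place.
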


\begin{proof}
The assumption on torsion implies that $H^d(X, W_2\cO_X)$ is a free $W_2(k)$-module. On the other hand, by Lemma~\ref{lemma:hw-ho}, we have $H^d(X, W_2\cO_X)\isom H^d(\wt X, \cO_{\wt X})$, thus the latter is locally free. Since $d=\dim X$, the formation of $R^d f_* \cO_{\wt X}$ commutes with base change. The rest of the argument follows the lines of proof of Corollary~\ref{cor:rfomega} (for $i=0$).
\end{proof}


\section{Frobenius and the Hodge filtration (I)}
\label{s:fhodge1}

\noindent
Let $S$ be a noetherian affine $k$-scheme and let $\wt S$ be a flat lifting of $S$ over $W_2(k)$. Let $(X, \sigma)$ be a smooth projective $F$-split scheme over $S$ 
endowed with an $F$-splitting $\sigma$. We endow $\wt S$ with the natural PD-structure on the ideal $(p)$. The key property of the canonical lifting $\wt X(\sigma)/\wt S$ defined in the previous section, which will enable us to get our hands on the associated crystal, is the following.

\begin{thm} \label{thm:fhodge1}
	For every lifting $F_{\wt S}\colon \wt S\to \wt S$ of the Frobenius of $S$, the crystalline Frobenius 
	\[
		\phi(F_{\wt S})\colon H^n_{\rm dR}(\wt X(\sigma)/\wt S) \ra H^n_{\rm dR}(\wt X(\sigma)/\wt S)
	\]
	maps $F^1 H^n_{\rm dR}(\wt X(\sigma)/\wt S)$ into itself for every $n\geq 0$
\end{thm}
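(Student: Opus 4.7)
The plan is to reduce Theorem~\ref{thm:fhodge1} to the existence of a global endomorphism $\wt F_\sigma \colon \wt X(\sigma) \to \wt X(\sigma)$ of $\wt S$-schemes lifting the absolute Frobenius of $X'$ and lying over $F_{\wt S}$. Granted such a $\wt F_\sigma$, its pullback on the de Rham complex $\wt F_\sigma^* \colon \Omega^\bullet_{\wt X(\sigma)/\wt S} \to \Omega^\bullet_{\wt X(\sigma)/\wt S}$ is automatically a morphism of complexes, so it preserves the stupid filtration by $\Omega^{\bullet \geq i}$; passing to hypercohomology, the induced map preserves $F^i H^n_{\rm dR}(\wt X(\sigma)/\wt S)$. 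Standard crystalline formalism then identifies $\wt F_\sigma^*$ with the crystalline Frobenius $\phi(F_{\wt S})$ under the canonical isomorphism $H^n_{\rm cris}(X'/\wt S) \cong H^n_{\rm dR}(\wt X(\sigma)/\wt S)$, yielding the conclusion.

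To construct $\wt F_\sigma$, I would use the description $\cO_{\wt X(\sigma)} = W_2(\cO_X/\wt S)/I_\sigma$ with $I_\sigma = V(\ker \sigma)$ from \S\ref{s:canlift}. First, one defines a candidate Witt Frobenius on $W_2(\cO_X/\wt S)$ as follows: locally, pick a lift $\wt X$ of $X$ over $\wt S$ and a Frobenius lift $\phi \colon \cO_{\wt X} \to \cO_{\wt X}$ over $F_{\wt S}$. Because $\phi(f) \bmod p = f^p$ is always a $p$-th power and hence lies in $\cO_{X'}\subseteq \cO_X$, the map $\phi$ carries $\cO_{\wt X}$ into the subsheaf $W_{\wt X} \cong W_2(\cO_X/\wt S)$. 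By Lemma~\ref{lem:fundamental}, the restriction $\phi|_{W_{\wt X}}$ is independent of the choice of $\phi$, and these local maps glue to a global ring endomorphism $F$ of $W_2(\cO_X/\wt S)$ lifting the absolute Frobenius on $\cO_{X'}$; a direct calculation using $\phi(p\wt\delta) = p\phi(\wt\delta) \equiv p\wt\delta^p \pmod{p^2}$ yields the Witt-vector identity $F(V(\delta)) = V(\delta^p)$.

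The main obstacle is showing that $F$ descends to the quotient $\cO_{\wt X(\sigma)}$, i.e., that $F(I_\sigma) \subseteq I_\sigma$: by the identity above, this would amount to $\delta^p \in \ker \sigma$ whenever $\delta \in \ker \sigma$, and since $\delta^p$ always lies in $\cO_{X'}$, on which $\sigma$ acts as the identity, this fails for the naive $F$. Thus the essential step is to modify $F$ by a suitable $\sigma$-dependent correction term --- presumably of the form $F + V \circ \psi$ for a Frobenius-linear map $\psi$ constructed from $\sigma$ and $\phi$ --- whose role is precisely to absorb the discrepancy $V(\delta^p)$ in the quotient, producing a well-defined endomorphism $\wt F_\sigma$ of $\cO_{\wt X(\sigma)}$ that is independent of $\phi$ and compatible with the ring structure modulo $I_\sigma$. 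Verifying the existence and the ring-theoretic properties of this modification, which reflects the uniqueness of the relative $F$-splitting furnished by Corollary~\ref{cor:uniq-rel-fsplit}, is the technical heart of the proof.
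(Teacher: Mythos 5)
Your strategy is to produce a global Frobenius lifting $\wt F_\sigma\colon \wt X(\sigma)\to\wt X(\sigma)$ and then invoke the standard fact that a Frobenius lift induces the crystalline Frobenius and preserves the Hodge filtration. This is \emph{not} what the paper does, and the difference is not cosmetic: the proposed $\wt F_\sigma$ does not exist in general.

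You correctly compute that the naive Frobenius on $W_2(\cO_X/\wt S)$ sends $V(\delta)$ to $V(\delta^p)$, and that $\delta^p = F_{X/S}^*(\delta\otimes 1)$ lies in the $F_{X/S}^*$-image, so $\sigma(\delta^p)\neq 0$ and $F(I_\sigma)\not\subseteq I_\sigma$. You then defer the construction to a ``$\sigma$-dependent correction term'' and call it the technical heart. But the object you hope to build simply need not exist: a ring endomorphism of $\cO_{\wt X(\sigma)}$ lifting the absolute Frobenius of $\cO_{X'}$ is exactly a Frobenius lifting of $\wt X(\sigma)$, and the obstruction to the existence of a Frobenius lifting on a $W_2$-scheme is generally nonzero. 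The paper's own remark after the proof makes this explicit: retracting Frobenius liftings on $\wt X(\sigma)$ exist only locally, and over $\Spec k$ the set of global ones ``is either empty or a torsor under $\Hom(\Omega^1_X, B^1_X)$.'' Indeed, by the classification of $F$-liftable varieties in \cite{AchingerWitaszekZdanowicz}, a typical $1$-ordinary variety with trivial canonical class (e.g.\ a sufficiently general Calabi--Yau) is \emph{not} Frobenius-liftable modulo $p^2$, so no choice of correction term can make $F$ descend to $\cO_{\wt X(\sigma)}$ as a ring homomorphism. The $F$-splitting guarantees the existence of the lifting $\wt X(\sigma)$, not of a Frobenius lift on it.

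The paper's proof sidesteps the nonexistence by never asking for a global endomorphism of $\wt X(\sigma)$. Instead it only ever needs a Frobenius lift on an ambient smooth $\wt S$-scheme $\wt Y\supseteq \wt X(\sigma)$ (e.g.\ a projective space), which always exists, and transports information through the PD-envelope $\bar Y$. Concretely, it builds a derived-category morphism $t\colon \Omega^\bullet_{\wt X(\sigma)/\wt S}\to W_2(\cO_X/\wt S)$ via $\Omega^\bullet_{\wt Y/\wt S}|_{\bar Y}\to\cO_{\bar Y}\xrightarrow{\bar t^*}W_2(\cO_X/\wt S)$, where $\bar t$ is produced by the universal property of the PD-envelope from the Cartier-type map $t(F_{\wt Y})\colon \cO_{\wt Y}\to W_{\wt Y}$, and then proves two compatibilities: $t$ intertwines $\phi(F_{\wt S})$ with the \emph{functoriality} Frobenius $F$ on $W_2(\cO_X/\wt S)$ (which is defined on $W_2(\cO_X/\wt S)$ and not required to descend to the quotient), and $i^*\circ t$ recovers the projection $\Omega^\bullet_{\wt X(\sigma)/\wt S}\to\cO_{\wt X(\sigma)}$. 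Since $H^n(X, W_2(\cO_X/\wt S))\xrightarrow{\ i^*\ }H^n(\wt X(\sigma),\cO_{\wt X(\sigma)})$ is an isomorphism (Lemma~\ref{lemma:hw-ho}, using $1$-ordinarity), these two facts give the desired compatible endomorphism of $H^n/F^1\hookrightarrow H^n(\wt X(\sigma),\cO_{\wt X(\sigma)})$, which is all that is needed. If you want to salvage your approach, the correct replacement for ``construct a global Frobenius lift'' is ``construct the map $t$ and check the two commuting diagrams''; the correction-term idea should be abandoned.
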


\noindent The proof of Theorem~\ref{thm:fhodge1} occupies \S\ref{ss:fhodge1pf-start}--\ref{ss:fhodge1pf-end}. In \S\ref{ss:fhodge1-drw}, we relate the Hodge filtration to the Hodge--Witt filtration in the case $S=\Spec k$, which is used to characterize the canonical lifting in \S\ref{ss:fhodge1-uniq}.  In the subsequent \S\ref{ss:fhodge1-hw1}, we apply this theorem to relate $\phi$ to the first higher Hasse--Witt operation. In Section~\ref{ss:fhodge2}, we will revisit this idea for the modular lifting of Frobenius acting on the crystal associated to a universal deformation of a $1$-ordinary variety with trivial canonical class.



\begin{example}
	If $\wt X$ is a smooth and projective scheme over $\wt S = \Spec W_2(k)$ admitting a lift of Frobenius $\wt F$, then the crystalline Frobenius $\phi$ coincides with the map 
	\[ 
		\wt F{}^* \colon  H^*_{\rm dR}(\wt X/\wt S) \ra  H^*_{\rm dR}(\wt X/\wt S)
	\]
	and hence it preserves the Hodge filtration. In fact, $\wt X{}'$ is canonically isomorphic to $\wt X(\sigma)$ for any $F$-splitting $\sigma$, see \cite[Theorem~3.6.5]{AchingerWitaszekZdanowicz}.
\end{example}

\subsection{Proof of Theorem~\ref{thm:fhodge1}: first steps}
\label{ss:fhodge1pf-start}

Let $\wt X = \wt X(\sigma)$ and $\phi = \phi(F_{\wt S})$ for brevity, and denote by
\[ 
	i \colon \wt X \hookrightarrow W_2(X/\wt S) = \Spec W_2(\cO_X/\wt S)
\] 
the canonical closed immersion. 
By the definition of $F^1$, the projection $\smash{\Omega^\bullet_{\wt X/\wt S}}\to \cO_{\wt X}$ induces an injection
\[ 
	H^n_{\rm dR}(\wt X/\wt S)/F^1 \hookrightarrow H^n(\wt X, \cO_{\wt X}).
\]
To prove the required assertion, it thus suffices to produce a dotted arrow below making the square commute
\begin{equation} \label{eqn:square}
	\xymatrix{
		H^n_{\rm dR}(\wt X/\wt S) \ar[r] \ar[d]_{\phi} & H^n(\wt X, \cO_{\wt X}) \ar@{.>}[d] \\
		H^n_{\rm dR}(\wt X/\wt S) \ar[r]  & H^n(\wt X, \cO_{\wt X}). 
	}
\end{equation}
On the other hand, by Lemma~\ref{lemma:hw-ho}, the map $i^*$ induces an isomorphism
\[ 
	i^*\colon H^n(X, W_2(\cO_X/\wt S)) \isomlong  H^n(\wt X, \cO_{\wt X}),
\]
while the pair $(F_X, F_{\wt S})$ induces by functoriality of $W_2(\cO_X/\wt S)$ a map
\[ 
	F \colon H^n(X, W_2(\cO_X/\wt S)) \ra H^n(X, W_2(\cO_X/\wt S)).
\]
Consequently, it suffices to construct a morphism in the derived category
\begin{equation} \label{eqn:mapt} 
	t\colon \Omega^\bullet_{\wt X/\wt S} \ra W_2(\cO_X/\wt S)
\end{equation}
such that the diagrams
\begin{equation} \label{eqn:fhodge1}
	\xymatrix{
		\Omega^\bullet_{\wt X/\wt S} \ar[r]^-t \ar[d]_{\phi} & W_2(\cO_X/\wt S) \ar[d]^{F} \\
		\Omega^\bullet_{\wt X/\wt S} \ar[r]_-t  & W_2(\cO_X/\wt S) .
	}
\end{equation}
and
\begin{equation} \label{eqn:fhodge2}
	\xymatrix{
		& W_2(\cO_X/\wt S)  \ar[d]^{i^*} \\
		\Omega^\bullet_{\wt X/\wt S} \ar[ur]^t \ar[r]  & \cO_{\wt X}.
	}
\end{equation}
commute. This will be our strategy for the proof in \S\ref{ss:fhodge1pf-cris}--\ref{ss:fhodge1pf-end}.

\subsection{Review of crystalline theory}
\label{ss:fhodge1pf-cris}

To construct \eqref{eqn:mapt} and prove the commutativity of \eqref{eqn:fhodge1}, we first need to explicate the construction of the crystalline Frobenius $\phi(F_{\wt S})$ (e.g.\ \cite[0 3.2]{Illusie_deRhamWitt}). 

Let $e\colon \wt X\to \wt Y$ be a closed immersion into a smooth $\wt S$-scheme $\wt Y$, and let
$\bar Y$ be the PD-envelope of $e$. Then the map induced by $\bar e\colon \wt X\to \bar Y$,
\[ 
	\Omega^\bullet_{\wt Y/\wt S}|_{\bar Y} \ra \Omega^\bullet_{\wt X/\wt S},
\]
is a quasi-isomorphism. On the other hand, $\bar Y$ coincides with the PD-envelope of $X$ in $\wt Y$. Consequently, 
if now $F_{\wt Y}\colon \wt Y\to \wt Y$ is a lifting of Frobenius compatible with $F_{\wt S}$, then by functoriality of the PD-envelope $F_{\wt X}$ naturally descends to a map $F_{\bar Y}\colon \bar Y\to \bar Y$. The crystalline Frobenius $\phi(F_{\wt S})$ is the composition
\[ 
	\phi(F_{\wt S})\colon
	\Omega^\bullet_{\wt X/\wt S} \isom \Omega^\bullet_{\wt Y/\wt S}|_{\bar Y} \xra{F_{\wt Y}^*} \Omega^\bullet_{\wt Y/\wt S}|_{\bar Y} \isom \Omega^\bullet_{\wt X/\wt S}.
\]
It is independent of the choice of $(\wt Y, e, F_{\wt Y})$. Such data exist if $\wt X$ is projective (take $\wt Y$ to be a projective space and $F_{\wt Y}$ be the map raising the coordinates to the $p$-th power).

\subsection{Construction of the map $t$ \eqref{eqn:mapt}}
\label{ss:fhodge1pf-const}

Let $\wt Y$ be a smooth $\wt S$-scheme endowed with a~Frobenius lifting $F_{\wt Y}$ commuting with $F_{\wt S}$. By definition, $W_2(\cO_Y/\wt S)$ is identified with the subsheaf $W_{\wt Y}$ \eqref{eqn:defwtilde} of $\cO_{\wt Y}$, and the image of $F_{\wt Y}^* \colon \cO_{\wt Y} \ra \cO_{\wt Y}$ is contained in $W_{\wt Y}$. We obtain a natural map
\[ 
	t(F_{\wt Y}) \colon \cO_{\wt Y} \ra W_2(\cO_Y/\wt S).
\]
The following square commutes
\begin{equation} \label{eqn:tf-ft}
	\xymatrix{
		\cO_{\wt Y} \ar[d]_{F_{\wt Y}} \ar[r]^-{t(F_{\wt Y})} & W_2(\cO_Y/\wt S) \ar[d]^{F_Y} \\
		\cO_{\wt Y} \ar[r]_-{t(F_{\wt Y})} &  W_2(\cO_Y/\wt S).
	}
\end{equation}
If $S=\Spec k$, this is the Cartier morphism \cite[0 1.3.21]{Illusie_deRhamWitt}. 

Let now $e\colon \wt X\to \wt Y$ be a closed immersion into a smooth $\wt S$-scheme and let $\bar Y$ be the PD-envelope of $e$, which is the same as the PD-envelope of $X$ in $\wt Y$. The closed immersion $X\hookrightarrow W_2(X/\wt S)$ is a PD-thickening of $X$ over $S\hookrightarrow \wt S$ fitting inside a commutative diagram of solid arrows
\begin{equation} \label{eqn:pd-fact}
	\xymatrix{
		X\ar[d] \ar[r] & \bar Y \ar[r] & \wt Y \\ 
		W_2(X/\wt S) \ar[rr] \ar@{.>}[ur]^{\bar t} &  & W_2(Y/\wt S) \ar[u]^{t(F_{\wt Y})}.
	}
\end{equation}
By the universal property defining the PD-envelope, we obtain a unique dotted arrow making the above diagram commute. We define now the desired map $t$ as the composition
\begin{equation} \label{eqn:def-t} 
	t\colon \Omega^\bullet_{\wt X/\wt S} \isom \Omega^\bullet_{\wt Y/\wt S}|_{\bar Y} \ra \cO_{\bar Y} \xra{\bar t^*} W_2(\cO_X/\wt S). 
\end{equation}

\subsection{Commutativity of \eqref{eqn:fhodge1}} 
\label{ss:fhodge1pf-comm1}

By the description of the crystalline Frobenius \S\ref{ss:fhodge1pf-cris}, the left square below commutes
\[
	\xymatrix{
		\Omega^\bullet_{\wt X/\wt S} \ar[d]_{\phi(F_{\wt S})} & \Omega^\bullet_{\wt Y/\wt S}|_{\bar Y} \ar[l]_\sim \ar[r] \ar[d]^{F_{\bar Y}^*} & \cO_{\bar Y} \ar[r]^-{\bar t^*} \ar[d]^{F_{\bar Y}^*} & W_2(\cO_X/\wt S) \ar[d]^{F_X} \\
		\Omega^\bullet_{\wt X/\wt S} & \Omega^\bullet_{\wt Y/\wt S}|_{\bar Y} \ar[l]_\sim \ar[r] & \cO_{\bar Y} \ar[r]^-{\bar t^*} & W_2(\cO_X/\wt S) 
	}
\]
Since the horizontal compositions equal $t$, we are left with checking the commutativity of the remaining two squares. The middle square commutes by the definition of $F_{\bar Y}$. For the right square, note that by the commutativity of \eqref{eqn:tf-ft} we get a morphism of solid diagrams \eqref{eqn:pd-fact}, and hence a morphism of diagrams with the dotted arrows, i.e.\ the commutativity in question.

\subsection{Commutativity of \eqref{eqn:fhodge2}}
\label{ss:fhodge1pf-end}

The diagram in the derived category can be described in terms of maps of complexes as
\begin{equation} \label{eqn:small-triangle}
	\xymatrix{
		\Omega^\bullet_{\wt Y/\wt S}|_{\bar Y} \ar[d]_{\rotatebox{90}{$\sim$}} \ar[r] & \cO_{\bar Y} \ar[r]^-{\bar t^*} \ar[dr]_{\bar e^*} & W_2(\cO_X/\wt S) \ar[d]^{i^*} \\
		\Omega^\bullet_{\wt X/\wt S} \ar[rr] & & \cO_{\wt X} 
	}
\end{equation}
where the left cell clearly commutes. Interestingly, the right cell commutes on the level of sheaves only for certain lifts of Frobenius $F_{\wt Y}$.

\begin{defin}
	Let $(\wt Y, e, F_{\wt Y})$ be as in \S\ref{ss:fhodge1pf-cris}. We call such data \emph{retracting} if the diagram below commutes
	\[ 
		\xymatrix{
			\wt X \ar[d]_i \ar[r]^e & \wt Y \\
			W_2(X/\wt S) \ar[r]_e & W_2(Y/\wt S). \ar[u]_{t(F_{\wt Y})}
		}
	\]
\end{defin}

\noindent
It is easy to construct retracting triples $(\wt Y, e, F_{\wt Y})$: take any such triple and replace the embedding $e$ with the composition
\[ 
	e' \colon \wt X \xra{i} W_2(X/\wt S) \ra W_2(Y/\wt S) \xra{t(F_{\wt Y})} \wt Y.
\]

Let  $(\wt Y, e, F_{\wt Y})$ be a retracting triple.  By the universal property of the PD-envelope, we see that the triangle in the diagram below commutes
\[ 
	\xymatrix{
		\wt X\ar[d]_i \ar[r]^{\bar e} & \bar Y \ar[r] & \wt Y \\ 
		W_2(X/\wt S) \ar[rr] \ar@{.>}[ur]^{\bar t} &  & W_2(Y/\wt S) \ar[u]^{t(F_{\wt Y})},
	}
\]
which is precisely the assertion that the right cell in \eqref{eqn:small-triangle} commutes. This finishes the proof of Theorem~\ref{thm:fhodge1}.

\begin{remark}
The name `retracting' is justified by the case $\wt X = \wt Y$, $e={\rm id}$, in which case it simply states that $t(F_{\wt X})\colon W_2(X/\wt S) \to \wt X$ is a retraction of the closed immersion $i$. If $\wt X$ admits a Frobenius lifting, it also admits a retracting one, and consequently retracting Frobenius liftings exist locally on $\wt X$. 

If $S=\Spec k$, the space of retracting Frobenius liftings is either empty or a torsor under $\Hom(\Omega^1_X, B^1_X)$. For example, if $X$ is an abelian variety and $\sigma$ is the unique $F$-splitting, then $\wt X(\sigma)$ admits multiple Frobenius liftings, but exactly one of them is retracting. This is the Serre--Tate canonical lifting.
\end{remark}

\subsection{Comparison with de Rham--Witt theory}
\label{ss:fhodge1-drw}

Let $S=\Spec k$. We finish the discussion of Theorem~\ref{thm:fhodge1} by comparing the map $t\colon \smash{\Omega^\bullet_{\wt X/\wt S}}\to W_2 \cO_X$ \eqref{eqn:def-t} constructed in the proof with the map $t'$ obtained by composing the canonical quasi-isomorphism $\Omega^\bullet_{\wt X/\wt S} \isom Ru_* \cO_{X/\wt S}$ with crystalline cohomology, the quasi-isomorphism $Ru_* \cO_{X/\wt S} \isom W_2\Omega^\bullet_X$ coming from de Rham--Witt theory, and the projection $W_2\Omega^\bullet_X \to W_2\cO_X$.

To this end, let $(\wt Y, e, F_{\wt Y})$ be a triple as in \S\ref{ss:fhodge1pf-cris}. By definition \cite[p.\ 600]{Illusie_deRhamWitt}, the isomorphism $\Omega^\bullet_{\wt X/\wt S} \isom Ru_* \cO_{X/\wt S} \isom W_2\Omega^\bullet_X$ equals the composition
\[ 
	\Omega^\bullet_{\wt X/\wt S} \isom \Omega^\bullet_{\wt Y/\wt S}|_{\bar Y} \xra{t(F_{\wt Y})} \Omega^\bullet_{W_2 X} \ra W_2 \Omega^\bullet_X.
\]
After projecting to the $0$-th term in each complex, we see that $t=t'$. Combining this with the commutativity of \eqref{eqn:fhodge2} (a priori for a retracting triple), we obtain:

\begin{cor} \label{cor:f1drw}
	The following diagram commutes in the derived category
	\[
		\xymatrix{
			& W_2\Omega^\bullet_X \ar[r] & W_2\cO_X \ar[dd]^{i^*} \\
			Ru_* \cO_{X/\wt S} \ar[dr]_{\rotatebox{-35}{$\sim$}} \ar[ur]^{\rotatebox{35}{$\sim$}} & & \\
			& \Omega^\bullet_{\wt X/\wt S} \ar[r] & \cO_{\wt X}.
		} 	
	\]
\end{cor}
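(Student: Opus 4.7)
The plan is to deduce the corollary from two facts already established during the proof of Theorem~\ref{thm:fhodge1}: the identification $t = t'$ noted in the paragraph just before the statement, and the commutativity of diagram \eqref{eqn:fhodge2}.

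First I would reformulate what the outer diagram is saying. Pre-composing with the inverse of the lower quasi-isomorphism $Ru_* \cO_{X/\wt S} \isom \Omega^\bullet_{\wt X/\wt S}$, its commutativity becomes the equality, in the derived category of sheaves on $X$, of two morphisms from $\Omega^\bullet_{\wt X/\wt S}$ to $\cO_{\wt X}$: on the one hand, the composite $i^* \circ t'$, where
\[
t' \colon \Omega^\bullet_{\wt X/\wt S} \isom Ru_* \cO_{X/\wt S} \isom W_2\Omega^\bullet_X \ra W_2\cO_X
\]
uses the de Rham--Witt comparison; on the other hand, the natural projection $\Omega^\bullet_{\wt X/\wt S} \to \cO_{\wt X}$ onto the zeroth term of the de Rham complex.

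Next I would invoke the identification $t = t'$ together with \eqref{eqn:fhodge2}. The equality $t = t'$ was checked just above the statement by evaluating both sides after choosing a triple $(\wt Y, e, F_{\wt Y})$: reading $t'$ through such a triple it becomes the composition $\Omega^\bullet_{\wt X/\wt S} \isom \Omega^\bullet_{\wt Y/\wt S}|_{\bar Y} \xra{t(F_{\wt Y})} \Omega^\bullet_{W_2 X} \to W_2\cO_X$, which coincides with the chain-level map $t$ from \eqref{eqn:def-t} after factoring through the PD-envelope $\bar Y$ and the map $\bar t^*$. Substituting $t$ for $t'$, the desired equality $i^* \circ t = \bigl(\Omega^\bullet_{\wt X/\wt S} \to \cO_{\wt X}\bigr)$ is precisely what \eqref{eqn:fhodge2} asserts, finishing the argument.

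The main point to verify is the passage from chain-level to derived-category commutativity: \eqref{eqn:fhodge2} was established at the level of sheaves only for \emph{retracting} triples $(\wt Y, e, F_{\wt Y})$. Since the map $t$ is intrinsically defined on $\Omega^\bullet_{\wt X/\wt S}$ in the derived category (independent of the choice of triple), and retracting triples exist locally on $\wt X$ by the remark at the end of \S\ref{ss:fhodge1pf-end}, this local chain-level equality assembles into the required equality in the derived category. I expect no further computation will be needed beyond this bookkeeping.
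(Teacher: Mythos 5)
Your proposal is correct and follows essentially the same route as the paper: reduce to the identification $t = t'$ established just above the statement, then invoke the commutativity of \eqref{eqn:fhodge2}, and finally note that although \eqref{eqn:fhodge2} was proved on the chain level only for retracting triples, the derived-category morphism $t$ is independent of the choice of triple, so the equality holds in general. One small remark: the paper does not need the local-to-global step you invoke at the end, because §\ref{ss:fhodge1pf-end} produces a \emph{global} retracting triple (take any triple $(\wt Y,e,F_{\wt Y})$ with $\wt Y$ an ambient projective space and replace $e$ by $e' = t(F_{\wt Y})\circ W_2(Y/\wt S)\circ i$); the remark about retracting liftings existing only locally is specific to the situation $\wt Y = \wt X$, $e = \mathrm{id}$, which need not be available. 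Your argument is still valid since derived-category equality of morphisms of sheaves can be checked Zariski-locally, but the global retracting triple makes it more immediate.
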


\subsection{Uniqueness of the canonical lifting}
\label{ss:fhodge1-uniq}

Using Ogus' results on Griffiths transversality in crystalline cohomology \cite{OgusGT}, we can show that for $1$-ordinary varieties with trivial canonical class in odd characteristic, the property expressed in Theorem~\ref{thm:fhodge1} characterizes the canonical lifting.

\begin{thm} \label{thm:canlift-uniq}
	Suppose that $p>2$. Let $X/k$ be a $1$-ordinary variety with trivial canonical class of dimension $d$, and let $\wt X/W_2(k)$ be the canonical lifting of $X'$, i.e.\ $\wt X = \wt X(\sigma)$ for the unique $F$-splitting $\sigma$ on $X$. Suppose that for every lifting of $X$ to $W_2(k)$, the Hodge groups are free and the Hodge spectral sequence degenerates. 
	Then $\wt X$ is the unique lifting of $X$ over $W_2(k)$ for which the crystalline Frobenius $\phi$ preserves $F^1 H^d_{\rm dR}(\wt X/W_2(k))$.
\end{thm}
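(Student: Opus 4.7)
The plan is to analyze the Hodge filtration $F^1_{\wt X} \subseteq H := H^d_{\rm cris}(X'/W_2(k))$ as $\wt X$ varies among $W_2(k)$-lifts of $X$, show that the $\phi$-stability condition forces $F^1_{\wt X}$ to equal the slope-$\geq 1$ part of the Newton filtration, and then translate this into uniqueness of the lift itself via Serre duality.

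By Proposition~\ref{prop:m-ord}(iv) with $m=1$, $1$-ordinarity gives a $\phi$-stable decomposition $H = H_0 \oplus H_{\geq 1}$ by free $W_2(k)$-modules with $\phi|_{H_0}$ bijective and $\phi(H_{\geq 1}) \subseteq pH_{\geq 1}$; the induced map on $H_0/pH_0 = H^d(X, \cO_X)$ is $\HW(0)$, which is bijective. The key lemma I plan to prove is that \emph{$H_{\geq 1}$ is the only $\phi$-stable direct-summand lift of the given $F^1 \subseteq H\otimes k$ to $H$}. Indeed, any such candidate lift has the form $F^1_\mu = \{h + p\mu(\bar h) : h \in H_{\geq 1}\}$ for some $k$-linear $\mu \colon H_{\geq 1}/pH_{\geq 1} \to H_0/pH_0$. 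Applying $\phi$, the ``$H_{\geq 1}$-part'' of $\phi(h + p\mu(\bar h))$ lies in $pH_{\geq 1}$, hence vanishes modulo $p$, while the ``$H_0$-part'' reduces modulo $p^2$ to $p\cdot(\HW(0)\circ \mu)(\bar h)$. Membership in $F^1_\mu$ then forces this $H_0$-part to vanish, and bijectivity of $\HW(0)$ yields $\mu = 0$. The hypothesis $p>2$ enters through the basic divisibility $\phi(F^1)\subseteq pH$ of \S\ref{ss:hodgefcrystal}, which has no $p=2$ correction.

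Theorem~\ref{thm:fhodge1} applied to $\wt X(\sigma)$ shows that $F^1_{\wt X(\sigma)}$ is both $\phi$-stable and a direct-summand lift of $F^1$, so by the key lemma $F^1_{\wt X(\sigma)} = H_{\geq 1}$; by the same token, any other lift $\wt X$ with $\phi(F^1_{\wt X}) \subseteq F^1_{\wt X}$ satisfies $F^1_{\wt X} = H_{\geq 1} = F^1_{\wt X(\sigma)}$. To conclude $\wt X = \wt X(\sigma)$, I use that the set of $W_2(k)$-lifts is a torsor under $H^1(X, T_X)$ and that two lifts differing by $\alpha \in H^1(X, T_X)$ have Hodge filtrations in $H$ differing by the Kodaira--Spencer cup product
\[
\kappa(\alpha)\colon H^{d-1}(X, \Omega^1_X) \ra H^d(X, \cO_X),\qquad \omega \mapsto \alpha \cup \omega.
\]
Since $\omega_X \cong \cO_X$, Serre duality makes $\kappa$ an isomorphism $H^1(X, T_X) \isomto \Hom_k(H^{d-1}(X, \Omega^1_X), H^d(X, \cO_X))$, so $F^1_{\wt X} = F^1_{\wt X(\sigma)}$ forces $\alpha = 0$. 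The main obstacle is the bookkeeping in the key-lemma computation: tracking semi-linearity of $\phi$ across the decomposition $H_0 \oplus H_{\geq 1}$, and identifying the residual action on the parameter $\mu$ after multiplication by $p$ with $\HW(0)$; once this is clean, the rest is classical Hodge-theoretic deformation theory combined with Serre duality.
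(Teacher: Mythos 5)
Your key-lemma argument is a legitimate alternative to the paper's route: the paper instead identifies $F^1 H^d_{\rm dR}(\wt X(\sigma)/W_2(k))$ with the second summand of the de Rham--Witt decomposition $H^d(X, W_2\cO_X)\oplus H^d(X, W_2\Omega^{\bullet\geq 1}_X)$ via Corollary~\ref{cor:f1drw}, whereas you use Proposition~\ref{prop:m-ord}(iv) with $m=1$ and a hands-on computation to show that the Newton-type direct summand $H_{\geq 1}$ is the unique $\phi$-stable direct-summand lift of $F^1\otimes k$. Both get you to the same conclusion, and your computation with $\mu$ is correct (it uses only $\phi(H_{\geq 1})\subseteq pH_{\geq 1}$ and bijectivity of $\HW(0)$ on $H_0/pH_0$, neither of which needs $p>2$).

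The gap is in the final paragraph, where you pass from $F^1_{\wt X} = F^1_{\wt X(\sigma)}$ inside $H$ to $\wt X\isom \wt X(\sigma)$. You assert that two lifts of $X$ over $W_2(k)$ that differ by $\alpha\in H^1(X,T_X)$ have Hodge filtrations in $H$ differing by the cup-product map $H^{d-1}(X,\Omega^1_X)\to H^d(X,\cO_X)$. There are two missing ingredients here. First, since $\bar F^1$ and $\wt F^1$ agree modulo $p$, the natural obstruction is a priori a map $F^1 H^d_{\rm dR}(X/k)\to H^d_{\rm dR}(X/k)/F^1$, \emph{not} a map $F^1/F^2\to H/F^1$; you need to know it kills $F^2$. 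That vanishing is Griffiths transversality for crystalline cohomology, a nontrivial theorem of Ogus (see \cite{OgusGT}), and it is precisely here that the hypothesis $p>2$ is used in the paper's proof --- not in the divisibility estimate $\phi(F^1)\subseteq pH$ you cite, which holds for all $p$. Second, even granting the factorization through $F^1/F^2$, the identification of the resulting map with cup product by the Kodaira--Spencer class is itself a theorem, \cite[Corollary~2.12]{OgusGT}, and not an a priori consequence of general deformation theory over $W_2(k)$. As written, your argument treats both as automatic; citing or proving them would close the gap and make the proposal correct.
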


\begin{proof}
Since the Frobenius is bijective on $H^d(X, W_2\cO_X)$, we obtain a decomposition
\[ 
	H^d_{\rm cris}(X/W_2(k)) \isom H^d(X, W_2\cO_X) \oplus H^d(X, W_2\Omega^{\bullet\geq 1}_X)
\]
where the Frobenius is divisible by $p$ on the second summand. By Corollary~\ref{cor:f1drw}, we see that $F^1 H^d_{\rm dR}(\wt X/W_2(k))$ coincides with the second summand. Consequently, $F^1 H^d_{\rm dR}(\wt X/W_2(k))$ is the unique Frobenius-stable lifting of $F^1 H^d_{\rm dR}(X/k)$. 

It remains to show that different liftings of $X$ give rise to different liftings of $F^1 H^d_{\rm dR}(X/k)$. More precisely, if $\bar X/W_2(k)$ is another lifting such that the image $\bar F^1$ of $F^1 H^d_{\rm dR}(\bar X/W_2(k))$ under the crystalline isomorphism 
\[ 
	H^d_{\rm dR}(\bar X/W_2(k)) \isomlong H^d_{\rm dR}(\wt X/W_2(k))
\]
equals $\wt F^1 = F^1 H^d_{\rm dR}(\wt X/W_2(k))$, then $\bar X \isom \wt X$. The obstruction to $\bar F^1 = \wt F^1$ is the map $\bar F^1 \to H^d_{\rm dR}(\wt X/W_2(k))/\wt F^1$, which since $\bar F^1 = \wt F^1$ mod $p$ vanishes mod $p$ and hence factors through a map $F^1 H^d_{\rm dR}(X/k)\to H^d_{\rm dR}(X/k) / F^1$. Since $p>2$, by Griffiths transversality \cite{OgusGT} this map vanishes on $F^2$, and hence factors through a map 
\[ 
	\eta(\bar X) \colon H^{d-1}(X, \Omega^1_X) \isom \gr^1 H^d_{\rm dR}(X/k) \ra \gr^0 H^d_{\rm dR}(X/k) \isom H^d(X, \cO_X).
\]

Varying $\bar X\in \Def_X(W_2(k))$, writing $v = \bar X - \wt X \in H^1(X, T_X)$ we obtain a map
\[ 
	v\mapsto \eta(\wt X + v) \colon H^1(X, T_X) \ra \Hom(H^{d-1}(X, \Omega^1_X), H^d(X, \cO_X)).
\]
By \cite[Corollary~2.12]{OgusGT}, this map coincides with map obtained by cup product, which is an isomorphism in our case. 
\end{proof}

\begin{cor} \label{cor:comparison-st1}
	In the situation of Theorem~\ref{thm:canlift-uniq}, if $X$ is either an abelian variety or a K3 surface, then the canonical lifting $\wt X$ agrees modulo $p^2$ with the canonical lifting constructed in \cite{DeligneIllusieKatz}.
\end{cor}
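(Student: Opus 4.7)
The plan is to reduce to Theorem~\ref{thm:canlift-uniq}: that result characterizes $\wt X = \wt X(\sigma)$ as the unique lifting of $X'$ to $W_2(k)$ for which the crystalline Frobenius preserves $F^1 H^d_{\rm dR}(-/W_2(k))$. Thus it is enough to show that the reduction modulo $p^2$ of the classical Serre--Tate (resp.\ Nygaard--Ogus) canonical lifting of $X'$ enjoys the same Frobenius-stability property; uniqueness will then do the rest.

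First I would verify the hypotheses of Theorem~\ref{thm:canlift-uniq}. For ordinary abelian varieties and for ordinary K3 surfaces (which coincides with height one), $1$-ordinarity is classical, and trivial canonical class is immediate. In both cases $H^*_{\rm cris}(X/W(k))$ is known to be torsion-free (for an abelian variety it is an exterior power of $H^1_{\rm cris}$; for a K3 surface this is a result of Deligne--Illusie and Nygaard), from which the required freeness of Hodge groups of any $W_2(k)$-lifting, and the degeneration of its Hodge spectral sequence, follow by standard base-change considerations.

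Next I would exhibit, for each case, a lift of the relative Frobenius $F_{X/k}\colon X\to X'$ to a morphism $\wt F\colon \wt X_0\to \wt X_0^{\rm cl}$ of $W_2(k)$-schemes, where $\wt X_0$ is the classical canonical lifting of $X$ and $\wt X_0^{\rm cl}$ is its Frobenius twist (which is the classical canonical lifting of $X'$). This lift is given by the Serre--Tate canonical Frobenius in the abelian case and by Nygaard's construction in the K3 case. The crystalline Frobenius on $H^d_{\rm dR}(\wt X_0^{\rm cl}/W_2(k))$ is then computed as $\wt F{}^*$ composed with the crystalline comparison, and since $\wt F{}^*$ is a genuine pullback along a morphism of smooth $W_2(k)$-schemes, it preserves the Hodge filtration by functoriality of de Rham cohomology. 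In particular it preserves $F^1$, which is all that Theorem~\ref{thm:canlift-uniq} requires.

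The main subtlety I foresee is bookkeeping around Frobenius twists: the object produced in this paper is a canonical lifting of $X'$, whereas classical Serre--Tate theory produces a canonical lifting of $X$. Once one recognizes that the Frobenius twist of the classical canonical lifting is itself a canonical lifting (of $X'$) admitting the desired Frobenius lift, the comparison becomes an immediate application of the uniqueness statement. The underlying fact, already observed by Katz in \cite[Appendix]{DeligneIllusieKatz}, is that the classical canonical lifting is characterized among all liftings by the Frobenius-stability of the Hodge filtration in crystalline cohomology; Theorem~\ref{thm:canlift-uniq} is precisely the mod-$p^2$ analogue of this characterization, and supplying Katz's input concludes the argument.
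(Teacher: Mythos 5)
Your overall strategy is exactly that of the paper: invoke the uniqueness statement of Theorem~\ref{thm:canlift-uniq} and match it against Katz's characterization (\cite[Appendix]{DeligneIllusieKatz}) of the classical Serre--Tate lift as the unique lift for which crystalline Frobenius preserves the Hodge filtration. Your last paragraph says precisely this, and it is the right argument.

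However, your middle paragraph contains a substantive error in the K3 case. You propose to exhibit, for both abelian varieties and K3 surfaces, an actual morphism of $W_2(k)$-schemes $\wt F\colon \wt X_0\to \wt X_0^{\rm cl}$ lifting the relative Frobenius, and then to conclude that $\phi=\wt F{}^*$ preserves $F^1$ by functoriality. For abelian varieties such a $\wt F$ exists (it is part of the classical Serre--Tate picture). But for an ordinary K3 surface, the canonical lift does \emph{not} admit a Frobenius lifting: a smooth projective $X/k$ with $\omega_X\cong\cO_X$ whose relative Frobenius lifts modulo $p^2$ satisfies Bott vanishing (Buch--Thomsen--Lauritzen--Mehta), which already forces $X$ to be an abelian variety. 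Nygaard's construction produces a lifting of Frobenius on the deformation space $\Def_X$, not on the K3 surface itself, so the morphism $\wt F$ you invoke simply is not there.

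The fix is exactly what you record at the end: for K3 surfaces one cannot (and need not) produce a geometric Frobenius lifting; one directly quotes the Nygaard/Katz characterization of the canonical lift by Frobenius-stability of the Hodge filtration in crystalline cohomology, and then appeals to the uniqueness of Theorem~\ref{thm:canlift-uniq}. In short, discard the middle paragraph as the operative step in the K3 case and treat the characterization result as the genuine input, which is how the paper phrases the corollary.
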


\begin{remark} Extending the arguments in \cite[\S 2]{OgusGT}, one can show that if $X_0/k$ is a proper smooth variety  for which the map
	\[
		H^1(X_0, T_{X_0}) \ra \Hom(H^{*-k}(X_0, \Omega^k_{X_0}), H^{*-k+1}(X_0, \Omega^{k-1}_{X_0}))		
	\]
	is injective, and if $X$ and $X'$ are two liftings of $X_0$ to $W_n(k)$ satisfying (DEG) and with free Hodge groups, then $X\isom X{}'$ if and only if the crystalline isomorphism 
	\[
		H^*_{\rm dR}(X/W_n(k))\isom H^*_{\rm dR}(X{}'/W_n(k))
	\] 
	identifies $F^k H^*_{\rm dR}(X/W_n(k))$ with $F^k H^*_{\rm dR}(X{}'/W_n(k))$.

	Specializing to the case $X_0$ a $1$-ordinary variety with trivial canonical class of dimension $d$ satisfying (NCT) and (DEG), $*=d$ and $k=1$, we see that there exists exactly one lifting of $F^1 H^d_{\rm dR}(X_0/k)$ to $H^d_{\rm cris}(X_0/W_n(k))$ preserved by Frobenius (arguing as in the proof above), and that this lifting comes from at most one lifting of $X_0$ over $W_n(k)$. It is unclear however whether such a lifting of $X_0$ exists if $n>2$, unless $X_0$ is an abelian variety or a K3 surface.
\end{remark}

\subsection{The first higher Hasse--Witt operation}
\label{ss:fhodge1-hw1}

Let $(X, \sigma)$ be an $F$-split smooth projective scheme over $k$, and let $\wt X = \wt X(\sigma)$ be the canonical lifting of $X'$ to $W_2(k)$. Suppose that $p>2$, that 
the Hodge groups of $\wt X$ are free and that its Hodge spectral sequence degenerates. 
Let $H = H^n_{\rm dR}(\wt X/W_2(k))$ for some $n\geq 0$ and let 
\[ 
	\phi \colon H\ra H
\]  
be the crystalline Frobenius. By the easy case of the divisibility estimates \cite{Mazur}, $\phi$ maps $F^1$ into $pH$ and vanishes on $F^2$ (as $p>2$). Moreover, by Theorem~\ref{thm:fhodge1}, we have $\phi(F^1)\subseteq F^1$, so that $\phi(F^1)\subseteq pH\cap F^1 = pF^1$ since $F^1$ is a direct summand of $H$. There is therefore a unique morphism
\[ 
	\beta = \frac{\phi}{p} \colon H^{n-1}(X, \Omega^1_{X}) \to H^{n-1}(X, \Omega^1_{X})
\]
such that the following diagram commutes
\[ 
	\xymatrix{
		H \ar[d]_\phi & F^1 \ar@{_{(}->}[l] \ar[d]_\phi \ar@{->>}[r] & F^1\otimes k \ar@{->>}[r] \ar[d] & (F^1/F^2)\otimes k \ar[r]^\sim \ar[d] & H^{n-1}(X, \Omega^1_X) \ar[d]^\beta \\
		H & F^1 \ar@{_{(}->}[l] & F^1\otimes k \ar@{_{(}->}[l]^-{\times p} \ar@{->>}[r] & (F^1/F^2)\otimes k \ar[r]^\sim  & H^{n-1}(X, \Omega^1_X).
	}
\]
In addition, the Hasse--Witt operation $\HW(0)\colon H^n(X, \cO_X)\to H^n(X, \cO_X)$ is bijective by Proposition~\ref{prop:1-ord}, so that the first higher Hasse--Witt operation  \eqref{eqn:hwi}
\[
	\HW(1)\colon H^{n-1}(X, \Omega^1_{X}) \ra H^{n-1}(X, \Omega^1_{X})
\]
is defined.

\begin{prop} \label{prop:beta-hw1}
	We have $\beta = \HW(1)$. 
\end{prop}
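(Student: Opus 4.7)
The plan is to recognise both $\beta$ and $\HW(1)$ as essentially the same composition built out of the divided crystalline Frobenius $p^{-1}\phi$, the two definitions differing only by which identification of $H^{n-1}(X, \Omega^1_X)$ is being used at the source (via $F^1/F^2$ for $\beta$, via $F_1/F_0$ for $\HW(1)$); the Mazur--Ogus description of the conjugate filtration recalled in \S\ref{ss:hodge_newton_polygons} is precisely what reconciles these identifications.

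First, I would establish the following mod-$p^2$ avatar of Mazur--Ogus together with Theorem~\ref{thm:fhodge1}. Since $\phi(F^2) \subseteq p^2 H = 0$ and $\phi(F^1) \subseteq pH$, division by $p$ yields a $k$-linear map
\[
	p^{-1}\phi \colon (F^1/F^2) \otimes k \ra H \otimes k = H^n_{\rm dR}(X/k)
\]
whose image lies in $F_1$ and which, composed with the projection $F_1 \twoheadrightarrow F_1/F_0$, equals the Mazur--Ogus isomorphism $F^1/F^2 \xrightarrow{\sim} F_1/F_0$; by \S\ref{ss:hodge_newton_polygons}, under the identifications of \eqref{eqn:hodge-conj-gr} this Mazur--Ogus isomorphism is the identity of $H^{n-1}(X, \Omega^1_X)$. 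Moreover, Theorem~\ref{thm:fhodge1} combined with the fact that $F^1$ is a direct summand of $H$ gives $\phi(F^1) \subseteq pH \cap F^1 = pF^1$, so $p^{-1}\phi$ actually lifts to a $k$-linear map
\[
	\alpha \colon (F^1/F^2) \otimes k \ra F^1 \otimes k
\]
whose image lies in $F_1 \cap F^1$; the preceding sentence amounts to saying that $\alpha$ composed with the projection $F_1 \cap F^1 \twoheadrightarrow F_1/F_0$ is the identity of $H^{n-1}(X, \Omega^1_X)$. By construction, $\beta$ is $\alpha$ followed by the projection $F^1 \otimes k \twoheadrightarrow (F^1/F^2) \otimes k$.

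Second, $1$-ordinarity implies $F_0 \oplus F^1 = H^n_{\rm dR}(X/k)$ and hence $F_1 = F_0 \oplus (F_1 \cap F^1)$, so the projection $F_1 \cap F^1 \xrightarrow{\sim} F_1/F_0$ is an isomorphism. By the definition in \S\ref{ss:hasse_witt}, $\HW(1)$ is the composition
\[
	H^{n-1}(X, \Omega^1_X) = F_1/F_0 \xleftarrow{\sim} F_1 \cap F^1 \hookrightarrow F^1 \twoheadrightarrow F^1/F^2 = H^{n-1}(X, \Omega^1_X).
\]
Pre-composing this with the Mazur--Ogus isomorphism $F^1/F^2 \xrightarrow{\sim} F_1/F_0$ --- the identity of $H^{n-1}(X, \Omega^1_X)$ by Step~1 --- replaces the first two arrows by the map $\alpha$ of Step~1, and what remains, namely $\alpha$ followed by inclusion into $F^1 \otimes k$ and projection onto $(F^1/F^2) \otimes k$, is exactly $\beta$. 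Hence $\HW(1) = \beta$.

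The main obstacle is the careful bookkeeping in Step~1: the Mazur--Ogus identification in \S\ref{ss:hodge_newton_polygons} is phrased integrally via $\phi^{-1}(p^i H) \subset H^n_{\rm cris}(X/W(k))$, so one has to verify that it descends faithfully modulo $p^2$. Granted the torsion-freeness hypothesis (NCT) and the freeness of the Hodge groups, this is a formal reduction. The conceptual content of the proof is that Theorem~\ref{thm:fhodge1} is precisely the additional information which promotes the divided Frobenius from a map landing in $F_1 \subset H^n_{\rm dR}(X/k)$ to one landing in the specific direct summand $F_1 \cap F^1$ of $F_1$, and this summand is exactly the piece that realises the inverse of the splitting in the definition of $\HW(1)$, identifying $\HW(1)$ with $\beta$.
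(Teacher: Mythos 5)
Your argument is correct and is essentially the paper's proof restated in map-theoretic rather than element-chasing language: both rely on the Mazur--Ogus description of the conjugate filtration (\cite[8.26]{BerthelotOgus}, as recalled in \S\ref{ss:hodge_newton_polygons}) to identify $p^{-1}\phi$ with the isomorphism $F^1/F^2 \isomto F_1/F_0$, and on Theorem~\ref{thm:fhodge1} to promote its target from $F_1$ to $F_1\cap F^1$. One cosmetic remark: the splitting $F_0\oplus F^1 = H^n_{\rm dR}(X/k)$ you invoke holds in all degrees $n$ here because $(X,\sigma)$ is $F$-split (Proposition~\ref{prop:1-ord}), which is slightly more than the degree-$d$ notion of $1$-ordinarity you name.
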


\begin{proof}
Let $x\in F^1$, and let $y$ be its image in $(F^1/F^2)\otimes k = H^{n-1}(X, \Omega^1_X)$. The element $\beta(y)$ is characterized by
\[ 
	\phi(x) = p\cdot \beta(y) \text{ mod }F^2.
\]
Let $z\in F_1/F_0$ be the image of $y$ under the isomorphism \eqref{eqn:hodge-conj-gr}
\[ 
	C^{-1} \colon H^{n-1}(X, \Omega^1_X) \isomlong F_1/F_0.
\]
By \cite[8.26.3]{BerthelotOgus}, we have $z = \phi(x)/p\text{ mod }F_0$. The element $HW(1)(y)$ is constructed as follows: there exists a unique $t\in F_1\cap (F^1\otimes k)$ lifting $z$, and $HW(1)(y)$ is the image of $t$ in $(F^1/F^2)\otimes k$. Since $\phi$ preserves $F^1$, we must have $\phi(x) = pt$, so that $\beta(y) = t = HW(1)(y)$ modulo $F^2$.   
\end{proof}


\section{Modular Frobenius liftings}
\label{s:modular-flift}

\noindent
In this section we show how the construction of the canonical lifting produces certain `modular' liftings of Frobenius modulo $p^2$. For motivation, suppose that there exists a~fine moduli space $\wt M$ over $W_2(k)$ parametrizing certain $1$-ordinary varieties with trivial canonical class, and that the morphism $\wt M\to \Spec W_2(k)$ is flat. Let $\wt X/\wt M$ be the universal family, and let $X/M$ be its reduction modulo $p$. The construction of the relative canonical lifting yields a family $\wt X{}'/\wt M$ lifting $X' = F_{M}^* X$. Since $M$ is a fine moduli space, there exists a unique morphism $\wt F\colon \wt M\to \wt M$ such that $\wt X{}' \isom \wt F{}^* \wt X$. This morphism is the desired lifting of Frobenius on the moduli space $\wt M$.

There are two ways of avoiding the problem of the non-existence of a fine moduli space. First, one can consider a suitable moduli stack $\mathscr{M}$. Second, one can try to construct the Frobenius lifting $\wt F$ on the base of a~family $X/M$ which is no longer universal, but which is universal formally locally at every point. We find the second approach more useful, and we deal with it first, coming back to stacks in \S\ref{ss:stacks}.

Before proceeding, let us recall the following result, which implies that the data of the modular Frobenius lifting is the same as the assignment of a canonical lifting $\wt X/\wt S$ of $X'$ to every family $X/S$. 

\begin{prop}[{\cite[Proposition~3.5.3]{AchingerWitaszekZdanowicz}}] \label{prop:awzmagic}
	Let $\wt M$ be a flat $W_2(k)$-scheme endowed with a Frobenius lifting $F_{\wt M}\colon \wt M\to \wt M$, and let $M=\wt M\otimes k$. Then for every flat $W_2(k)$-scheme $\wt S$ endowed with a map $f\colon S\to M$ there exists a canonically defined morphism $g\colon \wt S\to \wt M$ lifting $F_M\circ f$:
	\[ 
	\xymatrix{
		\wt S\ar@{.>}[r]_{\tilde f} \ar@{.>}@/^2em/[rr]^g & \wt M \ar[r]^{F_{\wt M}} & \wt M \\
		S\ar[u] \ar[r]_f & M\ar[r]_{F_M} \ar[u] & M. \ar[u]
	}
	\]
	This lifting $g$ is functorial, and it commutes with every Frobenius lifting on $\wt S$. If $\tilde f$ is a~lifting of $f$, then $g = F_{\wt M} \circ \tilde f$.
\end{prop}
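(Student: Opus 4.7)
The plan is to construct $g$ locally on $\wt M$ and $\wt S$ using an auxiliary local lift of $f$, and then show that the Frobenius lifting $F_{\wt M}$ rigidifies the construction so that the result is independent of the choice of lift; the local $g$'s will therefore glue to a canonical global $g \colon \wt S \to \wt M$. Working locally, assume $\wt M$ admits a local lift $\tilde f \colon \wt S \to \wt M$ of $f$; in the applications (where $\wt M$ is smooth over $W_2(k)$, e.g.\ a deformation space), such lifts exist because $S \hookrightarrow \wt S$ is a square-zero thickening (the ideal is $p\cO_{\wt S} \isom \cO_S$ by flatness of $\wt S$) and $\wt M$ is formally smooth. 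Define $g$ locally by the formula
\[
    g \;:=\; F_{\wt M} \circ \tilde f.
\]

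The heart of the proof is the verification that $g$ does not depend on the choice of $\tilde f$, which is the exact analogue of Lemma~\ref{lem:fundamental}. If $\tilde f_1, \tilde f_2 \colon \wt S \to \wt M$ are two lifts of $f$, their difference $\tilde f_1^* - \tilde f_2^* \colon \cO_{\wt M} \to p\cO_{\wt S} \isom \cO_S$ factors through $\cO_M$ and defines an $\cO_S$-linear derivation $D \colon \cO_M \to \cO_S$. Since $F_{\wt M}^*$ reduces modulo $p$ to the absolute Frobenius $\bar a \mapsto \bar a^p$ on $\cO_M$, we compute
\[
    (\tilde f_1^* - \tilde f_2^*)\bigl(F_{\wt M}^*(a)\bigr) \;=\; D(\bar a^p) \;=\; p\,\bar a^{p-1}\,D(\bar a) \;=\; 0 \quad \text{in } \cO_S,
\]
so $F_{\wt M} \circ \tilde f_1 = F_{\wt M} \circ \tilde f_2$. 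The local maps $g$ therefore agree on overlaps and glue to a global morphism lifting $F_M \circ f$, and the formula $g = F_{\wt M} \circ \tilde f$ holds wherever a (local or global) lift $\tilde f$ of $f$ is available.

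The remaining properties are formal consequences of canonicity. For functoriality in $\wt S$, a morphism $\varphi \colon \wt S' \to \wt S$ pulls a local lift of $f$ back to a local lift of $f \circ (\varphi \bmod p)$, and composing with $F_{\wt M}$ exhibits $g \circ \varphi$ as the canonical morphism produced by the construction for $\wt S'$. Compatibility with an arbitrary Frobenius lifting $F_{\wt S}$ on $\wt S$ is a special case: by naturality of Frobenius one has $f \circ F_S = F_M \circ f$, so functoriality applied to $\varphi = F_{\wt S}$ identifies $g \circ F_{\wt S}$ with the canonical lift associated to the map $F_M \circ f \colon S \to M$; but $g$ is already a global lift of $F_M \circ f$, so that same canonical lift also equals $F_{\wt M} \circ g$, yielding $g \circ F_{\wt S} = F_{\wt M} \circ g$.

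The main obstacle is not the derivation-killing calculation, which is essentially $d(x^p)=0$ in characteristic $p$, but rather the availability of local lifts $\tilde f$ of $f$; this requires a formal smoothness input on $\wt M$ that goes beyond bare flatness over $W_2(k)$. For the applications in this paper $\wt M$ is smooth, so this causes no trouble; in greater generality one would need either to exploit the $\delta$-structure implicit in a Frobenius lift or to pass to a smooth affine cover and descend.
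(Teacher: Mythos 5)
Your proof takes a genuinely different route from the paper's, and you correctly diagnose its limitation yourself. The paper constructs $g$ directly and globally as the composite
\[
\wt S \xra{\theta} W_2(S) \xra{W_2(f)} W_2(M) \xra{t(F_{\wt M})} \wt M,
\]
where $\theta^*(x_0,x_1)=\tilde x_0^p + p\tilde x_1$ (well-defined, and a ring homomorphism, because $\wt S$ is flat over $W_2(k)$) and $t(F_{\wt M})$ is the Cartier section $y\mapsto (y,\delta(y))$ coming from $F_{\wt M}^*(y)=y^p+p\delta(y)$. This uses only the stated hypotheses and in particular never lifts $f$; it is exactly the ``exploit the $\delta$-structure'' alternative you name in your closing paragraph. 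Your definition $g=F_{\wt M}\circ\tilde f$ instead requires a local lift $\tilde f$, which needs formal smoothness of $\wt M$ over $W_2(k)$ beyond the stated flatness. That is a real gap for the proposition as stated; it is harmless for the universal deformation functors in this paper (where $\wt S=\Spf R$ with $R$ a power series ring), but it would obstruct, for instance, the stack-level construction of \S\ref{ss:stacks} if one insisted on your route. Where $\tilde f$ does exist, your derivation-killing computation correctly shows independence of the choice, your $g$ agrees with the paper's (unwind both on a function $y$: each gives $\widetilde{f^*(y)}^{\,p}+p\,\widetilde{f^*(\delta(y))}$), and your deductions of functoriality and Frobenius compatibility are fine. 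In short: your argument is a correct proof under an additional smoothness hypothesis, whereas the paper's Witt-vector construction proves the statement in the generality claimed.
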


The morphism $g$ is defined as the composition
\[ 
	\wt S \xra{\theta} W_2(S) \xra{W_2(f)} W_2(M) \xra{t(\wt F)} \wt M  
\]
where on functions $\theta(x_0, x_1) = \tilde x_0^p + p\tilde x_1$ for arbitrary liftings $\tilde x_0, \tilde x_1 \in \cO_{\wt S}$, and where $t(\wt F)$ is the Cartier map, defined by $t(\wt F)(y) = (y, \delta(y))$ where $\wt F(y) = y^p + p\delta(y)$. This defines a mapping
\[ 
	\left\{\text{liftings of Frobenius on }\wt M\right\}
	\ra
	\left\{\text{functorial associations }f\mapsto g\text{ as in Proposition~\ref{prop:awzmagic}}\vphantom{\wt M}\right\}
\]
which is a bijection, with inverse given by evaluation at ${\rm id}_M$, at least if $M$ is smooth over $k$ (so that $\tilde f$ exists locally on $\wt S$).

\subsection{Frobenius liftings in modular families}
\label{ss:froblift-modular}

Let $\wt M$ be a flat $W_2(k)$-scheme locally of finite type, equipped with a flat family $\wt X/\wt M$ whose fibers are $1$-ordinary varieties with trivial canonical class. Suppose that this family is `formally universal' in the following sense: for every $\bar m\in M(\bar k)$, the natural transformation
\[ 
	\Spf \hat \cO_{\wt M\otimes W_2(\bar k), \bar m} \ra \Def_{X_{\bar m}/W_2(\bar k)}
\]
induced by the base change of $\wt X/\wt M$ to $\Spf \hat \cO_{\wt M\otimes W_2(\bar k)}$, is an isomorphism. Let $\wt X{}'/\wt M$ be the canonical lifting of $X'$ (Corollary~\ref{cor:can-lift-cy}).

\begin{prop}
	There exists a unique lifting of Frobenius $F_{\wt M}\colon \wt M\to \wt M$ such that the families $F_{\wt M}^* \wt X/\wt M$ and $\wt X{}'/\wt M$ are locally isomorphic.
\end{prop}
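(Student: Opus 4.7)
The strategy is to reduce both uniqueness and existence to the formal universality of $\wt X/\wt M$, first treating the question at each closed point of $M$, and then globalizing via the local finite-type hypothesis on $\wt M$. For $\bar m\in M(\bar k)$, write $\hat R_{\bar m} = \hat\cO_{\wt M\otimes W_2(\bar k), \bar m}$.

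\emph{Uniqueness.} Suppose $F_1, F_2: \wt M\to\wt M$ both satisfy the conclusion. For each closed point $\bar m$, the induced formal $W_2(\bar k)$-morphisms $\hat F_i: \hat R_{\bar m^{(p)}}\to \hat R_{\bar m}$ both classify the restriction $\wt X{}'|_{\Spf \hat R_{\bar m}}$ as a deformation of $X_{\bar m^{(p)}} = X'_{\bar m}$, hence agree by formal universality. Since $\wt M$ is locally of finite type over $W_2(k)$, the coordinate ring $\wt A$ of any affine open of $\wt M$ is Noetherian and Jacobson, so an element vanishing in every completion at a maximal ideal must vanish (by Krull's intersection theorem together with the vanishing of the Jacobson radical). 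Two ring endomorphisms of $\wt A$ agreeing on all such completions therefore coincide, giving $F_1 = F_2$.

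\emph{Existence.} By uniqueness, the construction of $F_{\wt M}$ is Zariski-local on $\wt M$. For each closed point $\bar m$, formal universality associates to the algebraic deformation $\wt X{}'|_{\Spf \hat R_{\bar m}}$ of $X'_{\bar m}$ a unique classifying morphism $\hat F_{\bar m}: \hat R_{\bar m^{(p)}}\to \hat R_{\bar m}$; reducing modulo $p$ recovers the Frobenius on $\hat\cO_{M, \bar m}$ (since $\wt X{}' \bmod p = F_M^*\wt X \bmod p$), so $\hat F_{\bar m}$ is a formal Frobenius lift. To globalize, one works on an affine open $\wt U\subseteq \wt M$ and combines the collection $\{\hat F_{\bar m}\}_{\bar m\in U}$ into an algebraic $F_{\wt U}: \wt U\to \wt U$: since $\wt X{}'|_{\wt U}$ is an honest algebraic family rather than merely a compatible system of formal deformations, the classifying data provided by formal universality at each closed point assembles --- via an algebraization argument grounded in the finite-type hypothesis --- into a scheme-theoretic morphism $F_{\wt U}$. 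The isomorphism $F_{\wt U}^*\wt X|_{\wt U} \isom \wt X{}'|_{\wt U}$ then holds Zariski-locally by formal universality applied to each completion, and the various $F_{\wt U}$ glue to the desired $F_{\wt M}$ by the already-proven uniqueness.

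\emph{Main obstacle.} The hard part is the algebraization step in the existence proof: converting the family $\{\hat F_{\bar m}\}$ of formal morphisms (produced by the universal property at each closed point) into an actual scheme morphism on an affine open $\wt U$. The enabling inputs are the locally-finite-type and flat nature of $\wt M$ over $W_2(k)$ together with the strength of formal universality, which jointly ensure that the algebraic deformation $\wt X{}'|_{\wt U}$ controls the formal classifying data uniformly enough to produce a scheme-level morphism.
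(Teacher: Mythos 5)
The uniqueness half of your argument is essentially sound and parallels the paper's remark that uniqueness ``is clear'' because the two lifts would agree on $\Spf \hat\cO_{\wt M\otimes W_2(\bar k), \bar m}$ for every $\bar m$; your invocation of Jacobsonness plus Krull to pass from all completions to global equality is a correct way to fill in that remark.

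The existence half, however, has a genuine gap. You explicitly flag ``the algebraization step'' as the main obstacle and then leave it unaddressed — the phrase ``via an algebraization argument grounded in the finite-type hypothesis'' is a placeholder, not a proof. And the obstruction here is real: formal universality hands you, at each closed point $\bar m$, a morphism of \emph{complete local} rings $\hat R_{\bar m^{(p)}}\to\hat R_{\bar m}$; there is no general principle that a compatible family of such maps (compatible in what sense, even?) descends to a ring homomorphism on an affine coordinate ring. Formal universality is a statement about functors on Artinian algebras, not about scheme morphisms, so some genuinely geometric input is needed to pass from the formal picture to an honest $F_{\wt U}\colon\wt U\to\wt U$.

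What the paper does to supply that input is a different order of operations. Rather than producing the morphism pointwise and then gluing, it packages the entire problem into the scheme
\[
\wt P = {\rm Isom}_{\wt M\times\wt M}(\pi_1^*\wt X,\ \pi_2^*\wt X{}'),
\]
an open subscheme of a Hilbert scheme and hence locally of finite type over $W_2(k)$, with projection $\pi\colon\wt P\to\wt M$, $(\alpha,\beta,\iota)\mapsto\alpha$. The data $({\rm id}_M,\,F_M,\,{\rm id})$ gives a section $\sigma\colon M\to P$ over the special fiber, and the sought Frobenius lifting is precisely an extension of $\sigma$ to a section $\wt M\to\wt P$. The lifting exists (Zariski-locally) once one knows $\pi$ is smooth along $\sigma(M)$; smoothness is then verified formal-locally at $\bar m$, where after choosing $\iota_0$ one identifies the fiber of $\wt P$ with the automorphism scheme of $\wt X{}'$, which is smooth. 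In other words, the ``algebraization argument'' you needed is the representability of the Isom functor by a locally-finite-type scheme together with the smoothness of the automorphism scheme; your proposal lacks exactly this idea, which is the technical heart of the proposition.
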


\begin{proof}
The uniqueness is clear, as any two such lifts have to coincide on $\Spf \hat \cO_{\wt M\otimes W_2(\bar k), \bar m}$ for every $\bar m\in M(\bar k)$. This implies that the existence of $F_{\wt M}$ can be checked locally. We let 
\[ 
	\wt P = {\rm Isom}_{\wt M\times \wt M}(\pi_1^* \wt X, \pi_2^* \wt X{}');
\] 
this is the scheme representing the functor associating to $S/W_2(k)$ the set of triples 
\[
	(\alpha\colon S\to \wt M, \beta\colon S\to \wt M, \iota\colon \alpha^* \wt X{}'\isom \beta^* \wt X).
\]
Being an open subscheme of the Hilbert scheme of the product family $\pi_1^* \wt X\times \pi_2^* \wt X{}'$, it is a scheme locally of finite type over $W_2(k)$. We let $\pi\colon \wt P\to \wt M$ be the projection mapping $(\alpha, \beta, \iota)$ to $\alpha$. 

The problem of constructing $F_{\wt M}$ can now be restated as follows: the data $({\rm id}_M, F_M, {\rm id})$ produces a section $\sigma\colon M\to P$ of $\pi\colon P\to M$, and we wish to extend it to a section $\wt M\to \wt P$ locally on $M$. For this, it is enough to show that $\pi$ is smooth along the image of $\sigma$. 

To this end, we can replace the base $\wt M$ with $\Spf \hat \cO_{\wt M\otimes W_2(\bar k), \bar m}$. By pro-representability, there exists a unique $\beta\colon \Spf \hat \cO_{\wt M\otimes W_2(\bar k), \bar m} \to \Spf \hat \cO_{\wt M\otimes W_2(\bar k), \bar m}$ such that $\wt X{}' \isom \beta^* \wt X$. Fix such an isomorphism $\iota_0$, then the base change of $\wt P$ to $\Spf \hat \cO_{\wt M\otimes W_2(\bar k), \bar m}$ becomes identified with the automorphism scheme of $\wt X{}'$, which is smooth by \cite[Theorem~2.6.1]{Sernesi}.
\end{proof}

\begin{remark}
The lifting of Frobenius $F_{\wt M}\colon \wt M\to \wt M$ corresponds to a morphism 
\[
	t(F_{\wt M})\colon W_2(M)\to \wt M,
\]
and hence yields a natural extension of $X/M$ to $W_2(M)$. See \cite{BorgerGuerney} for similar considerations.
\end{remark}

\subsection{Frobenius liftings on moduli stacks}\label{ss:stacks}

Consider the stack $\mathscr{M}$ on the big \'etale site of $\ZZ/p^2\ZZ$ associating to a scheme $S$ over $\ZZ/p^2\ZZ$ the groupoid of flat schemes $X/S$ whose geometric fibers are $1$-ordinary varieties with trivial canonical class. We wish for $\mathscr{M}$ to be a flat (or even smooth) Deligne--Mumford stack, but this is false already for K3 surfaces. In any case, the diagonal of $\mathscr{M}$ is representable by schemes locally of finite type.

To circumvent this difficulty, we choose an open substack $\mathscr{U}\subseteq \mathscr{M}$ which is a Deligne--Mumford stack and flat over $\ZZ/p^2\ZZ$. In simple terms, there exists a flat algebraic space $S$ over $\ZZ/p^2\ZZ$ and a surjective \'etale map $f\colon S\to \mathscr{U}$. The map $f$ corresponds to a family $X/S \in \mathscr{M}(S)$ which is formally universal.

We note that there exists a largest such substack $\mathscr{U}$: take $S_0$ to be the disjoint union of all flat algebraic spaces over $\ZZ/p^2\ZZ$ endowed with an \'etale map to $\mathscr{M}$, and let $S_1 = S_0 \times_\mathscr{M} S_0$ (which is an algebraic space, since the diagonal of $\mathscr{M}$ is representable). Then 
\[
\xymatrix{
	S_1 \ar@<-.5ex>[rr] \ar@<.5ex>[rr] & & S_0 
}
\]
defines a groupoid presentation of an open substack $\mathscr{U}\subseteq \mathscr{M}$. A $1$-ordinary variety with trivial canonical class $X/k$ defines a point in $\mathscr{U}(k)$ if e.g.\  $H^2(X, \cO_X) = 0$ and its formal deformation functor is pro-representable and flat over $W_2(k)$.

The Frobenius liftings constructed in \S\ref{ss:froblift-modular} on $S_1$ and $S_0$ descend to produce a lifting of Frobenius $F_{\mathscr{U}}\colon \mathscr{U}\to \mathscr{U}$.

\begin{remark}
	Since abelian varieties and K3 surfaces possess non-algebraic deformations, the above construction does not apply on the nose to the moduli stacks of such varieties. However, it can be adapted to produce Frobenius liftings on the moduli of principally polarized abelian varieties: a principal polarization is an isomorphism $A\to A^\vee$ and it is clear that such a structure is preserved by the construction of the canonical lifting. Similarly, for ordinary K3 surfaces in characteristic $p>2$, the formal subscheme of $\Def_X$ where a given line bundle $L$ deforms is preserved by the Frobenius lifting by \cite[2.2.2]{DeligneIllusieKatz}, and hence one can construct a Frobenius lifting modulo $p^2$ on the moduli stack of polarized ordinary K3 surfaces in odd characteristic.
\end{remark}


\section{Frobenius and the Hodge filtration (II)}
\label{s:fhodge2}

\subsection{The modular Frobenius lifting preserves $F^1$}
\label{ss:fhodge2}

Let $X_0/k$ be a $1$-ordinary variety with trivial canonical class. Suppose that the deformation functor $\wt S=\Def_{X/W_2(k)}$ is pro-representable and smooth over $W_2(k)$. 
Corollary~\ref{cor:can-lift-cy} applied to the reduction $X/S$ of the universal family $\wt X/\wt S$ yields a family $\wt X{}'$ lifting $X' = F_S^* X$, which is isomorphic to $F_{\wt S}^* \wt X$ for a unique Frobenius lifting $F_{\wt S}\colon S\to S$.

\begin{thm} \label{thm:fhodge2}
	The crystalline Frobenius map induced by the Frobenius lifting $F_{\wt S}$
	\[ 
		\phi(F_{\wt S})\colon  H^d_{\rm dR}(\wt X/\wt S) \ra H^d_{\rm dR}(\wt X/\wt S)
	\]
	maps $F^1  H^d_{\rm dR}(\wt X/\wt S)$ into itself.  
\end{thm}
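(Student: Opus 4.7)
My plan is to deduce this from Theorem~\ref{thm:fhodge1} applied to the reduction of the universal family, and then transfer the conclusion via the defining isomorphism $\wt X{}' \isom F_{\wt S}^* \wt X$.

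First, the reduction $X/S$ is a smooth proper family of $1$-ordinary varieties with trivial canonical class, so Corollary~\ref{cor:uniq-rel-fsplit}(a) provides a unique relative $F$-splitting $\sigma$, and the canonical lifting $\wt X(\sigma)/\wt S$ coincides with $\wt X{}'$. Applying Theorem~\ref{thm:fhodge1} to $(X,\sigma)$ with the chosen $F_{\wt S}$ shows that the crystalline Frobenius on $H^d_{\rm dR}(\wt X{}'/\wt S)$ preserves $F^1$.

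Next I would transfer this statement to $\wt X$. Since $\wt S$ is smooth over $W_2(k)$, the Frobenius lifting $F_{\wt S}$ is finite faithfully flat---\'etale-locally $F_{\wt S}\colon t_i\mapsto t_i^p + p\,\delta_i$ makes $\cO_{\wt S}$ free of rank $p^{\dim \wt S}$ over itself. Using $\wt X{}'\isom F_{\wt S}^*\wt X$, flat base change along $F_{\wt S}$ applied to the Hodge-filtered de Rham complex yields a filtered $\cO_{\wt S}$-linear identification
\[
(H^d_{\rm dR}(\wt X{}'/\wt S),\, F^\bullet) \isom F_{\wt S}^*\,(H^d_{\rm dR}(\wt X/\wt S),\, F^\bullet);
\]
here one uses (DEG) and the Hodge freeness hypotheses of Definition~\ref{def:hodgefcrys} to ensure that the $F^i$ are direct summands whose formation commutes with base change. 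Moreover, the identity $F_X\circ W_{X/S} = W_{X/S}\circ F_{X'}$ (where $W_{X/S}\colon X'\to X$ is the Frobenius-twist projection) combined with crystalline functoriality identifies the linearized crystalline Frobenius on the crystal of $X'/\wt S$ with $F_{\wt S}^*$ applied to the linearized crystalline Frobenius on the crystal of $X/\wt S$.

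Under these identifications, the conclusion of Theorem~\ref{thm:fhodge1} on $\wt X{}'$ is exactly the $F_{\wt S}^*$-pullback of the containment $\phi(F_{\wt S})(F^1)\subseteq F^1$ on $\wt X$. Because $F_{\wt S}^*$ is faithfully flat---hence conservative on containments of locally free subsheaves---this pulled-back containment implies the original, giving the desired $F^1$-preservation. The main potential obstacle is the crystalline-base-change step identifying $\phi_{X'}$ with $F_{\wt S}^*\phi_X$ compatibly with the filtered identification above: this is a formal consequence of functoriality of crystalline cohomology and the identity $W_{X/S}\circ F_{X'}=F_X\circ W_{X/S}$, but the bookkeeping between semilinear and linearized Frobenii, and between the two orientations of the relevant Cartesian squares, has to be handled carefully.
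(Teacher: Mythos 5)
Your proposal is correct and follows essentially the same route as the paper: identify $\wt X{}'$ (the canonical lifting) with $F_{\wt S}^*\wt X$, apply Theorem~\ref{thm:fhodge1} to deduce that the crystalline Frobenius on $H^d_{\rm dR}(\wt X{}'/\wt S)$ preserves $F^1$, invoke the base-change identification $H^d_{\rm dR}(F_{\wt S}^*\wt X/\wt S)\isom F_{\wt S}^*H^d_{\rm dR}(\wt X/\wt S)$ compatible with Frobenius, and conclude by faithful flatness of $F_{\wt S}$. The paper phrases this as the vanishing of the composite $F_{\wt S}^*F^1\hookrightarrow F_{\wt S}^*H\xra{\phi}H\twoheadrightarrow H/F^1$ which may be checked after a further pullback by $F_{\wt S}$; your formulation in terms of containments of direct summands is equivalent, and your extra care about filtered base change and the compatibility of the two linearized Frobenii correctly fills in the details the paper leaves implicit.
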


\noindent
This should be contrasted with the fact that for the canonical lift of a family $X/S$, \emph{every} Frobenius lifting on $\wt S$ preserves $F^1$ (Theorem~\ref{thm:fhodge1}). Interestingly, we deduce Theorem~\ref{thm:fhodge2} from this fact.

\begin{proof}
We have to show that the map of $\cO_S$-modules
\[ 
	F_{\wt S}^* F^1 \hookrightarrow F_{\wt S}^* H^d_{\rm dR}(\wt X/\wt S) \xra{\phi(F_{\wt S})} H^d_{\rm dR}(\wt X/\wt S) \to H^d_{\rm dR}(\wt X/\wt S)/F^1
\]
vanishes. Since $F_{\wt S}\colon \wt S\to \wt S$ is faithfully flat, it is enough to check this after pull-back by $F_{\wt S}$. On the other hand, $F_{\wt S}^* \wt X$ is by definition of $F_{\wt S}$ the canonical lifting of the family $X/S$. By Theorem~\ref{thm:fhodge1}, we know that 
\[ 
	\phi(F_{\wt S}) \colon H^d_{\rm dR}(F_{\wt S}^* \wt X/\wt S) \ra H^d_{\rm dR}(F_{\wt S}^* \wt X/\wt S)
\]
preserves $F^1$. But $H^d_{\rm dR}(F_{\wt S}^* \wt X/\wt S) \isom F_{\wt S}^* H^d_{\rm dR}(\wt X/\wt S)$ compatibly with Frobenius, and hence the result. 
\end{proof}

Our remaining goal in this section will be to employ the above result in order to relate the Frobenius lifting $F_{\wt S}$ to the first higher Hasse--Witt operation of $X$ (\S\ref{ss:fhodge2-hw1}). This will be then applied in \S\ref{ss:fhodge2-app} to show that $X$ can be deformed to a $2$-ordinary variety, and to show that the property of preserving $F^1$ actually characterizes $F_{\wt S}$. This in turn allows us to compare $F_{\wt S}$ with the classical construction in the case of abelian varieties and K3 surfaces.

\subsection{The modular Frobenius lifting and the first higher Hasse--Witt operation}
\label{ss:fhodge2-hw1}

Let $\wt S = \Spf W_2(k)[[t_1, \ldots, t_r]]$ and let $F_{\wt S}\colon \wt S\to \wt S$ be a Frobenius lifting. Let $H = (H, \nabla, F^\bullet, \phi)$ be a Hodge $F$-crystal over $\wt S$ (Definition~\ref{def:hodgefcrys}). Let 
\[
	\psi \colon T_{\wt S} \ra \Hom(F^1/F^2, H/F^1)
\]
be the map induced by $\nabla$. Suppose that the Frobenius map
\[ 
	\phi(F_{\wt S})F_{\wt S}^* \colon H\ra H
\]
maps $F^1$ into $F^1$ and vanishes on $F^2$, yielding maps
\[ 
	\phi_0 \colon H/F^1 \ra H/F^1 \quad\text{and}\quad \phi_1 \colon F^1/F^2 \ra F^1/F^2.
\] 
Suppose in addition that the map $\phi_0$ is bijective.

\begin{prop} \label{prop:xi-hw1}
	In the above situation, the following square commutes
	\[
		\xymatrix{
			T_{\wt S} \ar[rr]^-\psi \ar[d]_-{d F_{\wt S}} & & \Hom(F^1/F^2, H/F^1) \ar[d]^\gamma \\
			F_{\wt S}^* T_{\wt S} \ar[rr]_-{F^*_{\wt S} \psi}  & & \Hom(F_{\wt S}^* (F^1/F^2), F_{\wt S}^* (H/F^1)),
		}
	\]
	where $\gamma$ maps a homomorphism $f\colon F^1/F^2 \to H/F^1$ to the composition
	\[ 
		F_{\wt S}^* (F^1/F^2) \xra{\phi_1} F^1/F^2 \xra{f} H/F^1 \xra{\phi_0^{-1}} F^*_{\wt S} (H/F^1).
	\]
\end{prop}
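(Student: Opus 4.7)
\textbf{Proof plan for Proposition~\ref{prop:xi-hw1}.} The entire content of the proposition is a formal consequence of the horizontality of $\phi(F_{\wt S})$ together with Griffiths transversality. I would reduce it to a pointwise identity: namely, for every local section $D$ of $T_{\wt S}$ and every local section $y$ of $F^1$, the identity
\[
    (F_{\wt S}^*\psi)(dF_{\wt S}(D))\bigl(\overline{F_{\wt S}^* y}\bigr)
    =
    \phi_0^{-1}\Bigl(\psi(D)\bigl(\phi_1(\overline{F_{\wt S}^*y})\bigr)\Bigr)
\]
in $F_{\wt S}^*(H/F^1)$, where the overline denotes the class in $F_{\wt S}^*(F^1/F^2)$. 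Since $F^1/F^2$ is generated by such classes, this pointwise identity implies the commutativity of the square.

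To prove the identity, recall that by Definition~\ref{def:hodgefcrys}(i) the map $\phi(F_{\wt S})\colon F_{\wt S}^*H\to H$ is horizontal, i.e.\ for every section $y$ of $H$ and every $D\in T_{\wt S}$
\[
    \nabla_D\bigl(\phi(F_{\wt S})F_{\wt S}^* y\bigr)
    =
    \phi(F_{\wt S})\bigl(\nabla^*_D (F_{\wt S}^*y)\bigr),
\]
where $\nabla^*$ is the pullback connection on $F_{\wt S}^*H$. Choosing local coordinates $t_1,\ldots,t_r$ with dual vector fields $\partial_i$ and writing $F_{\wt S}^*(t_j)=f_j$, this unpacks as
\[
    \nabla_{\partial_i}\bigl(\phi(F_{\wt S})F_{\wt S}^*y\bigr)
    =
    \sum_j \frac{\partial f_j}{\partial t_i}\cdot \phi(F_{\wt S})F_{\wt S}^*(\nabla_{\partial_j}y),
\]
which is precisely the coordinate expression of $dF_{\wt S}(\partial_i)=\sum_j(\partial f_j/\partial t_i)\otimes\partial_j\in F_{\wt S}^*T_{\wt S}$ applied under $F_{\wt S}^*\nabla$.

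Now take $y\in F^1$. By Griffiths transversality, $\nabla_D$ sends $F^1$ into $H$ and $F^2$ into $F^1$, so it induces $\psi(D)\colon F^1/F^2\to H/F^1$. Since $\phi(F_{\wt S})F_{\wt S}^*$ sends $F^1$ into $F^1$, the element $\phi(F_{\wt S})F_{\wt S}^* y$ lies in $F^1$, and its class mod $F^2$ is $\phi_1(\overline{F_{\wt S}^*y})$ by the definition of $\phi_1$. Reducing the horizontality identity modulo $F^1$ therefore yields
\[
    \psi(D)\bigl(\phi_1(\overline{F_{\wt S}^*y})\bigr)
    =
    \phi_0\!\left(\sum_j \frac{\partial f_j}{\partial t_i}\otimes \psi(\partial_j)(\bar y)\right),
\]
where on the right we used that modulo $F^1$ the expression $\phi(F_{\wt S})F_{\wt S}^*(\nabla_{\partial_j}y)$ equals $\phi_0(F_{\wt S}^*(\psi(\partial_j)(\bar y)))$, since $\nabla_{\partial_j}y$ represents $\psi(\partial_j)(\bar y)$ modulo $F^1$. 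Applying $\phi_0^{-1}$ and recognising the right-hand side as $(F_{\wt S}^*\psi)(dF_{\wt S}(D))$ applied to $\overline{F_{\wt S}^*y}$ gives the desired identity, hence the commutativity of the square.

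The main obstacle is purely notational: one must be careful with the three different pullback operations (pullback of a connection, pullback of a $\Hom$-valued section, and $dF_{\wt S}$ viewed as a map $T_{\wt S}\to F_{\wt S}^*T_{\wt S}$) and ensure they are compatible. Once the horizontality of $\phi(F_{\wt S})$ is written out in local coordinates and combined with Griffiths transversality, the identity is essentially formal; notably, the invertibility hypothesis on $\phi_0$ enters only to make $\gamma$ well-defined and not in the verification itself.
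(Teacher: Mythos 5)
Your proposal is correct and follows essentially the same route as the paper: take a local section $x\in F^1$, post-compose the two sides of the square with $\phi_0$, and reduce the horizontality identity $\nabla_v(\phi(F_{\wt S})F_{\wt S}^* x)=\phi(F_{\wt S})(\nabla^*_v F_{\wt S}^* x)$ modulo $F^1$ using Griffiths transversality and the fact that $\phi(F_{\wt S})F_{\wt S}^*$ preserves $F^1$ and kills $F^2$. The paper's version is more compact and coordinate-free; your local-coordinate expansion spells out the pullback connection but does not change the argument.
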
  

\begin{proof}
Let $v\in T_A$, we show that the two images of $v$ in $\Hom(F_{\wt S}^* (F^1/F^2), F_{\wt S}^* (H/F^1))$ agree after post-composition with $\phi_0\colon F_{\wt S}^* (H/F^1)\to H/F^1$. Let $x\in F^1$, then 
\[ 
	\phi_0(\gamma(\psi(v))(F_{\wt S}^*(x)\text{ mod }F^2)) = (\nabla_v \phi(F_{\wt S})(x)) \text{ mod }F^1
\]
while
\[ 
	\phi_0(F_{\wt S}^*(\psi)(dF_{\wt S}(v))F_{\wt S}^*(x)\text{ mod }F^2)) = \phi(F_{\wt S})(F_{\wt S}^* \nabla_{dF_{\wt S}(v)} x)\text{ mod }F^1.
\]
Thus the assertion follows from the fact that $\phi(F_{\wt S})$ is horizontal, i.e.
\[ 
	\nabla_v (\phi(F_{\wt S})(F_{\wt S}^* x)) = \phi(F_{\wt S})(\nabla_v F_{\wt S}^* x) = \phi(F_{\wt S})(F_{\wt S}^* \nabla_{dF_{\wt S}(v)} x). \qedhere 
\]
\end{proof}

\begin{cor} \label{cor:xi-hw1} 
	Let $X$ be a $1$-ordinary variety with trivial canonical class with unobstructed deformations over $W_2(k)$. Suppose that $p>2$ and that the assumptions in \S\ref{ss:hodgefcrystal} are satisfied. Then the following square commutes
	\[ 
		\xymatrix{
			H^1(X, T_X) \ar[r]^-\sim \ar[d]_{\xi\, \eqref{eqn:xi0}} & \Hom(H^{d-1}(X, \Omega^1_X), H^d(X, \cO_X)) \ar[d]^{\Hom(\HW(1), \HW(0)^{-1})} \\
			H^1(X, T_X) \ar[r]_-{\sim} & \Hom(H^{d-1}(X, \Omega^1_X), H^d(X, \cO_X)). 
		}
	\]
	Consequently, $X$ is $2$-ordinary if and only if the Frobenius lifting $F_{\wt S}$ is ordinary (Definition~\ref{def:flift-ordinary}).
\end{cor}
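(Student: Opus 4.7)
The plan is to obtain the square of Corollary~\ref{cor:xi-hw1} by applying Proposition~\ref{prop:xi-hw1} to the Hodge $F$-crystal $H = H^d_{\rm dR}(\wt X/\wt S)$, where $\wt X/\wt S$ is the universal formal deformation and $F_{\wt S}$ is the modular Frobenius lifting of \S\ref{s:modular-flift}, and then specializing the resulting square at the closed point $0\in\wt S$. First one must verify the hypotheses of Proposition~\ref{prop:xi-hw1}: the Hodge $F$-crystal structure on $H$ exists under the assumptions of \S\ref{ss:hodgefcrystal}; Theorem~\ref{thm:fhodge2} supplies $\phi(F_{\wt S})F_{\wt S}^*(F^1)\subseteq F^1$; the divisibility $\phi(F_{\wt S})F_{\wt S}^*(F^2)\subseteq p^2 H = 0$ (using $p>2$) yields vanishing on $F^2$; and bijectivity of $\phi_0\colon F_{\wt S}^*(H/F^1)\to H/F^1$ follows because its reduction at the closed point is the Hasse--Witt operation $\HW(0)$ on $H^d(X, \cO_X)$, which is bijective by $1$-ordinarity (Proposition~\ref{prop:1-ord}), with Nakayama propagating this over the formal base.

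The next step is to identify each term of the square at the closed point. Pro-representability of $\Def_{X/W_2(k)}$ gives $T_{\wt S}|_0 = H^1(X, T_X)$; the Hodge decomposition gives $(F^1/F^2)|_0 = H^{d-1}(X, \Omega^1_X)$ and $(H/F^1)|_0 = H^d(X, \cO_X)$; the specialization $\psi|_0$ is the Kodaira--Spencer map of the universal family, which under a trivialization of $\omega_X$ and Serre duality coincides with the cup-product isomorphism $H^1(X, T_X)\isomto \Hom(H^{d-1}(X, \Omega^1_X), H^d(X, \cO_X))$ used as the horizontal arrow in Corollary~\ref{cor:xi-hw1}; and $\phi_0|_0 = \HW(0)$. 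Proposition~\ref{prop:beta-hw1} applied to the canonical lift $\wt X(\sigma)$ of $X'$ identifies $\tfrac1p\phi_1|_0$ (well-defined because $\phi(F^1)\subseteq pF^1$) with $\HW(1)$. Analogously, $dF_{\wt S}|_0$ carries a factor of $p$ since the absolute Frobenius on $S$ has vanishing differential, and the defining formula \eqref{eqn:def-xi} identifies $\tfrac1p dF_{\wt S}|_0$ with $\xi(0)$ from \eqref{eqn:xi0}. Substituting these identifications into the square of Proposition~\ref{prop:xi-hw1}, the two factors of $p$ cancel and one recovers the square asserted by Corollary~\ref{cor:xi-hw1}.

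For the concluding equivalence, $F_{\wt S}$ is ordinary iff $\xi$ is an $\cO_S$-linear isomorphism on $F^*\Omega^1_S$, and since the base is formally local Nakayama reduces this to bijectivity of $\xi(0)$. By the commuting square together with bijectivity of the horizontal arrows, this is equivalent to bijectivity of $\Hom(\HW(1), \HW(0)^{-1})$; since $\HW(0)$ is already bijective by $1$-ordinarity, this reduces to bijectivity of $\HW(1)$, i.e., to $2$-ordinarity of $X$ in the sense of Definition~\ref{def:m-ord}.

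The main obstacle will be tracking the cancellation of the two powers of $p$ in passing from Proposition~\ref{prop:xi-hw1} to Corollary~\ref{cor:xi-hw1}: one has to align the Frobenius-semilinearity conventions for the Hodge $F$-crystal with those for $\xi$ and the Hasse--Witt operations, and verify that the $p$-divisibilities on both sides of the square of Proposition~\ref{prop:xi-hw1} are exactly compensated by the $p$-divisions implicit in the definitions of $\xi$ (formula \eqref{eqn:def-xi}) and of $\HW(1)$ (via Proposition~\ref{prop:beta-hw1}). All other identifications used are either standard or have been established earlier in the paper.
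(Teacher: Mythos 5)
Your proposal is correct and follows exactly what the paper intends: Corollary~\ref{cor:xi-hw1} has no separate written proof precisely because it is the specialization of Proposition~\ref{prop:xi-hw1} at the closed point of $\wt S = \Def_{X/W_2(k)}$, using Theorem~\ref{thm:fhodge2} to verify $\phi$-stability of $F^1$, the divisibility estimate $\phi(F^2)\subseteq p^2H = 0$ (valid since $p>2$), and Proposition~\ref{prop:beta-hw1} to identify $\tfrac1p\phi_1$ at the closed point with $\HW(1)$. You correctly identify the mechanism of the $p$-cancellation: both $dF_{\wt S}$ (since $F_{\wt S}$ lifts Frobenius) and $\gamma$ (through $\phi_1$, since $\phi(F^1)\subseteq pH\cap F^1 = pF^1$ as $F^1$ is a direct summand) land in $p$ times a free module, and commutativity over $W_2(k)$ descends to commutativity of the divided maps over $k$.

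One small point worth making explicit in a write-up: $\phi_1$ is attached to the universal crystal over $\wt S$, while Proposition~\ref{prop:beta-hw1} is stated for the $W_2(k)$-crystal of the canonical lift $\wt X(\sigma)$. The bridge is that the restriction of the universal crystal along the Teichm\"uller point of $(\wt S, F_{\wt S})$ is exactly the crystal of the canonical lift (that point is the fixed point of the modular Frobenius, hence gives the canonical lifting), and the Teichm\"uller point reduces modulo $p$ to the closed $k$-point, so $\tfrac1p\phi_1|_0$ is unambiguously $\HW(1)$. You gesture at this by invoking the canonical lift, but the reader needs this identification to be fully convinced. With that gloss added, the argument is complete.
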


\subsection{Applications}
\label{ss:fhodge2-app}

If $\wt S$ is a smooth scheme over $W_2(k)$ with a Frobenius lift $F_{\wt S}$, then the induced map $\xi = \frac{1}{p}F_{\wt S}^* \colon F_S^* \Omega^1_{S'/k}\to \Omega^1_{S/k}$ is injective and hence generically an isomorphism. We deduce that the $2$-ordinary locus is a dense open subset in the moduli of $1$-ordinary varieties as in Corollary~\ref{cor:xi-hw1}. More formally, we have:

\begin{cor} \label{cor:2-ordinary-def} 
	Every $1$-ordinary variety $X$ as in Corollary~\ref{cor:xi-hw1} admits a formal deformation over $k[[t]]$ whose generic fiber is $2$-ordinary.
\end{cor}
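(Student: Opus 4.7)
The plan is to reduce the statement to the existence of a formal arc in $\Def_X$ avoiding the non-$2$-ordinary locus. Let $\wt S = \Def_{X/W_2(k)}$ and $S = \wt S\otimes_{W_2(k)} k$, which by hypothesis is formally smooth. By the remark preceding the corollary, the operator $\xi\colon F_S^*\Omega^1_S \to \Omega^1_S$ attached to the modular Frobenius lift $F_{\wt S}$ is injective, and since source and target are locally free of equal rank over $S$, it is generically an isomorphism. By the fiberwise extension of Corollary~\ref{cor:xi-hw1} (which follows from Proposition~\ref{prop:xi-hw1}, already formulated over an arbitrary base), $\xi$ is an isomorphism at a point $s\in S$ precisely when the corresponding fiber $X_s$ is $2$-ordinary. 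Thus the non-$2$-ordinary locus $Z\subseteq S$ is a proper closed formal subscheme.

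It then suffices to produce a formal arc $\phi\colon \Spf k[[t]]\to S$ sending the closed point to $0=[X]\in S$ and not factoring through $Z$, since pulling back the universal formal family along $\phi$ yields a formal deformation of $X$ whose generic fiber is $2$-ordinary. If $r = \dim S = 0$, the variety $X$ is formally rigid, hence $Z = \emptyset$, $X$ itself is $2$-ordinary, and the constant deformation $X\otimes_k k[[t]]$ works. Otherwise, using formal smoothness to identify $S\cong \Spf k[[t_1,\ldots,t_r]]$ and picking any nonzero $f$ in the ideal of $Z$, a general linear substitution $t_i\mapsto a_it$ with $a_i\in k$ achieves $\phi^*f\neq 0$: one has only to arrange that the lowest-order homogeneous part of $f$ does not vanish at $(a_1,\ldots,a_r)$, which is possible whenever $k$ is infinite. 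Over a finite field, one replaces the linear ansatz by one of higher order in $t$, or passes through a harmless finite extension of $k[[t]]$, since $2$-ordinarity is insensitive to residue field extension.

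The only step that requires more than purely formal manipulation is the relative version of Corollary~\ref{cor:xi-hw1} at points of $S$ other than the closed point. This is however essentially automatic: Proposition~\ref{prop:xi-hw1}, relating $\xi$ to the crystalline Frobenius on $F^1/F^2$, is already stated over an arbitrary base, and under the standing assumptions (freeness of the Hodge groups and degeneration of the Hodge spectral sequence for the universal family) the identification of this Frobenius with the first higher Hasse--Witt operation of the fibers commutes with base change. The main work is therefore already contained in the preceding results, and Corollary~\ref{cor:2-ordinary-def} becomes a direct geometric consequence.
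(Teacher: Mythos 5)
Your proof is correct and follows the same line as the paper's: the paper's justification, given in the paragraph immediately preceding the corollary, is precisely that $\xi = \frac{1}{p}F_{\wt S}^*$ is injective and hence generically an isomorphism, so the $2$-ordinary locus is dense open in $\Def_X$, and a formal arc through the origin avoiding its complement then does the job. You helpfully spell out the arc construction and the $r=0$ edge case; just note that the ``harmless finite extension of $k[[t]]$'' would change the residue field to $k'$ and hence not literally produce a deformation over $k[[t]]$ — over finite $k$ one should instead rely on the nonlinear ansatz you also mention (for instance $t_i\mapsto t^{N_i}$ with suitable $N_i$, or a Weierstrass preparation argument), which succeeds over any base field.
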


\begin{cor} \label{cor:uniq-froblift-moduler} 
	Under the assumptions of Corollary~\ref{cor:xi-hw1}. The Frobenius lifting $F_{\wt S}$ in Theorem~\ref{thm:fhodge2} is the unique Frobenius lifting on $\wt S$ for which $\phi(F_{\wt S})$ preserves $F^1 H^d_{dR}(\wt X/\wt S)$.
\end{cor}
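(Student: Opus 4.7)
\emph{Plan.} Suppose $F_{\wt S}$ and $F'_{\wt S}$ are two liftings of Frobenius on $\wt S$ such that both $\phi(F_{\wt S})$ and $\phi(F'_{\wt S})$ preserve $F^1 = F^1 H^d_{\rm dR}(\wt X/\wt S)$. Since both Frobenius lifts agree with $t_i^p$ modulo $p$, the elements $u_i = (F_{\wt S}^*(t_i) - (F'_{\wt S})^*(t_i))/p$ are well-defined in $\cO_S = \cO_{\wt S}/p$, and $F_{\wt S} = F'_{\wt S}$ if and only if all $u_i$ vanish. The change-of-Frobenius formula \eqref{eqn:changefrob}, applied to an arbitrary $x\in F^1$, yields
\[
\phi(F_{\wt S})F_{\wt S}^* x - \phi(F'_{\wt S})(F'_{\wt S})^* x = p\sum_i u_i\cdot \phi(F'_{\wt S})(F'_{\wt S})^*\bigl(\nabla_{\partial/\partial t_i} x\bigr),
\]
where Griffiths transversality $\nabla F^1 \subseteq F^0 \otimes \Omega^1_{\wt S} = H\otimes \Omega^1_{\wt S}$ is used to make sense of $\nabla_{\partial/\partial t_i}x \in H$. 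The hypothesis forces the LHS to lie in $F^1$, so the RHS does as well.

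Next, I would reduce mod $p$. Since $F^1 \subseteq H$ is a direct summand, we have $pH \cap F^1 = pF^1$, so the RHS lying in $F^1$ is equivalent to the image of $\sum_i u_i \cdot \phi(F'_{\wt S})(F'_{\wt S})^*(\nabla_{\partial/\partial t_i} x)$ in $H_0 = H/pH$ lying in $F^1_0 = F^1/pF^1$. Letting $\phi_0 \colon F_S^*H_0 \to H_0$ denote the common reduction mod $p$ of $\phi(F_{\wt S})F_{\wt S}^*$ and $\phi(F'_{\wt S})(F'_{\wt S})^*$, the preservation of $F^1$ mod $p$ induces an operator $\phi_0 \colon F_S^*(H_0/F^1_0)\to H_0/F^1_0$. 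The condition therefore becomes
\[
\sum_i u_i \cdot \phi_0\bigl(F_S^*\,\eta(\partial/\partial t_i)(\bar x)\bigr) = 0 \quad\text{in } H_0/F^1_0
\]
for every $\bar x \in F^1_0/F^2_0$, where $\eta \colon T_S \to \cHom(F^1_0/F^2_0,\, H_0/F^1_0)$ is the Kodaira--Spencer-type map induced by $\nabla$. By $1$-ordinarity (Proposition~\ref{prop:1-ord} in the relative form), $\phi_0$ is an isomorphism on $H_0/F^1_0$, so after cancelling it we obtain $\eta\bigl(\sum_i u_i\,\partial/\partial t_i\bigr) = 0$ in $\cHom(F^1_0/F^2_0,\,H_0/F^1_0)$.

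Finally, I would show that $\eta$ is an isomorphism of locally free $\cO_S$-modules, so the vanishing above forces $u_i = 0$ for all $i$, proving $F_{\wt S} = F'_{\wt S}$. At each closed point $s \in S$, $\eta_s$ factors as the classical Kodaira--Spencer map $T_{\wt S,s}\isomto H^1(X_s, T_{X_s})$ (an isomorphism by universality of the family $\wt X/\wt S$), composed with the cup-product map $H^1(X_s, T_{X_s}) \to \Hom(H^{d-1}(X_s, \Omega^1_{X_s}), H^d(X_s, \cO_{X_s}))$; the latter is an isomorphism on $1$-ordinary varieties with trivial canonical class via the trivialization $T_{X_s}\isom \Omega^{d-1}_{X_s}$ (Corollary~\ref{cor:uniq-rel-fsplit}) combined with Serre duality. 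This is exactly the fiberwise isomorphism implicit in Corollary~\ref{cor:xi-hw1}. Since $\eta$ is a map of locally free sheaves of equal rank that is bijective at every point of $S$, it is an isomorphism.

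The main obstacle is the bookkeeping for the mod-$p$ reduction: carefully exploiting that $F^1 \subseteq H$ is a direct summand (so $pH \cap F^1 = pF^1$) to convert the relation ``$p \cdot (\text{stuff})$ lies in $F^1$'' into vanishing of the stuff modulo $F^1_0$, and then identifying the resulting map with the Kodaira--Spencer-cup-product isomorphism over the whole base $S$ (rather than just at the origin).
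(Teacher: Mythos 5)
Your proof is correct, and it takes a genuinely different — and in one respect more streamlined — route than the paper. The paper first invokes Proposition~\ref{prop:xi-hw1} to conclude that $dF_{\wt S}=dF$, hence that $F_{\wt S}^*(t_i)-F^*(t_i)=pg_i^p$ with $g_i\in k[[t_1,\dots,t_r]]$; the correction terms in \eqref{eqn:changefrob} then assemble via the $F_S$-semilinearity of $\phi(F)F^*$ into a single $p\,\phi(F)F^*(\nabla_v x)$ with $v=\sum g_i\partial/\partial t_i$, and a contradiction is derived by choosing $x\in F^1$ with $\nabla_v x\notin F^1$, using that $\gr\nabla$ is an isomorphism and that $\phi$ is bijective on $H/F^1\otimes k$. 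You bypass Proposition~\ref{prop:xi-hw1} entirely: rather than linearizing by recognizing $u_i=f_i-f'_i$ as a $p$-th power, you observe that the reduced Frobenius $\phi_0\colon F_S^*(H_0/F^1_0)\to H_0/F^1_0$ is already $\cO_S$-linear on the $F_S^*$-twisted source, so you can move the $u_i$ inside and cancel $\phi_0$ directly. What this buys is a self-contained argument: the two-step decomposition (differential part, then constant part) is merged into a single linear-algebra step, and the dependence on the auxiliary commutative square of Proposition~\ref{prop:xi-hw1} is removed.

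One small inaccuracy worth flagging: after cancelling $\phi_0$, what you get is $\sum_i u_i\otimes\eta(\partial/\partial t_i)=0$ in $F_S^*\cHom(F^1_0/F^2_0, H_0/F^1_0)$, not $\eta\bigl(\sum_i u_i\,\partial/\partial t_i\bigr)=0$ in $\cHom(F^1_0/F^2_0,H_0/F^1_0)$; the scalars $u_i$ land in the $\cO_S$-factor of the Frobenius twist, not in the untwisted module. This is not a problem — since $F_S^*\eta$ is injective, $\sum_i u_i\otimes\partial/\partial t_i=0$ in $F_S^*T_S$, and $F_S^*T_S$ is free on $1\otimes\partial/\partial t_i$, so $u_i=0$ follows all the same — but the intermediate assertion as written is off by a Frobenius twist. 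You should also state explicitly that bijectivity of $\phi_0$ on $H_0/F^1_0$ and of $\eta$ are checked at the closed point and propagated to all of $\Spf\,k[[t_1,\dots,t_r]]$ by Nakayama; you do this for $\eta$ but use it implicitly for $\phi_0$.
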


\begin{proof}
Suppose that $F$ is another Frobenius lifting preserving $F^1$ and write 
\[ 
	\wt S = \Spf W_2(k)[[t_1, \ldots, t_r]], 
	\quad F_{\wt S}(t_i) = t_i^p + pf_i,
	\quad F(t_i) = t_i^p + pf'_i. 
\] 
We have to prove that $f_i=f'_i$ for all $i$. Proposition~\ref{prop:xi-hw1} implies that $dF_{\wt S} = dF$, so that $f_i - f'_i = g_i^p$ for some $g_i\in k[[t_1, \ldots, t_r]]$. Let $v = \sum g_i \frac{\partial}{\partial t_i}$; if $F_{\wt S}\neq F$, then $v\neq 0$. Since the `Kodaira--Spencer' map
\[ 
	\gr\nabla \colon T_{\wt S} \ra \Hom(F^1/F^2, H/F^1)
\]
is an isomorphism, there exists an $x\in F^1$ such that $\nabla_{v} x \notin F^1$.

As $p>2$, the `change of Frobenius' formula \eqref{eqn:changefrob} gives
\begin{align}
	\phi(F_{\wt S})F_{\wt S}^* x &=
	\phi(F)F^* x 
	+ \sum_{i=1}^r p(f_i - f'_i) \phi(F)F^*(\nabla_{\frac{\partial}{\partial t_i}} x) \nonumber \\
	&= \phi(F)F^* x +  p\phi(F_S)F_S^* (\nabla_v x)
	\quad
	\text{for } x\in H.  \label{eqn:change-frob}
\end{align}
Since the Frobenius of $S$ induces an isomorphism on $H/F^1\otimes k$, we have 
\[ 
	\phi(F)F^*(\nabla_{v} x) \notin F^1
\]
so the corresponding term in \eqref{eqn:change-frob} is not in $F^1$, a contradiction.
\end{proof}

\noindent
Applying \cite[Appendix]{DeligneIllusieKatz}, we obtain:

\begin{cor} \label{cor:comparison-st2}
	In the situation of Theorem~\ref{thm:fhodge2}, if $p>2$ and $X$ is either an abelian variety or a K3 surface, then the Frobenius lifting $F_{\wt S}$ on $\wt S = \Def_{X/W_2(k)}$ agrees with the restriction modulo $p^2$ of the Serre--Tate Frobenius lifting constructed in \cite{DeligneIllusieKatz}.
\end{cor}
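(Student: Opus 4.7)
The plan is to deduce this corollary directly from the uniqueness of $F_{\wt S}$ established in Corollary~\ref{cor:uniq-froblift-moduler}, together with the classical fact that the Serre--Tate Frobenius lifting has a crystalline Frobenius that preserves the Hodge filtration.

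First I would verify that the hypotheses of Corollary~\ref{cor:uniq-froblift-moduler} (which require those of Corollary~\ref{cor:xi-hw1}, and therefore of Theorem~\ref{thm:alphabetahw1} and \S\ref{ss:hodgefcrystal}) are met for ordinary abelian varieties and for ordinary K3 surfaces in odd characteristic. For ordinary abelian varieties, the deformation space $\Def_{X/W_2(k)}$ is formally smooth and pro-representable, and the Hodge groups $H^j(\wt X, \Omega^i_{\wt X/\wt S})$ are free with degenerating Hodge spectral sequence (classical). For ordinary K3 surfaces, these properties are consequences of Nygaard--Ogus theory and the Deligne--Illusie theorem, and the K3 deformation space is formally smooth of dimension 20 over $W_2(k)$. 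Hence the uniqueness statement of Corollary~\ref{cor:uniq-froblift-moduler} applies: the Frobenius lifting $F_{\wt S}$ of Theorem~\ref{thm:fhodge2} is the \emph{only} Frobenius lifting on $\wt S = \Def_{X/W_2(k)}$ whose associated crystalline Frobenius preserves $F^1 H^d_{\mathrm{dR}}(\wt X/\wt S)$.

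Next, let $F^{\mathrm{ST}}$ denote the restriction modulo $p^2$ of the Serre--Tate Frobenius lifting on $\Def_{X/W(k)}$ (for abelian varieties this is the classical one; for K3 surfaces this is the Nygaard construction developed in \cite{Nygaard} and revisited in the appendix of \cite{DeligneIllusieKatz}). The appendix of \cite{DeligneIllusieKatz} establishes in both cases that the crystalline Frobenius on the universal family, with respect to $F^{\mathrm{ST}}$, preserves the entire Hodge filtration on $H^d_{\mathrm{dR}}(\wt X/\wt S)$. In particular it preserves $F^1$. By the uniqueness established in the previous paragraph, we conclude $F^{\mathrm{ST}} = F_{\wt S}$ modulo $p^2$.

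The main obstacle is really one of bookkeeping rather than mathematics: one must check that the $F$-crystal on $\wt S$ produced in our setup (from the universal family of the canonical lift) agrees, modulo $p^2$ and as an $F$-crystal with Hodge filtration, with the $F$-crystal considered by Katz. This is straightforward because both are obtained from the universal family by taking middle crystalline / de~Rham cohomology, and the two families agree on the base once one knows that our $F_{\wt S}$ and the Serre--Tate lift produce the same pulled-back family. In fact, once we know $F^{\mathrm{ST}}$ preserves $F^1$ on the mod-$p^2$ $F$-crystal, uniqueness forces $F^{\mathrm{ST}} = F_{\wt S}$ without any further identification of lifts of the family itself.
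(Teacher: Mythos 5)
Your proposal is correct and follows exactly the same route the paper takes: it reads off the conclusion from the uniqueness statement in Corollary~\ref{cor:uniq-froblift-moduler} once one knows (from Katz's appendix in \cite{DeligneIllusieKatz}) that the crystalline Frobenius attached to the Serre--Tate Frobenius lifting preserves the Hodge filtration, hence in particular $F^1$. Your worry in the last paragraph about matching the two $F$-crystals is, as you yourself note at the end, unnecessary: both constructions live on the same pro-representable $\wt S = \Def_{X/W_2(k)}$ with the same universal family, so the only variable is the Frobenius lifting, and the uniqueness does the rest.
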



\section{Canonical coordinates}
\label{s:cancoord}

\subsection{Ordinary liftings of Frobenius}

Let $\wt S = \Spf W_2(k)[[t_1, \ldots, t_r]]$, and let $S$ be its reduction mod $p$. 

\begin{lemma} \label{lemma:teich}
	Let $\wt F\colon \wt S\to \wt S$ be a lifting of Frobenius. There exists a unique map over $W_2(k)$
	\[ 
		f \colon \Spf W_2(k) \ra \wt S
	\]
	which commutes with the Frobenius lifts.
\end{lemma}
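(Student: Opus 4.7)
The plan is to translate the statement into a system of equations on the images $a_i := f^*(t_i)$ of the coordinates, and to solve this system uniquely using perfectness of $k$. First, I would note that a $W_2(k)$-morphism $f \colon \Spf W_2(k) \to \wt S$ corresponds to a continuous $W_2(k)$-algebra map $f^*\colon W_2(k)[[t_1, \ldots, t_r]] \to W_2(k)$, which is determined by the tuple $(a_1, \ldots, a_r)$; continuity forces $a_i \in pW_2(k)$. Writing the Frobenius lift as $\wt F{}^*(t_i) = t_i^p + p\,\delta_i(t_1, \ldots, t_r)$ for some $\delta_i \in W_2(k)[[t_1, \ldots, t_r]]$, the commutation condition $\wt F \circ f = f\circ F_{W_2(k)}$ becomes the system
\[
	a_i^p + p\,\delta_i(a_1, \ldots, a_r) = F_{W_2(k)}(a_i), \qquad i = 1, \ldots, r.
\]

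Next, I would exploit two ``$p^2 = 0$'' cancellations, both immediate from $a_j \in pW_2(k)$. Since $p \geq 2$, $a_i^p$ is divisible by $p^p$, hence zero; and in the power series expansion of $\delta_i(a_1, \ldots, a_r)$, every nonconstant term picks up an extra factor of $p$ from some $a_j$ and is killed after multiplication by the outer $p$. Thus the system collapses to
\[
	F_{W_2(k)}(a_i) = p\,\delta_i(0), \qquad i = 1, \ldots, r,
\]
an equation taking place entirely inside the ideal $pW_2(k)$.

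To finish, I would use the verschiebung isomorphism $V \colon k \isomto pW_2(k)$, $\alpha \mapsto (0, \alpha)$, which intertwines $F_{W_2(k)}|_{pW_2(k)}$ with the $p$-power map on $k$. Perfectness of $k$ makes this $p$-power map bijective, so $F_{W_2(k)}$ restricts to a bijection on $pW_2(k)$. The displayed equation therefore has a unique solution $a_i \in pW_2(k)$, giving both existence and uniqueness of $f$ (explicitly, $a_i = V(\overline{\delta_i(0)})$, where the overline denotes reduction modulo $p$). I expect no genuine obstacle: the only point requiring care is bookkeeping with the Witt vector arithmetic, and conceptually the lemma is a one-step version of the classical contraction-of-Frobenius argument for the Teichm\"uller section, trivialised here by $p^2 = 0$.
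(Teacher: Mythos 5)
Your proof is correct. The paper gives no argument here (it simply cites Katz), so there is nothing to compare with on the paper's side; but your direct computation is exactly the right, fully elementary route, and you have been careful about the only two points that matter: that continuity forces $a_i \in pW_2(k)$, and that $F_{W_2(k)}$ restricts to a bijection on $pW_2(k) = V(k)$ via the Verschiebung and perfectness of $k$. As you note, this is the one-step degeneration of the usual contraction-of-Frobenius argument for Teichm\"uller points over $W(k)$: over $W_2(k)$, the relation $p^2 = 0$ kills both $a_i^p$ and all nonconstant contributions of $\delta_i$ at once, so the fixed-point iteration terminates immediately instead of converging $p$-adically.
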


\begin{proof}
This is standard. See e.g.\ \cite[\S 1.1]{KatzDwork}.
\end{proof}

\begin{defin}
	We call $f$ the \emph{Teichm\"uller point} associated to $\wt F$, and denote the ideal of its image by $J_{\wt F}\subseteq \cO_{\wt S}$.
\end{defin}

Since the map $\wt F{}^*\colon \wt F{}^* \Omega^1_{\wt S}\to \Omega^1_{\wt S}$ vanishes modulo $p$, there exists a unique mapping
\[ 
	\xi\colon F_S^* \Omega^1_{S} \ra \Omega^1_{S}
\]
such that $\wt F{}^*(\omega) = p \cdot \xi(\omega\,{\rm mod}\, p)$, as in \eqref{eqn:def-xi}.
Explicitly, if $\wt F(t_i) = t_i^p + pf_i$ then
\[ 
	\xi(dt_i) = t_i^{p-1} dt_i + df_i.
\]

\begin{defin}[{\cite[III \S 1]{Mochizuki}}]\label{def:flift-ordinary}
	We call a lifting of Frobenius $\wt F\colon \wt S\to \wt S$ \emph{ordinary} if the map $\xi$ is an isomorphism.
\end{defin}

\begin{remarks} \label{rmk:ord-froblift}
\begin{enumerate}[1.]
	\item The determinant of $\xi$ sends the generator 
	\[
		\wt F{}^*(dt_1 \wedge \ldots \wedge dt_r)
	\]
	to 
	\[ 
		(t_1^{p-1} dt_1 + df_1)\wedge \ldots \wedge (t_r^{p-1} dt_r + df_r),
	\]
	which equals $df_1\wedge\ldots\wedge df_r$ modulo $(t_1, \ldots, t_r)\cdot \Omega^1_{S}$. In particular, the lifting $\wt F$ is ordinary if and only if the Jacobian
	\[ 
		\det \left[ \frac{\partial f_i}{\partial t_j} \right] \in \cO_{S}
	\]
	has invertible constant term.
	\item If $\wt F$ is ordinary, the fixed point set of $\xi$
	\[ 
		\{ \omega \in \Omega^1_{S}\,|\, \xi(\omega)=\omega\}
	\]
	is an $\FF_p$-vector space of dimension $r$, and any basis yields a basis of $\Omega^1_{S}$ as an $\cO_S$-module. 
	\item An easy calculation using the fact that $\wt F{}^*$ vanishes on $\Omega^2_{\wt S}$ shows that $d\circ \xi$ vanishes. In particular, forms fixed by $\xi$ are always closed. 
\end{enumerate}
\end{remarks}

\subsection{Comparison with the $p$-th power map} 

We are interested in identifying the group  $Q(\wt F)$ consisting of $q\in \cO^\times_{\wt S}$ for which $\wt F{}^*(q) = q^p$. For this, we note first that `subtracting' the diagrams with exact rows
\[ 
	\xymatrix{
		0\ar[r] & \cO_S \ar[r]^{1+pf} \ar[d]_{F^*_X} & \cO_{\wt S}^\times \ar[r] \ar[d]^{\wt F^*} & \cO_S^\times \ar[r] \ar[d]^{F_X^*} & 1\\
		0 \ar[r] & \cO_S \ar[r]^{1+pf} & \cO_{\wt S}^\times \ar[r] & \cO_S^\times \ar[r] & 1
	}
	\quad\text{and}\quad
	\xymatrix{
		0\ar[r] & \cO_S \ar[r]^{1+pf} \ar[d]_{0} & \cO_{\wt S}^\times \ar[r] \ar[d]^{f\mapsto f^p} & \cO_S^\times \ar[r] \ar[d]^{f\mapsto f^p} & 1\\
		0 \ar[r] & \cO_S \ar[r]^{1+pf} & \cO_{\wt S}^\times \ar[r] & \cO_S^\times \ar[r] & 1
	}
\]
yields
\[ 
	\xymatrix{
		0\ar[r] & \cO_S \ar[d]_{F_S^*} \ar[r] & \cO_{\wt S}^\times \ar[r] \ar[d]|-{f\mapsto \wt F{}^*(f)/f^p } & \cO_S^\times \ar[r] \ar[d]^{1} & 1\\
		0 \ar[r] & \cO_S \ar[r] & \cO_{\wt S}^\times \ar[r] & \cO_S^\times \ar[r] & 1.
	}
\]
Snake lemma then gives an exact sequence
\[ 
	0\ra Q(\wt F) \ra \cO_S^\times \xra{\delta} \cO_S / \cO_S^{p} .
\]
The composition of $\delta$ with the injective map $d \colon \cO_S / \cO_S^{p}\to Z\Omega^1_S$ maps $f$ to $\xi(df)f^{-p} - d\log f$, which vanishes precisely if $\xi(d\log f) = d\log f$. This gives
\[
	Q(\wt F) \isomto \{ f\in \cO_S^\times \, | \, \xi(d\log f) = d\log f\}.
\]

\begin{lemma} \label{lemma:cancoord}
	Suppose that $\omega\in \Omega^1_S$ satisfies $\xi(\omega) = \omega$. Then $\omega = d\log q$ for some $q\in \cO_S^\times$, uniquely defined up to $\cO_S^{\times p}$.
\end{lemma}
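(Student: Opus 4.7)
The plan is as follows.

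\emph{Uniqueness} follows directly from the computation: if $d\log q=d\log q'$, the Leibniz rule gives $d(q/q')=0$, so $q/q'\in \ker(d\colon \cO_S\to \Omega^1_S)$. Since $k$ is perfect and $\cO_S$ is a formal power series ring, this kernel coincides with $\cO_S^p$, and being a unit forces $q/q'=u^p$ for some $u\in \cO_S^\times$.

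For \emph{existence}, the strategy is to lift $\omega$ to an element of $Q(\wt F)\subseteq \cO_{\wt S}^\times$, using the isomorphism $Q(\wt F)\isomto \{f\in \cO_S^\times \,:\, \xi(d\log f)=d\log f\}$ established just before the lemma. This shows that the composition $d\log\colon Q(\wt F)\to V$, with $V=\ker(\xi-1\colon \Omega^1_S\to \Omega^1_S)$, has image an $\FF_p$-subspace of $V$. By Remark~\ref{rmk:ord-froblift}(2), $V$ has $\FF_p$-dimension $r$ and any of its $\FF_p$-bases is an $\cO_S$-basis of $\Omega^1_S$, so it suffices to produce $r$ elements $q_1,\ldots,q_r\in Q(\wt F)$ whose reductions mod $p$ have logarithmic derivatives that restrict to a basis of the cotangent space of $S$ at the Teichm\"uller point.

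To construct such $q_i$, I would choose coordinates $t_1,\ldots,t_r$ centered at the Teichm\"uller point (Lemma~\ref{lemma:teich}), set $q_i^{(0)}:=1+t_i$, and then iteratively correct by $q_i^{(n+1)}:=q_i^{(n)}(1+\tau)$ with $\tau$ of strictly greater $\mathfrak m$-adic valuation so that $\wt F^*(q_i^{(n+1)})\equiv (q_i^{(n+1)})^p$ modulo higher order. The snake-lemma calculation preceding the lemma, combined with the identity $d\phi_0(q_0)=q_0^p(\xi(d\log q_0)-d\log q_0)$, converts the obstruction at each step into a linear equation in $\tau$ governed by $\mathrm{Id}-\xi$ on the associated graded piece of $\Omega^1_S$. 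Since $\xi$ raises $\mathfrak m$-adic valuation by a factor of $p$, the operator $\mathrm{Id}-\xi$ is invertible on each positive-order graded piece, and the correction exists uniquely. The $\mathfrak m$-adic Cauchy limit is the desired $q_i\in Q(\wt F)$, whose reduction mod $p$ satisfies $d\log q_i\equiv dt_i\pmod{\mathfrak m}$, giving the required independence.

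The main obstacle is verifying the convergence of this iteration, which boils down to the filtration-shifting property of $\xi$ on positive-order forms; ordinarity of $\wt F$ is precisely what makes this work. Once the successive approximation is set up, uniqueness of the correction at each stage is automatic.
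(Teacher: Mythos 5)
Your proposal is correct in substance but takes a genuinely different path from the paper's proof.

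The paper's argument is essentially two lines: since $\xi(\omega)=\omega$, Remark~\ref{rmk:ord-froblift}.3 gives $d\omega=0$, so $\omega\in Z\Omega^1_S$; moreover the identity $C\circ\xi=\mathrm{id}$ on closed forms (visible from $\xi(dt_i)=t_i^{p-1}dt_i+df_i$) gives $C(\omega)=C(\xi(\omega))=\omega$. The conclusion is then immediate from Illusie's exact sequence
\[
1\ra \cO_S^\times/\cO_S^{\times p}\ra Z\Omega^1_S\xra{\;C-1\;}\Omega^1_S\ra 0,
\]
which simultaneously supplies both existence (surjectivity of $d\log$ onto $\ker(C-1)$) and uniqueness (injectivity of $\cO_S^\times/\cO_S^{\times p}\hookrightarrow Z\Omega^1_S$). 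Your proof instead builds the $q_i$ by Newton-type successive approximation in the formal power series ring, using that $\xi$ strictly increases $\mathfrak m$-adic order so that $\mathrm{Id}-\xi$ is invertible on each positive graded piece. This is sound as far as it goes, and it has the pedagogical virtue of making it explicit where ordinarity enters; but it is considerably longer and is in effect a hands-on reproof of the Illusie exact sequence in the complete local case. Two points worth tightening if you pursue your route: (a) you should verify that the obstruction at each step of the iteration lands in a strictly positive graded piece (so the inversion of $\mathrm{Id}-\xi$ actually applies), since $\xi$ does \emph{not} raise order on the degree-$0$ part; and (b) your reduction from ``a basis of $V$ is logarithmic'' to ``every $\omega\in V$ is logarithmic'' is correct but implicit — it rests on $\dim_{\FF_p} V = r$ (Remark~\ref{rmk:ord-froblift}.2) together with the fact that $d\log$ is an $\FF_p$-linear homomorphism out of $\cO_S^\times$, so an $\FF_p$-basis in the image is automatically all of $V$.
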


\begin{proof}
Since $d\omega = 0$ (Remark~\ref{rmk:ord-froblift}.3), we have $C(\omega) = C(\xi(\omega)) = \omega$. On the other hand, we have the short exact sequence \cite[Corollaire~2.1.18, p.\ 217]{Illusie_deRhamWitt}
\[ 
	1 \ra \cO_S^\times / \cO_S^{\times p}\ra Z\Omega^1_S \xra{C - 1} \Omega^1_S \ra 0.
\]
which yields the result.
\end{proof}

\subsection{Canonical multiplicative coordinates}

We will call a tuple of elements $\tilde q_1, \ldots, \tilde q_r \in \cO_{\wt S}$ \emph{multiplicative coordinates} if 
\[ 
	(\tilde q_1 -1, \ldots, \tilde q_r -1 ,p )  
\]
is the maximal ideal of $W_2(k)[[t_1, \ldots, t_r]]$. 

Multiplicative coordinates give rise to the Frobenius lifting $\wt F$ defined by $\wt F{}^*(\tilde q_i) = \tilde q_i^p$ which is easily seen to be ordinary. For the converse, we have the following result, whose version `over $W(k)$' was proved in \cite[Corollaire~1.4.5]{DeligneIllusieKatz} under the assumption $p>2$.

\begin{prop} \label{prop:cancoord}
	Let $\wt F\colon A\to A$ be an ordinary lifting of Frobenius and let $\omega_1, \ldots, \omega_r$ be a basis of $\{ \omega\in \Omega^1_S\,|\,\xi(\omega) = \omega\} \cong \FF_p^r$. 
	\begin{enumerate}[(a)]
		\item There exist multiplicative coordinates $q_1, \ldots, q_r \in \cO_S$, unique up to $\cO_S^{\times p}$, such that $\omega_i = d\log q_i$. They admit unique liftings to multiplicative coordinates $\tilde q_i \in \cO_{\wt S}$ such that $\wt F{}^*(\tilde q_i) = \tilde q_i^p$. 
		\item The ideal $J_{\wt F}$ is generated by $\tilde q_i - 1$.
		\item If $q'_1, \ldots, q'_r \in \cO_S$ have the same property, then 
	\[ 
		\tilde q_i - \tilde q'_i \in \wt F(J_{\wt F}) = (\tilde q_1^p -1, \ldots, \tilde q_r^p - 1)
	\]
	\end{enumerate}
\end{prop}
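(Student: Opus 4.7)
The proof splits naturally into the three assertions; I would rely throughout on the snake-lemma identification
$Q(\wt F) \hookrightarrow \cO_S^\times$ with image $\{f : \xi(d\log f) = d\log f\}$
that was derived earlier in this subsection, together with Lemma~\ref{lemma:cancoord}.

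\emph{For (a),} I first apply Lemma~\ref{lemma:cancoord} to each $\omega_i$, producing $q_i \in \cO_S^\times$ with $d\log q_i = \omega_i$, unique modulo $\cO_S^{\times p}$. Since $\omega_i = d\log q_i$ is automatically $\xi$-fixed, each $q_i$ lies in the image of the injection $Q(\wt F) \hookrightarrow \cO_S^\times$, hence admits a unique lift $\tilde q_i \in \cO_{\wt S}^\times$ with $\wt F{}^*(\tilde q_i) = \tilde q_i^p$. Replacing $q_i$ by $q_i \cdot c^{-1}$ for $c \in k^\times$ its value at the closed point (which changes neither $\omega_i$ nor the lifting property) arranges $\tilde q_i \equiv 1$ at the Teichm\"uller point. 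Since the $\omega_i$ form an $\cO_S$-basis of $\Omega^1_S$ by Remark~\ref{rmk:ord-froblift}.2, the differentials $d\tilde q_i$ reduce modulo $\mathfrak m_{\wt S}$ to a basis of $\Omega^1_{\wt S}/\mathfrak m_{\wt S}$, so by the standard criterion $(\tilde q_1 - 1, \ldots, \tilde q_r - 1, p)$ is the maximal ideal of $\cO_{\wt S}$.

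\emph{For (b),} the central computation is
\[
  \wt F{}^*(\tilde q_i - 1) \;=\; \tilde q_i^p - 1 \;=\; (\tilde q_i - 1)\bigl(1 + \tilde q_i + \cdots + \tilde q_i^{p-1}\bigr),
\]
which shows that the ideal $I := (\tilde q_1 - 1, \ldots, \tilde q_r - 1)$ is preserved by $\wt F{}^*$. Hence $\wt F$ descends to a Frobenius lifting on $V(I) \cong \Spf W_2(k)$, producing a Frobenius-equivariant section $\Spf W_2(k) \hookrightarrow \wt S$. By the uniqueness of the Teichm\"uller section (Lemma~\ref{lemma:teich}), this section is the Teichm\"uller point, so $J_{\wt F} = I$, and consequently $\wt F(J_{\wt F})$ is the ideal generated by the $\tilde q_j^p - 1$.

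\emph{For (c),} the strategy is: since $d\log(q'_i/q_i) = 0$, the uniqueness in (a) mod $\cO_S^{\times p}$ gives $q'_i = q_i \cdot h_i^p$ with $h_i \in \cO_S^\times$; because $k$ has no nontrivial $p$-th roots of unity one may arrange $h_i \in 1 + \mathfrak m_S$. The key observation is that for \emph{any} lift $\tilde h_i \in \cO_{\wt S}^\times$ of $h_i$, the element $\tilde h_i^p$ lies in $Q(\wt F)$: writing $\wt F{}^*(\tilde h_i) = \tilde h_i^p + p\delta$ and using the identity $(a + pb)^p \equiv a^p \pmod{p^2}$ we obtain
\[
  \wt F{}^*(\tilde h_i^p) = \bigl(\tilde h_i^p + p\delta\bigr)^p \equiv \tilde h_i^{p^2} = (\tilde h_i^p)^p \pmod{p^2}.
\]
By the uniqueness of lifts in $Q(\wt F)$ this forces $\tilde q'_i = \tilde q_i \cdot \tilde h_i^p$, so $\tilde q'_i - \tilde q_i = \tilde q_i(\tilde h_i^p - 1)$, and since $\tilde q_i$ is a unit the claim reduces to $\tilde h_i^p - 1 \in \wt F(J_{\wt F})$. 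To verify this, I would write $\tilde h_i = 1 + \tilde v$ with $\tilde v \in \mathfrak m_{\wt S} = J_{\wt F} + (p)$ using (b), expand $(1+\tilde v)^p - 1 = \sum_{k=1}^p \binom{p}{k} \tilde v^k$ modulo $p^2$, and match the resulting sum against the generators using the identity $\tilde q_j^p - 1 = (\tilde q_j - 1)^p + p\,(\tilde q_j - 1)\,u_j$ with $u_j \in \cO_{\wt S}^\times$.

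The main obstacle is the final containment in (c). The divisibility $(a+pb)^p \equiv a^p \pmod{p^2}$ and the formula for $\tilde q_j^p - 1$ dispose of the leading term $\tilde v^p$ and the leading correction $p \tilde v$, but the intermediate terms $\binom{p}{k} \tilde v^k$ for $2 \le k \le p-1$ require a delicate combinatorial bookkeeping to fit into $\wt F(J_{\wt F})$; this is where care with the multinomial expansion and the $p$-divisible parts of the generators is essential.
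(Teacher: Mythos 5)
Your proofs of (a) and (b) follow the same route as the paper and are correct. For (a) the paper only cites Lemma~\ref{lemma:cancoord}; the extra steps you supply (the snake-lemma description of $Q(\wt F)\hookrightarrow\cO_S^\times$ to produce the unique lift, normalizing by a constant, and using Remark~\ref{rmk:ord-froblift}.2 to verify the coordinate condition) are the right gaps to fill. For (b) both you and the paper observe that $\wt F{}^*(\tilde q_i - 1) = \tilde q_i^p - 1$ makes the closed point $\{\tilde q_i = 1\}$ Frobenius-stable and then invoke Lemma~\ref{lemma:teich} to identify it with the Teichm\"uller point.

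Your unease about (c) is warranted, and the ``delicate combinatorial bookkeeping'' you defer is the crux; in fact the paper's own proof of (c) is not correct as written, and the statement needs a larger ideal. Up through $\tilde q'_i = \tilde q_i\tilde h_i^p$ you and the paper (whose $\tilde u_i$ is your $\tilde h_i$) agree, and your observation that $\tilde h_i^p\in Q(\wt F)$ for \emph{any} lift $\tilde h_i$ is correct and is what pins down $\tilde q'_i$. The paper then writes $\tilde q_i(\tilde u_i^p - 1) = \tilde q_i\wt F{}^*(\tilde u_i - 1)$, which would force $\wt F{}^*(\tilde u_i) = \tilde u_i^p$, i.e.\ $\tilde u_i\in Q(\wt F)$; but the only constraint imposed is $\tilde u_i - 1\in J_{\wt F}$, which merely gives $\wt F{}^*(\tilde u_i)\equiv\tilde u_i^p\pmod p$, and the discrepancy cannot be normalized away. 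Concretely, set $\tilde v = \tilde h_i - 1\in J_{\wt F}$; then $\tilde h_i^p - 1 = \tilde v^p + p\,g(\tilde v)$ with $g(x)=\sum_{k=1}^{p-1}\tfrac{1}{p}\binom{p}{k}x^k$, which lies in $J_{\wt F}^{\,p} + pJ_{\wt F}$ --- a strictly larger ideal than $\wt F(J_{\wt F})$ --- and not in $\wt F(J_{\wt F})$ in general. For instance with $r=1$, $p=3$, $a\in k\setminus\FF_3$ and $h = 1 + a(q-1)$ one computes
\[
\tilde h^3 - 1 = [a]^3(\tilde q^3 - 1) + 3\bigl([a]-[a]^3\bigr)(\tilde q - 1) + 3\bigl([a]^2 - [a]^3\bigr)(\tilde q - 1)^2,
\]
and since by flatness of $\cO_{\wt S}/(\tilde q^p-1)$ over $W_2(k)$ one has $p\cO_{\wt S}\cap(\tilde q^p - 1) = p\cdot\bigl((q-1)^p\bigr)$, the middle term (of $(\tilde q-1)$-adic order $1<p$) obstructs membership in $(\tilde q^3-1)$. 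So part (c) holds with $J_{\wt F}^{\,p}+pJ_{\wt F}$ in place of $\wt F(J_{\wt F})$, and your instinct not to force the final containment was the correct reaction.
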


\begin{proof}
The first assertion follows directly from Lemma~\ref{lemma:cancoord}. For the second, by Lemma~\ref{lemma:teich} it is enough to note that the map $W_2(k)[[\tilde q_1 - 1, \ldots, \tilde q_r-1]]\to W_2(k)$ sending all $q_i$ to $1$ commutes with the Frobenius lift. For the last assertion, again by Lemma~\ref{lemma:cancoord} we have $q'_i = u^p_i q_i$ for some $1$-units $u_i \in \cO_S$. Let $\tilde u_i \in \cO_{\wt S}$ be a lifting such that $\tilde u_i - 1$ belongs to $(q_1 -1, \ldots, q_r - 1) = J_{\wt F}$. Then $\tilde q'_i = \tilde u_i^p \tilde q_i$, so
\[ 
	\tilde q'_i - \tilde q_i = \tilde q_i (\tilde u_i^p - 1) = \tilde q_i \wt F{}^*(\tilde u_i - 1) \in \wt F(J_{\wt F}). \qedhere
\]
\end{proof}

\section{Applications to isotriviality}
\label{s:isotriv}
 
\noindent
In this section, we deduce certain `hyperbolicity' properties of the $2$-ordinary locus of the moduli of $d$-dimensional varieties with trivial canonical class satisfying the conditions 
\begin{equation}\label{eqn:conditions_applications}
	H^1(X, \cO_X) = H^2(X, \cO_X) = H^{d}(X, \Omega^1_X) = 0.
\end{equation}
More precisely, suppose that $S$ is a simply connected base with no non-zero global one-forms. We prove that a family of $2$-ordinary varieties with trivial canonical class, smooth deformation space, and satisfying the above conditions and conditions (NCT) and (DEG) over $S$ is necessarily isotrivial.  We remark that results in a similar direction appear in \cite[Theorem~3.4]{OgusFGT}.  The essential point of the argument is that the moduli space classifying varieties in question is a smooth Deligne--Mumford substack of $\mathscr{U}$ (defined in \S\ref{ss:stacks}) equipped with an ordinary Frobenius lifting (see Definition~\ref{def:flift-ordinary}).  We denote this stack by $\mathscr{U}^{\rm sm}$.  

First, we state a general lemma which we shall then use for the above stack.

\begin{lemma}\label{lem:morphism_dm_stack_no_forms}
Let $S$ be a smooth simply connected variety such that $H^0(S,\Omega^1_S) = 0$.  Suppose that $\mathscr{M}$ is a Deligne--Mumford stack admitting an isomorphism $F^*\Omega^1_{\mathscr{M}} \isomto \Omega^1_{\mathscr{M}}$.  Then for every morphism $f \colon S \to \mathscr{M}$ the differential $df \colon f^*\Omega^1_{\mathscr{M}} \to \Omega^1_{S}$ is zero.   
\end{lemma}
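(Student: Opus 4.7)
The plan is to exhibit $\Omega^1_{\mathscr{M}}$ as the tensor product of an \'etale $\FF_p$-local system with $\cO_{\mathscr{M}}$, pull this structure back to $S$, trivialize the local system using simple-connectedness, and finally conclude using the $\cO_S$-linearity of $df$ together with the hypothesis $H^0(S, \Omega^1_S)=0$.

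Denote the given isomorphism by $\xi\colon F^*\Omega^1_{\mathscr{M}}\isomto \Omega^1_{\mathscr{M}}$, and set
\[
    M = (\Omega^1_{\mathscr{M}})^{\xi=1},
\]
viewed as a sheaf of $\FF_p$-vector spaces on the small \'etale site of $\mathscr{M}$. The first step is to verify, via Artin--Schreier descent for unit-root $F$-modules, that $M$ is an \'etale local system whose rank agrees with that of $\Omega^1_{\mathscr{M}}$, and that the evaluation map
\[
    M\otimes_{\FF_p}\cO_{\mathscr{M}} \isomto \Omega^1_{\mathscr{M}}
\]
is an isomorphism. On an \'etale atlas, this amounts, after locally trivializing $\Omega^1_{\mathscr{M}}$ and writing $\xi$ as a matrix $A\in GL_r(\cO)$, to observing that $A\cdot x^{(p)}=x$ defines a finite \'etale $\FF_p^r$-torsor.

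Next I would pull back along $f$ to obtain $f^*\Omega^1_{\mathscr{M}} \isom f^*M \otimes_{\FF_p}\cO_S$. Because $S$ is simply connected, the \'etale local system $f^*M$ is trivial, hence of the form $M_0\otimes_{\FF_p}\cO_S$ for a finite-dimensional $\FF_p$-vector space $M_0\subseteq H^0(S, f^*\Omega^1_{\mathscr{M}})$ generating $f^*\Omega^1_{\mathscr{M}}$ over $\cO_S$. By $\cO_S$-linearity, the vanishing of $df$ then reduces to the vanishing of $df$ on $M_0$; and since $df(M_0)\subseteq H^0(S, \Omega^1_S) = 0$, we are done.

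The only nontrivial point is the first step, namely carrying out Artin--Schreier descent for $F$-modules on a DM stack rather than on a scheme; this is handled by passing to an \'etale presentation, where the statement reduces to the standard affine case and then descends by functoriality of the fixed-point construction.
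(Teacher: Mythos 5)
Your proof is correct and rests on the same key input as the paper's (Artin--Schreier descent / Lange--Stuhler for unit-root $F$-modules), but you apply it one level higher than necessary. The paper's proof pulls back the given isomorphism along $f$ \emph{first}, obtaining $F_S^* f^*\Omega^1_{\mathscr{M}} \isom f^*\Omega^1_{\mathscr{M}}$ on the scheme $S$, and then applies Lange--Stuhler directly on $S$ to conclude étale trivializability and hence triviality by simple-connectedness. By contrast, you build the $\FF_p$-local system $M = (\Omega^1_{\mathscr{M}})^{\xi=1}$ upstairs on the Deligne--Mumford stack $\mathscr{M}$ and then pull it back, which forces you to address (as you acknowledge at the end) the extension of Artin--Schreier descent to DM stacks. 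That extension is true and not hard via an étale presentation, but it is an avoidable complication: since the conclusion lives entirely on $S$, one gains nothing from constructing the local system before pulling back. So your argument is a valid variant, only marginally less economical than the paper's.
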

\begin{proof}
Let $f \colon S \to \mathscr{M}$ be a morphism.  The pullback of the given isomorphism induces an isomorphism $F_S^*f^*\Omega^1_{\mathscr{M}} \to f^*\Omega^1_{\mathscr{M}}$.  By the results of \cite{Lange_Stuhler} this implies that $f^*\Omega^1_{\mathscr{M}}$ is \'etale trivializable, and hence trivial because $S$ is simply connected.  Consequently, the differential $df \colon \cO_S^{n} \isom f^*\Omega^1_{\mathscr{M}} \to \Omega^1_S$ is induced by a collection of sections $\Omega^1_S$, and therefore is zero.  
\end{proof}

\begin{remark}
In characteristic zero the assumption $H^0(S,\Omega^1_S) = 0$ follows from $\pi_1^{\et}(S) = 0$.  This is no longer true in characteristic $p$, for example for supersingular Enriques surfaces in characteristic $2$ \cite[Proposition~III~7.3.8, p.\ 658]{Illusie_deRhamWitt}.
\end{remark}

We now apply the above lemma for the stack $\mathscr{U}^{\rm sm}$.  By \S\ref{ss:stacks}, we know that $\mathscr{U}^{\rm sm}$ is a smooth Deligne--Mumford stack, and admits an isomorphism $F^*\Omega^1_{\mathscr{U}^{\rm sm}} \to \Omega^1_{\mathscr{U}^{\rm sm}}$ induced by the differential of the modular Frobenius lifting.

\begin{prop} \label{prop:isotriv1}
Let $S$ be a smooth simply connected variety over a perfect field $k$ of characteristic $p>2$ such that $H^0(S,\Omega^1_S) = 0$.  Then there are no non-isotrivial families over $S$ of $2$-ordinary varieties with trivial canonical class satisfying conditions (NCT), (DEG), and \eqref{eqn:conditions_applications}.
\end{prop}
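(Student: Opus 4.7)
The strategy is to combine the ordinary modular Frobenius lifting on $\mathscr{U}^{\rm sm}$ with Lemma~\ref{lem:morphism_dm_stack_no_forms} and the Frobenius descent of Appendix~\ref{app:zeroks}, iterated indefinitely, to force the classifying morphism of $X/S$ to be constant on formal neighborhoods, and hence globally.

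As explained immediately before the proposition, $\mathscr{U}^{\rm sm}$ is a smooth Deligne--Mumford stack over $W_2(k)$ equipped with a modular Frobenius lifting which is ordinary by Corollary~\ref{cor:xi-hw1}. Consequently, on its mod $p$ reduction the operator $\xi\colon F^*\Omega^1_{\mathscr{U}^{\rm sm}_k}\to \Omega^1_{\mathscr{U}^{\rm sm}_k}$ is an isomorphism. The family $X/S$ determines a classifying morphism $f\colon S\to \mathscr{U}^{\rm sm}_k$, and applying Lemma~\ref{lem:morphism_dm_stack_no_forms} directly yields $df=0$; equivalently, the Kodaira--Spencer map of $X/S$ vanishes.

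Now the hypothesis $H^d(X_s,\Omega^1_{X_s})=0$ combined with $\omega_{X_s}\cong \cO_{X_s}$ gives $H^0(X_s,T_{X_s})=0$ by Serre duality, so Appendix~\ref{app:zeroks} produces a canonical Frobenius descent of the family: there exists a family $X^{(1)}$ on $S^{(p)}$ with $X\cong F_{S/k}^*X^{(1)}$. All of our hypotheses ($2$-ordinarity, trivial canonical class, (NCT), (DEG), and \eqref{eqn:conditions_applications}) are stable under Frobenius twist, and $S^{(p)}$ (identified with $S$ as an $\FF_p$-scheme via perfectness of $k$) retains simple connectedness and the vanishing of global $1$-forms. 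Hence the argument iterates: we obtain, for every $n\geq 0$, a family $X^{(n)}$ on $S^{(p^n)}$ with classifying map $f^{(n)}$ such that $X\cong F_{S/k}^{n\,*}X^{(n)}$ and $f=f^{(n)}\circ F_{S/k}^{n}$.

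To finish, we argue formally at a geometric point $\bar s\in S(\bar k)$. In an étale neighborhood of $f(\bar s)$, Proposition~\ref{prop:cancoord} (applicable because the modular Frobenius lifting is ordinary) furnishes canonical multiplicative coordinates $q_1,\ldots,q_r$ on $\mathscr{U}^{\rm sm}$. The formal completion $\hat f$ is then encoded by power series $f_i=\hat f^*(q_i)\in \hat{\cO}_{S,\bar s}\cong\bar k[[t_1,\ldots,t_{\dim S}]]$, and the factorization $f=f^{(n)}\circ F_{S/k}^{n}$ forces $f_i\in \bar k[[t_1^{p^n},\ldots,t_{\dim S}^{p^n}]]$ for every $n$, hence $f_i\in \bar k$. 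Thus $\hat f$ is constant, and by connectedness of $S$ the map $f$ factors through a single closed point of $\mathscr{U}^{\rm sm}_k$, which is exactly the asserted isotriviality. The main technical obstacle is justifying the iteration, which reduces to checking that all of our running assumptions are preserved under Frobenius twist of the fibers; this is immediate from the definitions, since the Hasse--Witt and higher Hasse--Witt operations, as well as the cohomological vanishings, are insensitive to the $k$-linear structure.
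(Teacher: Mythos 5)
Your proposal is correct and follows the same overall architecture as the paper's proof: exploit the ordinary modular Frobenius lifting on $\mathscr{U}^{\rm sm}$ together with Lemma~\ref{lem:morphism_dm_stack_no_forms} to force $df = 0$ (i.e.\ vanishing Kodaira--Spencer), descend the family along Frobenius via Theorem~\ref{thm:ks_zero_F_descent}, and iterate. The one place you diverge is the termination argument. The paper isolates this as Lemma~\ref{lem:frobenius_degree}, a general statement that a non-constant morphism of noetherian Deligne--Mumford stacks cannot factor through $F^n_{\mathscr{X}/k}$ for all $n$, proved by reducing to schemes via \cite{SGA5} and invoking Krull's intersection theorem. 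You instead pass to formal completions and observe that the pullback of coordinates lies in $\bigcap_n \bar k[[t_1^{p^n},\ldots]] = \bar k$, which is the same Krull-intersection phenomenon made explicit. The appeal to the canonical multiplicative coordinates of Proposition~\ref{prop:cancoord} is unnecessary for this step --- any local coordinates on an \'etale chart of $\mathscr{U}^{\rm sm}_k$ near $f(\bar s)$ would serve equally well, since you only use that they are formal coordinates on a smooth source, not their compatibility with the Frobenius lifting. The passage from formal-local constancy of $f$ at every geometric point to global constancy of $f$, which you assert via connectedness, is where the actual content of Lemma~\ref{lem:frobenius_degree} resides and deserves a sentence (Nakayama/Krull plus the representability and separatedness of the diagonal of $\mathscr{U}^{\rm sm}$ ensure the constancy locus is open and closed). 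With that caveat the argument is sound and yields the same conclusion.
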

\begin{proof}
Suppose $X \to S$ is a non-isotrivial family of $2$-ordinary Calabi--Yau varieties satisfying conditions \eqref{eqn:conditions_applications}, and let $f \colon S \to \mathscr{U}^{\rm sm}$ be the corresponding non-constant  morphism.  The dual of the differential $df \colon f^*\Omega^1_{\mathscr{U}^{\rm sm}} \to \Omega^1_S$ can be identified with the Kodaira--Spencer class ${\rm KS}_{X/S}$, and therefore we may apply the following reasoning repeatedly.  Either, $df$ is non-zero and we obtain a contradiction with Lemma~\ref{lem:morphism_dm_stack_no_forms} or $df$ is zero, which by the mentioned identification, allows us to apply to Theorem~\ref{thm:ks_zero_F_descent} to descend $X \to S$ via the relative Frobenius $F_{S/k} \colon S \to S'$, that is, construct a factorization $f = f' \circ F_{S/k}$ with $f' \colon S' \to \mathscr{U}^{\rm sm}$ non-constant.  We now substitute $f$ with $f'$ and reiterate the argument.  By Lemma~\ref{lem:frobenius_degree} given below, the second situation can only happen finitely many times and therefore the proof is finished.
\end{proof}

\begin{lemma}\label{lem:frobenius_degree}
Let $f \colon \mathscr{X} \to \mathscr{Y}$ be a non-constant morphism (i.e., it does not factor through $\Spec(k)$ on any \'etale chart) of noetherian Deligne--Mumford stacks, with $\mathscr{X}$ connected.  Then there exists an integer $n \geq 0$ such that $f$ does not factorize via the $n$-th power of the Frobenius morphism $F^n_{\mathscr{X}/k} \colon \mathscr{X} \to \mathscr{X}^{(n)}$
\end{lemma}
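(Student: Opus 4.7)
The plan is to reduce the lemma, via étale charts, to a commutative-algebra statement and to analyze the intersection $\bigcap_{n \geq 0} A^{p^n}$ for a noetherian $k$-algebra $A$. After choosing an étale chart $U = \Spec A \to \mathscr{X}$ and, after a refinement, lifting the composition $U \to \mathscr{X} \xrightarrow{f} \mathscr{Y}$ to a morphism $U \to V$ into an affine étale chart $V = \Spec B \to \mathscr{Y}$ corresponding to a $k$-algebra map $f^{\#} \colon B \to A$, the factorization of $f$ through $F^n_{\mathscr{X}/k}$ translates (using that $k$ is perfect, so $k = k^{p^n}$ and therefore $k \cdot A^{p^n} = A^{p^n}$) to the containment $f^{\#}(B) \subseteq A^{p^n}$. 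Hence, factoring through every $F^n$ is equivalent to $f^{\#}(B) \subseteq \bigcap_{n\geq 0} A^{p^n}$.

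The heart of the argument is the analysis of this intersection at every closed point $x \in \Spec A$. Cohen's structure theorem provides a coefficient field $\kappa(x) \hookrightarrow \hat A_x$, so every element of $\hat A_x$ is of the form $c + m$ with $c \in \kappa(x)$ and $m$ in the maximal ideal $\hat{\mathfrak m}_x$. The characteristic-$p$ identity $(c+m)^{p^n} = c^{p^n} + m^{p^n}$, combined with $\kappa(x)^{p^n} = \kappa(x)$ (valid because $\kappa(x)/k$ is algebraic and $k$ is perfect), yields the inclusion $\hat A_x^{p^n} \subseteq \kappa(x) + \hat{\mathfrak m}_x^{p^n}$. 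Krull's intersection theorem then gives $\bigcap_n \hat A_x^{p^n} \subseteq \kappa(x)$, so the image of $f^{\#}$ maps into the residue field at every closed point, meaning $f$ is formally constant at each such point.

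By connectedness of $\mathscr{X}$, this forces $f$ to factor set-theoretically through a single point of $\mathscr{Y}$, and after refining the étale chart the residue field at this image point may be taken to be $k$, producing a factorization through $\Spec k$ and contradicting the non-constancy of $f$. The main subtlety I expect is in this last step: one must upgrade `image is a single point' to `factors through a $k$-point'. This is where the precise meaning of non-constancy on every étale chart is used, together with the geometric setup of Proposition~\ref{prop:isotriv1} --- in particular the implicit positive-dimensional smoothness of the source $S$ --- to rule out degenerate cases such as a non-constant map $\Spec L \to \Spec L$ for a nontrivial finite separable extension $L/k$, in which the relative Frobenius is an isomorphism and so the argument via $\bigcap A^{p^n}$ is vacuous.
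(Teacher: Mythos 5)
Your approach matches the paper's in outline: lift $f$ to a map $g$ between affine \'etale charts, translate ``factors through $F^n_{\mathscr{X}/k}$'' into ``$g^{\#}(B)\subseteq A^{p^n}$,'' and finish with Krull's intersection theorem. What you add is the concrete mechanism behind the Krull step --- Cohen's structure theorem, the estimate $\hat A_x^{p^n}\subseteq\kappa(x)+\hat{\mathfrak m}_x^{p^n}$, and the verification that the intersection collapses to $\kappa(x)$ --- where the paper compresses all of this into ``for noetherian schemes, the claim is then clear using Krull's intersection theorem.'' One step you passed over silently is the one the paper outsources to SGA\,5: that a factorization of $f$ through $F^n_{\mathscr{X}/k}$ descends to a factorization of the chart-level $g$ through $F^n_{X/k}$. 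This deserves either the citation or a short argument using that the relative Frobenius of an \'etale morphism is an isomorphism.

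Where your write-up genuinely improves on the paper's is in flagging the degenerate case. The Krull/Cohen argument only places the image of $g^{\#}$ inside $\kappa(x)$, not inside $k$, and the lemma as stated is in fact false without extra hypotheses: take $\mathscr{X}=\Spec L$ for a nontrivial finite separable extension $L/k$ (then $L$ is perfect since $k$ is, so $F_{\mathscr{X}/k}$ is an isomorphism and every morphism out of $\mathscr{X}$ factors through all $F^n$, yet a morphism picking out an element of $L\setminus k$ is non-constant in the lemma's sense). The paper's proof does not engage with this. Your instinct to resolve it by appeal to the setting of Proposition~\ref{prop:isotriv1} is the right one, although the feature actually doing the work is not so much the positive-dimensional smoothness of $S$ as the interaction of simple connectedness with the surrounding hypotheses, which in practice forces $k$ to be separably (hence algebraically) closed so that $\kappa(x)=k$ at every closed point. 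Two smaller caveats in the Cohen step: the equality $\kappa(x)^{p^n}=\kappa(x)$ requires $\kappa(x)/k$ algebraic, which holds when $A$ is of finite type over $k$ but not for arbitrary noetherian $A$; and one should note that the coefficient field produced by Cohen's theorem can be chosen to contain $k$ compatibly, which again uses that $k$ is perfect. Neither affects the paper's application.
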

\begin{proof}
Let $X \to \mathscr{X}$ and $Y \to \mathscr{Y}$ be the \'etale charts such that $f$ lifts to a morphism of schemes $g$ in the diagram 
\[
	\xymatrix{
		X \ar[r]^{g}\ar[d] & Y \ar[d] \\
		\mathscr{X} \ar[r]_{f} & \mathscr{Y}.
	}
\]
Assume that $f$ factors through the $n$-th power of the Frobenius morphism $F^n\colon \mathscr{X} \to \mathscr{X}^{(n)}$.  By \cite[XIV=XV \S{}1 $n^\circ$2, Pr. 2(c)]{SGA5} this implies that $g$ factors through the $n$-th power of the Frobenius of $X$.  For noetherian schemes, the claim is then clear using Krull's intersection theorem.
\end{proof}

\noindent The following remark was suggested to us by Yohan Brunebarbe.

\begin{remark}
It is conjectured that on simply connected varieties all $F$-isocrystals are trivial (see \cite{EsnaultShiho} for a proof in a special case), and hence the Newton polygon does not change among the fibers.  Consequently, in a family of varieties with trivial class satisfying assumption (NCT) -- necessary for Hodge polygons to be constant, if one geometric fiber is $2$-ordinary then all the fibers are.  The conjecture and our results therefore imply that no simply connected variety intersects the $2$-ordinary locus of the moduli space of varieties with trivial canonical class.
\end{remark}

\subsection{Alternative version}

In what follows we give a more direct proof of the above results without any reference to the moduli stack.  Let $S$ be an integral scheme over $k$, and let $f \colon X \to S$ be a family of smooth varieties of dimension $d$, such that the Hodge sheaves
\[
R^{d-i}f_*\Omega^{i}_{X/S} \quad 0 \leq i \leq d
\] are locally free, and the relative Hodge to de Rham spectral sequence
\[
R^jf_*\Omega^{d-i}_{X/S} \quad \Longrightarrow \quad H^{d}_{\rm dR}(X/S)
\] degenerates.  Using \cite[\S 4]{DeligneIllusie}, we see these conditions are satisfied if the family lifts mod $p^2$ and is of relative dimension $d < p$, or if all the fibers satisfy assumptions (NCT) and (DEG).

Using \cite[\S 2.3]{KatzAlgSoln}, the conditions imply that the Hasse--Witt operators can be defined in the relative setting.  In order to define $\HW(0)_{X/S}$, we let $f' \colon X' \to S$ be the Frobenius twist of $X/S$.  The base change map induces a morphism $F_S^*R^df_*\cO_X \to R^df'_*\cO_{X'}$ whose composition with the natural map $R^df'_*\cO_{X'} \to R^df'_*F_{X/S,*}\cO_X = R^df_*\cO_X$ gives a~morphism 
\[
{\rm HW}(0)_{X/S} \colon F_S^*R^df_*\cO_X \to R^df_*\cO_X,
\]
whose specialization for every $s \in S$ is identified with the $p$-linear map 
\[
{\rm HW}(0) \colon H^d(X_s,\cO_{X_s}) \to H^d(X_s,\cO_{X_s}) \quad \text{(cf.\ \S\ref{ss:hasse_witt})}.
\] 
Once ${\rm HW}(i-1)_{X/S}$ is an isomorphism, we can define 
\[
{\rm HW}(i)_{X/S} \colon F_S^*R^{d-i}f_*\Omega^i_{X/S} \to R^{d-i}f_*\Omega^i_{X/S},
\]
which specializes to ${\rm HW}(i)$ on the fibers.  

\begin{prop} \label{prop:isotriv2}
Suppose $S$ is a smooth simply connected variety over a perfect field $k$ of characteristic $p>0$ such that $H^0(S, \Omega^1_S) = 0$. Then there are no non-isotrivial families over $S$ of $2$-ordinary varieties with trivial canonical class satisfying (NCT) and (DEG).  
\end{prop}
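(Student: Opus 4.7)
The strategy is to establish, in order, (a) a reduction to $\omega_{X/S} \isom \cO_X$, (b) triviality of the relative Hodge sheaf $\mathcal H := R^{d-1}f_*\Omega^1_{X/S}$, (c) vanishing of the Kodaira--Spencer map of $f$, and (d) isotriviality of $f$ by iterated Frobenius descent. For (a), I would invoke Corollary~\ref{cor:uniq-rel-fsplit}(b): the canonical bundle $\omega_{X/S}$ becomes trivialized on a canonical $\mu_{p-1}$-torsor $T \to S$, and since $\mu_{p-1}$ is \'etale over $\FF_p$ and $\pi_1^{\et}(S)=0$, this torsor splits, so we may replace $f$ by its pullback and assume $\omega_{X/S}\isom\cO_X$ throughout.

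For step (b), the hypotheses of $2$-ordinarity on every geometric fiber together with (NCT) and (DEG) make the relative higher Hasse--Witt operator
\[
\HW(1)_{X/S}\colon F_S^*\mathcal H \isomlong \mathcal H
\]
well defined and an isomorphism, as explained just before the statement. By \cite{Lange_Stuhler}, any coherent sheaf admitting such a Frobenius isomorphism is \'etale trivializable; combined with $\pi_1^{\et}(S)=0$, it follows that $\mathcal H$ is a free $\cO_S$-module. For step (c), I would use relative Serre duality in the form
\[
R^1f_*T_{X/S} \isom R^1f_*(\Omega^{d-1}_{X/S}\otimes\omega_{X/S}^{-1}) \isom (R^{d-1}f_*\Omega^1_{X/S})^\vee = \mathcal H^\vee,
\]
valid because $\omega_{X/S}$ is trivial and the Hodge sheaves are locally free. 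The Kodaira--Spencer class $\kappa\colon T_S \to R^1f_*T_{X/S}$ thus dualizes to a morphism $\mathcal H \to \Omega^1_S$. Since $\mathcal H$ is free of some finite rank $r$ and $H^0(S, \Omega^1_S)=0$, this morphism is zero; hence $\kappa=0$.

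To finish (step (d)), I would apply the Frobenius descent result of Appendix~\ref{app:zeroks} to conclude that $X \isom F_S^*X_1$ canonically for some family $f_1\colon X_1 \to S$. The new family $f_1$ inherits all the hypotheses, since $2$-ordinarity, trivial canonical class, (NCT), and (DEG) are all fiberwise conditions and the fibers of $X_1$ coincide with those of $X$ (the absolute Frobenius of $S$ is bijective on points). Iterating, one obtains a compatible tower $(X_n)_{n\ge 0}$ with $X_{n-1}\isom F_S^*X_n$, i.e.\ an $F$-divided scheme over $S$. On a simply connected base such data is constant (via the Tannakian equivalence of dos~Santos), so $X/S$ is constant, hence \emph{a fortiori} isotrivial.

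The main obstacle I anticipate lies in the iterative descent step: the appendix's Frobenius descent requires the fibers to have no nonzero global vector fields, equivalently (using the trivial canonical class and Serre duality) $H^d(X_s, \Omega^1_{X_s})=0$ on every fiber, which is not part of (NCT)+(DEG). To carry out the argument rigorously one either tacitly incorporates this into the ``certain assumptions'' referred to in the introduction, or verifies directly that the combination of $2$-ordinarity, triviality of the canonical bundle, and the given cohomological hypotheses forces this vanishing in the geometric situations of interest; alternatively, one can avoid the iteration by invoking a global rigidity theorem for families with trivial Hodge bundle and vanishing Kodaira--Spencer, but I expect the cleanest route to go through the $F$-divided formalism.
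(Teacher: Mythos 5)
Your proof is correct and follows essentially the same route as the paper: show the relative Hasse--Witt operators $\HW(0)_{X/S}$ and $\HW(1)_{X/S}$ are isomorphisms, invoke Lange--Stuhler plus $\pi_1^\et(S)=0$ to trivialize $R^{d-1}f_*\Omega^1_{X/S}$, dualize to conclude $R^1 f_* T_{X/S}$ is trivial, kill the Kodaira--Spencer map using $H^0(S,\Omega^1_S)=0$, and then iterate Frobenius descent. Two points where your argument diverges usefully from the paper's: first, your step~(a) gives a clean justification that $\omega_{X/S}$ can be trivialized (pull back along the $\mu_{p-1}$-torsor of Corollary~\ref{cor:uniq-rel-fsplit}(b), which splits since $\mu_{p-1}$ is \'etale and $S$ is simply connected), whereas the paper passes over this reduction quickly; second, you terminate the iterated descent with the $F$-divided/Tannakian rigidity of dos~Santos, whereas the paper terminates with Lemma~\ref{lem:frobenius_degree}, which says a non-constant map from a noetherian connected scheme to a Deligne--Mumford stack can factor through only finitely many powers of Frobenius. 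The paper's device is more self-contained (it avoids invoking the Tannakian theory of stratified bundles) but yields the same conclusion.

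You are also right to flag the hidden hypothesis: Theorem~\ref{thm:ks_zero_F_descent} requires $f_* T_{X/S}=0$, i.e.\ $H^0(X_s, T_{X_s})=0$ on the fibers, equivalently $H^d(X_s,\Omega^1_{X_s})=0$ once $\omega$ is trivialized. Proposition~\ref{prop:isotriv1} builds this into conditions~\eqref{eqn:conditions_applications}, but Proposition~\ref{prop:isotriv2} as stated assumes only (NCT) and (DEG), which do not force it. The paper's proof of Proposition~\ref{prop:isotriv2} applies Theorem~\ref{thm:ks_zero_F_descent} nonetheless, so it tacitly carries this extra hypothesis; your observation is an accurate reading of the gap, and your suggested fix (add the vanishing of infinitesimal automorphisms to the hypotheses) is the right one. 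Aside from this shared caveat, your proposal is a valid proof.
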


\begin{proof}
	First, we observe that the conditions required to define relative Hasse--Witt operators are satisfied.  Since the fibers are $2$-ordinary, the maps ${\rm HW}(0)_{X/S}$ and ${\rm HW}(1)_{X/S}$ are well-defined and yield isomorphisms
	\[
		F_S^*(R^df_*\cO_X) \isomto R^df_*\cO_X 
		\quad 
		F_S^*(R^{d-1}f_*\Omega^1_{X/S}) \isomto R^{d-1}f_*\Omega^1_{X/S}.
	\]
	By \cite{Lange_Stuhler}, this implies that $R^df_*\cO_X \isom f_*\omega_{X/S}$ and $R^{d-1}f_*\Omega^1_{X/S}$ are trivial vector bundles.  Consequently, using the fact that the fibers have trivial canonical class and Corollary~\ref{cor:rfomega}, we see that the dual 
	\[
		(R^{d-1}f_*\Omega^1_{X/S})^\vee \isom R^1f_*(T_{X/S} \otimes \omega_{X/S}) \isom R^1f_*T_{X/S}.
	\]
	is also trivial.  As in the above proof, using the Frobenius descent (see Theorem~\ref{thm:ks_zero_F_descent}), we may assume that Kodaira--Spencer map $T_{S} \to  R^1f_*T_{X/S}$ is non-zero.  Its dual therefore furnishes a non-zero differential $1$-form on $S$ which finishes the proof.  
\end{proof}


\section{Serre--Tate theory a'la Nygaard}
\label{s:nygaard}

\noindent
Let $X/k$ be a $2$-ordinary variety with trivial canonical class defined over an \emph{algebraically closed} field $k$. We set
\[ 
	d = \dim X, \quad r = \dim H^{d-1}(X, \Omega^1_X) = \dim H^1(X, T_X).
\] 
In addition to (NCT) and (DEG), we assume that
\[
	H^{d-1}(X, \cO_{X}) = 0 = H^d(X, \Omega^1_{X}),
\]
(equivalently, $H^1(X, \cO_X) = 0 = H^0(X, T_X)$, i.e.\ ${\rm Pic}(X)$ and ${\rm Aut}(X)$ are reduced and discrete). The first assumption implies that the Artin--Mazur formal group $\Phi_{X}$ is pro-representable (see \S\ref{ss:amfg}). The second one implies that $\Def_X$ is pro-representable, and follows from the first if Hodge symmetry holds for $X$. 

The goal of this section is to show that Nygaard's construction of Serre--Tate theory for K3 surfaces \cite{Nygaard} (see \cite{WardThesis} for the Calabi--Yau threefold case) works without much change for varieties $X$ as above under the assumption that the deformation space is smooth. We keep the discussion quite brief; in particular, we avoid any mention of enlarged Artin--Mazur groups, though they are still `morally' present in the background (see Remark~\ref{rmk:p-div-gp}).

\subsection{Hodge--Witt and flat cohomology}

The following two $\ZZ_p$-modules will be of key importance:
\[ 
	U = H^d_{\rm cris}(X/W(k))^{\phi=1}, \quad E = H^d_{\rm cris}(X/W(k))^{\phi=p}.
\]
We shall relate $U$ and $E$ to the Hodge--Witt cohomology $H^d(X, W\cO_X)$ and $H^{d-1}(X, W\Omega^1_X)$ and the cohomology groups $H^d_\et(X, \ZZ_p)$ and $H^d_{\rm fl}(X, \ZZ_p(1))$. Recall (e.g.\ from \cite[II 5.1]{Illusie_deRhamWitt}) that the latter group is by definition the inverse limit of the flat cohomology groups $H^d_{\rm fl}(X, \mu_{p^n})$, which by the $p^n$-Kummer sequence can be identified with the Zariski cohomology groups $H^{d-1}(X, \cO^\times_X/\cO^{\times p^n}_X)$, and that the maps
\[
	d\log [-]\colon \cO^\times_X/\cO^{\times p^n}_X \ra W_n\Omega^1_X
\]
define by passing to the limit a map $d\log\colon H^d_{\rm fl}(X, \ZZ_p(1)) \to H^{d-1}(X, W\Omega^1_X)$.

\begin{lemma} \label{lemma:nygaard-pre}
	The $\ZZ_p$-modules $H^d_\et(X, \ZZ_p)$ and $H^d_{\rm fl}(X, \ZZ_p(1))$ are torsion free, the maps
	\[ 
		H^d_\et(X, \ZZ_p)\ra H^d(X, W\cO_X)^{F=1}, \quad H^d_{\rm fl}(X, \ZZ_p(1)) \xra{d\log} H^{d-1}(X, W\Omega^1_X)^{F=1}
	\]
	are bijective and induce isomorphisms
	\[ 
		H^d_\et(X, \ZZ_p)\otimes W(k) \isomto H^d(X, W\cO_X),
		\quad
		H^d_{\rm fl}(X, \ZZ_p(1))\otimes W(k)\isomto H^{d-1}(X, W\Omega^1_X).
	\]
\end{lemma}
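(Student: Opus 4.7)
The strategy is that of \cite{Nygaard} (see also \cite{WardThesis} in the Calabi--Yau threefold case): identify both groups with Frobenius-fixed points in the Hodge--Witt cohomology by means of Artin--Schreier--Witt-type exact sequences on the étale site of $X$.

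First I would collect the needed preliminaries. The short exact sequences $0\to \cO_X\to W_n\cO_X\to W_{n-1}\cO_X\to 0$ combined with the assumption $H^{d-1}(X,\cO_X)=0$ give, by induction on $n$, the vanishing $H^{d-1}(X,W_n\cO_X)=0$ for all $n$. By $1$-ordinarity and the Dieudonn\'e-theoretic analysis of \S\ref{ss:amfg}, $H^d(X,W\cO_X)$ is a free $W(k)$-module of rank $\dim_k H^d(X,\cO_X)$ on which the de Rham--Witt Frobenius $F$ acts bijectively. Analogously, $2$-ordinarity combined with (NCT) for $H^d_{\rm cris}(X/W(k))$ and the slope decomposition of Proposition~\ref{prop:m-ord}(iv) identifies $H^{d-1}(X,W\Omega^1_X)$ with the slope-one summand of $H^d_{\rm cris}(X/W(k))$, making it a free $W(k)$-module on which the de Rham--Witt Frobenius is bijective.

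For the étale part, I would apply the Artin--Schreier--Witt sequences
\[
	0 \to \ZZ/p^n\ZZ \to W_n\cO_X \xra{F-1} W_n\cO_X \to 0
\]
in the étale topology. Since $k$ is algebraically closed, $F-1$ is surjective on $W_n(k)$, and hence on any free $W_n(k)$-module on which $F$ acts semilinearly and bijectively; in particular on $H^d(X,W_n\cO_X)$. Combined with the vanishing of $H^{d-1}(X,W_n\cO_X)$, the cohomology long exact sequence collapses to short exact sequences
\[
	0 \to H^d_\et(X,\ZZ/p^n\ZZ) \to H^d(X,W_n\cO_X) \xra{F-1} H^d(X,W_n\cO_X) \to 0,
\]
which form a Mittag-Leffler system. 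Passing to the inverse limit then yields simultaneously the torsion-freeness of $H^d_\et(X,\ZZ_p)$, the identification with $H^d(X,W\cO_X)^{F=1}$, and --- after tensoring with $W(k)$ --- the last asserted isomorphism.

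For the flat part, I would first replace flat by étale cohomology via the Kummer isomorphism $H^d_{\rm fl}(X,\mu_{p^n})\isom H^{d-1}(X_\et,\cO_X^\times/\cO_X^{\times p^n})$, then invoke the Bloch--Kato--Illusie--Raynaud identification $d\log[-]\colon \cO_X^\times/\cO_X^{\times p^n}\isom W_n\Omega^1_{X,\log}$, the latter fitting into the fundamental exact sequence
\[
	0 \to W_n\Omega^1_{X,\log} \to W_n\Omega^1_X \xra{F-1} W_n\Omega^1_X \to 0
\]
of étale sheaves. Exactly as before, $F-1$ is surjective on $H^{d-1}(X,W_n\Omega^1_X)$. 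The main obstacle I anticipate is the parallel surjectivity of $F-1$ on $H^{d-2}(X,W_n\Omega^1_X)$, which controls the boundary term at the head of the relevant long exact sequence. Under (NCT) in degree $d-1$, the slope spectral sequence identifies $H^{d-2}(X,W\Omega^1_X)$ modulo torsion with the slope-$[1,2)$ summand of $H^{d-1}_{\rm cris}(X/W(k))$, on which the de Rham--Witt Frobenius has slopes in $[0,1)$ and hence defines a bijection of the associated free $W(k)$-module; it follows that $F-1$ is surjective modulo every $p^n$. Granted this, the same truncation-and-limit argument finishes the proof, with $d\log$ realizing the asserted natural identification.
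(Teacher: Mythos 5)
Your overall strategy coincides with what the paper does: the proof the paper cites, namely \cite[Lemma~7.1]{Bloch_Kato}, is precisely this Artin--Schreier--Witt/$d\log$ argument, and the paper's contribution is to observe that the full Bloch--Kato ordinarity hypothesis $H^j(X,B\Omega^i_X)=0$ for all $i,j$ can be weakened to the vanishings coming from Proposition~\ref{prop:1-ord}(ii) and Lemma~\ref{lemma:2-ord}. Your étale half is correct as written.

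The flat half, however, has two real gaps. First, the fundamental étale sequence you use is not quite the right one: for $i>0$ the correct sequence (Illusie, see also Colliot-Th\'el\`ene--Sansuc--Soul\'e and \cite[\S 2]{Bloch_Kato}) is
\[
	0 \ra W_n\Omega^1_{X,\log} \ra W_n\Omega^1_X \xra{F-1} W_n\Omega^1_X/dV^{n-1}\Omega^0_X \ra 0,
\]
with a quotient in the third term. You can repair this, since $dV^{n-1}\Omega^0_X \isom B\Omega^1_X$ as abelian sheaves and $H^*(X,B\Omega^1_X)=0$ by $1$-ordinarity, so cohomology is unaffected; but that step needs to be said, and it is exactly one of the vanishings the paper points at.

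Second, and more seriously, your treatment of the boundary term $H^{d-2}(X,W_n\Omega^1_X)$ is insufficient. You argue via the slope spectral sequence that $H^{d-2}(X,W\Omega^1_X)$ is, \emph{modulo torsion}, the slope-$[1,2)$ summand of $H^{d-1}_{\rm cris}(X/W(k))$, and deduce that $F-1$ is surjective ``modulo every $p^n$.'' But the slope spectral sequence degenerates only modulo torsion, and the potentially infinite ($V$-)torsion in $H^{d-2}(X,W\Omega^1_X)$ (the ``domino'' part, in the sense of Illusie--Raynaud) is exactly where bijectivity of $F$, and hence surjectivity of $F-1$, can fail. Your argument says nothing about this; nor does it descend cleanly to the finite levels $H^{d-2}(X,W_n\Omega^1_X)$ that actually occur in the long exact sequence. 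The Bloch--Kato route that the paper cites controls this directly from sheaf-level vanishings rather than slope data: the crucial additional inputs are $H^{d-1}(X,B\Omega^2_X)=H^d(X,B\Omega^2_X)=0$, which is the content of Lemma~\ref{lemma:2-ord}, and which you never invoke. Without some substitute for those vanishings, the claimed surjectivity of $F-1$ on $H^{d-2}$ --- and therefore the injectivity of $d\log$ and the torsion-freeness of $H^d_{\rm fl}(X,\ZZ_p(1))$ --- is not established.
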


\begin{proof}
A careful analysis of the proof of \cite[Lemma~7.1]{Bloch_Kato} shows that the cohomology vanishings given by Proposition~\ref{prop:1-ord}(ii) and Lemma~\ref{lemma:2-ord} are sufficient to obtain the required assertions. 
\end{proof}

The $2$-ordinarity assumption implies by Proposition~\ref{prop:m-ord} that we have a decomposition 
\begin{equation} \label{eqn:decomp-cris1} 
	(H^d_{\rm cris}(X/W(k)), \phi) \isom (U\otimes W(k), 1\otimes \sigma) \oplus (E\otimes W(k), p\otimes \sigma) \oplus (H_{\geq 2}, p^2 \phi').
\end{equation}
On the other hand, under certain soon to be verified assumptions \cite[Cor.\ 4.4 p.\ 201]{IllusieRaynaud}, the slope and conjugate de Rham--Witt spectral sequences yield a decomposition
\begin{equation} \label{eqn:decomp-cris2}
		H^d_{\rm cris}(X/W(k) \isom H^d(X, W\cO_X) \oplus H^{d-1}(X, W\Omega^1_X) \oplus H^d(X, \tau_{\geq 2}W\Omega^\bullet_X)
\end{equation}

\begin{prop} \label{prop:nygaard-pre}
	The decompositions \eqref{eqn:decomp-cris1} and \eqref{eqn:decomp-cris2} coincide. Consequently,
	\[ 
		U \isom H^d(X, W\cO_X)^{F=1} \isom H^d_\et(X, \ZZ_p)
	\]
	is a free $\ZZ_p$-module of rank one and
	\[
		E \isom H^{d-1}(X, W\Omega^1_X)^{F=1} \isom H^d_{\rm fl}(X, \ZZ_p(1)).
	\]
	is a free $\ZZ_p$-module of rank $r$.
\end{prop}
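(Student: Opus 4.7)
The plan is to identify both decompositions in the statement as the slope decomposition of the $F$-crystal $H^d_{\rm cris}(X/W(k))$, and then to read off the Frobenius-fixed parts and invoke Lemma~\ref{lemma:nygaard-pre}. First I would observe that by $2$-ordinarity together with Proposition~\ref{prop:m-ord}, the decomposition \eqref{eqn:decomp-cris1} is nothing but the canonical splitting of the $F$-crystal $H^d_{\rm cris}(X/W(k))$ into its pure slope $0$ part $U\otimes W(k)$, its pure slope $1$ part $E\otimes W(k)$, and the slope $\geq 2$ summand $H_{\geq 2}$.

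Next I would check that the three summands on the right hand side of \eqref{eqn:decomp-cris2} coincide with the same slope pieces, integrally. The rational identification $H^j(X,W\Omega^i_X)\otimes K = (\text{slope } [i,i+1)\text{ part of }H^{i+j}_{\rm cris}\otimes K)$ is standard, and the coincidence of the first two segments of the Newton and Hodge polygons is exactly the hypothesis in \cite[Cor.~4.4, p.~201]{IllusieRaynaud} that upgrades this to an integral statement for slopes $0$ and $1$; the vanishings in Proposition~\ref{prop:1-ord} and Lemma~\ref{lemma:2-ord}, together with (NCT) and (DEG), provide the cohomological input. Once both \eqref{eqn:decomp-cris1} and \eqref{eqn:decomp-cris2} are identified with the same slope decomposition, they must coincide.

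Having matched the decompositions, the identifications of $U$ and $E$ follow by extracting Frobenius eigenspaces. On the slope $0$ summand, the crystalline Frobenius $\phi$ is the de Rham--Witt Frobenius $F$, so $U=(U\otimes W(k))^{\phi=1}=H^d(X,W\cO_X)^{F=1}$, which by Lemma~\ref{lemma:nygaard-pre} is $H^d_\et(X,\ZZ_p)$. On the slope $1$ summand the crystalline Frobenius equals $p\cdot F$ for the de Rham--Witt $F$ on $W\Omega^1_X$ (the normalization under which $d\log[x]$ is $F$-fixed), so $\phi=p$ translates to $F=1$; hence $E=H^{d-1}(X,W\Omega^1_X)^{F=1}\isom H^d_{\rm fl}(X,\ZZ_p(1))$ by Lemma~\ref{lemma:nygaard-pre} again. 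The ranks are then the Hodge numbers of the corresponding slope pieces: $h^0=\dim_k H^d(X,\cO_X)=1$ since $\omega_X$ is trivial, and $h^1=\dim_k H^{d-1}(X,\Omega^1_X)=r$.

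The main obstacle I expect is securing the integral slope splitting of \eqref{eqn:decomp-cris2} in the precise form needed: rationally it is automatic, but the fact that the decomposition holds \emph{over $W(k)$} with $H^d(X,W\cO_X)$ and $H^{d-1}(X,W\Omega^1_X)$ themselves sitting as direct summands is exactly the content of the Illusie--Raynaud theorem under the $2$-ordinarity hypothesis. Once that is in hand, matching the two decompositions and reconciling the crystalline vs.\ de Rham--Witt Frobenius conventions on each slope piece is essentially bookkeeping.
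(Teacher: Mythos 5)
Your proposal takes essentially the same route as the paper: both identify \eqref{eqn:decomp-cris1} and \eqref{eqn:decomp-cris2} as the slope decomposition of the $F$-crystal via Illusie--Raynaud~[Cor.~4.4] and then conclude by the uniqueness of such a decomposition and Lemma~\ref{lemma:nygaard-pre}. The paper is merely more explicit in verifying the hypotheses of loc.\ cit.\ (finite generation of $H^*(X,ZW\Omega^1_X)$ and the isomorphism $H^{d-1}(X,ZW\Omega^1_X)\isomto H^{d-1}(X,W\Omega^1_X)$, via the inverse-limit formula and bijectivity of $F$) where you gesture at the cohomological vanishings more loosely, but the substance is the same.
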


\begin{proof}
To apply \cite[Cor.\ 4.4 p.\ 201]{IllusieRaynaud} we need to check that $H^{d-1}(X, ZW\Omega^1_X)$ and $H^d(X, ZW\Omega^1_X)$ are finitely generated $W$-modules and that the inclusion induces an isomorphism
$
	H^{d-1}(X, ZW\Omega^1_X) \isomlong H^{d-1}(X, W\Omega^1_X).
$
By \cite[(1.3.1), p.\ 174]{IllusieRaynaud}, we have
\[
	H^{*}(X, ZW\Omega^1_X) \isomto \varprojlim_F H^*(X, W\Omega^1_X).	
\]
The required assertions follow now from the fact that $F$ is bijective on the finitely generated $W$-modules $H^{d-1}(X, W\Omega^1_X)$ and $H^d(X, W\Omega^1_X)(=0)$. Since a~decomposition of the type \eqref{eqn:decomp-cris1} is unique, we obtain the desired assertion.
\end{proof}

We note for future reference that the above assertions imply in particular that the composition 
\[ 
	E \isom H^d_{\rm fl}(X, \ZZ_p(1)) \ra H^d_{\rm fl}(X, \mu_p) \isom H^{d-1}_\et(X, \cO_X^\times/\cO_X^{\times p}) \xra{d\log} H^{d-1}(X, \Omega^1_X) 
\]
induces an isomorphism $E\otimes_{\ZZ_p} k \isomto H^{d-1}(X, \Omega^1_X)$.


\subsection{Serre--Tate theory for $2$-ordinary varieties}

Recall that by \S\ref{ss:amfg} we have an isomorphism $\Phi_X \isomto U\otimes \hat\GG_m$ inducing the identity $H^d(X, \cO_X)=H^d(X, \cO_X)$ on the tangent spaces. By rigidity, for every deformation $\wt X$ of $X$ over an Artinian local ring $A$ with residue field $k$, there exists a unique isomorphism $\Phi_{\wt X}\isomto U\otimes \hat \GG_{m,A}$ extending the given one. 

Let $\wt X/A$ be as above, and let $n\geq 0$ be such that $\mathfrak{m}_A^n = 0$. We have a short exact sequence
\begin{equation} \label{eqn:unitsmodpn}
	0\ra 1+\mathfrak{m}_A\cO_{\wt X} \ra \cO^\times_{\wt X}/\cO^{\times p^n}_{\wt X} \ra \cO^\times_{X}/\cO^{\times p^n}_{X} \ra 0.
\end{equation}
We let $\beta_n$ be the composition
\[
	\beta_n \colon H^d_{\rm fl}(X, \mu_{p^n}) \isom H^{d-1}(X, \cO^\times_{X}/\cO^{\times p^n}_{X}) \xra{\delta} H^d(X, 1+\mathfrak{m}_A\cO_{\wt X}) = \Phi_{\wt X}(A) = U \otimes (1+\mathfrak{m}_A).
\]
where $\delta$ is the connecting homomorphism induced by \eqref{eqn:unitsmodpn}. The sequence of maps $\beta_n$ for $n\gg 0$ induces a homomorphism on the inverse limit
\[ 
	\beta\colon E = H^d_{\rm fl}(X, \ZZ_p(1)) = \varprojlim H^d_{\rm fl}(X, \mu_{p^n}) \ra \Phi_{\wt X}(A) = U \otimes (1+\mathfrak{m}_A).
\]
Varying $A$ and $\wt X$, we obtain a natural transformation
\[ 
	\gamma\colon \Def_X \ra T,
\]
where $T = \Hom(E, U)\otimes \hat\GG_m$ is the formal torus with co-character group $\Hom(E, U)$.

\begin{prop}
	The map $\gamma$ induces an isomorphism on tangent spaces
	\[ 
		\gamma\colon \Def_X(k[\varepsilon]/(\varepsilon^2)) \isomlong T(k[\varepsilon]/(\varepsilon^2)).
	\]
\end{prop}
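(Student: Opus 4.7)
The plan is to identify $d\gamma$ with a cup-product pairing that is perfect by Serre duality. Set $A = k[\varepsilon]/(\varepsilon^2)$. Since $\hat{\GG}_m(A) = 1+\varepsilon k$, which the logarithm identifies additively with $\varepsilon k$, the isomorphisms $U\otimes k \isom H^d(X,\cO_X)$ and $E\otimes k \isom H^{d-1}(X,\Omega^1_X)$ noted after Proposition~\ref{prop:nygaard-pre} give
\[
	T(A) \isom \Hom_k\bigl(H^{d-1}(X,\Omega^1_X),\, H^d(X,\cO_X)\bigr)\cdot \varepsilon.
\]
Any trivialization of $\omega_X$ yields $T_X \isom \Omega^{d-1}_X$, and Serre duality identifies $\Def_X(A) = H^1(X,T_X)$ with the same $\Hom$-space via $\xi \mapsto (\alpha \mapsto \xi\cup\alpha)$, using the contraction $T_X\otimes\Omega^1_X \to \cO_X$. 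The proposition therefore reduces to showing $d\gamma(\xi) = \xi \cup (-)$ for every Kodaira--Spencer class $\xi$.

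Let $\wt X_\xi \to \Spec A$ be the first-order deformation classified by $\xi$. Because $1+\varepsilon\cO_X$ is $p$-torsion, the map $\beta$ factors through the mod-$p$ reduction $E \to E\otimes \FF_p \isom H^d_{\rm fl}(X,\mu_p)$, so computing $d\gamma(\xi)$ amounts to determining the connecting homomorphism $\delta$ attached to the short exact sequence
\[
	0 \to \cO_X \xto{f\mapsto 1+\varepsilon f} \cO^\times_{\wt X_\xi}/\cO^{\times p}_{\wt X_\xi} \to \cO^\times_X/\cO^{\times p}_X \to 0.
\]
On the source, $d\log$ identifies $H^{d-1}(X,\cO^\times_X/\cO^{\times p}_X)$ with $H^{d-1}(X,\Omega^1_X)$ (by $2$-ordinarity, as noted after Proposition~\ref{prop:nygaard-pre}), so we may parametrize the source by $\omega \in H^{d-1}(X,\Omega^1_X)$. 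The heart of the proof is then a \v{C}ech-cocycle calculation, following Nygaard's template \cite{Nygaard} (see also \cite{WardThesis}): given a cocycle representative of $\omega$ in the form $d\log u_{i_0\cdots i_{d-1}}$, local lifts $\wt u_{i_0\cdots i_{d-1}}$ to $\cO^\times_{\wt X_\xi}/\cO^{\times p}_{\wt X_\xi}$ fail to glue across overlaps by a coboundary recorded by the Kodaira--Spencer cocycle of $\wt X_\xi$; tracking this boundary term gives
\[
	\delta(\omega) \;=\; (\xi \cup \omega)\cdot \varepsilon \;\in\; H^d(X,\cO_X)\cdot \varepsilon.
\]

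The main obstacle is executing this cocycle computation: one must verify carefully that the obstruction to $p$-lifting a local unit is precisely the contraction of $\xi$ with $d\log u$ (rather than some other bilinear expression involving the K--S class), and that the $d\log$ normalization matches the Serre-duality contraction up to sign. Once this identification is in hand, $d\gamma(\xi)$ coincides with cup product with $\xi$, which is a perfect pairing by Serre duality since the canonical class is trivial, completing the proof.
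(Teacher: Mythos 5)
Your overall strategy matches the paper's: reduce to showing that the tangent map of $\gamma$ is cup product with the Kodaira--Spencer class, and then invoke Serre duality to conclude this is a bijection. Your identifications of both sides of $d\gamma$ with $\Hom_k(H^{d-1}(X,\Omega^1_X), H^d(X,\cO_X))$ agree with the paper's (you reach the $\Def_X$ side through Serre duality, the paper through the boundary map of the conormal sequence; the result is the same cup-product map). Your observation that $\beta$ factors through $E\otimes\FF_p$ because $1+\varepsilon\cO_X$ is $p$-torsion is also correct and matches what the paper implicitly uses.

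The genuine gap is exactly where you flag it: your proposal sketches, but does not execute, the verification that $\delta(\omega) = (\xi\cup\omega)\cdot\varepsilon$, and this equality is the whole content of the proof. Carrying out the proposed \v{C}ech computation ``following Nygaard's template'' would be delicate in practice: one has to compare two different coboundary maps (the Kummer-theoretic $\delta$ and the deformation-theoretic $\cup{\rm KS}_{\wt X}$), track signs and the $d\log$ normalization through $d{-}1$ indices, and verify that nothing extra appears; without a conceptual scaffold this is error-prone and, as written, simply missing.

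The paper sidesteps the cocycle computation entirely by a short conceptual argument: the sheaf maps $\log\colon 1+\varepsilon\cO_X \to \varepsilon\cO_X$ and $d\log\colon \cO^\times/\cO^{\times p}\to\Omega^1$ (on both $\wt X$ and $X$) assemble into a morphism of short exact sequences
\[
	\xymatrix{
		0\ar[r] & 1+\varepsilon \cO_{X} \ar[d]_\log \ar[r] & \cO_{\wt X}^\times/\cO_{\wt X}^{\times p} \ar[d]^{d\log} \ar[r] & \cO_X^\times/\cO_X^{\times p} \ar[d]^{d\log} \ar[r] & 0\\
		0\ar[r] & \varepsilon\cO_{X} \ar[r] & \Omega^1_{\wt X/k} \ar[r] & \Omega^1_{X/k} \ar[r] & 0.
	}
\]
The connecting map of the bottom row is $\cup{\rm KS}_{\wt X}$ (the extension class of the cotangent sequence is the Kodaira--Spencer class), and functoriality of connecting homomorphisms in morphisms of short exact sequences immediately yields the commutative square identifying $\delta$ with $\cup{\rm KS}_{\wt X}$ under $d\log$ and $\log$ --- no \v{C}ech manipulation needed. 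You should either supply this morphism of sequences, or actually carry out the cocycle calculation you have only outlined; as it stands the key step is asserted rather than proved.
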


\begin{proof}
We note first that using Proposition~\ref{prop:nygaard-pre} and Lemma~\ref{lemma:nygaard-pre} the target can be identified with
\begin{align} 
	 \Hom_{\ZZ_p}(H^d_{\rm fl}(X, \ZZ_p(1)), H^d(X, 1+\varepsilon \cO_X))  
	 &= \Hom_{\FF_p}(H^d_{\rm fl}(X, \mu_p), H^d(X, \cO_X)) \nonumber \\
	 &= \Hom_{\FF_p}(H^{d-1}_\et(X, \cO_X^\times/\cO_X^{\times p})), H^d(X, \cO_X)) \label{eqn:identif-a} \\
	 &= \Hom_k(H^{d-1}(X, \Omega^1_X), H^d(X, \cO_X)) \label{eqn:identif-b}.
\end{align}
On the other hand, a deformation $\wt X\in \Def_X(k[\varepsilon]/(\varepsilon^2))$ corresponds to an element ${\rm KS}_{\wt X}\in H^1(X, T_X)$. Cup product with ${\rm KS}$ induces a homomorphism
\[ 
	\cup {\rm KS}\colon H^{d-1}(X, \Omega^1_X) \ra H^d(X, \cO_X),
\]
which coincides with the boundary homomorphism coming from the short exact sequence
\[ 
	0\ra \cO_X \xra{d\varepsilon} \Omega^1_{\wt X} \ra \Omega^1_X \ra 0.
\]
This defines a bijection
\begin{equation} \label{eqn:ksisom} 
	\Def_X(k[\varepsilon]/(\varepsilon^2)) \isom H^1(X, T_X) \isomlong \Hom_k(H^{d-1}(X, \Omega^1_X), H^d(X, \cO_X)).
\end{equation}
We shall prove that this map equals $\gamma$ under our identification \eqref{eqn:identif-b}.

The short exact sequence above fits inside a commutative diagram with exact rows
\[ 
	\xymatrix{
		0\ar[r] & 1+\varepsilon \cO_{X} \ar[d]_\log \ar[r] & \cO_{\wt X}^\times/\cO_{\wt X}^{\times p} \ar[d]^{d\log} \ar[r] & \cO_X^\times/\cO_X^{\times p} \ar[d]^{d\log} \ar[r] & 0\\
		0\ar[r] & \varepsilon\cO_{X} \ar[r] & \Omega^1_{\wt X/k} \ar[r] & \Omega^1_{X/k} \ar[r] & 0,
	}
\]
which induces a commutative square
\[ 
	\xymatrix{
		H^{d-1}(X, \cO_X^\times/\cO_X^{\times p}) \ar[r]^\delta \ar[d]_{d\log} & H^d(X, 1+\varepsilon \cO_X) \ar[d]^{\log}_{\rotatebox{90}{$\sim$}} \\
		H^{d-1}(X, \Omega^1_X) \ar[r]_{\cup {\rm KS}_{\wt X}} & H^d(X, \cO_X).
	}
\]
Since $\delta$ above corresponds to $\gamma(\wt X)$ under the identification \eqref{eqn:identif-a}, it follows that ${\rm KS}_{\wt X}$ corresponds to $\gamma(\wt X)$ under the identification \eqref{eqn:identif-b}, as desired.
\end{proof}

\begin{cor} \label{cor:nygaard}
	If $X$ has unobstructed deformations over $W_m(k)$ for some $m\geq 1$, then $\gamma$ induces an isomorphism 
	\begin{equation} \label{eqn:defgamma}
		\gamma\colon \Def_{X/W_m(k)} \isomlong T \otimes W_m(k).
	\end{equation}
\end{cor}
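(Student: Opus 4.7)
The plan is a straightforward smoothness-comparison argument: I will show that both the source and the target of $\gamma$ are formally smooth formal $W_m(k)$-schemes of the same relative dimension $r$, and then invoke the standard fact that a morphism between such, inducing an isomorphism on tangent spaces at the closed point, is itself an isomorphism.

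For the target, Proposition~\ref{prop:nygaard-pre} identifies $E$ and $U$ with free $\ZZ_p$-modules of ranks $r$ and $1$ respectively, so $\Hom_{\ZZ_p}(E,U)$ is free of rank $r$ and $T \otimes W_m(k) = \Hom_{\ZZ_p}(E,U)\otimes_{\ZZ_p}\hat\GG_m \otimes W_m(k)$ is non-canonically isomorphic to $\Spf W_m(k)[[q_1 - 1, \ldots, q_r - 1]]$, manifestly formally smooth of relative dimension $r$. For the source, the pro-representability of $\Def_X$ (guaranteed by $H^0(X, T_X) = 0$) together with the unobstructedness hypothesis gives that $\Def_{X/W_m(k)}$ is represented by a formally smooth complete local $W_m(k)$-algebra, necessarily of relative dimension $\dim_k H^1(X, T_X) = r$.

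To conclude, I would lift a regular system of parameters $q_1 - 1, \ldots, q_r - 1$ of $T \otimes W_m(k)$ through $\gamma$ to elements in the maximal ideal of the local ring representing $\Def_{X/W_m(k)}$. By the tangent-space isomorphism established in the previous proposition together with Nakayama's lemma, these elements form a regular system of parameters. This yields a $W_m(k)$-algebra surjection between complete local Noetherian rings of the same Krull dimension, which is automatically an isomorphism by dimension count; its inverse is a $W_m(k)$-algebra isomorphism providing the required inverse to $\gamma$.

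The main work for this corollary was the tangent-space computation carried out in the preceding proposition, and once that is granted, there is no real obstacle: what remains is purely formal deformation-theoretic bookkeeping, and the argument above does not require any further input beyond the free $\ZZ_p$-module structure of $E$ and $U$ from Proposition~\ref{prop:nygaard-pre}.
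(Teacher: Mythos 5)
Your overall strategy — source and target are both formally smooth of relative dimension $r$ over $W_m(k)$, and a morphism of such inducing an isomorphism on tangent spaces at the closed point is an isomorphism — is indeed the intended argument; the paper gives no explicit proof precisely because this is routine deformation-theoretic bookkeeping once the tangent-space computation is done, and your first two paragraphs carry it out correctly.

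However, your final injectivity step is not quite right as stated. You claim that a $W_m(k)$-algebra surjection between complete local Noetherian rings of the same Krull dimension is automatically an isomorphism by dimension count. This fails once $m\geq 2$, because $W_m(k)$ has nilpotents: the reduction map $W_m(k)[[t_1,\ldots,t_r]]\to k[[t_1,\ldots,t_r]]$ is a surjection of complete local Noetherian rings both of Krull dimension $r$, yet it is certainly not injective. The missing ingredient is the $W_m(k)$-flatness (equivalently formal smoothness) of the \emph{target} of $\gamma^*$, which you do have available but did not actually invoke: with $A = \cO_T$, $B$ the ring pro-representing $\Def_{X/W_m(k)}$, and $I=\ker(\gamma^*\colon A\to B)$, tensor $0\to I\to A\to B\to 0$ with $k$ over $W_m(k)$; flatness of $B$ kills the relevant $\mathrm{Tor}_1$, and then the surjection $A/p\to B/p$ between $r$-dimensional power series rings over $k$ (now genuine domains) is injective by the dimension count you had in mind, whence $I/pI = 0$ and so $I=0$ by Nakayama. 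Alternatively, one can simply identify both rings with $W_m(k)[[t_1,\ldots,t_r]]$ and apply the formal inverse function theorem, since you have shown the linear part of $\gamma^*$ is invertible. Either repair is immediate given what you have; the rest of your argument is correct and matches the (implicit) proof of the paper.
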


\begin{remark} \label{rmk:p-div-gp}
In the above discussion, we chose to be as direct as possible. In particular, the reader might be surprised by the absence of $p$-divisible groups. The enlarged formal group $\Psi_X$ generalizing the one used by Nygaard can be reconstructed as follows. Let $\wt X/A$ be deformation of $X$, corresponding to a homomorphism 
\[
	\beta\colon H^d_{\rm fl}(X, \ZZ_p(1))\ra \Phi_{\wt X}(A) = H^d(\wt X, 1+ \mathfrak{m}_A \cO_{\wt X}).
\]
One can construct a morphism $\bar\beta$ making the diagram below commute.
\[
	\xymatrix{
		0\ar[r] & H^d_{\rm fl}(X, \ZZ_p(1)) \ar[r] \ar[d]_\beta & H^d_{\rm fl}(X, \ZZ_p(1))\otimes \QQ_p \ar[r] \ar[d]_{\bar\beta} & H^d_{\rm fl}(X, \ZZ_p(1)) \otimes \QQ_p/\ZZ_p \ar[r] \ar[d] & 0 \\
		0\ar[r] & H^d(\wt X, 1+ \mathfrak{m}_A \cO_{\wt X}) \ar[r] & H^d_{\rm fl}(\wt X, \mu_{p^\infty}) \ar[r] & H^d_{\rm fl}(X, \mu_{p^\infty}) \ar[r] & 0. 
	}
\]
By a reasoning analogous to \cite[Corollary~1.4]{Nygaard}, the right arrow is an isomorphism, thus setting $\Psi_{\wt X}(B) = H^d_{\rm fl}(\wt X\otimes B, \mu_{p^\infty})$ as in \cite[Corollary~1.5]{Nygaard}, the bottom row can be interpreted as the extension of $p$-divisible groups 
\[ 
	0\ra \Phi_{\wt X} \ra \Psi_{\wt X} \ra H^d_{\rm fl}(X, \ZZ_p(1))\otimes \QQ_p/\ZZ_p \ra 0,
\]
which is the pushout of the top row above under $\beta$. Consequently, this extension corresponds to $\beta$ under Messing's isomorphism 
\[ 
	\Hom_{\ZZ_p}(H^d_{\rm fl}(X, \ZZ_p(1)), \Phi_{\wt X})\isomlong \Ext^1_A(H^d_{\rm fl}(X, \ZZ_p(1))\otimes \QQ_p/\ZZ_p, \Phi_{\wt X}),
\]
and our torus $T$ is identified with the deformation space of $\Psi_X$.
\end{remark}

\begin{question} \label{question:polarized}
Let $L$ be a line bundle on $X$. The assumption that $H^{d-1}(X, \cO_X)=0$ is equivalent to $H^1(X, \cO_X)=0$, so the forgetful transformation $\Def_{X, L}\to \Def_X \isom T$ is a~closed immersion. Is its image preserved by the Frobenius lifting on $\Def_X$, e.g.\ the kernel of a character of $T$? 

This is the case for K3 surfaces \cite[2.2.2]{DeligneIllusieKatz}: the crystalline Chern class $c_1(L)$ lies in $E = H^2(X/W(k))^{F=p}$. If $\chi\in \Hom(T, \hat\GG_m)= \Hom(U, E)$ maps a basis element of $U\isom \ZZ_p$ to $c_1(L)$, then $\Def_{X, L} = \ker \chi$.

A positive answer would provide the moduli spaces of polarized $2$-ordinary varieties with trivial canonical class with a natural Frobenius lifting.
\end{question}

\subsection{Comparison with canonical liftings constructed in \S\ref{s:canlift}}
\label{ss:nygaardp2}

Corollary~\ref{cor:nygaard} gives in particular a notion of a canonical lifting: we denote by $X_m/W_{m+1}(k)$ the lifting of $X$ corresponding to the neutral element of $T(W_{m+1}(k))$ under $\gamma$ \eqref{eqn:defgamma}. 

\begin{prop}
	The canonical lifting $\wt X/W_2(k)$ constructed in \S\ref{s:canlift} is isomorphic to $X'_1$. In particular, the crystalline Frobenius preserves $F^1 H^d_{\rm dR}(X_1/W_2(k))$.
\end{prop}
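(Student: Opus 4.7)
The plan is to apply the uniqueness criterion of Theorem~\ref{thm:canlift-uniq} to pin down the identification $\wt X \cong X'_1$. Both objects are liftings of $X'$ over $W_2(k)$, and the standing assumptions of Section~\ref{s:nygaard}, combined with (NCT) and (DEG), ensure that the hypotheses of Theorem~\ref{thm:canlift-uniq} are met (in particular, we take $p>2$, and the Hodge groups of every lifting to $W_2(k)$ are free with degenerating Hodge spectral sequence by \cite{DeligneIllusie}). By Theorem~\ref{thm:fhodge1}, $\phi$ preserves $F^1 H^d_{\rm dR}(\wt X/W_2(k))$; so the only remaining task is to verify the analogous statement for $X'_1$, after which uniqueness forces $\wt X \cong X'_1$. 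The ``in particular'' clause will then follow by Frobenius-twist functoriality of crystalline cohomology and Theorem~\ref{thm:fhodge1}.

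To verify $F^1$-preservation for $X'_1$, the plan is to exploit the slope decomposition from Proposition~\ref{prop:nygaard-pre}:
\[
H^d_{\rm cris}(X/W(k)) \cong U\otimes_{\ZZ_p} W(k) \;\oplus\; E\otimes_{\ZZ_p} W(k) \;\oplus\; H_{\geq 2},
\]
where $\phi$ acts as $\sigma$, $p\sigma$, and $p^2\phi'$ on the three summands respectively. Reducing modulo $p^2$, the submodule $(E \oplus H_{\geq 2}) \otimes W_2(k)$ is manifestly $\phi$-stable and reduces modulo $p$ to $F^1 H^d_{\rm dR}(X/k)$; moreover, the proof of Theorem~\ref{thm:canlift-uniq} shows that this is the \emph{unique} $\phi$-stable lift of $F^1 H^d_{\rm dR}(X/k)$ to $H^d_{\rm cris}(X/W_2(k))$. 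Consequently, $F^1$-preservation for $X'_1$ is equivalent to identifying $F^1 H^d_{\rm dR}(X'_1/W_2(k))$, under the crystalline comparison, with the Frobenius twist of this canonical submodule.

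To establish this identification, I would trace through the construction of $\gamma$ together with the tangent-space computation from the preceding proposition on $\gamma$. The Nygaard canonical lift $X_1$ is by definition the one with $\gamma(X_1) = 1 \in T(W_2(k))$, which by Remark~\ref{rmk:p-div-gp} amounts to the triviality of the associated extension of $p$-divisible groups $\Psi_{X_1}$. Via Messing's dictionary between $p$-divisible groups and crystals equipped with a Hodge filtration, the trivial extension yields the slope-$\geq 1$ piece as $F^1$, matching the canonical $\phi$-stable lift above. Frobenius-twisting then transfers the same description to $X'_1$, which is precisely what is needed to invoke Theorem~\ref{thm:canlift-uniq}.

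The main obstacle is this last identification via Messing's theory: for K3 surfaces and abelian varieties it is classical (Nygaard for K3s, Katz for abelian varieties), but in our general setting one needs an extension of Nygaard's enlarged formal group argument from the Artin--Mazur $\Phi_X$ to $\Psi_X$. A cleaner alternative that bypasses Messing is to compare modular Frobenius liftings: Corollary~\ref{cor:nygaard} provides a natural Frobenius lifting $F_{\rm Nyg}$ on $\wt S \cong T \otimes W_2(k)$ via the $p$-power map on the torus $T$; if one directly verifies that $\phi(F_{\rm Nyg})$ preserves $F^1$ on the universal Hodge $F$-crystal, then Corollary~\ref{cor:uniq-froblift-moduler} forces $F_{\rm Nyg}$ to coincide with the modular Frobenius lifting of Theorem~\ref{thm:fhodge2}, and their common fixed section is simultaneously $X_1$ and the Frobenius twist of $\wt X$, yielding the isomorphism at once.
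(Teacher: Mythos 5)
Your proposal inverts the logical direction of the paper's argument, and the gap is in exactly the step you flag as the ``main obstacle.'' You want to first show that the Nygaard lift $X'_1$ has $F^1$-preservation (via Messing's theory or via the Nygaard Frobenius lifting on the torus $T$), and then deduce $\wt X\cong X'_1$ from uniqueness (Theorem~\ref{thm:canlift-uniq}). But ``does the crystalline Frobenius preserve $F^1$ for the Nygaard lift?'' is precisely the content of the \emph{conclusion} of this proposition, and more broadly it is the open problem recorded as Question~\ref{question:crysf1}. Your sketch via Messing's dictionary for the enlarged formal group $\Psi_X$ does not actually establish that the Hodge filtration of $X_1$ agrees with the unique $\phi$-stable lift of $F^1$; it only observes that the two statements are equivalent, and then appeals to a ``classical'' identification which, as you yourself note, is only available for abelian varieties and K3 surfaces. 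Your ``cleaner alternative'' via Corollary~\ref{cor:uniq-froblift-moduler} has exactly the same shape: it reduces to checking $F^1$-preservation for $\phi(F_{\rm Nyg})$, which is again the thing to be proven. So the reduction is circular in effect even if not in form, and nothing in the proposal closes the loop.

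The paper's proof is much shorter and goes in the opposite direction: it proves the isomorphism $\wt X\cong X'_1$ directly and then reads off $F^1$-preservation for $X_1$ from Theorem~\ref{thm:fhodge1}. Concretely, by Lemma~\ref{lemma:teichmueller} the Teichm\"uller lift gives a multiplicative section of $\cO_{\wt X}^\times\to\cO_{X'}^\times$. This splits each sequence \eqref{eqn:unitsmodpn}, so the connecting map $\delta$ --- and hence $\beta$ --- vanishes, i.e.\ $\gamma(\wt X)=1\in T(W_2(k))$. Since $X'_1$ is by definition the lift with $\gamma=1$, the isomorphism follows immediately, with no appeal to Theorem~\ref{thm:canlift-uniq}, to Messing's theory, or to the hypothesis $p>2$ (which your route would impose but which the paper's argument does not need). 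If you want to salvage your plan, you would have to supply an actual proof that the Hodge filtration of the Nygaard lift is $\phi$-stable, and the only route the paper offers to that conclusion is precisely the Teichm\"uller splitting argument you are trying to avoid.
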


\begin{proof}
Recall from Lemma~\ref{lemma:teichmueller} that the restriction map $\cO_{\wt X}^\times \to \cO_{X'}^\times$ admits a section. Consequently, the sequences \eqref{eqn:unitsmodpn} are split, and hence $\beta=0$. 
\end{proof}

\begin{question} \label{question:crysf1}
Does the crystalline Frobenius preserve $F^1 H^d_{\rm dR}(X_m/W_{m+1}(k))$ for $m> 1$? Does the Frobenius lifting on $\Def_X$ corresponding to the $p$-th power map on $T$ under \eqref{eqn:defgamma} satisfy the natural analog of Theorem~\ref{thm:fhodge2}?
\end{question}

For abelian varieties and K3 surfaces \cite{DeligneIllusieKatz}, the answer is to both questions is positive. For more general varieties with trivial canonical class, one runs into the problem that not every lifting of $F^1 H^d_{\rm dR}(X/k)$ to $H^d_{\rm cris}(X/W_m(k))$ comes from a lifting of $X$. We were able to overcome this difficulty (an avatar of Griffiths transversality constraints on period maps) only modulo $p^2$.


\appendix

\section{Finite height}
\label{app:yobuko}

\noindent
In \cite{Yobuko}, Yobuko defines the notion of a quasi-$F$-splitting, more general than an $F$-splitting, and proves that a smooth quasi-$F$-split variety can be lifted modulo $p^2$, generalizing the argument for smooth $F$-split varieties \cite[\S 8.5]{IllusieFrobenius}. He also shows that for Calabi--Yau varieties, being quasi-$F$-split is equivalent to the height of the associated Artin--Mazur formal group being finite. In this section, we give a somewhat different point of view on Yobuko's construction. Based on this, we extend our construction of a~canonical lifting associated to an $F$-splitting to quasi-$F$-split varieties, thus showing that there is a preferred lifting mod $p^2$. In particular, we show that the smoothness assumption used by Yobuko is not necessary. Some of our results were observed earlier by Adrian Langer (unpublished).

It would be interesting to extend this construction to families and to generalize the Serre--Tate theory discussed in this paper to varieties with trivial canonical class of finite height in the spirit of \cite{NygaardOgus}. 

\subsection{The canonical lifting}

Yobuko defines a \emph{quasi-$F$-splitting of level $m$} on an \mbox{$\FF_p$-scheme} $X$ as an additive map $\sigma\colon W_m\cO_X\to \cO_X$ satisfying $\sigma(1)=1$ and which is $F$-linear in the sense that
\[ 
	\sigma(Fx\cdot y) = x_0 \cdot \sigma(y).
\]
We have a short exact sequence
\[ 
	0\ra W_m\cO_X \xra{V} W_{m+1}\cO_X \xra{R^m} \cO_X \ra 0.
\]

\begin{lemma} \label{lemma:ideal}
	Let $\sigma\colon W_m\cO_X\to \cO_X$ be a quasi-$F$-splitting of level $m$ on an $\FF_p$-scheme $X$. Then the image of $\ker(\sigma)$ under $V$ is an ideal in $W_{m+1}\cO_X$.
\end{lemma}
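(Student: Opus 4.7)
The plan is to use the projection formula for the Verschiebung combined with the $F$-linearity hypothesis on $\sigma$. Concretely, recall the standard Witt-vector identity
\[ V(a) \cdot b = V(a \cdot F(b)) \qquad (a \in W_m\cO_X,\ b \in W_{m+1}\cO_X), \]
where $F \colon W_{m+1}\cO_X \to W_m\cO_X$ denotes the Witt vector Frobenius. Once this identity is in hand, the statement becomes almost a tautology.

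First I would observe that $V(\ker\sigma)$ is an additive subgroup of $W_{m+1}\cO_X$, since both $V$ and $\ker\sigma$ are additive. To check that it is stable under multiplication by arbitrary elements of $W_{m+1}\cO_X$, I would take $a \in \ker\sigma$ and $b \in W_{m+1}\cO_X$, apply the projection formula to rewrite
\[ V(a) \cdot b = V(a \cdot F(b)), \]
and then verify that $a \cdot F(b) \in \ker\sigma$. This last step is exactly where the $F$-linearity of $\sigma$ is used, with the roles $x = b$ and $y = a$:
\[ \sigma\!\left(F(b) \cdot a\right) = b_0 \cdot \sigma(a) = 0. \]
Hence $V(a)\cdot b \in V(\ker\sigma)$, finishing the argument.

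No step in this plan looks genuinely obstructive. The whole proof is a one-line consequence of the projection formula, and in fact the definition of a quasi-$F$-splitting of level $m$ appears to have been tailored precisely so that this computation goes through (compare the analogous proof for ordinary $F$-splittings in \S\ref{ss:cons-canlift}). If there is any subtlety, it is purely bookkeeping: one must be careful that the indexing conventions for $F\colon W_{m+1}\cO_X \to W_m\cO_X$ and $V\colon W_m\cO_X \to W_{m+1}\cO_X$ match those implicit in the formula $\sigma(Fx \cdot y) = x_0 \sigma(y)$, but once the conventions are fixed, all identities invoked are standard (see e.g.\ \cite{Illusie_deRhamWitt}).
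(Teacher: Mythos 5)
Your proof is correct and matches the paper's proof essentially verbatim: both rewrite $b\cdot V(a)$ as $V(F(b)\cdot a)$ via the Witt-vector projection formula and then use the $F$-linearity axiom $\sigma(Fx\cdot y)=x_0\sigma(y)$ to conclude $F(b)\cdot a\in\ker\sigma$. The only cosmetic difference is the choice of letters and that the paper writes $y\cdot\sigma(x)$ where $y_0\cdot\sigma(x)$ is meant.
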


\begin{proof}
Suppose that $x\in W_m\cO_X$ satisfies $\sigma(x)=0$, and let $y\in W_{m+1}\cO_X$. Then
\[
	y\cdot V(x) = V(F(y)\cdot x)
\] 
and
\[ 
	\sigma(F(y)\cdot x) = y\cdot \sigma(x) = 0.\qedhere
\]
\end{proof}

\begin{cor} \label{cor:lifting}
	Let $\sigma\colon W_m\cO_X\to \cO_X$ be a quasi-$F$-splitting of level $m$ on an \mbox{$\FF_p$-scheme} $X$. Then the quotient
	\[
		\cO_{\wt X} = W_{m+1}\cO_X / V(\ker \sigma)
	\]
	defines a lifting of $X$ over $\ZZ/p^2\ZZ$, fitting inside a pushout diagram of exact sequences
	\[ 
		\xymatrix{
			0\ar[r] & W_m\cO_X \ar[d]_\sigma \ar[r]^V & W_{m+1}\cO_X \ar[d]^\pi \ar[r]^{R^m} & \cO_X \ar@{=}[d] \ar[r] & 0 \\
			0\ar[r] & \cO_X \ar[r] & \cO_{\wt X}\ar[r] & \cO_X \ar[r] & 0.
		}
	\]
\end{cor}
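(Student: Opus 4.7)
The plan is to define $\cO_{\wt X} = W_{m+1}\cO_X / V(\ker\sigma)$, which is a sheaf of rings thanks to Lemma~\ref{lemma:ideal}, construct the claimed diagram by direct inspection, and then verify flatness over $\ZZ/p^2\ZZ$.

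For the diagram: the projection $R^m \colon W_{m+1}\cO_X \twoheadrightarrow \cO_X$ has kernel $V(W_m\cO_X)$, which contains $V(\ker\sigma)$, so $R^m$ descends to a surjection $\cO_{\wt X} \twoheadrightarrow \cO_X$ whose kernel is $V(W_m\cO_X)/V(\ker\sigma)$. Since $\sigma(1) = 1$ forces $\sigma$ to be surjective (indeed, setting $y = 1$ in the $F$-linearity identity gives $\sigma(Fx) = x_0$, hence $\sigma(F[a]) = a$ for the Teichm\"uller lift of any $a \in \cO_X$), and $V$ is injective, $\sigma$ identifies this kernel with $W_m\cO_X/\ker\sigma = \cO_X$. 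This yields the bottom short exact sequence; the left square commutes by construction and the right square because the bottom map is induced from $R^m$. The pushout property is then automatic: the pushout of the injection $V$ along the surjection $\sigma$ is by definition $W_{m+1}\cO_X / V(\ker\sigma)$.

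For flatness over $\ZZ/p^2\ZZ$, I must verify that $p^2$ vanishes in $\cO_{\wt X}$ and that multiplication by $p$ induces an isomorphism $\cO_{\wt X}/p \isomto p \cdot \cO_{\wt X}$. Working in characteristic $p$, we have $p \cdot x = V(F(x))$ for any $x \in W_{m+1}\cO_X$. By additivity of $\sigma$ together with $p = 0$ in $\cO_X$, we get $\sigma(pz) = p \sigma(z) = 0$ for all $z \in W_m\cO_X$, so $p \cdot W_m\cO_X \subseteq \ker\sigma$. Applying this with $z = F(x)$ yields $p^2 \cdot x = V(p F(x)) \in V(\ker\sigma)$, so $p^2 = 0$ in $\cO_{\wt X}$. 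On the other hand, the surjectivity of $\sigma \circ F$ established above implies $F(W_{m+1}\cO_X) + \ker\sigma = W_m\cO_X$, i.e.\ $V F(W_{m+1}\cO_X) + V(\ker\sigma) = V(W_m\cO_X)$. Hence $p \cdot \cO_{\wt X}$ coincides with the kernel $V(W_m\cO_X)/V(\ker\sigma)$, which under $\sigma$ is identified with $\cO_X$; tracing through, the induced map $\cO_{\wt X}/p \to p \cdot \cO_{\wt X}$ corresponds to $a \mapsto V(F[a]) \mod V(\ker\sigma) \leftrightarrow \sigma(F[a]) = a$, i.e.\ the identity of $\cO_X$, in particular an isomorphism.

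The conceptual heart of the argument lies in the two uses of the quasi-$F$-splitting condition. First, the identity $\sigma(Fx \cdot y) = x_0 \sigma(y)$ enters decisively in Lemma~\ref{lemma:ideal}, ensuring that $V(\ker\sigma)$ is an ideal and hence that $\cO_{\wt X}$ is a ring at all. Second, its special case $\sigma(F[a]) = a$ forces the lift to be flat over $\ZZ/p^2\ZZ$, distinguishing it from a mere square-zero extension of $\cO_X$ by a quotient $\cO_X$-module. Everything else is formal manipulation of Witt-vector identities in characteristic $p$.
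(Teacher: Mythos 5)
Your proof is correct and follows essentially the same route as the paper: both hinge on the identities $\sigma(Fx)=x_0$ (from $F$-linearity with $y=1$) and $VF=p$, and you simply unpack the flatness check into three sub-steps where the paper verifies it in one stroke by showing the composite $\cO_{\wt X}\to\cO_X\to\cO_{\wt X}$ equals multiplication by $p$ via the $\sigma$-preimage $Ff$ of $f_0$. You also make the surjectivity of $\sigma$ explicit, which the paper uses but leaves unjustified.
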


\begin{proof}
Since $\sigma$ is surjective, the left square is a pushout. To prove that $\cO_{\wt X}$ is a lifting, we need to check that the composition
\[ 
	\cO_{\wt X} \ra \cO_X \ra \cO_{\wt X}
\]
equals multiplication by $p$. Let $\tilde f\in \cO_{\wt X}$ be the image of
\[ 
	f= (f_0,\ldots, f_m) \in W_{m+1}\cO_X.
\]
Then the image of $\tilde f$ in $\cO_X$ is $f_0$, which is the image under $\sigma$ of
\[ 
	Ff = (f_0^p, \ldots, f_m^p) \in W_m\cO_X.
\]
This in turn has image $V(F f) = pf$ in $W_m\cO_X$, which maps to $p\tilde f$ under $\pi$.
\end{proof}

\begin{defin}
	We call the lifting $\wt X$ constructed in Corollary~\ref{cor:lifting} the \emph{canonical lifting} associated to the quasi-$F$-splitting $\sigma$.
\end{defin}

\subsection{The sheaf $\mathcal{F}_m \cO_X$ and Witt vectors mod $p$}

Suppose now that $X$ is a smooth scheme over a perfect field $k$ of characteristic $p$. We denote by 
\[ 
	C\colon Z\Omega^1_X \ra \Omega^1_X
\]
the Cartier operator. The sheaf $Z_m\Omega^1_X\subseteq F^m_* \Omega^1_X$ consists of local sections $\omega$ such that 
\[ 
	\omega, C(\omega), \ldots C^{m-1}(\omega)
\]
are closed. The subsheaf $B_m\Omega^1_X\subseteq Z_m\Omega^1_X$ consists of local sections $\omega$ such that $C^{m-1}(\omega)\in B^1_X$. The map 
\[ 
	D_m \colon F_* W_m \cO_X \ra B_m\Omega^1_X,
	\quad
	(f_0, \ldots, f_{m-1}) \mapsto f_0^{p^{m-1}-1}df_0 +\ldots +df_{m-1}
\]
is $W_m\cO_X$-linear, and there is a short exact sequence of $W_m\cO_X$-modules
\[ 
	0\ra W_m\cO_X  \xra{F}  F_*W_m\cO_X \xra{D_m} B_m\Omega^1_X\ra 0. 
\] In this situation, Yobuko defined a natural extension of $\cO_X$-modules (denoted $(e_m)$)
\[
	0\ra \cO_X\ra \mathcal{F}_m\cO_X\ra B_m\Omega^1_X \ra 0
\] 
and showed that $\cO_X$-linear splittings of this sequence correspond to quasi-$F$-splittings as defined previously. The exact sequence $(e_m)$ is defined by the pullback diagram
\[ 
	\xymatrix{
		0\ar[r] & \cO_X \ar@{=}[d] \ar[r] & \mathcal{F}_m\cO_X \ar[r]\ar[d] & B_m\Omega^1_X \ar[d]^{C^{m-1}} \ar[r] & 0 \\
		0 \ar[r] & \cO_X \ar[r] & F_* \cO_X \ar[r] & B_1\Omega^1_X \ar[r] & 0.
	}
\] 
In particular, if $X$ is $F$-split, the bottom row is, and hence so is $(e_m)$. Yobuko observes that $(e_m)$ can also be defined as a pushout
\[ 
	\xymatrix{
		0\ar[r] & W_m\cO_X \ar[d]_{R^{m-1}} \ar[r]^-F & F_* W_m\cO_X \ar[d]\ar[r]^-{D_m} & B_m\Omega^1_X \ar@{=}[d] \ar[r] & 0 \\
		0 \ar[r] & \cO_X \ar[r] & \mathcal{F}_m \cO_X \ar[r] & B_m \Omega^1_X \ar[r] & 0.
	}
\]
It follows that quasi-$F$-splittings are precisely $\cO_X$-linear splittings of $(e_m)$. 


We shall now elucidate the sheaf $\mathcal{F}_m$ and its $\cO_X$-algebra structure. Let $X$ be an $\FF_p$-scheme. Following \cite[\S 3]{HesselholtMadsen}, we denote by
\[ 
	\overline{W}_m \cO_X = W_m \cO_X / p\cdot W_m\cO_X
\]
the mod-$p$ reduction of the ring of Witt vectors of length $m$ over $\cO_X$. Recall from \emph{op.cit.}\ that the $p$-th power of the Teichm\"uller map
\[ 
	[-]^p \colon \cO_X \ra W_m\cO_X
\]
is additive modulo $p$, and therefore induces an $\cO_X$-algebra structure 
\[ 
	\rho = [-]^p \, {\rm mod}\, p\colon \cO_X \ra \overline{W}_m\cO_X
\]
on $W_m\cO_X$. In addition, the following triangle commutes
\[
	\xymatrix{
		\cO_X \ar[r]^-\rho \ar[dr]_F & \overline{W}_m \cO_X \ar[d]^{R^{m-1}} \\ 
		& \cO_X.
	}
\]
Consequently, $\rho$ is injective if $X$ is reduced. We denote its cokernel by $B_m$, so that there is a short exact sequence of $\cO_X$-modules
\[ 
	0\ra \cO_X\ra \overline{W}_m\cO_X \ra B_m\ra 0.
\]
The formula
\[ 
	V(x)\cdot V(y) = V(FV(x)\cdot V(y)) = p\cdot V(xy)
\]
implies that the multiplication in $\overline{W}_m\cO_X$ is highly degenerate.

\begin{remark}
The existence of $\rho$ can be also seen as follows (cf.\ \cite[\S 1.1]{OgusVologodsky}): the kernel $VW_{m-1}\cO_X$ of $R^{m-1}\colon W_m\cO_X\to \cO_X$ has a natural divided power structure, and hence so does the kernel $I$ of 
\[ 
	R^{m-1}\colon \overline{W}_m\cO_X\ra \cO_X.
\]
It follows that $f^p=0$ for every $f\in I$, and hence the absolute Frobenius of $\overline{W}_m \cO_X$ factors naturally through $\cO_X$. 
\end{remark}

\begin{prop}
The following diagram is a pushout square
\[ 
	\xymatrix{
		W_m\cO_X \ar[r]^F \ar[d]_{R^{m-1}} & W_m\cO_X \ar[d]^\pi \\
		\cO_X \ar[r]_-\rho & \overline{W}_m\cO_X.
	}
\]
\end{prop}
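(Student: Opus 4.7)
The plan is to recognize the pushout explicitly by leveraging surjectivity of $R^{m-1}\colon W_m\cO_X\twoheadrightarrow \cO_X$. In general, the pushout of an epimorphism $A\twoheadrightarrow B$ with kernel $K$ along a map $g\colon A\to C$ is canonically $C/g(K)$, with the quotient map as the right vertical arrow and with bottom map sending $b\in B$ to the class of $g(\tilde b)$ for any lift $\tilde b$. From the displayed exact sequence $0\to W_{m-1}\cO_X\xra{V} W_m\cO_X\xra{R^{m-1}}\cO_X\to 0$ (the earlier exact sequence with the index shifted by one), $\ker R^{m-1}=V(W_{m-1}\cO_X)$, so the whole problem reduces to two verifications: (i) $F(V(W_{m-1}\cO_X))=p\,W_m\cO_X$ as subgroups of $W_m\cO_X$, and (ii) the induced map $\cO_X\to \overline{W}_m\cO_X$ coincides with $\rho$.

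For (i), I would use the Witt-vector identity $FV=VF=p$, which holds in characteristic $p$ because the Frobenius on $W_m\cO_X$ is componentwise $p$-th power. A direct calculation gives
\[
F(V(a_0,\ldots,a_{m-2}))=(0,a_0^p,\ldots,a_{m-2}^p),\qquad p\cdot(b_0,\ldots,b_{m-1})=V(F(b))=(0,b_0^p,\ldots,b_{m-2}^p).
\]
As $(a_i)$ and $(b_i)$ range independently over $\cO_X$, both expressions sweep out the same subset $\{(0,c_0,\ldots,c_{m-2})\,:\,c_i\in\cO_X^p\}$ of $W_m\cO_X$, proving the equality of images. Consequently the pushout equals $W_m\cO_X/pW_m\cO_X=\overline{W}_m\cO_X$, with right vertical map $\pi$.

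For (ii), given $a\in\cO_X$ I would choose the Teichm\"uller element $[a]=(a,0,\ldots,0)\in W_m\cO_X$ as a preimage under $R^{m-1}$; then $F([a])=[a^p]=[a]^p$, and reduction modulo $p$ gives precisely $\rho(a)$ by the very definition of $\rho$. Any other $R^{m-1}$-preimage differs from $[a]$ by an element of $\ker R^{m-1}=V(W_{m-1}\cO_X)$, which by (i) is sent into $pW_m\cO_X$ by $F$, so the value in $\overline{W}_m\cO_X$ is independent of the lift. No real obstacle is anticipated: the whole argument rests on two elementary identities in the truncated Witt ring.
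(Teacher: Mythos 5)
Your proof is correct and uses the same two ingredients as the paper's argument: the identity $F(V(W_{m-1}\cO_X))=p\,W_m\cO_X$ (which the paper packages as the induced map $\ker R^{m-1}\to\ker\pi$ being an isomorphism) and the computation $F([a])=[a]^p\equiv\rho(a)\bmod p$ via the Teichm\"uller lift. The only difference is organizational: the paper first verifies commutativity by expanding $F(f_0,\ldots,f_{m-1})=[f_0]^p+VF(f_1,\ldots,f_{m-1})$ and then reduces, via surjectivity of the verticals, to a comparison of kernels, whereas you identify the pushout directly as $W_m\cO_X/F(\ker R^{m-1})$ and check that this quotient is $\overline{W}_m\cO_X$ with $\rho$ as the induced bottom arrow.
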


\begin{proof}
The commutativity of the diagram follows from the formula
\begin{align*}
	F(f_0, \ldots, f_{m-1}) &= (f_0^p, \ldots, f_{m-1}^p) \\
	&= [f_0]^p + VF(f_1, \ldots, f_{m-1}) \\
	&= [f_0]^p + p\cdot (f_1, \ldots, f_{m-1}) \equiv \rho(f_0) \, {\rm mod}\, p.
\end{align*}
Since the vertical arrows are surjective, it remains to check that the induced map 
\[
	F\colon \ker R^{m-1} \ra \ker \pi
\] 
is an isomorphism. This follows from
\[ 
	F(\ker R^{m-1}) = F(V(W_{m-1}\cO_X)) = p\cdot W_{m-1}\cO_X.\qedhere
\]
\end{proof}

\begin{cor}
	Suppose that $X$ is a smooth scheme over $k$. There are natural isomorphisms
	\[
		\overline{W}_m\cO_X\isom \mathcal{F}_m\cO_X
		\quad \text{and} \quad
		B_m \isom B_m\Omega^1_X
	\] 
	fitting inside a commutative diagram
	\[
		\xymatrix{
			0\ar[r] & \cO_X \ar[r]^\rho \ar@{=}[d] & \overline{W}_m\cO_X \ar[d]^{\rotatebox{90}{$\sim$}} \ar[r] & B_m \ar[d]^{\rotatebox{90}{$\sim$}}  \ar[r] & 0 \\
			0 \ar[r] & \cO_X \ar[r] & \mathcal{F}_m\cO_X \ar[r] & B_m\Omega^1_X \ar[r] & 0.
		}
	\]
\end{cor}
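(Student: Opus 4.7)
The plan is to observe that the two short exact sequences are obtained from essentially the same pushout diagram, once one recognizes that the distinction between $W_m\cO_X$ and $F_* W_m\cO_X$ is invisible at the level of underlying sheaves of abelian groups.

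First, I would note that by Yobuko's construction, $\mathcal{F}_m\cO_X$ is the pushout (in sheaves of abelian groups on $X$) of $F\colon W_m\cO_X \to F_*W_m\cO_X$ along $R^{m-1}\colon W_m\cO_X \to \cO_X$. On the other hand, the proposition just proved exhibits $\overline{W}_m\cO_X$ as the pushout (again in abelian sheaves) of $F\colon W_m\cO_X \to W_m\cO_X$ along the same map $R^{m-1}$. Since $F_* W_m\cO_X$ and $W_m\cO_X$ have the same underlying abelian sheaf, and the additive map $F$ is the same in both settings, the two pushout diagrams coincide. Hence the universal property of the pushout yields a canonical isomorphism $\overline{W}_m\cO_X \isomto \mathcal{F}_m\cO_X$ of abelian sheaves. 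The compatibility with the maps from $\cO_X$ shows that the bottom-left arrow in the Yobuko pushout square is identified with $\rho$, and one checks that the $\cO_X$-module structures agree (they are both induced from the $\cO_X$-action on the $\cO_X$-summand of the pushout).

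Second, the compatibility of the two short exact sequences forces an isomorphism on the cokernels. Both cokernels are equal, by excision, to the cokernel of $F\colon W_m\cO_X \to W_m\cO_X$ as an abelian sheaf. On the smooth scheme $X$ this cokernel is identified with $B_m\Omega^1_X$ via the map $D_m$, by virtue of the exact sequence
\[
    0\ra W_m\cO_X  \xra{F}  F_*W_m\cO_X \xra{D_m} B_m\Omega^1_X\ra 0
\]
recalled earlier in the discussion. This gives the desired identification $B_m\cong B_m\Omega^1_X$, and from the commutativity of the two pushout squares one deduces that the resulting isomorphism of short exact sequences is the one displayed in the corollary.

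I do not expect any serious obstacle here: the content is really bookkeeping with the universal property of pushouts, plus Illusie's computation of the cokernel of $F$ on $W_m\cO_X$ for smooth $X$. The only mild subtlety is verifying that the $\cO_X$-module structures on the two sides match --- this amounts to the observation that, in both constructions, scalars from $\cO_X$ act through the map $\rho$ (equivalently, through the Frobenius on the target of $F$), since this is what the pushout construction imposes.
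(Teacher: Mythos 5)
Your proof is correct and takes essentially the same approach as the paper, which presents the corollary as an immediate consequence of juxtaposing the proposition's pushout square for $\overline{W}_m\cO_X$ with Yobuko's pushout description of $\mathcal{F}_m\cO_X$ recalled just above. The only cosmetic difference is your phrase "by excision," which really means the standard fact that a pushout of a short exact sequence induces an isomorphism on cokernels.
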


We can therefore rephrase the definition of a quasi-$F$-splitting as follows.

\begin{defin}
	Let $X$ be an $\FF_p$-scheme. A \emph{quasi-$F$-splitting of level $m$} on $X$ is an $\cO_X$-linear splitting $\sigma\colon \overline{W}_m\cO_X\to \cO_X$ of the map
	\[
		\rho \colon \cO_X\ra \overline{W}_m\cO_X.
	\]
\end{defin}

\section{Families with vanishing Kodaira--Spencer}
\label{app:zeroks}

\noindent
Let $k$ be a perfect field of characteristic $p>0$ and let $S$ be a smooth $k$-scheme. Let $f\colon X\to S$ be a smooth morphism. Applying $Rf_*$ to the short exact sequence on $X$
\begin{equation} \label{eqn:ks-seq} 
	0\ra T_{X/S} \ra T_{X/k} \ra f^* T_{S/k} \ra 0
\end{equation}
yields an exact sequence on $S$
\begin{equation} \label{eqn:longks} 
	0\ra f_* T_{X/S} \ra f_* T_{X/k} \ra f_* f^* T_{S/k} \xto{\delta} R^1 f_* T_{X/S}.
\end{equation}
The \emph{Kodaira--Spencer map} of $X/S$ is the map
\[ 
	{\rm KS}_{X/S} \colon T_{S/k} \ra R^1 f_* T_{X/S}
\]
obtained as the composition of the adjunction map $T_{S/k}\to f_* f^* T_{S/k}$ and the map $\delta\colon f_* f^* T_{S/k} \to R^1 f_* T_{X/S}$. 

\begin{thm}[{compare \cite[Lemma~3.5]{OgusFGT}}] \label{thm:ks_zero_F_descent}
	Suppose that ${\rm KS}_{X/S} = 0$ and $f_* T_{X/S} = 0$. Then there exists a canonical cartesian diagram
	\begin{equation} \label{eqn:kssq} 
		\xymatrix{
			X \ar[d] \ar[r] & Y \ar[d] \\
			S \ar[r]_{F_{S/k}} & S'.
		}
	\end{equation}
\end{thm}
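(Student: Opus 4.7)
The plan is to construct an fpqc descent datum for $X$ along the faithfully flat morphism $F_{S/k}\colon S\to S'$, by producing a canonical restricted-Lie-algebra lift of vector fields from $S$ to $X$ and exponentiating it.

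From the long exact sequence \eqref{eqn:longks}, the vanishing of $\mathrm{KS}_{X/S}$ precisely says that the adjunction $T_{S/k}\to f_*f^*T_{S/k}$ takes values in $\ker\delta$, which equals the image of $f_*T_{X/k}$. Combined with $f_*T_{X/S}=0$, the map $f_*T_{X/k}\hookrightarrow f_*f^*T_{S/k}$ is injective, so there is a unique $\sigma\colon T_{S/k}\to f_*T_{X/k}$ refining the adjunction. Uniqueness forces $\sigma$ to be $\cO_S$-linear and a morphism of sheaves of restricted Lie $k$-algebras: given $v,w\in T_{S/k}$ locally, both $\sigma([v,w])$ and $[\sigma(v),\sigma(w)]$ are elements of $f_*T_{X/k}$ lifting $[v,w]\otimes 1\in f_*f^*T_{S/k}$, so their difference lies in $f_*T_{X/S}=0$; the same argument gives $\sigma(v^{[p]})=\sigma(v)^{[p]}$.

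To build the descent datum, I work étale-locally on $S$ with coordinates $x_1,\ldots,x_n$, and set $D_i=\sigma(\partial_{x_i})\in \Gamma(X, T_{X/k})$, so that $[D_i,D_j]=0$ and $D_i^p=0$. The fiber product $\tilde S:=S\times_{S'}S$ is affine over $S$ with coordinate ring $\cO_S[\varepsilon_1,\ldots,\varepsilon_n]/(\varepsilon_i^p)$, and the two projections satisfy $p_1^\sharp(x_i)=x_i$ and $p_2^\sharp(x_i)=x_i+\varepsilon_i$. Define the truncated exponential
\[
\phi:=\exp\!\Bigl(\sum_i\varepsilon_iD_i\Bigr)=\sum_{\alpha\in\{0,\ldots,p-1\}^n}\frac{\varepsilon^\alpha}{\alpha!}\,D^\alpha
\]
as an endomorphism of $\cO_X\otimes_{\cO_S}\cO_{\tilde S}$; this is well-defined and a ring automorphism by the commutativity and $p$-nilpotency of the $D_i$'s. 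A direct check yields $\phi(x_i)=x_i+\varepsilon_i$ (higher Taylor contributions $D_j(D_ix_k)=D_j(\delta_{ik})=0$ drop out) and $\phi\equiv\mathrm{id}\pmod{(\varepsilon_i)}$, so $\phi$ defines an isomorphism $p_1^*X\isomto p_2^*X$ over $\tilde S$ restricting to the identity on the diagonal.

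On the triple fiber product $S\times_{S'}S\times_{S'}S$ the additivity $\varepsilon^{(02)}=\varepsilon^{(01)}+\varepsilon^{(12)}$ combined with the commutation of the $D_i$'s gives the cocycle identity $\phi_{12}\circ\phi_{01}=\phi_{02}$. The locally constructed $\phi$'s glue because $\sigma$ is globally canonical, independent of the choice of coordinates. Since $F_{S/k}$ is faithfully flat, fpqc descent applied to $(X,\phi)$ produces a unique $Y/S'$ with $X\cong S\times_{S'}Y$, and the construction depends only on $X/S$, yielding canonicity of the diagram. The main technical point is the verification of the cocycle identity and of the coordinate-independence of the exponential formula; both reduce to the multiplicative property of truncated exponentials of commuting $p$-nilpotent derivations.
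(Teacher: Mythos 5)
Your first paragraph reproduces the same initial step as the paper: the vanishing of $\mathrm{KS}_{X/S}$ and of $f_*T_{X/S}$ produces a unique $\cO_S$-linear lift $\sigma\colon T_{S/k}\to f_*T_{X/k}$ of the adjunction map, and uniqueness forces compatibility with brackets and $p$-th powers. This is exactly what the paper does. From there, however, you diverge: where the paper adjoint-transposes $\sigma$ to a splitting $v\colon f^*T_{S/k}\to T_{X/k}$ of \eqref{eqn:ks-seq}, checks that its image is a $1$-foliation (obstruction maps land in $f_*T_{X/S}=0$), and invokes Ekedahl's quotient-by-foliation theorem to produce $Y=X/\mathcal{F}$, you instead build an explicit fpqc descent datum for $X$ along $F_{S/k}$ by exponentiating the commuting $p$-nilpotent derivations $D_i=\sigma(\partial_{x_i})$ on the nilpotent thickening $\tilde S=S\times_{S'}S$. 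Both routes are ultimately ``Cartier descent for schemes,'' but yours is more elementary in that it does not rely on the existence and properties of the foliation quotient, at the cost of an explicit cocycle and gluing argument. What Ekedahl's theorem buys the paper is exactly what you are re-proving by hand.

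Two points in your write-up are asserted rather than argued and deserve attention. First, the coordinate-independence of $\phi=\exp\bigl(\sum_i\varepsilon_i D_i\bigr)$: you dismiss this with ``the locally constructed $\phi$'s glue because $\sigma$ is globally canonical,'' but the element $\sum_i\varepsilon_i D_i$ is \emph{not} itself coordinate-independent (under a change of \'etale coordinates $y=y(x)$, one has $\eta_j=\sum_i\frac{\partial y_j}{\partial x_i}\varepsilon_i+O(\varepsilon^2)$ while $E_j=\sum_i\frac{\partial x_i}{\partial y_j}D_i$, so $\sum\eta_j E_j\neq\sum\varepsilon_i D_i$ beyond first order). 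It is a genuine, if classical, fact that the \emph{truncated exponential} of a $p$-flat connection is nonetheless coordinate-independent; a direct verification involves nontrivial cancellations among the Taylor terms and should be recorded, or replaced by a coordinate-free formulation (e.g.\ via the HPD stratification attached to a $p$-integrable connection). Second, your claim $\sigma(v^{[p]})=\sigma(v)^{[p]}$ needs a word of justification: the restricted power $v\mapsto v^{[p]}$ is neither additive nor $\cO_S$-linear, and the image of $\sigma(v)^{[p]}$ in $f^*T_{S/k}$ equals $v^{[p]}\otimes 1$ only because the projection $T_{X/k}\to f^*T_{S/k}$ is a map of Lie algebroids compatible with $p$-th powers; the paper handles this by observing that the obstruction is a map $F_S^*T_{S/k}\to f_*T_{X/S}$ (Frobenius-semilinear in the source, hence a Frobenius pullback appears) which vanishes because the target is zero. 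With these two clarifications your proof is correct and complete, and gives an appealing explicit construction of the descent datum that the paper obtains abstractly from Ekedahl's theory.
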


\begin{proof}
The assumptions ${\rm KS}_{X/S} = 0$ and $f_* T_{X/S} = 0$ combined with the exactness of \eqref{eqn:longks} show that the adjunction map $T_{S/k}\to f_* f^* T_{S/k}$ factors uniquely through a map $u\colon T_{S/k}\to f_* T_{X/k}$. By another adjunction, we obtain a map $v\colon f^* T_{S/k} \to T_{X/k}$ which splits \eqref{eqn:ks-seq}. We check that $v$ defines a $1$-foliation (see \cite[I.1, p.\ 104]{EkedahlCanonical}), i.e.\ that its image is closed under the Lie bracket and $p$-th iterates. The respective obstructions [op.cit. Lemma 1.4, p.\ 105] are maps
\[ 
	\bigwedge^2 f^* T_{S/k} \ra T_{X/S} \quad \text{and} \quad F_{X}^* f^* T_{S/k} \ra T_{X/S}
\]
by adjunction correspond to maps
\[ 
	\bigwedge^2 T_{S/k} \ra f_* T_{X/S} \quad \text{and} \quad F_S^* T_{S/k} \ra f_* T_{X/S}
\]
which both vanish because the target is zero. We define $Y$ to be the quotient by this $1$-foliation.
\end{proof}




\begin{remark}
As the example $S=\mathbf{P}^1_k$ and $X=\mathbf{P}(\cO_S\oplus \cO_{S}(1))\to S$ shows, the assumption that $f_* T_{X/S}=0$ is necessary. Note that in this example $X/S$ still descends along $F_{S/k}$ Zariski-locally on $S$. To produce examples which do not descend even locally, one can take $X/S$ a Brauer--Severi variety whose corresponding class in $H^2_\et(S, \GG_m)$ is not divisible by~$p$.

Of course, if ${\rm KS}_{X/S} = 0$, then the adjunction map $T_{S/k}\to f_* f^* T_{S/k}$ can locally be lifted to a map $u\colon T_{S/k}\to f_* T_{X/k}$ (since $T_{S/k}$ is locally free). However, for the above proof to work, we need the lifting $u$ to be compatible with the restricted Lie algebra structures.
\end{remark}





\bibliographystyle{amsalpha} 
\bibliography{bib.bib}

\providecommand{\bysame}{\leavevmode\hbox to3em{\hrulefill}\thinspace}
\providecommand{\MR}{\relax\ifhmode\unskip\space\fi MR }
\providecommand{\MRhref}[2]{%
  \href{http://www.ams.org/mathscinet-getitem?mr=#1}{#2}
}
\providecommand{\href}[2]{#2}
\begin{thebibliography}{{Yob}17}

\bibitem[AM77]{Artin_Mazur}
Michael Artin and Barry Mazur, \emph{Formal groups arising from algebraic
  varieties}, Ann. Sci. \'Ecole Norm. Sup. (4) \textbf{10} (1977), no.~1,
  87--131.
  \MR{\href{http://www.ams.org/mathscinet-getitem?mr=0457458}{0457458}}

\bibitem[AWZ17]{AchingerWitaszekZdanowicz}
Piotr {Achinger}, Jakub {Witaszek}, and Maciej {Zdanowicz}, \emph{{Liftability
  of the Frobenius morphism and images of toric varieties}},
  \href{https://arxiv.org/abs/1708.03777}{arXiv:1708.03777} (2017).

\bibitem[BG16]{BorgerGuerney}
James {Borger} and Lance {Gurney}, \emph{{Canonical lifts of families of
  elliptic curves}}, \href{https://arxiv.org/abs/1608.05912}{arXiv:1608.05912}
  (2016).

\bibitem[BK86]{Bloch_Kato}
Spencer Bloch and Kazuya Kato, \emph{{$p$}-adic \'etale cohomology}, Inst.
  Hautes \'Etudes Sci. Publ. Math. (1986), no.~63, 107--152.
  \MR{\href{http://www.ams.org/mathscinet-getitem?mr=849653}{849653}}

\bibitem[BO78]{BerthelotOgus}
Pierre Berthelot and Arthur Ogus, \emph{Notes on crystalline cohomology},
  Princeton University Press, Princeton, N.J.; University of Tokyo Press,
  Tokyo, 1978.
  \MR{\href{http://www.ams.org/mathscinet-getitem?mr=0491705}{0491705}}

\bibitem[Bog78]{Bogomolov}
Fedor~A. Bogomolov, \emph{Hamiltonian {K}\"ahlerian manifolds}, Dokl. Akad.
  Nauk SSSR \textbf{243} (1978), no.~5, 1101--1104.
  \MR{\href{http://www.ams.org/mathscinet-getitem?mr=514769}{514769}
  (80c:32024)}

\bibitem[CvS09]{CynkVanStraten}
S{\l}awomir Cynk and Duco van Straten, \emph{Small resolutions and non-liftable
  {C}alabi-{Y}au threefolds}, Manuscripta Math. \textbf{130} (2009), no.~2,
  233--249.
  \MR{\href{http://www.ams.org/mathscinet-getitem?mr=2545516}{2545516}
  (2010k:14074)}

\bibitem[Del81]{DeligneIllusieKatz}
Pierre Deligne, \emph{Cristaux ordinaires et coordonn\'ees canoniques},
  Algebraic surfaces ({O}rsay, 1976--78), Lecture Notes in Math., vol. 868,
  Springer, Berlin-New York, 1981, With the collaboration of L. Illusie, With
  an appendix by Nicholas M. Katz, pp.~80--137.
  \MR{\href{http://www.ams.org/mathscinet-getitem?mr=638599}{638599}}

\bibitem[DI87]{DeligneIllusie}
Pierre Deligne and Luc Illusie, \emph{Rel\`evements modulo {$p^2$} et
  d\'ecomposition du complexe de de {R}ham}, Invent. Math. \textbf{89} (1987),
  no.~2, 247--270.
  \MR{\href{http://www.ams.org/mathscinet-getitem?mr=894379}{894379}}

\bibitem[Eke88]{EkedahlCanonical}
Torsten Ekedahl, \emph{Canonical models of surfaces of general type in positive
  characteristic}, Inst. Hautes \'Etudes Sci. Publ. Math. (1988), no.~67,
  97--144. \MR{\href{http://www.ams.org/mathscinet-getitem?mr=972344}{972344}}

\bibitem[ES15]{EsnaultShiho}
Hélène Esnault and Atsushi Shiho, \emph{Convergent isocrystals on simply
  connected varieties}, 2015.

\bibitem[ESB05]{EkedahlSB}
Torsten Ekedahl and Nicholas~I. Shepherd-Barron, \emph{Tangent lifting of
  deformations in mixed characteristic}, J. Algebra \textbf{291} (2005), no.~1,
  108--128.
  \MR{\href{http://www.ams.org/mathscinet-getitem?mr=2158513}{2158513}
  (2006d:13015)}

\bibitem[Fin10]{Finotti}
Lu\'\i s R.~A. Finotti, \emph{Lifting the {$j$}-invariant: questions of {M}azur
  and {T}ate}, J. Number Theory \textbf{130} (2010), no.~3, 620--638.
  \MR{\href{http://www.ams.org/mathscinet-getitem?mr=2584845}{2584845}}

\bibitem[GS06]{GrossSiebert}
Mark Gross and Bernd Siebert, \emph{Mirror symmetry via logarithmic
  degeneration data. {I}}, J. Differential Geom. \textbf{72} (2006), no.~2,
  169--338.
  \MR{\href{http://www.ams.org/mathscinet-getitem?mr=2213573}{2213573}}

\bibitem[Har77]{Hartshorne}
Robin Hartshorne, \emph{Algebraic geometry}, Springer-Verlag, New York, 1977,
  Graduate Texts in Mathematics, No. 52.
  \MR{\href{http://www.ams.org/mathscinet-getitem?mr=0463157}{0463157} (57
  \#3116)}

\bibitem[Hir99]{Hirokado}
Masayuki Hirokado, \emph{A non-liftable {C}alabi-{Y}au threefold in
  characteristic {$3$}}, Tohoku Math. J. (2) \textbf{51} (1999), no.~4,
  479--487.
  \MR{\href{http://www.ams.org/mathscinet-getitem?mr=1725623}{1725623}
  (2000m:14044)}

\bibitem[HM03]{HesselholtMadsen}
Lars Hesselholt and Ib~Madsen, \emph{On the {$K$}-theory of local fields}, Ann.
  of Math. (2) \textbf{158} (2003), no.~1, 1--113.
  \MR{\href{http://www.ams.org/mathscinet-getitem?mr=1998478}{1998478}}

\bibitem[Ill79]{Illusie_deRhamWitt}
Luc Illusie, \emph{Complexe de de\thinspace {R}ham-{W}itt et cohomologie
  cristalline}, Ann. Sci. \'Ecole Norm. Sup. (4) \textbf{12} (1979), no.~4,
  501--661. \MR{\href{http://www.ams.org/mathscinet-getitem?mr=565469}{565469}}

\bibitem[Ill96]{IllusieFrobenius}
\bysame, \emph{Frobenius et d\'eg\'en\'erescence de {H}odge}, Introduction \`a\
  la th\'eorie de {H}odge, Panor. Synth\`eses, vol.~3, Soc. Math. France,
  Paris, 1996, pp.~113--168.
  \MR{\href{http://www.ams.org/mathscinet-getitem?mr=1409820}{1409820}}

\bibitem[IR83]{IllusieRaynaud}
Luc Illusie and Michel Raynaud, \emph{Les suites spectrales associ\'ees au
  complexe de de {R}ham-{W}itt}, Inst. Hautes \'Etudes Sci. Publ. Math. (1983),
  no.~57, 73--212.
  \MR{\href{http://www.ams.org/mathscinet-getitem?mr=699058}{699058}}

\bibitem[Kat72]{KatzAlgSoln}
Nicholas~M. Katz, \emph{Algebraic solutions of differential equations
  ({$p$}-curvature and the {H}odge filtration)}, Invent. Math. \textbf{18}
  (1972), 1--118.
  \MR{\href{http://www.ams.org/mathscinet-getitem?mr=0337959}{0337959}}

\bibitem[Kat73]{KatzDwork}
\bysame, \emph{Travaux de {D}work}, 167--200. Lecture Notes in Math., Vol. 317.
  \MR{\href{http://www.ams.org/mathscinet-getitem?mr=0498577}{0498577}}

\bibitem[Kat79]{KatzSlopeFil}
\bysame, \emph{Slope filtration of {$F$}-crystals}, Journ\'ees de
  {G}\'eom\'etrie {A}lg\'ebrique de {R}ennes ({R}ennes, 1978), {V}ol. {I},
  Ast\'erisque, vol.~63, Soc. Math. France, Paris, 1979, pp.~113--163.
  \MR{\href{http://www.ams.org/mathscinet-getitem?mr=563463}{563463}}

\bibitem[Kat81]{KatzSerreTate}
\bysame, \emph{Serre-{T}ate local moduli}, Algebraic surfaces ({O}rsay,
  1976--78), Lecture Notes in Math., vol. 868, Springer, Berlin-New York, 1981,
  pp.~138--202.
  \MR{\href{http://www.ams.org/mathscinet-getitem?mr=638600}{638600}}

\bibitem[Kaw92]{Kawamata}
Yujiro Kawamata, \emph{Unobstructed deformations. {A} remark on a paper of {Z}.
  {R}an: ``{D}eformations of manifolds with torsion or negative canonical
  bundle'' [{J}. {A}lgebraic {G}eom.\ {\bf 1} (1992), no.\ 2, 279--291;
  {MR}1144440 (93e:14015)]}, J. Algebraic Geom. \textbf{1} (1992), no.~2,
  183--190.
  \MR{\href{http://www.ams.org/mathscinet-getitem?mr=1144434}{1144434}
  (93e:14016)}

\bibitem[Kaw97]{KawamataErratum}
\bysame, \emph{Erratum on: ``{U}nobstructed deformations. {A} remark on a paper
  of {Z}. {R}an: `{D}eformations of manifolds with torsion or negative
  canonical bundle'\,'' [{J}. {A}lgebraic {G}eom.\ {\bf 1} (1992), no.\ 2,
  183--190; {MR}1144434 (93e:14016)]}, J. Algebraic Geom. \textbf{6} (1997),
  no.~4, 803--804.
  \MR{\href{http://www.ams.org/mathscinet-getitem?mr=1487238}{1487238}
  (98k:14009)}

\bibitem[LS77]{Lange_Stuhler}
Herbert Lange and Ulrich St\"uhler, \emph{Vektorb\"undel auf {K}urven und
  {D}arstellungen der algebraischen {F}undamentalgruppe}, Math. Z. \textbf{156}
  (1977), no.~1, 73--83.
  \MR{\href{http://www.ams.org/mathscinet-getitem?mr=0472827}{0472827}}

\bibitem[LS13]{LiuShen}
Kefeng {Liu} and Yang {Shen}, \emph{{Hodge metric completion of the moduli
  space of Calabi-Yau manifolds}}, ArXiv e-prints (2013), arXiv:1305.0231.

\bibitem[Maz73]{Mazur}
Barry Mazur, \emph{Frobenius and the {H}odge filtration (estimates)}, Ann. of
  Math. (2) \textbf{98} (1973), 58--95.
  \MR{\href{http://www.ams.org/mathscinet-getitem?mr=0321932}{0321932}}

\bibitem[MB85]{MoretBailly}
Laurent Moret-Bailly, \emph{Pinceaux de vari\'et\'es ab\'eliennes},
  Ast\'erisque (1985), no.~129, 266.
  \MR{\href{http://www.ams.org/mathscinet-getitem?mr=797982}{797982}}

\bibitem[Moc96]{Mochizuki}
Shinichi Mochizuki, \emph{A theory of ordinary {$p$}-adic curves}, Publ. Res.
  Inst. Math. Sci. \textbf{32} (1996), no.~6, 957--1152.
  \MR{\href{http://www.ams.org/mathscinet-getitem?mr=1437328}{1437328}}

\bibitem[MR85]{MehtaRamanathan}
Vikram~B. Mehta and Annamalai Ramanathan, \emph{Frobenius splitting and
  cohomology vanishing for {S}chubert varieties}, Ann. of Math. (2)
  \textbf{122} (1985), no.~1, 27--40.
  \MR{\href{http://www.ams.org/mathscinet-getitem?mr=799251}{799251}}

\bibitem[MS87]{MehtaSrinivas}
Vikram~B. Mehta and Vasudevan Srinivas, \emph{Varieties in positive
  characteristic with trivial tangent bundle}, Compositio Math. \textbf{64}
  (1987), no.~2, 191--212, With an appendix by Srinivas and M. V. Nori.
  \MR{\href{http://www.ams.org/mathscinet-getitem?mr=916481}{916481}}

\bibitem[NO85]{NygaardOgus}
Niels~O. Nygaard and Arthur Ogus, \emph{Tate's conjecture for {$K3$} surfaces
  of finite height}, Ann. of Math. (2) \textbf{122} (1985), no.~3, 461--507.
  \MR{\href{http://www.ams.org/mathscinet-getitem?mr=819555}{819555}}

\bibitem[Nyg83]{Nygaard}
Niels~O. Nygaard, \emph{The {T}ate conjecture for ordinary {$K3$}\ surfaces
  over finite fields}, Invent. Math. \textbf{74} (1983), no.~2, 213--237.
  \MR{\href{http://www.ams.org/mathscinet-getitem?mr=723215}{723215}}

\bibitem[Ogu78a]{OgusFGT}
Arthur Ogus, \emph{{$F$}-crystals and {G}riffiths transversality}, Proceedings
  of the {I}nternational {S}ymposium on {A}lgebraic {G}eometry ({K}yoto
  {U}niv., {K}yoto, 1977), Kinokuniya Book Store, Tokyo, 1978, pp.~15--44.
  \MR{\href{http://www.ams.org/mathscinet-getitem?mr=578852}{578852}}

\bibitem[Ogu78b]{OgusGT}
\bysame, \emph{Griffiths transversality in crystalline cohomology}, Ann. of
  Math. (2) \textbf{108} (1978), no.~2, 395--419.
  \MR{\href{http://www.ams.org/mathscinet-getitem?mr=506993}{506993}}

\bibitem[Ols07]{Olsson}
Martin~C. Olsson, \emph{Crystalline cohomology of algebraic stacks and
  {H}yodo-{K}ato cohomology}, Ast\'erisque (2007), no.~316, 412 pp. (2008).
  \MR{\href{http://www.ams.org/mathscinet-getitem?mr=2451400}{2451400}}

\bibitem[OV07]{OgusVologodsky}
Arthur Ogus and Vadim Vologodsky, \emph{Nonabelian {H}odge theory in
  characteristic {$p$}}, Publ. Math. Inst. Hautes \'Etudes Sci. (2007),
  no.~106, 1--138.
  \MR{\href{http://www.ams.org/mathscinet-getitem?mr=2373230}{2373230}}

\bibitem[Ran92]{Ran}
Ziv Ran, \emph{Deformations of manifolds with torsion or negative canonical
  bundle}, J. Algebraic Geom. \textbf{1} (1992), no.~2, 279--291.
  \MR{\href{http://www.ams.org/mathscinet-getitem?mr=1144440}{1144440}
  (93e:14015)}

\bibitem[Sch03]{SchroerT1}
Stefan Schr{\"o}er, \emph{The {$T^1$}-lifting theorem in positive
  characteristic}, J. Algebraic Geom. \textbf{12} (2003), no.~4, 699--714.
  \MR{\href{http://www.ams.org/mathscinet-getitem?mr=1993761}{1993761}
  (2004i:14011)}

\bibitem[Sch04]{SchroerExamples}
\bysame, \emph{Some {C}alabi-{Y}au threefolds with obstructed deformations over
  the {W}itt vectors}, Compos. Math. \textbf{140} (2004), no.~6, 1579--1592.
  \MR{\href{http://www.ams.org/mathscinet-getitem?mr=2098403}{2098403}
  (2005i:14051)}

\bibitem[Ser06]{Sernesi}
Edoardo Sernesi, \emph{Deformations of algebraic schemes}, Grundlehren der
  Mathematischen Wissenschaften [Fundamental Principles of Mathematical
  Sciences], vol. 334, Springer-Verlag, Berlin, 2006.
  \MR{\href{http://www.ams.org/mathscinet-getitem?mr=2247603}{2247603}}

\bibitem[Tia87]{Tian}
Gang Tian, \emph{Smoothness of the universal deformation space of compact
  {C}alabi-{Y}au manifolds and its {P}etersson-{W}eil metric}, Mathematical
  aspects of string theory ({S}an {D}iego, {C}alif., 1986), Adv. Ser. Math.
  Phys., vol.~1, World Sci. Publishing, Singapore, 1987, pp.~629--646.
  \MR{\href{http://www.ams.org/mathscinet-getitem?mr=915841}{915841}}

\bibitem[Tod89]{Todorov}
Andrey~N. Todorov, \emph{The {W}eil-{P}etersson geometry of the moduli space of
  {${\rm SU}(n\geq 3)$} ({C}alabi-{Y}au) manifolds. {I}}, Comm. Math. Phys.
  \textbf{126} (1989), no.~2, 325--346.
  \MR{\href{http://www.ams.org/mathscinet-getitem?mr=1027500}{1027500}
  (91f:32022)}

\bibitem[vdGK03]{GeerKatsura}
Gerard van~der Geer and Toshiyuki Katsura, \emph{On the height of
  {C}alabi-{Y}au varieties in positive characteristic}, Doc. Math. \textbf{8}
  (2003), 97--113.
  \MR{\href{http://www.ams.org/mathscinet-getitem?mr=2029163}{2029163}}

\bibitem[War14]{WardThesis}
Matthew Ward, \emph{Arithmetic {P}roperties of the {D}erived {C}ategory for
  {C}alabi-{Y}au {V}arieties}, ProQuest LLC, Ann Arbor, MI, 2014, Thesis
  (Ph.D.)--University of Washington.
  \MR{\href{http://www.ams.org/mathscinet-getitem?mr=3271876}{3271876}}

\bibitem[{Yob}17]{Yobuko}
Fuetaro {Yobuko}, \emph{{Quasi-Frobenius-splitting and lifting of Calabi-Yau
  varieties in characteristic $p$}},
  \href{https://arxiv.org/abs/1704.05604}{arXiv:1704.05604} (2017).

\bibitem[Zda18]{Zdanowicz}
Maciej Zdanowicz, \emph{Liftability of singularities and their {F}robenius
  morphism modulo $p^2$}, International Mathematics Research Notices
  \textbf{2018} (2018), no.~14, 4513--4577,
  \url{http://dx.doi.org/10.1093/imrn/rnw297}.

\end{thebibliography}


\begin{thebibliography}{plain}

	
\bibitem[SGA 5]{SGA5}
\emph{Cohomologie {$\ell$}-adique et fonctions {$L$}}, Lecture Notes in 
	Mathematics, Vol. 589, Springer-Verlag, Berlin, 1977, S{\'e}minaire de 
	G{\'e}ometrie Alg{\'e}brique du Bois-Marie 1965--1966 (SGA 5), Edit{\'e} par Luc Illusie. 
	\MR{\href{http://www.ams.org/mathscinet-getitem?mr=0491704}{0491704} (58 \#10907)}

\end{thebibliography}

\renewcommand{\refname}{\rule{2cm}{0.4pt}}
\renewcommand{\addcontentsline}[3]{}

\end{document}